\documentclass[12pt, reqno]{amsart}

\usepackage{amsmath,amssymb,amsthm,amsfonts,verbatim}
\usepackage{microtype}
\usepackage[all,2cell]{xy}
\usepackage{mathtools}
\usepackage{graphicx}
\usepackage{pinlabel}
\usepackage{hyperref}
\usepackage{mathrsfs}
\usepackage{color}
\usepackage[dvipsnames]{xcolor}
\usepackage{enumerate}
\usepackage{cite}
\usepackage{soul}
\usepackage{MnSymbol}

% \usepackage[
% backend=biber,
% style=alphabetic,
% sorting=ynt
% ]{biblatex}
% \addbibresource{bib.bib}

\CompileMatrices

\usepackage[top=1.2in,bottom=1.2in,left=1in,right=1in]{geometry}

%%%Fancy hyperlinks and cleveref
\usepackage{hyperref} %Hyperlink reference
\hypersetup{          %Set up  for hyperlinks
	colorlinks=true, breaklinks, linkcolor=[RGB]{51 102 204}, filecolor=Orchid, urlcolor=[RGB]{51 102 204},
	citecolor=Orchid, linktoc=all, }
\usepackage[nameinlink]{cleveref}
%%%

%%%Theorem environments, with Cref naming aliases (see https://tex.stackexchange.com/questions/730148/cref-refers-to-lemmas-as-theorems)

\theoremstyle{plain}
\newtheorem{theorem}{Theorem}[section]
\newtheorem{maintheorem}{Theorem}

\newtheorem{maincor}[maintheorem]{Corollary}
    \AddToHook{env/maincor/begin}{\crefalias{maintheorem}{maincor}}
    \crefname{maincor}{corollary}{corollaries}

    \AddToHook{env/problem/begin}{\crefalias{theorem}{problem}}
    \crefname{problem}{problem}{problems}
\newtheorem{proposition}[theorem]{Proposition}
    \AddToHook{env/proposition/begin}{\crefalias{theorem}{proposition}}
    \crefname{proposition}{proposition}{propositions}
\newtheorem{lemma}[theorem]{Lemma}
    \AddToHook{env/lemma/begin}{\crefalias{theorem}{lemma}}
    \crefname{lemma}{lemma}{lemmas}
\newtheorem{conjecture}[theorem]{Conjecture}
    \AddToHook{env/conjecture/begin}{\crefalias{theorem}{conjecture}}
    \crefname{conjecture}{conjecture}{conjectures}

    \AddToHook{env/fact/begin}{\crefalias{theorem}{fact}}
    \crefname{fact}{fact}{facts}
\newtheorem{corollary}[theorem]{Corollary}
    \AddToHook{env/corollary/begin}{\crefalias{theorem}{corollary}}
    \crefname{corollary}{corollary}{corollaries}

    \AddToHook{env/claim/begin}{\crefalias{theorem}{claim}}
    \crefname{claim}{claim}{claims}

\theoremstyle{definition}
\newtheorem{definition}[theorem]{Definition}
    \AddToHook{env/definition/begin}{\crefalias{theorem}{definition}}
    \crefname{definition}{definition}{definitions}

    \AddToHook{env/example/begin}{\crefalias{theorem}{example}}
    \crefname{example}{example}{examples}
\newtheorem{construction}[theorem]{Construction}
    \AddToHook{env/construction/begin}{\crefalias{theorem}{construction}}
    \crefname{construction}{construction}{constructions}

    \AddToHook{env/exercise/begin}{\crefalias{theorem}{exercise}}
    \crefname{exercise}{exercise}{exercises}
\newtheorem{remark}[theorem]{Remark}
    \AddToHook{env/remark/begin}{\crefalias{theorem}{remark}}
    \crefname{remark}{remark}{remarks}

    \AddToHook{env/question/begin}{\crefalias{theorem}{question}}
    \crefname{question}{question}{questions}

    \AddToHook{env/answer/begin}{\crefalias{theorem}{answer}}
    \crefname{answer}{answer}{answers}
\newtheorem{convention}[theorem]{Convention}
    \AddToHook{env/convention/begin}{\crefalias{theorem}{convention}}
    \crefname{convention}{convention}{conventions}

\newcommand{\nc}{\newcommand}
\nc{\dmo}{\DeclareMathOperator}

\nc{\OO}{\mathcal{O}}
\nc{\cA}{\mathcal{A}}
\nc{\cB}{\mathcal{B}}
\nc{\sB}{\mathscr{B}}
\nc{\C}{\mathbb{C}}
\nc{\cC}{\mathcal{C}}
\nc{\BB}{\mathbb{B}}
\nc{\LL}{\mathcal{L}}
\nc{\bd}{\mathbf{d}}
\nc{\DD}{\mathbb{D}}
\nc{\cD}{\mathcal{D}}
\nc{\bF}{\mathbb{F}}
\nc{\cF}{\mathcal{F}}
\nc{\cG}{\mathcal{G}}
\nc{\cI}{\mathcal{I}}
\nc{\cK}{\mathcal{K}}
\nc{\cL}{\mathcal{L}}
\nc{\cM}{\mathcal{M}}
\nc{\bM}{\mathbf{M}}
\nc{\N}{\mathbb{N}}
\nc{\cN}{\mathcal{N}}
\nc{\cO}{\mathcal{O}}
\nc{\bp}{\mathbf{p}}
\nc{\cP}{\mathcal{P}}
\nc{\Q}{\mathbb{Q}}
\nc{\R}{\mathbb{R}}
\nc{\cS}{\mathcal{S}}
\nc{\cT}{\mathcal T}
\nc{\cU}{\mathcal U}
\nc{\cX}{\mathcal{X}}
\nc{\Z}{\mathbb{Z}}
\nc{\disk}{\mathbb{D}}
\nc{\hyp}{\mathbb{H}}
\renewcommand{\P}{\mathbb{P}}
\renewcommand{\O}{\mathcal O}
\nc{\CP}{\mathbb{CP}}
\nc{\RP}{\mathbb{RP}}
\dmo{\Mod}{Mod}
\dmo{\PMod}{PMod}
\dmo{\LMod}{LMod}
\dmo{\Diff}{Diff}
\dmo{\Homeo}{Homeo}
\dmo{\dist}{dist}
\dmo\BDiff{BDiff}
\dmo\SO{SO}
\dmo\Hom{Hom}
\dmo\SL{SL}
\dmo\rank{rank}
\dmo\sig{sig}
\dmo\Out{Out}
\dmo\Aut{Aut}
\dmo\Inn{Inn}
\dmo\GL{GL}
\dmo\PGL{PGL}
\dmo\Gr{Gr}
\dmo\PSL{PSL}
\dmo\BHomeo{BHomeo}
\dmo\EHomeo{EHomeo}
\dmo\EDiff{EDiff}
\dmo\Disc{Disc}
\dmo\Aff{Aff}
\dmo\Spin{Spin}

\renewcommand{\hat}{\widehat}
\renewcommand{\bar}{\overline}

\dmo\Teich{Teich}
\dmo\Fix{Fix}
\nc{\pair}[1]{\ensuremath{\left\langle #1 \right\rangle}}
\nc{\abs}[1]{\ensuremath{\left| #1 \right|}}
\nc{\action}{\circlearrowright}
\nc{\norm}[1]{\ensuremath{\left | \left | #1 \right | \right |}}
\nc{\abcd}[4]{\ensuremath{\left(\begin{array}{cc} #1 & #2 \\ #3 & #4 \end{array}\right)}}
\nc{\into}\hookrightarrow
\dmo{\Isom}{Isom}
\nc{\normal}{\vartriangleleft}
\dmo{\Vol}{Vol}
\dmo{\im}{Im}
\dmo{\Push}{Push}
\dmo{\Conf}{Conf}
\dmo{\UConf}{UConf}
\dmo{\PConf}{PConf}
\dmo{\Poly}{Poly}
\dmo{\PB}{PB}
\dmo{\id}{id}
\dmo{\Jac}{Jac}
\dmo{\Pic}{Pic}
\dmo{\Stab}{Stab}
\dmo{\Arf}{Arf}
\dmo{\End}{End}
\dmo{\Gal}{Gal}
\dmo{\lcm}{lcm}
\dmo{\ab}{ab}
\dmo{\opp}{op}
\dmo{\SU}{SU}
\dmo{\OT}{\Omega \mathcal{T}}
\dmo{\OM}{\Omega \mathcal{M}}
\dmo{\PH}{\mathbb{P}\mathcal{H}}
\dmo{\spin}{spin}
\dmo{\even}{even}
\dmo{\odd}{odd}
\dmo{\comp}{\mathcal{H}}
\dmo{\Mgk}{\mathcal{M}_{g, \underline{\kappa}}}
\dmo{\orb}{orb}
\dmo{\AJ}{AJ}
\dmo{\Ck}{\mathsf{C}(\underline{\kappa})}
\dmo{\Int}{Int}
\dmo{\pr}{pr}
\dmo{\lab}{lab}
\dmo{\Sym}{Sym}
\dmo{\Ann}{Ann}
\dmo{\Rad}{Rad}
\dmo{\Ind}{Ind}
\dmo{\Div}{Div}
\dmo{\Res}{Res}
\dmo{\Hur}{Hur}
\dmo{\vcd}{vcd}

\nc{\Span}[1]{\operatorname{Span}(#1)}
\newcommand{\onto}{\twoheadrightarrow}

\renewcommand{\epsilon}{\varepsilon}
\renewcommand{\tilde}{\widetilde}
\renewcommand{\le}{\leqslant}

\nc{\margin}[1]{\marginpar{\scriptsize #1}}
\nc{\para}[1]{\medskip\noindent\textbf{#1.}}
\definecolor{myblue}{RGB}{102,153, 255}
\definecolor{myred}{RGB}{204,0,0}
\definecolor{mygreen}{RGB}{0,204,0}
\definecolor{myorange}{RGB}{255,102,0}
\definecolor{mypurple}{RGB}{138,43,226}
\definecolor{myyellow}{RGB}{255,204,0}
\nc{\red}[1]{\textcolor{myred}{#1}}
\nc{\blue}[1]{\textcolor{myblue}{#1}}

%Project-specific macros
\nc{\Adm}{\mathbf{Adm}}
\nc{\Env}{\mathbf{Env}}
\dmo{\ord}{ord}

\author{Ishan Banerjee}
\author{Nick Salter}

\address{Ishan Banerjee: Math Tower, the Ohio State University, 231 W 18th St Columbus OH 43210}
\email{banerjee.238@osu.edu}

\address{Nick Salter: Department of Mathematics, University of Notre Dame, 255 Hurley Building, Notre Dame, IN 46556}
\email{nsalter@nd.edu}

\date{January 6, 2026}

\title{Monodromy and vanishing cycles for complete intersection curves}

\begin{document}

\begin{abstract}
    We compute the topological monodromy of every family of complete intersection curves. Like in the case of plane curves previously treated by the second-named author, we find the answer is given by the $r$-spin mapping class group associated to the maximal root of the adjoint line bundle. Our main innovation is a suite of tools for studying the monodromy of sections of a tensor product of very ample line bundles in terms of the monodromy of sections of the factors, allowing for an induction on (multi-)degree.
\end{abstract}

\maketitle

\section{Introduction}

Let $\bd := (d_1, \dots, d_{n-1})$ be a multidegree, and let $C \subset \CP^n$ be a smooth complete intersection curve of multidegree $\bd$. The family $U_\bd$ of all such curves admits a {\em topological monodromy representation}
\[
\rho: \pi_1(U_\bd) \to \Mod(\Sigma_{g(\bd)}),
\]
where $g(\bd)$ denotes the genus of such curves (see \Cref{lemma:multidegreegenus} for a formula) and $\Mod(\Sigma_{g(\bd)})$ denotes the mapping class group of $\Sigma_g$, i.e. isotopy classes of orientation-preserving diffeomorphisms. (Note that we are not modding out by automorphisms, so $U_\bd$ is a quasiprojective complex variety, and $\pi_1$ denotes the ordinary topological fundamental group). 

Understanding the topological monodromy group of families of curves is important in a wide variety of settings, from symplectic geometry \cite{donaldson} to number theory \cite{LV}. We resolve this here for any family of complete intersection curves.

The answer turns out to be completely governed by an elementary principle in algebraic geometry. By the adjunction formula, $K_C \cong \cO_C(r(\bd))$ for some integer $r(\bd)$ (again, see \Cref{lemma:multidegreegenus}). Thus $\cO_C(1)$ determines a canonical $r(\bd)$-spin structure $\phi_{\bd}$ on $C$. The topological monodromy of the family of smooth complete intersection curves of multidegree $\bd$ is therefore contained in an ``$r$-spin mapping class group'' - the stabilizer of $\cO_C(1)$ under an action of the mapping class group (see \Cref{section:rspin} for details). The main result of this paper shows that for families of complete intersection curves, this is a complete characterization.

\begin{maintheorem}\label{mainthm:ambientPn}
    For all multidegrees $\bd$, the monodromy $\Gamma_\bd \le \Mod(\Sigma_{g(\bd)})$ of the family of smooth complete intersection curves of multidegree $\bd$ is the associated $r(\bd)$-spin mapping class group:
    \[
    \Gamma_{\bd} = \Mod(\Sigma_{g(\bd)})[\phi_\bd].
    \]
\end{maintheorem}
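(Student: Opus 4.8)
The strategy is an induction on the multidegree $\bd$, using the factorization of a complete intersection of multidegree $(d_1, \dots, d_{n-1})$ as the intersection of a complete intersection of multidegree $(d_1, \dots, d_{n-2})$ with a hypersurface of degree $d_{n-1}$. Concretely, I would set up the ambient surface $S$ cut out by the first $n-2$ equations (a complete intersection surface in $\CP^n$, or after fixing the first $n-3$ equations a surface fibered appropriately), and realize the curve $C$ of multidegree $\bd$ as a smooth divisor in the linear system $|\cO_S(d_{n-1})|$. The monodromy of the family over the space of such divisors then splits into two contributions: the ``vertical'' monodromy coming from varying the divisor inside a fixed surface $S$, and the monodromy coming from varying $S$ itself (i.e.\ the monodromy of the family of complete intersection surfaces, which acts on the pair $(S, C)$). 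The first containment $\Gamma_\bd \le \Mod(\Sigma_{g(\bd)})[\phi_\bd]$ is automatic from the adjunction discussion in the introduction; the real content is the reverse containment, i.e.\ that every element of the $r(\bd)$-spin mapping class group is realized.

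For the reverse containment I would proceed in two stages. First, the base case: small multidegrees, handled by hand or by appealing to the second author's earlier work on plane curves (the case $\bd = (d)$ of plane curves of degree $d$) together with the complete intersections of low degree that are themselves plane curves, rational, or elliptic, where the $r$-spin mapping class group is either all of $\Mod$ or can be computed directly. Second, the inductive step: assuming the result for multidegrees of smaller total degree, I would show that (a) the vertical monodromy — sections of $\cO_S(d_{n-1})$ over a fixed very ample surface — already contains ``enough'' of the $r$-spin mapping class group, and (b) the remaining generators are supplied by the monodromy of the family of surfaces $S$. The key technical input, advertised in the abstract, is a package of tools relating the monodromy of sections of a tensor product $L_1 \otimes L_2$ of very ample line bundles to the monodromy of sections of $L_1$ and of $L_2$ separately; this is what makes the induction on degree go through, since $\cO_S(d_{n-1}) = \cO_S(1)^{\otimes d_{n-1}}$ and one peels off one factor at a time. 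One needs the Picard--Lefschetz / vanishing cycle picture: a generic pencil in $|L_1 \otimes L_2|$ degenerating to $C_1 \cup C_2$ with $C_i \in |L_i|$ produces vanishing cycles at the $C_1 \cap C_2$ intersection points, and Dehn twists about these, together with the (inductively known) monodromies of the $|L_i|$, must be shown to generate the target group.

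The main obstacle, and where I expect the bulk of the work to lie, is establishing that these vanishing cycles plus the inductively-known submonodromies actually generate the full stabilizer $\Mod(\Sigma_{g(\bd)})[\phi_\bd]$, rather than merely some finite-index or otherwise proper subgroup. This requires a good generating set for the $r$-spin mapping class group — presumably in terms of Dehn twists about an explicit ``chain-like'' or ``$E$-arc'' configuration of curves all of whose $\phi_\bd$-values are prescribed — and then a geometric argument that such a configuration can be located inside the degenerate fiber $C_1 \cup C_2$ (or a nearby smooth fiber) with the vanishing cycles and sub-configurations matching up. Controlling the $r$-spin structure $\phi_\bd$ through the degeneration — i.e.\ checking that the winding-number function on the smoothed curve restricts correctly to the pieces and that the vanishing cycles are $\phi_\bd$-admissible (nonseparating with the correct framing) — is the delicate bookkeeping step. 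A secondary subtlety is ensuring the ambient surface $S$ can be taken smooth and the pencil generic (Bertini-type transversality), and handling the finitely many genuinely low-genus exceptional multidegrees where $\Mod(\Sigma_{g(\bd)})[\phi_\bd]$ degenerates (e.g.\ $g \le 2$, or hyperelliptic-type phenomena) separately.
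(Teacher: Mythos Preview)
Your outline has the right overall architecture --- induction on multidegree, degeneration to a reducible curve $C \cup D$, vanishing cycles at the nodes, and an appeal to a generation criterion for the $r$-spin mapping class group --- and this matches the paper. But two of your structural choices diverge from what actually happens, and one of them is a genuine gap.

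First, the decomposition into ``vertical monodromy in fixed $S$'' plus ``monodromy from varying $S$'' is not used. The paper shows (via Lefschetz hyperplane, \Cref{lemma:bigsmall}) that $\Gamma_{X,\bd} = \Gamma_\bd$: a generic pencil inside a single fixed surface already sees the full monodromy. So the second contribution you plan to use is redundant, and the argument must run entirely inside one $S$.

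Second, and more seriously: you propose to use the inductively known monodromies of \emph{both} factors $|L_1|$ and $|L_2|$, together with the nodal vanishing cycles $\beta_i$ at $C \cap D$. In the paper's setup $L_2 = \cO(1)$, so $D$ is a hyperplane section and may well have genus $0$ or $1$ --- there is no useful inductive monodromy on the $D$ side. The nodal $\beta_i$ alone, together with the monodromy lifted from $C$, do not fill $E$: they leave the interior of $\tilde D$ untouched. The paper's key technical innovation is to produce vanishing cycles entering $\tilde D$ by \emph{tacnodal} degenerations: vary $C$ in a pencil and consider fibers simply tangent to $D$. A local analysis shows that one vanishing cycle for each such tangency crosses into $\tilde D$ along a controlled arc (the distinguished lift of a path in $\CP^1$ under the pencil map), and by choosing enough such paths one fills $\tilde D$. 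This is the mechanism your proposal is missing; without it the generation step cannot close. A second, subsidiary ingredient you do not mention is the ``simple braid group'' machinery (\Cref{section:simplebraids,section:exhibitsimplebraids}), which is what allows vanishing cycles on $C$ to be lifted to prescribed admissible curves on $\tilde C$ rather than merely to curves in the correct unpointed isotopy class.

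Finally, note the paper's explicit observation that the inductive principle is \emph{asymmetric}: one only needs full monodromy for one tensor factor, not both. Your proposal implicitly assumes symmetry, which would make the base cases harder and obscure this point.
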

\begin{remark}\label{remark:lowedegree}
    In a handful of low-degree cases (enumerated in \Cref{lemma:smallgenustable}), $r(\bd) \le 1$, and so some care must be taken in understanding what is being asserted in \Cref{mainthm:ambientPn}. The case $r(\bd)< 0$ is equivalent to the case $g(\bd) = 0$, in which case the entire mapping class group is trivial. The case $r(\bd) = 0$ is equivalent to the condition $g(\bd) = 1$. Curves of genus $1$ are canonically framed, and indeed the entire mapping class group $\Mod(\Sigma_1)$ preserves this, so that in these cases, \Cref{mainthm:ambientPn} is asserting that $\Gamma_\bd = \Mod(\Sigma_1) = \SL_2(\Z)$. In the case $r(\bd) = 1$, there is no additional structure imposed by $\phi_\bd$, so the theorem specializes here to asserting $\Gamma_\bd = \Mod(\Sigma_{g(\bd)})$. In summary, in the cases where $r(\bd) \le 1$, \Cref{mainthm:ambientPn} asserts that $\Gamma_\bd = \Mod(\Sigma_{g(\bd)})$.
\end{remark}

Now suppose $n \ge 3$, and define $\bd' := (d_1, \dots, d_{n-2})$. Let $X \subset \CP^n$ be a smooth complete intersection surface of multidegree $\bd'$. In the family of complete intersection curves of multidegree $\bd$, there is the subfamily $U_{X,\bd}$ of smooth complete intersections $C = X \cap Y$, where $Y$ is some hypersurface of degree $d_{n-1}$. Again by adjunction, $\cO(1)$ induces a canonical $r(\bd)$-spin structure on any such $C$, which we continue to denote by $\phi_\bd$.
    
\begin{maintheorem}\label{mainthm:ambientX}
In the above setting, the monodromy $\Gamma_{X,\bd} \le \Mod(\Sigma_{r(\bd)})$ of the family $U_{X,\bd}$ of complete intersection curves of multidegree $\bd$ in $X$, is the associated $r(\bd)$-spin mapping class group:
\[
\Gamma_{X,\bd} = \Mod(\Sigma_{g(\bd)})[\phi_\bd].
\]
\end{maintheorem}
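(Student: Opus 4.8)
The plan is to prove the two inclusions of \Cref{mainthm:ambientX} separately. The inclusion $\Gamma_{X,\bd} \le \Mod(\Sigma_{g(\bd)})[\phi_\bd]$ is the formal direction: $\cO(1)$ extends over the total space of the family $U_{X,\bd}$, and by adjunction $\cO_C(1)$ is an $r(\bd)$-th root of $K_C$, so the $r(\bd)$-spin structure $\phi_\bd$ it determines is locally constant over the family and hence fixed by the monodromy (cf.\ \Cref{section:rspin}). Since a curve $X\cap Y$ is literally a complete intersection of multidegree $\bd$, we also have $U_{X,\bd}\subseteq U_\bd$ and hence $\Gamma_{X,\bd}\le \Gamma_\bd$; combined with the same upper bound applied to $\Gamma_\bd$, this shows that \Cref{mainthm:ambientPn} is a formal consequence of \Cref{mainthm:ambientX}, so it suffices to prove the reverse inclusion $\Mod(\Sigma_{g(\bd)})[\phi_\bd] \le \Gamma_{X,\bd}$.

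I would prove this lower bound by induction on the multidegree $\bd$, ordered so that factoring a linear form off a defining equation strictly decreases it. The base cases are: (i) the finitely many low-genus multidegrees of \Cref{lemma:smallgenustable}, where $\Mod(\Sigma_{g(\bd)})[\phi_\bd]=\Mod(\Sigma_{g(\bd)})$ and one appeals to classical irreducibility and fullness results for Lefschetz pencils; and (ii) the case $d_1=\dots=d_{n-2}=1$, in which $X$ is a linearly embedded $\CP^2$ and $U_{X,\bd}$ is exactly the universal family of smooth plane curves of degree $d_{n-1}$, so the claim is the second-named author's theorem on plane curves (with $r(\bd)=d_{n-1}-3$). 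For the inductive step, pick homogeneous coordinates in which one of the defining equations factors off a linear form --- a degree-$d_i$ equation of $X$ with $d_i\ge 2$, or (if no such equation exists and we are not in case (ii)) the degree-$d_{n-1}$ equation of $Y$. This produces a one-parameter degeneration of the generic smooth member $C$ of $U_{X,\bd}$ to a nodal curve $C_1\cup C_2$ with $N>0$ nodes, where $C_1,C_2$ are complete intersection curves lying in surfaces of strictly smaller multidegree (in the first case $X$ itself degenerates to $X_1\cup X_2$ with $X_2$ a hyperplane section), and $N$, the number of intersection points of $C_1$ and $C_2$, is computed by B\'ezout.

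The core of the argument is a package of lemmas --- the ``suite of tools'' of the abstract, built on an analysis of the monodromy of smooth sections of a tensor product $L\otimes M$ of very ample line bundles in terms of those of $L$ and $M$ --- which extracts from such a degeneration three kinds of elements of $\Gamma_{X,\bd}$: (a) for each node a Dehn twist $T_{\delta_j}$ about the corresponding vanishing cycle, arising from a meridian of the relevant local branch of the discriminant; (b) by varying only the factor carrying $C_1$ and invoking the inductive hypothesis, a copy of $\Mod(\Sigma_{g_1})[\phi_1]$ supported on the subsurface of $\Sigma_{g(\bd)}$ obtained by smoothing $C_1$; and (c) likewise a copy of $\Mod(\Sigma_{g_2})[\phi_2]$ on the complementary subsurface. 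Making (b) and (c) precise --- choosing a consistent smoothing, identifying the subsurfaces, and especially checking that the $\delta_j$ and all curves carrying $\Mod(\Sigma_{g_i})[\phi_i]$ are \emph{admissible} for $\phi_\bd$, i.e.\ have vanishing winding number --- is the first substantial technical point; it is essentially a compatibility computation for $r$-spin structures under degeneration and smoothing, local near each node.

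The main obstacle is the concluding step: showing that a subgroup of $\Mod(\Sigma_{g(\bd)})[\phi_\bd]$ containing $\Mod(\Sigma_{g_1})[\phi_1]$, $\Mod(\Sigma_{g_2})[\phi_2]$ and the twists $T_{\delta_1},\dots,T_{\delta_N}$ about the $N$ connecting curves is in fact all of $\Mod(\Sigma_{g(\bd)})[\phi_\bd]$. I would prove this by bootstrapping the given elements, via change-of-coordinates for admissible curves, to one of the known generating sets of an $r$-spin mapping class group (a suitable chain or arboreal configuration of admissible curves, or admissible analogues of Humphries generators). This is precisely the step that needs $N$ and the genera $g_i$ to be not too small, which is why the genuinely small multidegrees must be quarantined into \Cref{lemma:smallgenustable} and handled directly.
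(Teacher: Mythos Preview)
Your inductive strategy via degeneration to a reducible curve is in the right spirit, but the ``concluding step'' you identify as the main obstacle has a genuine gap. The three families of elements (a), (b), (c) that you produce all \emph{preserve the isotopy class of the multicurve} $\{\delta_1,\dots,\delta_N\}$. Varying $C_1$ while keeping $C_2$ fixed and transverse (and then smoothing) gives a family in which the nodes, hence their vanishing cycles, persist throughout; the resulting monodromy therefore stabilizes the decomposition $E=\tilde C_1\cup\tilde C_2$ setwise. The same holds for varying $C_2$, and trivially for the $T_{\delta_j}$. Thus the group these generate sits inside the stabilizer of the multicurve $\{\delta_j\}$, which is a proper subgroup of $\Mod(\Sigma_{g(\bd)})[\phi_\bd]$ (the $\delta_j$ are admissible, and $\Mod(\Sigma_{g(\bd)})[\phi_\bd]$ certainly contains admissible twists that move them). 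No ``bootstrapping via change-of-coordinates'' can escape this stabilizer: what is missing is a source of vanishing cycles that \emph{cross} between $\tilde C_1$ and $\tilde C_2$. The paper supplies exactly this via the tacnodal degenerations of \Cref{section:tacnode}: letting $C$ run in a pencil until it becomes simply tangent to $D$ yields a vanishing cycle on $E$ that enters and exits $\tilde D$ along a single arc whose isotopy class is completely controlled by a path in $\CP^1$.

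Two structural points also differ from the paper. First, the induction never degenerates $X$; it keeps $X$ fixed and only decrements $d_{n-1}$, using that IH$(d_1,\dots,d_{n-1})$ trivially gives IH$(d_1,\dots,d_{n-1},1)$ to pass between ambient dimensions. Second---and this is one of the paper's main points---the inductive hypothesis is invoked on only \emph{one} factor, namely $C$ of multidegree $\bd$; the other piece $D$ is always a hyperplane section of $X$, which may well have genus $0$ or $1$ (and does when $X=\CP^2$), so an appeal to IH on the $D$-side would be vacuous. The tacnodal technique is precisely what replaces it. A consequence is that your base case (ii) is unnecessary: the only base cases needed are the finitely many $\bd$ with $r(\bd)\le 1$ from \Cref{lemma:smallgenustable}, handled in \Cref{section:basecase}.
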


\begin{remark}
    Evidently \Cref{mainthm:ambientX} implies \Cref{mainthm:ambientPn}, but in fact the two statements are equivalent. This is a relatively straightforward consequence of the Lefschetz hyperplane theorem - see \Cref{lemma:bigsmall}. 
\end{remark}

\begin{remark}
    To date, the monodromy calculations appearing in the literature \cite{CL1,CL2,saltertoric} have taken place in the setting of smooth toric surfaces, and {\em a fortiori} within the world of rational surfaces. \Cref{mainthm:ambientX} thus shows that a wide variety of algebraic surfaces have linear systems with $r$-spin monodromy- not only rational surfaces, but $K3$'s and surfaces of general type. \Cref{conj:rspinmon} below posits that this should hold in great generality.
\end{remark}

\para{Vanishing cycles} A primary application of these results is to the problem of characterizing {\em vanishing cycles}. Recall (see \Cref{subsection:nodaldegen}) that under a nodal degeneration of a smooth algebraic curve (over $\C$), there is a topological cylinder which collapses to a cone at the nodal point; the core curve of such a cylinder is called a vanishing cycle. It is of interest, e.g. in symplectic geometry, to understand precisely which simple closed curves can arise as a vanishing cycle. This problem was posed by Donaldson \cite{donaldson}, originally in the setting of families of curves in toric surfaces. Substantial progress in this direction was made by Cr\'etois--Lang \cite{CL1,CL2}, and was resolved in full for smooth toric surfaces in the work \cite{saltertoric} of the second-named author. 

The characterization of vanishing cycles turns out to hinge on a topological reformulation of an $r$-spin structure known as a {\em winding number function}. As outlined in \Cref{section:rspin}, $r$-spin structures on a smooth algebraic curve $C$ are in correspondence with topological structures known as {\em winding number functions}, which assign an element of $\Z/r\Z$ to each isotopy class of oriented simple closed curve on $C$. A simple closed curve $c \subset C$ is said to be {\em admissible} for a winding number function $\phi$ (equivalently, for an $r$-spin structure) if $c$ is nonseparating ($C \setminus c$ is topologically connected) and if $\phi(c) = 0$. 

\begin{maincor}\label{maincor:VC}
    In the settings of \Cref{mainthm:ambientPn} or \Cref{mainthm:ambientX}, let $C$ be a smooth member of the relevant family. Then a nonseparating simple closed curve $c \subset C$ is a vanishing cycle if and only if it is admissible for the winding number function associated to the distinguished $r(\bd)$-spin structure.
\end{maincor}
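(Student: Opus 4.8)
The plan is to deduce the corollary from the monodromy computation of \Cref{mainthm:ambientPn}/\Cref{mainthm:ambientX} together with two standard facts about vanishing cycles and the structure of $r$-spin mapping class groups. First I would establish the ``only if'' direction, which is the easy one: a vanishing cycle $c$ is always nonseparating (a nodal degeneration of a smooth curve into an irreducible one-nodal curve drops the genus by one and keeps the curve connected), and $c$ must be preserved—up to the monodromy action—by the distinguished $r(\bd)$-spin structure. Concretely, the $r$-spin structure $\phi_\bd$ is monodromy-invariant by construction (it comes from $\cO(1)$, which is intrinsic to the family), so for any Dehn twist $T_c$ appearing in the monodromy we have $(T_c)_* \phi_\bd = \phi_\bd$; since $(T_c)_* \phi_\bd$ differs from $\phi_\bd$ by a cocycle that evaluates on a curve $a$ to $\hat{i}(a,c)\,\phi_\bd(c)$ (the standard twist formula for winding number functions), invariance forces $\phi_\bd(c) = 0$, i.e. $c$ is admissible. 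I would cite \Cref{section:rspin} for the twist formula and the fact that a vanishing cycle supports a monodromy Dehn twist (Picard--Lefschetz).

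For the ``if'' direction I would invoke the key structural input: the set of admissible curves for a fixed winding number function $\phi$ with $r \ge 2$ and $g$ large enough forms a single orbit under the action of $\Mod(\Sigma_g)[\phi]$. This is a theorem about $r$-spin mapping class groups—essentially the change-of-coordinates principle adapted to the $r$-spin setting, due to the second-named author and collaborators—and I would cite it from \Cref{section:rspin}. Given this, the argument is: there exists at least one vanishing cycle $c_0$ in the family (e.g. the vanishing cycle of a generic Lefschetz pencil obtained by intersecting with hyperplanes of degree $d_{n-1}$), it is admissible by the previous paragraph, and any other admissible curve $c$ is of the form $c = f(c_0)$ for some $f \in \Mod(\Sigma_{g(\bd)})[\phi_\bd] = \Gamma_{X,\bd}$. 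Since $f$ is realized by monodromy, the set of vanishing cycles is invariant under $f$, so $c = f(c_0)$ is again a vanishing cycle.

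Two bookkeeping points remain. First, I must exhibit at least one admissible vanishing cycle; this is supplied by taking a Lefschetz pencil of degree-$d_{n-1}$ hypersurface sections of $X$ (or of the appropriate complete intersection surface), whose vanishing cycles are nonseparating by a genus count and nonzero-homologous, hence admissible by the ``only if'' direction—so the existence is automatic once the family is nonempty with the relevant Lefschetz pencil structure. Second, the low-degree cases flagged in \Cref{remark:lowedegree}: when $r(\bd) \le 1$ the statement reduces to asserting that every nonseparating curve is a vanishing cycle (the winding number function imposes no constraint, or the genus is $\le 1$), which follows from the same orbit argument using the classical fact that $\Mod(\Sigma_g)$ acts transitively on nonseparating simple closed curves, together with $\Gamma_\bd = \Mod(\Sigma_{g(\bd)})$ in those cases. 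The main obstacle is purely one of citation hygiene: the transitivity of $\Mod(\Sigma_g)[\phi]$ on admissible curves is the substantive ingredient, and I need to make sure the genus of complete intersection curves is always in the range where that transitivity theorem applies—this should be checked against \Cref{lemma:multidegreegenus} and the table in \Cref{lemma:smallgenustable}, with the finitely many genuinely small cases handled by hand as above.
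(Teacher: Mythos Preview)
Your proposal is correct and follows exactly the well-known argument that the paper alludes to but explicitly omits (``The passage from a monodromy group calculation to the determination of vanishing cycles is well-known, and we omit the proof. See [saltertoric, Lemma 11.5]\ldots''). The ``only if'' direction is precisely \Cref{lemma:VCadmiss}, and the ``if'' direction via transitivity of $\Mod(\Sigma_g)[\phi]$ on admissible curves plus monodromy-invariance of the set of vanishing cycles is the standard route; the transitivity is supplied by \Cref{prop:ccp} (a single nonseparating curve never constrains the Arf invariant when $g\ge 2$, and $g(\bd)\ge 3$ whenever $r(\bd)\ge 1$ by \Cref{lemma:smallgenustable}).

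One minor over-claim worth trimming: you assert that a vanishing cycle is \emph{always} nonseparating, but this fails for degenerations to reducible nodal curves. It does not affect your argument, since the corollary is stated only for nonseparating $c$ and you only need $\phi(c)=0$ for that direction; just drop the parenthetical justification and take nonseparating as the standing hypothesis.
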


The passage from a monodromy group calculation to the determination of vanishing cycles is well-known, and we omit the proof. See \cite[Lemma 11.5]{saltertoric} for one point of view on this in the $r$-spin setting.

\begin{remark}
    Again, some commentary is warranted in the low-degree cases. If $r(\bd) < 0$ then \Cref{maincor:VC} is vacuous. In the cases $0 \le r(\bd) \le 1$, this asserts that every nonseparating $c$ is a vanishing cycle.
\end{remark}

\begin{remark}[A note on terminology and notation]
    As is inevitable when studying topological aspects of algebraic curves and algebraic surfaces, there is a collision of terminology between algebraic geometry and topology. Unfortunately, we will need to consider both algebraic surfaces (which are $4$-manifolds to a topologist), algebraic curves ({\em surfaces} in the parlance of topology), as well as topological surfaces and simple closed curves on them (embedded $1$-manifolds).

    To minimize confusion, we adopt the following terminological conventions. When dealing with algebro-geometric objects (curves and surfaces), we will always prepend ``algebraic''. We will reserve the term ``surface'' to mean a topological $2$-manifold. When discussing (simple closed) curves on topological surfaces, we will always prepend ``simple closed''.

    We will also reserve certain symbols for specific classes of objects. Capital letters $C,D,E$ will always denote algebraic curves, $X$ will always denote an algebraic surface, and $S$ will always denote a (topological) surface. Simple closed curves on $S$ will be denoted by lowercase letters $a,b,c$, etc., or by lowercase Greek letters $\alpha, \beta, \gamma$, etc. 
\end{remark}

\para{Idea of proof} We believe that the proof techniques developed here should be applicable to monodromy problems on a wide variety of algebraic surfaces. Here we give an overview of the main ideas.

Following the work of \cite{saltertoric,strata2,strata3}, the theory of $r$-spin mapping class groups (and their cousins the ``framed mapping class groups'' appearing below) is well-developed, and there exist simple and flexible criteria for showing that a collection of Dehn twists generates a given $r$-spin mapping class group (see, e.g. \Cref{theorem:assemblagegenset}). Dehn twists arise as the monodromy of a nodal degeneration, and so the challenge in proving a result such as \Cref{mainthm:ambientPn} lies in exhibiting enough nodal degenerations.

In the prior work \cite{CL1,saltertoric} in the setting of smooth toric surfaces, this was accomplished using the techniques of tropical geometry, as developed in \cite{CL1}. Although beautiful in its own right, the tropical method does not extend to general algebraic surfaces.

Here, we overcome this limitation by means of an inductive argument. Suppose that $L_1$ and $L_2$ are very ample line bundles on an algebraic surface $X$. One obtains a family of smooth sections of $L_1 \otimes L_2$ by taking the union of a section of $L_1$ and a section of $L_2$ (both smooth) and then smoothing by a small perturbation. The crux of our argument is to show that if $L_1$ has ``full monodromy'' (equal to some $r$-spin mapping class group), then (under suitable technical hypotheses) so does $L_1 \otimes L_2$ - note that in general, the value of $r$ will be different! We found the asymmetry in this principle to be surprising - it is {\em not} necessary to assume that both $L_1$ and $L_2$ have full monodromy, just that one of them does. 

For clarity, it is worthwhile to explain the argument in more detail. Let $C,D$ denote smooth zero loci of sections of $L_1, L_2$, respectively, and let $E$ be a smooth perturbation of $C \cup D$. The generation criterion \Cref{theorem:assemblagegenset} asserts that it is necessary to find a collection of finitely many vanishing cycles that topologically ``fill'' $E$ (subject to technical hypotheses detailed in \Cref{subsection:genusge5}). 

The construction of $E$ from $C \cup D$ endows it with a decomposition
\[
E = \tilde C \cup \tilde D,
\]
where $\tilde C$ and $\tilde D$ are surfaces with boundary components inserted at each of the points of intersection $C \cap D$; then $E$ is glued together by identifying each pair of boundary components. The inductive hypothesis provides a suitable supply of vanishing cycles on $C$; we show in \Cref{lemma:liftconfig} that these can be lifted to vanishing cycles in $\tilde C \subset E$.

It remains to produce vanishing cycles that fill out the $\tilde D$-side of $E$. For this, we vary $C$ in a pencil $C_t$, and consider the singularities that arise when a member $C_{t_i}$ becomes tangent to $D$. A bare-hands analysis (carried out in \Cref{section:tacnode}) shows that one of the vanishing cycles for this tacnodal singularity enters and exits $\tilde D$ exactly once. What is more, we can {\em precisely control} the corresponding arc on $\tilde D$, by considering the image in $\CP^1$ under the pencil map $\pi: X \dashedrightarrow  \CP^1$ obtained from $C_t$. We show in \Cref{section:tacnode} that we can obtain sufficiently many vanishing cycles of this form by taking enough tacnodal degenerations along a set of well-chosen paths.

\para{A conjecture}
The original monodromy results of this sort, obtained in \cite{saltertoric}, held only for the rather narrow class of smooth toric surfaces, but this was for artificial reasons - the use of tropical techniques to manufacture vanishing cycles was limited to this arena. The results here reinforce the idea that possessing $r$-spin monodromy should be a fairly general phenomenon for linear systems of curves on algebraic surfaces - one exception being the case when every smooth section is hyperelliptic. On the other hand, the work \cite{ishanpi1} of the first-named author shows that some restrictions are necessary - when the fundamental group of the algebraic surface $X$ is nontrivial, the monodromy group can be an infinite-index subgroup of the mapping class group! Perhaps these are the only obstructions, at least in large degree?

\begin{conjecture}\label{conj:rspinmon}
    Let $X$ be a smooth simply-connected algebraic surface, and let $L$ be an ample line bundle on $X$. Let $k \gg 0$ be sufficiently large, and suppose that a general smooth section of $L^{\otimes k}$ is not hyperelliptic. Then the monodromy of the family of smooth sections of $L^{\otimes k}$ is an $r$-spin mapping class group, where $r$ is the largest root of $K_X \otimes L^{\otimes k}$ in $\Pic(X)$.
\end{conjecture}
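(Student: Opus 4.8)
The plan is to run the same two-part strategy as in the proof of \Cref{mainthm:ambientX} --- a base case followed by an induction that tensors up the power of $L$ --- but now on an arbitrary smooth simply-connected algebraic surface $X$. The simple-connectivity hypothesis is used in two essential places. First, $H_1(X)=0$ forces $\Pic(X)$ to be torsion-free (for simply-connected $X$ one has $\Pic(X) = NS(X) \hookrightarrow H^2(X;\Z)$, which has no torsion), so the maximal root $r$ of $K_X \otimes L^{\otimes k}$ is well-defined and comes with a unique root line bundle, hence a canonical $r$-spin structure $\phi$. Second, by \cite{ishanpi1} simple connectivity is precisely what is needed to prevent the monodromy from dropping to infinite index, so it should be the hypothesis making the $r$-spin (and framed) mapping class group targets of \Cref{theorem:assemblagegenset} the correct ones. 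The non-hyperelliptic assumption excludes the degenerate regime in which the $r$-spin description fails; for $k \gg 0$ one expects it to hold automatically unless $X$ carries a pencil of hyperelliptic curves of unbounded genus, and in any case it only needs to be invoked to know we are not in that exceptional case.

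For the induction, fix $k_0 \gg 0$ so that $L^{\otimes k_0}$ is very ample, its smooth sections have genus at least $5$ and are not hyperelliptic, and (the base case, discussed below) $|L^{\otimes k_0}|$ already has full $r(k_0)$-spin monodromy. For any $k$ with $k - k_0$ large enough that $L^{\otimes (k - k_0)}$ is very ample --- i.e. all $k \gg 0$ --- write $L^{\otimes k} = L^{\otimes k_0} \otimes L^{\otimes (k - k_0)}$ and apply the tensor-product machinery of the paper with $L_1 = L^{\otimes k_0}$ and $L_2 = L^{\otimes (k - k_0)}$. The point to check is that this machinery --- the decomposition $E = \tilde C \cup \tilde D$ of a smoothing of $C \cup D$, the lifting of a filling vanishing-cycle configuration from $C$ into $\tilde C \subset E$ via \Cref{lemma:liftconfig}, and the production of arcs on $\tilde D$ from tacnodal degenerations along well-chosen paths in $\CP^1$ in \Cref{section:tacnode} --- never uses the complete-intersection structure of the ambient surface, only the very ampleness of the two factors and lower bounds on the genus. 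Granting this, $|L^{\otimes k}|$ inherits full monodromy, now for the $r$-spin structure determined by $K_X \otimes L^{\otimes k} = (K_X \otimes L^{\otimes k_0}) \otimes L^{\otimes (k - k_0)}$, which establishes the conjecture for all $k \gg 0$.

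The \textbf{main obstacle} is therefore the base case: exhibiting one very ample $L^{\otimes k_0}$ whose monodromy is the full $r(k_0)$-spin mapping class group. The natural candidate is a Lefschetz pencil $\mathcal P \subset |L^{\otimes k_0}|$. Its vanishing cycles are nonseparating simple closed curves with vanishing winding number --- hence admissible for $\phi$, by the local Picard--Lefschetz model and the adjunction computation underlying \Cref{maincor:VC} --- and the Picard--Lefschetz monodromy is generated by the Dehn twists about them. By \Cref{theorem:assemblagegenset} it then suffices to show that, for $k_0 \gg 0$, these vanishing cycles topologically fill a smooth fiber and moreover form a configuration of the combinatorial type that criterion requires. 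I expect the filling statement to be the real work: one wants a local model showing that near every point of $X$ one can arrange, for $k_0$ large, vanishing cycles running in transverse directions, and then to use ampleness to make the fibers dense enough that these local pieces assemble into a globally filling system; a generic-projection comparison with the plane-curve calculation of \cite{saltertoric} is a possible alternative route. Once the base case is secured --- and once one has verified that nothing in \Cref{section:tacnode} or \Cref{lemma:liftconfig} quietly used rationality of the ambient surface --- the induction of the previous paragraph closes and yields \Cref{conj:rspinmon}.
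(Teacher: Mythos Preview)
The paper does not prove \Cref{conj:rspinmon}: it is posed as an open conjecture, motivated by the main theorems but not established by them. There is no paper proof to compare against.

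What you have written is a research plan, not a proof, and you correctly locate the decisive gap yourself: the base case. For an arbitrary simply-connected $X$ you need \emph{some} $L^{\otimes k_0}$ whose monodromy is already the full $r(k_0)$-spin mapping class group, and you give no argument for this beyond the hope that Lefschetz-pencil vanishing cycles might fill the fiber in a way meeting the hypotheses of \Cref{theorem:assemblagegenset}. The paper's own base cases (\Cref{prop:basecasesbig}) exploit the classical accident that a general canonical curve of genus $3,4,5$ is a complete intersection, so that the parameter space dominates a Zariski-open in $\cM_g$; nothing of the sort is available on a general $X$. Producing a filling $E$-arboreal admissible configuration of vanishing cycles for an arbitrary very ample linear system is essentially the content of the conjecture, not a preliminary to it. Even your inductive step is not free: the introduction advertises the $L_1 \otimes L_2$ principle in general terms, but the actual arguments in \Cref{section:topologyCI,section:exhibitsimplebraids,section:tacnode,section:inductionstep} are carried out only for $L_2 = \cO_X(1)$ on a complete intersection $X \subset \CP^n$ --- the framing computation in \Cref{lemma:tildeDframed}, the versal-deformation argument in \Cref{lemma:tacnodelocal}, and the explicit curve combinatorics of Step~2 in \Cref{section:inductionstep} all use that setting, and extending them to arbitrary very ample $L_2$ on arbitrary $X$ is genuine work you have not carried out.
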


Coming from the other direction, it would be quite interesting to find examples of very ample line bundles $L$ whose sections are non-hyperelliptic, but for which the monodromy group is a strict subgroup of the associated $r$-spin mapping class group.

\para{Organization} The technical core of paper is divided into two parts. The first, occupying \Cref{section:rspin,section:simplebraids,section:genspinmcg}, is concerned with group theory (of the (framed or $r$-spin) mapping class group and various subgroups), and the second, from \Cref{section:topologyCI,section:exhibitsimplebraids,section:tacnode,section:basecase,section:inductionstep}, with the topology of families of complete intersection curves. The brief \Cref{section:numerology} recalls some basic facts of algebraic geometry used throughout.

Within the first part, \Cref{section:rspin} recalls the various points of view on $r$-spin structures and framings on surfaces, and the associated subgroups of the mapping class group. \Cref{section:simplebraids} establishes a technical result, giving a generating set for the ``simple braid group''. This is a subgroup of the braid group of $E$ previously studied by Dolgachev-Libgober \cite{DolgLib}, Shimada \cite{shimada}, Walker \cite{walker}, and Qiu-Zhou \cite{QZ}. Finite presentations for the simple braid group were obtained by Shimada \cite{shimadapres} and Qiu-Zhou \cite{QZ}. These make use of specific generating sets, whereas for our purposes we require more flexibility, and so we give a separate argument.
\Cref{section:genspinmcg} treats the subject of generating sets for $r$-spin mapping class groups. Following the work of \cite{saltertoric,strata2,strata3}, there is a completely satisfactory theory so long as the genus of the surface is at least five. However, our arguments will require us to go past this threshold; in \Cref{section:genspinmcg} we establish the rather bespoke results needed to deal with this.

The second part begins in \Cref{section:topologyCI} with some general discussion of the topology of a complete intersection curve near the reducible locus (the decomposition $E = \tilde C \cup \tilde D$ discussed above), as well as some other generalities. In \Cref{section:exhibitsimplebraids}, we show that the monodromy contains the simple braid group studied in \Cref{section:simplebraids}; ultimately this is a technical step that will allow us to lift vanishing cycles from $C$ up to $\tilde C \subset E$. In \Cref{section:tacnode}, we study the vanishing cycles associated to tacnodal degenerations; this will be a crucial technical tool in the inductive step. \Cref{section:basecase} establishes the base cases, carrying out the monodromy calculations for the cases $r(\bd) \le 1$ where the $r$-spin condition is vacuous; this is essentially classical. Finally, the inductive step of the argument is carried out in \Cref{section:inductionstep}.

\para{Acknowledgements} The first author would like to thank Madhav Nori for bringing this question to his attention. The first author would also like to thank Aaron Calderon for many helpful discussions. The second author acknowledges support from the National Science Foundation, grant no. DMS-2338485. Both authors gratefully acknowledge the efforts of an anonymous referee whose close reading led to numerous improvements.

\section{Numerology of complete intersection curves}\label{section:numerology}

Here we record some basic results relating the multidegree $\bd$ to other numerical invariants of the complete intersection curve, especially genus $g(\bd)$ and the index $r(\bd)$ of the distinguished $r$-spin structure induced from $\cO(1)$. Letting $\bd = (d_1, \dots, d_{n-1})$ be a multidegree, we define the numerical quantities
\[
\Pi(\bd) = \prod_{i = 1}^{n-1}d_i \qquad \mbox{and} \qquad r(\bd) = \sum_{i = 1}^{n-1} d_i - n - 1.
\]

The following is a straightforward application of the adjunction formula.

\begin{lemma}[Multidegree-genus formula]\label{lemma:multidegreegenus}
    Let $C$ be a smooth complete intersection curve of multidegree $\bd = (d_1, \dots, d_{n-1})$. Then $C$ has genus $g(\bd)$ given by
    \[
    g(\bd) = \tfrac 12(\Pi(\bd)r(\bd) + 2),
    \]
    and 
    \[
    K_C \cong \cO_C(r(\bd)),
    \]
    so that $\cO(1)$ restricts to $C$ as a distinguished $r(\bd)^{th}$ root of $K_C$.
\end{lemma}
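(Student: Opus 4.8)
The plan is to derive both assertions from the adjunction formula together with B\'ezout's theorem, with no case analysis needed. Write $C \subset \CP^n$ as the common zero locus of homogeneous forms $F_1,\dots,F_{n-1}$ of degrees $d_1,\dots,d_{n-1}$ that form a regular sequence; this is what it means for $C$ to be a complete intersection of multidegree $\bd$, and smoothness of $C$ is exactly transversality of the vanishing loci.

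The first step is the canonical bundle formula. Because the defining forms are a regular sequence, the conormal sheaf of $C$ in $\CP^n$ is $N_{C/\CP^n}^\vee \cong \bigoplus_{i=1}^{n-1}\cO_C(-d_i)$, so $\det N_{C/\CP^n} \cong \cO_C(d_1 + \cdots + d_{n-1})$. Since $K_{\CP^n} \cong \cO_{\CP^n}(-n-1)$, the adjunction formula $K_C \cong K_{\CP^n}|_C \otimes \det N_{C/\CP^n}$ gives
\[
K_C \cong \cO_C(d_1 + \cdots + d_{n-1} - n - 1) = \cO_C(r(\bd)).
\]
This is the second claim, and it shows $\cO(1)|_C$ is an $r(\bd)$-th root of $K_C$, since its $r(\bd)$-th tensor power is $\cO_C(r(\bd)) \cong K_C$.

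The second step computes $g(\bd)$ by comparing degrees of line bundles on $C$. We have $\deg K_C = 2g(\bd) - 2$ on one side, and $\deg \cO_C(r(\bd)) = r(\bd)\cdot\deg \cO_C(1)$ on the other, where $\deg \cO_C(1)$ is the degree of $C$ as a curve in $\CP^n$. Iterated application of B\'ezout's theorem identifies this degree with $\Pi(\bd) = \prod_{i=1}^{n-1} d_i$. Equating the two expressions for $\deg K_C$ yields $2g(\bd) - 2 = \Pi(\bd)\,r(\bd)$, hence $g(\bd) = \tfrac12(\Pi(\bd)\,r(\bd) + 2)$.

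I anticipate no real obstacle: the statement is classical and the proof amounts to assembling two standard inputs --- the split (co)normal bundle of a complete intersection, and the multiplicativity of degree under B\'ezout. (Alternatively, the genus formula can be extracted from the Hilbert polynomial of a complete intersection via the Koszul resolution, but the degree comparison above is shorter.) It is worth remarking that the derivation is uniform in $\bd$, so it covers without change the low-degree edge cases noted in \Cref{remark:lowedegree}, where $r(\bd)$ or $g(\bd)$ may be small or negative; the degree computation does not see these degeneracies.
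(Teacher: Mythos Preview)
Your proof is correct and is exactly the ``straightforward application of the adjunction formula'' that the paper invokes without further detail; you have simply spelled out the standard computation of $K_C$ via the split normal bundle of a complete intersection, followed by the degree comparison using B\'ezout.
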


The regimes $g(\bd) = 0, g(\bd) = 1, g(\bd) \ge 2$ each correspond to a regime for $r(\bd)$. We will freely pass back and forth between these points of view.

\begin{lemma}
    There is the following correspondence between $g(\bd)$ and $r(\bd)$:
    \begin{itemize}
        \item $g(\bd) = 0$ if and only if $r(\bd) < 0$,
        \item $g(\bd) = 1$ if and only if $r(\bd) = 0$,
        \item $g(\bd) \ge 3$ if and only if $r(\bd) \ge 1$.
    \end{itemize}
\end{lemma}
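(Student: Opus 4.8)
The plan is to read off all three equivalences from the multidegree--genus formula of \Cref{lemma:multidegreegenus}, rewritten as $\Pi(\bd)\,r(\bd) = 2g(\bd) - 2$, supplemented by two trivial remarks: $g(\bd)$ is a non-negative integer (it is the genus of an actual smooth curve), and $\Pi(\bd) = \prod_{i=1}^{n-1} d_i$ is a positive integer, with $\Pi(\bd) = 1$ exactly when $\bd = (1,\dots,1)$ and $\Pi(\bd) = 2$ exactly when one $d_i$ equals $2$ and the rest equal $1$.

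For the first bullet: if $g(\bd) = 0$ then $\Pi(\bd) r(\bd) = -2 < 0$, and since $\Pi(\bd) > 0$ this gives $r(\bd) < 0$. Conversely, if $r(\bd) < 0$ then $r(\bd) \le -1$, so $\Pi(\bd) r(\bd) \le -\Pi(\bd) \le -1$ and hence $g(\bd) = 1 + \tfrac12 \Pi(\bd) r(\bd) \le \tfrac12$; a non-negative integer that is $\le \tfrac12$ is $0$. The second bullet is immediate from $\Pi(\bd) \ne 0$: $r(\bd) = 0 \iff \Pi(\bd) r(\bd) = 0 \iff g(\bd) = 1$.

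For the third bullet, the implication $g(\bd) \ge 3 \Rightarrow r(\bd) \ge 1$ is formal given the first two bullets, since $g(\bd) \ge 3$ rules out both $g(\bd) = 0$ (forcing $r(\bd) \ge 0$) and $g(\bd) = 1$ (forcing $r(\bd) \ne 0$). For the converse, $r(\bd) \ge 1$ gives $2g(\bd) - 2 = \Pi(\bd) r(\bd) \ge \Pi(\bd) \ge 1$, hence $g(\bd) \ge 2$; the one step that is not purely formal — and the only place I expect to do any actual work — is excluding $g(\bd) = 2$. If $g(\bd) = 2$ then $\Pi(\bd) r(\bd) = 2$, forcing $\Pi(\bd) \in \{1,2\}$, and then the explicit formula $r(\bd) = \bigl(\sum_{i=1}^{n-1} d_i\bigr) - n - 1$ yields $r(\bd) = -2$ (when $\bd = (1,\dots,1)$) or $r(\bd) = -1$ (when one $d_i = 2$), either way contradicting $r(\bd) \ge 1$. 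Equivalently, $\Pi(\bd) \le 2$ forces $C$ to be a line or a plane conic, hence rational, so $g(\bd) = 0$. Thus $g(\bd) \ge 3$, completing the proof.
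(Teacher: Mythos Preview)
Your proof is correct and follows essentially the same approach as the paper: both deduce all three equivalences from the multidegree--genus formula together with $\Pi(\bd) > 0$, and both isolate the exclusion of $g(\bd) = 2$ (i.e.\ $\Pi(\bd)r(\bd) = 2$) as the only nontrivial step. The paper leaves that exclusion as a one-line remark, whereas you spell out the case analysis $\Pi(\bd)\in\{1,2\}$ explicitly; this is a harmless elaboration, not a difference in method.
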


\begin{proof}
    This is a corollary of \Cref{lemma:multidegreegenus}.
    As $\Pi(\bd) > 0$, the only way for $g(\bd) = 0$ to hold is if $r(\bd) < 0$, and similarly $g(\bd) = 1$ is equivalent to $r(\bd) = 0$. The additional assertion that $g(\bd) \ne 2$ likewise follows from the multidegree-genus formula, by seeing that no $\bd$ have $\Pi(\bd)r(\bd) = 2$.
\end{proof}

A multidegree $\bd = (d_1, \dots, d_{n-1})$ is {\em reduced} if either $n = 2$ and $d_1 = 1$, or else $d_i \ge 2$ for all $i$. It will be useful to have a complete listing of reduced multidegrees leading to small genus and/or $r(\bd)$.

\begin{lemma}\label{lemma:smallgenustable}
    The table below gives a complete listing of reduced multidegrees of complete intersection curves of genus at most four and/or of $r(\bd) \le 1$.
    \[
    \begin{array}{|ccc|} \hline
       \bd    &  g(\bd) &r(\bd)\\ \hline
        1 & 0 & <0\\ 
        2 & 0 & < 0\\
        3& 1 & 0\\
        (2,2) & 1& 0\\
        4 & 3 & 1\\
        (3,2) & 4 & 1 \\ 
        (2,2,2) & 5 & 1\\ \hline
    \end{array}
    \]
\end{lemma}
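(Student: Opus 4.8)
The plan is to enumerate directly. The task is to list all reduced multidegrees $\bd = (d_1,\dots,d_{n-1})$ with either $g(\bd) \le 4$ or $r(\bd) \le 1$; by the previous lemma these two conditions overlap heavily, and since $g(\bd) = \tfrac12(\Pi(\bd)r(\bd)+2)$ with $\Pi(\bd) > 0$, we have $g(\bd) \le 4 \iff \Pi(\bd)r(\bd) \le 6$. Because $r(\bd)$ is negative precisely when $g(\bd) = 0$ and zero precisely when $g(\bd) = 1$, the only genuinely new constraint beyond $g(\bd)\le 4$ coming from ``$r(\bd)\le 1$'' is the case $r(\bd)=1$, where $g(\bd) = \tfrac12(\Pi(\bd)+2)$ can a priori be large; so in fact I will show the combined condition is equivalent to $\Pi(\bd) r(\bd) \le 6$ \emph{or} $r(\bd) = 1$, and handle each piece.

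First I would dispose of the case $n = 2$. Here $\bd = (d_1)$ with $d_1 \ge 1$ (reducedness forces $d_1 = 1$ only when we want the truly reduced normalization, but as a plane curve any $d_1$ is allowed; I follow the paper's convention that a plane curve multidegree $(d_1)$ is reduced), $\Pi(\bd) = d_1$, and $r(\bd) = d_1 - 3$. Then $g(\bd) \le 4$ gives $d_1(d_1-3) \le 6$, i.e. $d_1 \le 4$; combined with $r(\bd) \le 1$, i.e. $d_1 \le 4$, we get exactly $d_1 \in \{1,2,3,4\}$, producing the rows $1, 2, 3, 4$ of the table with the stated values of $g$ and $r$. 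Next, the case $n \ge 3$ with all $d_i \ge 2$: here $\Pi(\bd) \ge 2^{n-1}$ and $r(\bd) = \sum_{i=1}^{n} d_i - n - 1 = \bigl(\sum_{i=1}^{n-1} d_i\bigr) + d_n - n - 1$. Wait — I must be careful: the formula is $r(\bd) = \sum_{i=1}^{n} d_i - n - 1$ but $\bd$ only has $n-1$ entries; reading \Cref{section:numerology}, the intended reading is $r(\bd) = \sum_{i=1}^{n-1} d_i - n - 1$... actually the displayed definition literally sums $d_i$ for $i=1$ to $n$, which only makes sense if one appends $d_n = 1$ for the final hyperplane, so $r(\bd) = \bigl(\sum_{i=1}^{n-1} d_i\bigr) - n$. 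I will use $r(\bd) = \sum_{i=1}^{n-1} d_i - n$. Then for $n-1$ entries each $\ge 2$ we get $r(\bd) \ge 2(n-1) - n = n - 2 \ge 1$, with equality $r(\bd) = 1$ forcing $n = 3$ and $d_1 = d_2 = 2$, and $r(\bd) = 2$ being the next possible value.

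I would then organize the $n \ge 3$ enumeration by the value of $r(\bd)$. If $r(\bd) = 1$: as just noted this forces $\bd = (2,2)$, giving $\Pi = 4$, $g = \tfrac12(4+2) = 4$ — wait, that is genus $4$, not $5$; let me recompute with $(2,2,2)$: $n-1 = 3$, $r = 6 - 4 = 2$, $\Pi = 8$, $g = \tfrac12(16+2) = 9$. That contradicts the table's claim $g((2,2,2)) = 5$, $r = 1$, so my normalization of $r$ is still off. The correct reading must be $r(\bd) = \sum_{i=1}^{n-1} d_i - (n+1) + 1 = \sum d_i - n$; for $(2,2,2)$, $n = 4$, so $r = 6 - 4 = 2$ still. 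The table says $r = 1$, so evidently $r(\bd) = \sum_{i=1}^{n-1} d_i - n$ is wrong and it should be $\sum_{i=1}^{n-1} d_i - n - 1 + 1$... The honest move here is: I would first \emph{pin down the correct formula for $r(\bd)$} by cross-checking against the table entries $(3,2)\mapsto(4,1)$ and $(2,2,2)\mapsto(5,1)$ together with $K_C \cong \cO_C(r(\bd))$ and $g(\bd) = \tfrac12(\Pi(\bd)r(\bd)+2)$; from $(2,2,2)$: $5 = \tfrac12(8r+2)$ gives $r = 1$, consistent with $r(\bd) = \sum d_i - n - 1$ once one reads $\sum_{i=1}^{n}d_i$ with the convention that there are $n-1$ listed degrees plus... no: $2+2+2 - 4 - 1 = 1$. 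Good — so with $n = 4$ and three entries, $r = (\sum_{i=1}^{3} d_i) - n - 1 = 6 - 5 = 1$. So the correct formula, matching the table, is $r(\bd) = \bigl(\sum_{i=1}^{n-1} d_i\bigr) - n - 1$. I would use this throughout.

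With $r(\bd) = \bigl(\sum_{i=1}^{n-1} d_i\bigr) - n - 1$: for all $d_i \ge 2$ we get $r(\bd) \ge 2(n-1) - n - 1 = n - 3$. For $n = 3$: $r = d_1 + d_2 - 4$, and $g = \tfrac12(d_1 d_2(d_1+d_2-4)+2)$; imposing $g \le 4$ or $r \le 1$ means $d_1 + d_2 - 4 \le 1$ (when also requiring the genus bound, $d_1d_2(d_1+d_2-4) \le 6$), and with $d_1,d_2 \ge 2$ the candidates are $(2,2)$ giving $r = 0$, $g = 1$, and $(3,2)$ giving $r = 1$, $g = 4$; $(2,3)$ is the same, $(3,3)$ gives $r = 2$, $g = \tfrac12(18+2) = 10 > 4$, excluded; $(4,2)$ gives $r = 2$, $g = \tfrac12(16+2)=9$, excluded. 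For $n = 4$: all $d_i \ge 2$ forces $r \ge 4 - 3 = 1$, so $r \le 1$ means $r = 1$, forcing $d_1 = d_2 = d_3 = 2$, giving $(2,2,2)$ with $g = \tfrac12(8\cdot 1 + 2) = 5$; any larger entry pushes $r \ge 2$ and $g = \tfrac12(\Pi\cdot r + 2)$ with $\Pi \ge 12$, $r \ge 2$ gives $g \ge 13 > 4$, excluded. For $n \ge 5$: $r \ge n - 3 \ge 2$, and then $g = \tfrac12(\Pi r + 2) \ge \tfrac12(2^{n-1}\cdot 2 + 2) = 2^{n-1} + 1 \ge 17 > 4$, so no reduced multidegree survives. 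The hard part — really the only subtlety — is getting the index formula $r(\bd)$ pinned to the convention actually used; once that is fixed, the enumeration is a finite, mechanical check bounding $n$ (by $n \le 4$) and then the individual entries, and assembling the table. I would present it as: (i) restate $r(\bd) = \bigl(\sum_{i=1}^{n-1}d_i\bigr) - n - 1$ and $g(\bd) = \tfrac12(\Pi(\bd)r(\bd)+2)$ from \Cref{lemma:multidegreegenus}; (ii) bound $n$; (iii) for each $n \le 4$, bound the $d_i$ using $g \le 4$ or $r \le 1$; (iv) tabulate.

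\begin{proof}
By \Cref{lemma:multidegreegenus} we have $g(\bd) = \tfrac12(\Pi(\bd)r(\bd)+2)$, and since $\Pi(\bd) \ge 1$ the condition $g(\bd) \le 4$ is $\Pi(\bd)r(\bd) \le 6$, while the previous lemma shows $r(\bd) < 0 \iff g(\bd) = 0$ and $r(\bd) = 0 \iff g(\bd) = 1$; hence the only values of $r(\bd)$ for which the hypothesis ``$g(\bd)\le 4$ or $r(\bd)\le 1$'' can hold with $g(\bd)$ unbounded a priori is $r(\bd) = 1$.

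\emph{Case $n = 2$.} Here $\bd = (d_1)$ with $d_1 \ge 1$, $\Pi(\bd) = d_1$ and $r(\bd) = d_1 - 3$. The combined condition reads $d_1(d_1-3) \le 6$ or $d_1 - 3 \le 1$, both of which give $d_1 \le 4$. This yields $d_1 \in \{1,2,3,4\}$, i.e. the rows $\bd = 1,2,3,4$ with $(g,r) = (0,-2),(0,-1),(1,0),(3,1)$ respectively, matching the table.

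\emph{Case $n \ge 3$.} Now $d_i \ge 2$ for all $i$ by reducedness, so $\Pi(\bd) \ge 2^{n-1}$ and $r(\bd) = \bigl(\textstyle\sum_{i=1}^{n-1}d_i\bigr) - n - 1 \ge 2(n-1) - n - 1 = n - 3$.

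If $n \ge 5$ then $r(\bd) \ge 2$ and $\Pi(\bd) \ge 16$, so $g(\bd) = \tfrac12(\Pi(\bd)r(\bd)+2) \ge 17$; also $r(\bd) \ge 2 > 1$. Hence no reduced multidegree with $n \ge 5$ satisfies the hypothesis.

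If $n = 4$ then $r(\bd) \ge 1$, and $r(\bd) = 1$ forces $d_1 = d_2 = d_3 = 2$, giving $\bd = (2,2,2)$, $\Pi(\bd) = 8$, $g(\bd) = \tfrac12(8+2) = 5$. If instead $r(\bd) \ge 2$ then $\Pi(\bd) \ge 2\cdot 2\cdot 3 = 12$ (some $d_i \ge 3$) forces $g(\bd) \ge 13 > 4$, so such $\bd$ is excluded. Thus $\bd = (2,2,2)$ is the only survivor for $n = 4$.

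If $n = 3$ then $r(\bd) = d_1 + d_2 - 4$ with $d_1, d_2 \ge 2$. The hypothesis requires $d_1 d_2(d_1+d_2-4) \le 6$ or $d_1 + d_2 - 4 \le 1$. If $d_1 = d_2 = 2$ then $r(\bd) = 0$, $g(\bd) = 1$; this gives $\bd = (2,2)$. If $\{d_1,d_2\} = \{2,3\}$ then $r(\bd) = 1$, $\Pi(\bd) = 6$, $g(\bd) = \tfrac12(6+2) = 4$; this gives $\bd = (3,2)$. For any other choice, $d_1 + d_2 \ge 6$, so $r(\bd) \ge 2$ and $\Pi(\bd) \ge 8$, whence $g(\bd) \ge 9 > 4$ and $r(\bd) > 1$; excluded. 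Thus the only survivors for $n = 3$ are $\bd = (2,2)$ and $\bd = (3,2)$.

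Collecting the surviving multidegrees $1, 2, 3, 4, (2,2), (3,2), (2,2,2)$ with the genus and index values computed above gives exactly the table in the statement.
\end{proof}
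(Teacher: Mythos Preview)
Your formal proof is correct and follows essentially the same approach as the paper: reduce the genus bound to $\Pi(\bd)r(\bd)\le 6$, use the inequality $r(\bd)\ge n-3$ (valid when all $d_i\ge 2$) to bound $n\le 4$, and then check the finitely many remaining cases by hand. The organization differs only cosmetically (you treat $n=2$ first and split $n\ge 4$ into $n=4$ and $n\ge 5$, whereas the paper handles $n\ge 4$ in one stroke and leaves the $r(\bd)=1$ case $(2,2,2)$ for a closing remark), but the logic is the same.

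One comment: the lengthy exploratory preamble in which you debug the formula for $r(\bd)$ should be excised from a final write-up. You correctly diagnose that the displayed formula's upper summation index is a typo and that the working formula is $r(\bd)=\sum_{i=1}^{n-1}d_i - n - 1$, but the back-and-forth (including the momentary misidentification of $(2,2)$ as having $r=1$) is distracting and not part of the proof proper.
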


\begin{proof}
By \Cref{lemma:multidegreegenus}, enumerating $\bd$ for which $g(\bd) \le 4$ amounts to enumerating $\bd$ for which $\Pi(\bd)r(\bd) \le 6$. Since $\bd$ is reduced, either $\bd = 1$ (which is accounted for in the table) or else $\Sigma(\bd) \ge 2n-2$ and $\Pi(\bd) \ge 2^{n-1}$. 

    If $n \ge 4$ then $\Pi(\bd) \ge 8$ and $r(\bd) \ge n-3 > 0$. Thus every complete intersection curve of genus at most four has $n \le 3$. Suppose $n = 3$ and $\bd = (d_1, d_2)$ has (without loss of generality) $d_1 \ge 4$. Then $\Pi(\bd) \ge 8$ and $\Sigma(\bd) - 4= d_2 > 0$, so no examples of this form exist. Of the remaining possibilities, $(2,2)$ and $(3,2)$ are added to the table, while $(3,3)$ has genus $10$. The case $n = 2$ is equally easy to analyze. Returning to the inequality $r(\bd) \ge n-3$, we conclude that the only $\bd$ for which $r(\bd) = 1$ which has not yet been accounted for is $\bd = (2,2,2)$.
\end{proof}

\section{Framings and $r$-spin structures}\label{section:rspin}
Here we recall the topological theory of framings and $r$-spin structures on surfaces, following the treatment given in \cite[Section 2]{strata3}. While our ultimate interest is in $r$-spin mapping class groups, it will be necessary to also consider the closely-related notion of a {\em framed mapping class group} and the underlying theory of framings on surfaces.

\subsection{Perspectives} There is a surprising diversity of points of view on framings and $r$-spin structures.\\

\para{Framings} Recall that a {\em framing} of an $n$-manifold $M$ is a trivialization $\phi: TM \cong M \times \R^n$ of the tangent bundle, or equivalently a section of the frame bundle $F(TM)$. Two framings are said to be isotopic\footnote{\label{footnote:absfr}In our previous work \cite{strata3}, we had occasion to also consider a more restrictive notion of ``relative isotopy'' which required the sections to stay fixed on $\partial M$, but we will not need this here. What we discuss here were called ``absolute framings'' in \cite{strata3}.} if they are isotopic through sections of $F(TM)$. In the sequel we will only ever consider framings up to isotopy; for simplicity, ``framing'' should be understood to mean ``isotopy class of framing''. 

When $M = S$ is an oriented surface, a framing can be obtained from an apparently more modest collection of data.

\begin{lemma}
    Let $S$ be an oriented surface. Either the data of a non-vanishing vector field $\xi$, or a non-vanishing $1$-form $\omega$, determines a framing on $S$. An isotopy of $\xi$ or $\omega$ produces an isotopic framing.
\end{lemma}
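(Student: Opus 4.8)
The plan is to use an auxiliary Riemannian metric to manufacture the missing second vector of a frame, and to exploit the contractibility of the space of metrics to get independence of choices. First, fix a Riemannian metric $g$ on $S$. Given a non-vanishing vector field $\xi$, normalize it to the unit vector field $\hat\xi = \xi / \|\xi\|_g$. Using the orientation of $S$ together with $g$, there is a well-defined ``rotation by $\pi/2$'' operation on tangent spaces (equivalently, the almost-complex structure $J$ determined by $g$ and the orientation); let $\hat\xi^\perp = J\hat\xi$. Then $(\hat\xi, \hat\xi^\perp)$ is a positively-oriented $g$-orthonormal frame at every point of $S$, i.e.\ a smooth section of the frame bundle $F(TS)$, and hence a framing. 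Put differently: $\langle \hat\xi\rangle$ is a trivial line subbundle of $TS$, its $g$-orthogonal complement is the complementary trivial line bundle, and this splits $TS$ as a sum of two trivialized line bundles.

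For a non-vanishing $1$-form $\omega$, pass to the $g$-dual vector field $\omega^\sharp$, which is again non-vanishing, and apply the construction above. (Alternatively, work directly with coframes: the orientation and $g$ determine a volume form, and one completes $\omega/\|\omega\|_g$ to a positively-oriented $g$-orthonormal coframe.) In either case $\omega$ determines a framing.

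It remains to check the isotopy statements, which come down to the fact that every choice made above varies in a contractible space. For independence of the metric: given two metrics $g_0, g_1$, the straight-line path $g_t = (1-t)g_0 + t g_1$ consists of Riemannian metrics, and running the construction with $g_t$ yields a continuous path of framings from the $g_0$-framing to the $g_1$-framing; hence they are isotopic, and one may speak of ``the'' framing associated to $\xi$ (or $\omega$). For an isotopy $\xi_t$ of non-vanishing vector fields: fixing a single metric $g$ and applying the construction pointwise in $t$ produces a path of framings from that of $\xi_0$ to that of $\xi_1$, so isotopic $\xi$'s give isotopic framings; the $1$-form case follows after dualizing. Together these establish that the isotopy class of the resulting framing depends only on the isotopy class of the input.

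There is no serious obstacle here; the only point requiring a modicum of care is the continuity and smoothness of the ``rotation by $\pi/2$'' operation as a function of both the metric $g$ and the parameter $t$, which is exactly where the contractibility of the space of metrics is doing its work. Everything else is an unwinding of the definition of a framing as a section of the frame bundle.
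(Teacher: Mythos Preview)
Your proof is correct and follows essentially the same approach as the paper: use an auxiliary Riemannian metric to rotate $\xi$ (or the dual of $\omega$) by $\pi/2$ to obtain the second vector of the frame, and appeal to contractibility of the space of metrics for independence of that choice. You give somewhat more detail (explicitly naming the almost-complex structure $J$ and the straight-line path of metrics), but the idea is identical.
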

\begin{proof}
    A framing of $S$ consists of the data of two non-vanishing vector fields $\xi_1, \xi_2$ that are everywhere linearly independent. Let $g$ be an arbitrary Riemannian metric on $S$. Then, given $\xi$, a second vector field $\eta$ can be constructed as the positively-oriented unit perpendicular to $\xi$; it is easy to see that different choices of metric lead to isotopic $\eta$ and hence isotopic framings. Similarly, given a $1$-form $\omega$, the metric $g$ identifies $\omega$ with some non-vanishing vector field $\xi$, and the construction proceeds as before. It is clear that an isotopy of $\xi$ induces an isotopy of the resulting framing.
\end{proof}

\para{$\mathbf{r}$-spin structures} A framing is an instance of the more general notion of an {\em $r$-spin structure}. Recall that a classical spin structure can be defined as a lifting of the structure group of the tangent bundle of an oriented Riemannian $n$-manifold $M$ from $\SO(n)$ to its double cover $\Spin(n)$. For $n \ge 3$, $\pi_1(\SO(n)) \cong \Z/2\Z$, but as $\SO(2) \cong S^1$, for $n = 2$ there is a unique $\Z/r\Z$-cover $\Spin(2;r)$ of $\SO(2)$ for each $r \ge 0$. An $r$-spin structure on $S$ is then a lifting of the structure group of $TS$ to $\Spin(2;r)$. As $\Spin(2;0) \cong \R$ is contractible, it follows that a framing indeed coincides with the case $r = 0$ of this notion.

\para{Equivalent formulations} In this paper we will have occasion to consider a wide variety of perspectives on $r$-spin structures. 

\begin{definition}[Winding number function]
    Let $S$ be an oriented surface. Let $\cS$ denote the set of isotopy classes of oriented simple closed curves on $S$, including the inessential curve $\delta$ given as the boundary of an embedded disk $D \subset S$. A {\em $\Z/r\Z$-winding number function} is a function $\phi: \cS \to \Z/r\Z$ that obeys the following conditions:
    \begin{enumerate}[(a)]
        \item (Reversibility) If $c \in \cS$ and $\bar c$ denotes $c$ with the opposite orientation, then $\phi(\bar c) = - \phi(c)$,
        \item (Twist-linearity) Given $c, d \in \cS$,
        \[
        \phi(T_c(d)) = \phi(d) + \pair{[c],[d]}\phi(c),
        \]
        where $T_c$ denotes the Dehn twist about $c$ and $\pair{\cdot, \cdot}$ denotes the algebraic intersection pairing,
        \item (Homological coherence) If $c_1, \dots, c_k$ bound a subsurface $S'$ lying to the left of each $c_i$, then
        \[
        \sum \phi(c_i) = \chi(S'),
        \]
        where $\chi(S')$ denotes the Euler characteristic of $S'$.
    \end{enumerate}
\end{definition}

\begin{remark}[Orientation conventions]\label{remark:convention}
    The reversibility axiom implies that a statement of the form ``$\phi(c) = k$'' is only unambiguous either when $k = -k \pmod r$ or if an orientation for the simple closed curve $c$ is specified. An important case we will frequently encounter is when $c \subset S$ is a boundary component. Here, we adopt the convention that $c$ will always be oriented with $S$ to the left, in keeping with the homological coherence condition. Otherwise, statements of the form $\phi(c) = k$ will tacitly mean that such an equality holds for some unspecified choice of orientation.
\end{remark}

\begin{definition}[Signature of $r$-spin structure]\label{def:signature}
    Let $(S, \phi)$ be a surface equipped with an $r$-spin structure. Suppose $S$ has boundary components $d_1, \dots, d_k$. The {\em signature} of $\phi$ is the $k$-tuple $(\phi(d_1), \dots, \phi(d_k)) \in \Z/r\Z^k$, where each $d_i$ is oriented (in accordance with \Cref{remark:convention}) with $S$ to the left. $\phi$ is said to have {\em constant signature $w$} if $\phi(d_i) = w$ for all $d_i \in \partial S$.
\end{definition}

\begin{remark}
    The term ``signature'' in the context of $r$-spin structures was introduced in \cite{saltertoric}. It is unrelated to the meaning of the term in the context of orbifolds (or $4k$-manifolds, etc.). 
\end{remark}

Winding number functions admit a cohomological reformulation, by work of Humphries-Johnson \cite{HJ}. Given $c \in \cS$, there is a well-defined {\em Johnson lift} of $c$ to an isotopy class of simple closed curve $\hat c$ in $UTS$, the unit tangent bundle of $S$, by equipping $c$ with its forward-pointing unit tangent vector. Thus a cohomology class $\phi \in H^1(UTS; \Z/r\Z)$ determines a function $\phi: \cS \to \Z/r\Z$ via the assignment $c \mapsto \phi(\hat c)$. 

It turns out that all of these notions are different aspects of the same theory.

\begin{proposition}\label{prop:rspindefs}
    Let $S$ be an oriented surface, and let $r \ge 0$ be given. Then the following sets are in natural bijective correspondence:
    \begin{enumerate}
        \item The set of $r$-spin structures on $S$,
        \item The set of isotopy classes of vector fields on $S$ all of whose zeroes have order divisible by $r$,
        \item The set of isotopy classes of $1$-forms on $S$ all of whose zeroes have order divisible by $r$,
        \item The set of $\Z/r\Z$-winding number functions on $S$,
        \item The affine subset 
        \[
        \{\phi \in H^1(UTS; \Z/r\Z)\mid \phi(\hat \delta) = 1\},
        \]
        where $\delta$ as above is the boundary of an embedded disk $D$, oriented with $D$ to the left,
        \item When $S = C$ is an algebraic curve and $r > 0$, the set of line bundles $\cL \in \Pic(C)$ for which $\cL^{\otimes r} \cong K_C$.
    \end{enumerate}
\end{proposition}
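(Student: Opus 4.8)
The plan is to establish the bijections in a cycle or spoke pattern, using the contractible cover $\Spin(2;0) \cong \R$ and the principal $S^1$-bundle structure of $UTS$ as the organizing backbone, so that the "geometric" descriptions (1)--(3) collapse together and the "combinatorial/cohomological" descriptions (4)--(6) are identified with them via $UTS$. First I would treat (1) $\Leftrightarrow$ (5): an $r$-spin structure is a reduction of the structure group of $TS$ along $\Spin(2;r) \to \SO(2)$, i.e. (since everything here is a principal $S^1 = \SO(2)$-bundle) an $r$-fold fiberwise connected cover of the frame bundle $F(TS)$, which up to isomorphism over $S$ is the same as $UTS$. Such covers are classified by $H^1$ of the total space $UTS$ with $\Z/r\Z$ coefficients, subject to the normalization that the cover is connected on the fiber — and the fiber class of the $S^1$-bundle $UTS \to S$ is exactly the Johnson lift $\hat\delta$ of a small disk boundary. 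The condition "restricts to a generator on the fiber" becomes $\phi(\hat\delta) = 1$ after fixing the standard generator; this is the content of the affine subset in (5). Then (5) $\Leftrightarrow$ (4) is precisely the Humphries--Johnson correspondence recalled just above the proposition: a class $\phi \in H^1(UTS;\Z/r\Z)$ defines a function on $\cS$ by $c \mapsto \phi(\hat c)$, and I would cite \cite{HJ} for the fact that the three axioms (reversibility, twist-linearity, homological coherence) cut out exactly the image of the affine subset in (5) — reversibility and twist-linearity are formal consequences of how $\hat c$ and $\widehat{\bar c}$, $\widehat{T_c(d)}$ sit in $UTS$, while homological coherence encodes the Poincar\'e--Hopf/Gauss--Bonnet relation that the fiber class satisfies, together with the normalization $\phi(\hat\delta)=1$.

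Next I would handle (2) $\Leftrightarrow$ (3), which is immediate: a choice of Riemannian metric (any two are isotopic) identifies vector fields with $1$-forms, preserving zeros and their orders, as in the lemma on framings above; isotopies correspond under this identification. For (2) $\Leftrightarrow$ (1): given an $r$-spin structure, i.e. a connected $r$-fold fiberwise cover $P \to F(TS) \cong UTS$, a generic vector field $\xi$ with isolated zeros determines, away from its zeros, a section $S \setminus Z \to UTS$ (normalize $\xi$), and the obstruction to lifting this section to $P$ near a zero $z$ of order $k$ is the class in $\Z/r\Z$ measured by the local degree $k$ of $\xi$ around $z$ against the deck group; the section lifts globally (making $\xi$ compatible with the $r$-spin structure) if and only if every zero has order divisible by $r$. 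Conversely such a $\xi$ produces, by the same lifting, the reduction of structure group. I would phrase this cleanly by noting that isotopy classes of nonvanishing vector fields on $S \setminus D$ (for $D$ a disk) surject onto $H^1(UTS;\Z/r\Z)$-torsors in a way matching the Johnson-lift description, so that the "zeros of order divisible by $r$" condition is exactly the condition for the relative framing on $S \setminus D$ to extend over the $r$-spin structure; this is the link to (4) and (5) and closes the loop among (1)--(5).

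Finally, (1) $\Leftrightarrow$ (6) in the algebraic setting with $r > 0$: for a smooth algebraic curve $C$, the holomorphic tangent bundle $TC$ is a complex line bundle with $T^{\vee}C = K_C$, and the real frame bundle $UTC$ is the unit circle bundle of $K_C^{\vee}$, whose Euler class is $-\deg K_C = 2 - 2g$. An $r$-spin structure is an $r$-fold fiberwise cover of this circle bundle; by the standard correspondence between fiberwise $\Z/r\Z$-covers of the unit circle bundle of a line bundle $\cL$ and $r$-th tensor roots of $\cL$ (a root $\cM$ with $\cM^{\otimes r} \cong \cL$ gives the cover $UT\cM \to UT\cL$ induced by $z \mapsto z^{\otimes r}$, and conversely), the $r$-spin structures correspond to line bundles $\cM$ with $\cM^{\otimes r} \cong K_C^{\vee}$, equivalently $\cL := \cM^{\vee}$ with $\cL^{\otimes r} \cong K_C$; I would note that over $\R$ the holomorphic and topological notions of such roots agree since a topological $r$-th root of an algebraic line bundle on a curve inherits a unique compatible holomorphic structure. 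The main obstacle, and the step I would spend the most care on, is the precise bookkeeping in the $UTS$-based arguments: getting the normalization $\phi(\hat\delta) = 1$ and the sign in twist-linearity to match across the vector-field, cohomological, and line-bundle pictures, and correctly tracking which generator of the $\Z/r\Z$ fiber is used (the orientation conventions of \Cref{remark:convention} must be threaded through). The topological content beyond this bookkeeping is light, since \cite{HJ} supplies the hard part of (4) $\Leftrightarrow$ (5); accordingly I would lean on \cite[Section 2]{strata3} for the detailed verifications and present the proof here as an assembly of these known correspondences.
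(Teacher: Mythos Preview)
Your proposal is correct and in fact considerably more detailed than the paper's own proof, which consists entirely of the single line ``See \cite[Section 2]{strata3}.'' You have essentially unpacked what that reference contains: the identification of $r$-spin structures with fiberwise $\Z/r\Z$-covers of $UTS$ and hence with the affine subset of $H^1(UTS;\Z/r\Z)$, the Humphries--Johnson passage to winding number functions, the metric duality between vector fields and $1$-forms, and the root-of-$K_C$ description via circle bundles of line bundles. Since you yourself close by deferring the detailed bookkeeping to \cite[Section 2]{strata3}, your approach and the paper's are the same in spirit; you simply expose the skeleton of the argument where the paper is content to cite it wholesale.
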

\begin{proof}
    See \cite[Section 2]{strata3}.
\end{proof}

In the case $r = 0$ above, ``order divisible by $r = 0$'' means that there are no zeroes at all.
We remark that when $S$ is a closed surface of genus $g$, $r$-spin structures exist on $S$ only for $r \mid 2g-2$, as can be seen easily from characterizations (2) or (3) of \Cref{prop:rspindefs}.

\subsection{$r$-spin mapping class groups} The mapping class group $\Mod(S)$ of $S$ acts on the set of $r$-spin structures. This action is perhaps most transparent from the point of view of winding number functions: given such a function $\phi$ and a mapping class $f$, define $f \cdot \phi$ via the formula
\begin{equation}
    \label{eqn:rspinaction}
    (f \cdot \phi)(c) = \phi(f^{-1}(c)).
\end{equation}

\begin{definition}[$r$-spin/framed mapping class group]
    Let $\phi$ be an $r$-spin structure on $S$. The associated {\em $r$-spin mapping class group}, written $\Mod(S)[\phi]$, is the stabilizer of $\phi$ under the action \eqref{eqn:rspinaction}. When $r = 0$ we will call $\Mod(S)[\phi]$ a {\em framed mapping class group}.
\end{definition}

It will be important to understand the amount of data necessary to completely specify an $r$-spin structure. Equivalently, this gives a simple criterion to check that a given $f \in \Mod(S)$ is contained in $\Mod(S)[\phi]$.

\begin{lemma}\label{lemma:specifyrss}
    Let $S$ be a surface and let $c_1, \dots, c_k$ be a set of simple closed curves such that $[c_1], \dots, [c_k]$ forms a basis for $H_1(S;\Z)$. Let $r$ be a nonnegative integer, further supposing $r \mid 2g-2$ in the case $S \cong \Sigma_g$ is a closed surface of genus $g$. Then there is a one-to-one correspondence between $r$-spin structures on $S$ and $k$-tuples in $\Z/r\Z$, given by 
    \[
    \phi \leftrightarrow (\phi(c_1), \dots, \phi(c_k)).
    \]
    In particular, $f \in \Mod(S)$ lies in $\Mod(S)[\phi]$ if and only if $\phi(f(c_i)) = \phi(c_i)$ for $i = 1, \dots, k$.
\end{lemma}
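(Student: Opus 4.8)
The plan is to work entirely within the cohomological model of $r$-spin structures provided by characterization (5) of \Cref{prop:rspindefs}: identify the set of $r$-spin structures on $S$ with the affine subset $A \coloneq \{\phi \in H^1(UTS;\Z/r\Z) \mid \phi(\hat\delta) = 1\}$, and show that the evaluation map $\Phi\colon A \to (\Z/r\Z)^k$, $\phi \mapsto (\phi(\hat c_1),\dots,\phi(\hat c_k))$, is a bijection. This will follow from two facts: (i) $A$ is a torsor over $\pi^*H^1(S;\Z/r\Z)$, where $\pi\colon UTS \to S$ is the bundle projection; and (ii) under this torsor structure, the difference of two elements of $A$ is recorded on the $\hat c_i$ by the standard isomorphism $H^1(S;\Z/r\Z) \cong \Hom(H_1(S;\Z),\Z/r\Z) \xrightarrow{\ \sim\ } (\Z/r\Z)^k$ induced by the basis $[c_1],\dots,[c_k]$. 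Granting (i) and (ii), $\Phi$ is an affine map with bijective underlying linear map between nonempty affine spaces, hence a bijection; non-emptiness of $A$ is exactly the hypothesis $r\mid 2g-2$ in the closed case (and is automatic when $\partial S \ne \varnothing$).

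For (i) the plan is to run the Gysin sequence of the oriented circle bundle $S^1 \to UTS \xrightarrow{\pi} S$ with $\Z/r\Z$ coefficients, which yields an exact sequence
\[
H^1(S;\Z/r\Z) \xrightarrow{\ \pi^*\ } H^1(UTS;\Z/r\Z) \xrightarrow{\ \pi_*\ } H^0(S;\Z/r\Z) \xrightarrow{\ \cup e\ } H^2(S;\Z/r\Z),
\]
in which $\pi^*$ is moreover injective (since $H^{-1}(S)=0$). Here $e \in H^2(S;\Z)$ is the Euler class; it equals $\chi(S)=2-2g$ when $S$ is closed, and lies in the vanishing group $H^2(S;\Z)$ when $\partial S\neq\varnothing$. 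In either case the divisibility hypothesis (resp. the vanishing of $H^2$) makes $\cup e$ the zero map, so $\pi_*$ surjects onto $H^0(S;\Z/r\Z)\cong\Z/r\Z$. Since $\pi_*$ is integration over the fiber and $\hat\delta$ is homologous in $UTS$ to the oriented fiber class (the forward tangent of a small boundary circle rotates once), $A = \pi_*^{-1}(1)$ is a nonempty coset of $\ker\pi_* = \operatorname{im}\pi^* \cong H^1(S;\Z/r\Z)$, giving the torsor structure.

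For (ii): given $\phi,\phi'\in A$, write $\phi-\phi' = \pi^*\psi$ for the unique $\psi \in H^1(S;\Z/r\Z)$. Then $\phi(\hat c_i)-\phi'(\hat c_i) = (\pi^*\psi)(\hat c_i) = \psi(\pi_*[\hat c_i]) = \psi([c_i])$, using that $\pi\circ\hat c_i = c_i$ forces $\pi_*[\hat c_i]=[c_i]$ in $H_1(S;\Z)$. Since $H_1(S;\Z)$ is free abelian with basis $[c_1],\dots,[c_k]$ and $H^1(S;\Z/r\Z)\cong\Hom(H_1(S;\Z),\Z/r\Z)$ by universal coefficients, the map $\psi\mapsto(\psi([c_1]),\dots,\psi([c_k]))$ is an isomorphism onto $(\Z/r\Z)^k$; this is precisely (ii). The final clause then follows because, by the bijection just established applied to the $r$-spin structure $f\cdot\phi$, one has $f\cdot\phi=\phi$ iff $(f\cdot\phi)(c_i)=\phi(c_i)$, i.e. $\phi(f^{-1}(c_i))=\phi(c_i)$, for all $i$; replacing $f$ by $f^{-1}$ (which does not affect membership in $\Mod(S)[\phi]$) gives the stated criterion $\phi(f(c_i))=\phi(c_i)$.

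I expect the only real obstacle to be bookkeeping: correctly computing $H^1(UTS;\Z/r\Z)$ in both the closed and bounded cases, verifying that the hypothesis $r\mid 2g-2$ is exactly what annihilates the Euler-class obstruction in $H^2(S;\Z/r\Z)$ (and that nothing stronger is needed), and confirming that $\hat\delta$ represents the fiber class so that $A$ is the intended coset. An alternative, more elementary route would avoid cohomology and instead construct the winding number function with prescribed values $(\phi(c_i))$ directly from the axioms — extending from the $c_i$ to all of $\cS$ via twist-linearity and homological coherence, and proving well-definedness and uniqueness by reducing an arbitrary simple closed curve to the $c_i$ through Dehn twists — but the relations bookkeeping there is appreciably messier, so I would favor the Gysin-sequence argument, which additionally makes the torsor structure transparent.
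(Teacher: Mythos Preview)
Your proposal is correct and follows essentially the same approach as the paper: both use characterization (5) of \Cref{prop:rspindefs} and reduce to understanding $H^1(UTS;\Z/r\Z)$. The paper computes $H_1(UTS;\Z) \cong H_1(S;\Z) \oplus \Z/\epsilon\Z$ directly (with a non-canonical splitting via the Johnson lift) and then applies universal coefficients, whereas you run the Gysin sequence in cohomology and frame the result as a torsor over $\pi^*H^1(S;\Z/r\Z)$; these are dual versions of the same computation, and your torsor formulation has the mild advantage of sidestepping the non-canonical splitting.
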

\begin{proof}
    This is best understood from point of view (5) in \Cref{prop:rspindefs}, viewing $\phi$ as a cohomology class in $H^1(UTS; \Z/r\Z)$ for which $\phi(\hat \delta) =1$.
     One computes
    \[
    H_1(UTS; \Z) \cong H_1(S;\Z) \oplus \Z/\epsilon\Z,
    \]
    with $\epsilon = 2g-2$ if $S$ is closed of genus $g$ and $\epsilon = 0$ otherwise; in either case this latter factor is generated by the class of $\hat \delta$. The former factor $H_1(S;\Z)$ embeds (non-canonically) in $H_1(UTS; \Z)$ via the Johnson lift. By the universal coefficients theorem, to specify an $r$-spin structure, it is necessary and sufficient to specify its values on any basis for $H_1(S; \Z)$, giving the result.
\end{proof}

\para{$r$-spin monodromy constraints} Let $X$ be a smooth projective algebraic surface, let $\cL$ be a line bundle on $X$, and let $U_{\cL}$ be a family of smooth curves in the linear system determined by $\cL$. According to the adjunction formula, the line bundle $\cL \otimes K_X$ restricts to the canonical bundle on any member of $U_\cL$. Thus if $\cL \otimes K_X$ admits an $r^{th}$ root, this equips the curves in $U_\cL$ with a distinguished $r$-spin structure. 

\begin{remark}
    If $\Pic_0(X)$ has $r$-torsion, then the root $\cL' \in \Pic(X)$ is not unique. In this case, {\em each} such root is monodromy-invariant, and the monodromy group $\Gamma_\cL$ is necessarily contained in the intersection of the corresponding $r$-spin mapping class groups. Note, however, that this only occurs when the irregularity $q$ of $X$ is positive; in the complete intersection setting, the $r^{th}$ root $\cL'$ is unique. See \cite{ishanpi1} for an investigation of monodromy problems in the $q > 0$ regime.
\end{remark}

\begin{lemma}
    In the above setting, the monodromy $\Gamma_\cL \le \Mod(C)$ (where $C$ is the topological surface underlying some member of $U_\cL$) is contained in the $r$-spin mapping class group associated to the $r^{th}$ root $\cL'$ of $\cL \otimes K_X$.
\end{lemma}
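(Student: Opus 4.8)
The plan is to exhibit the assignment $t \mapsto \cL'|_{C_t}$ as a section of a covering space of $U := U_\cL$ whose deck action recovers the $r$-spin action \eqref{eqn:rspinaction} through the topological monodromy representation. Since a section of a covering space over a connected base is monodromy-invariant, this will force $\Gamma_\cL$ to stabilize the corresponding $r$-spin structure.

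First I would set up the universal family $\pi \colon \mathcal C \to U$. As every member of $U$ is smooth, $\pi$ is a smooth surface bundle with fiber the topological surface $C$ underlying a chosen member $C_{t_0}$, and by definition $\Gamma_\cL = \rho(\pi_1(U, t_0))$ for the topological monodromy $\rho \colon \pi_1(U,t_0) \to \Mod(C)$. By the adjunction formula $(\cL \otimes K_X)|_{C_t} \cong K_{C_t}$ for all $t \in U$, so the restriction $\cL'|_{C_t}$ of the chosen root is an $r^{th}$ root of $K_{C_t}$, hence by characterization (6) of \Cref{prop:rspindefs} an $r$-spin structure $\phi_t$ on $C_t$; write $\phi := \phi_{t_0}$.

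Next, the $r$-spin structures on the fibers of $\pi$ assemble into a covering space $p \colon \mathcal R \to U$. Topologically, characterization (5) of \Cref{prop:rspindefs} identifies $\mathcal R$ with the subset $\{\phi \mid \phi(\hat\delta) = 1\}$ of the flat bundle over $U$ with fiber $H^1(UTC; \Z/r\Z)$ (the local system of cohomology of the fiberwise unit tangent bundles); this is itself a covering of $U$, since $\hat\delta$ is canonically defined and $\phi \mapsto \phi(\hat\delta)$ is monodromy-invariant. Algebraically, when $r > 0$, $\mathcal R$ is the scheme of $r^{th}$ roots of the relative dualizing sheaf $\omega_{\mathcal C/U}$, a finite étale cover of $U$ (a torsor under the $r$-torsion of the relative Jacobian). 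The point of keeping the topological model is that, under it, the deck action of $\pi_1(U, t_0)$ on the fiber $p^{-1}(t_0)$, namely the set of $r$-spin structures on $C$, is precisely the restriction to $\Gamma_\cL$ of the action \eqref{eqn:rspinaction}; this follows from the naturality with respect to diffeomorphisms of the correspondences in \Cref{prop:rspindefs}, for which see \cite[Section 2]{strata3}.

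Finally, since $\cL'$ is a line bundle on all of $X$, pulling it back along the tautological map $\mathcal C \to X$ and restricting to fibers exhibits $t \mapsto \phi_t$ as a continuous (indeed algebraic) section $s \colon U \to \mathcal R$. A section of a covering space over a connected base is fixed by monodromy, so for every $\gamma \in \pi_1(U, t_0)$ the deck transformation attached to $\gamma$ fixes $s(t_0) = \phi$; by the previous paragraph this deck transformation is $\rho(\gamma)$ acting via \eqref{eqn:rspinaction}, whence $\rho(\gamma) \cdot \phi = \phi$. Therefore $\Gamma_\cL \le \Mod(C)[\phi]$. The only step demanding genuine care is the compatibility recorded in the third paragraph — reconciling the algebraic description of $\mathcal R$ (via $r^{th}$ roots of $K$) with its topological description (via winding number functions) so that parallel transport in $\mathcal R$ matches \eqref{eqn:rspinaction}. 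Everything else, namely adjunction and the fact that a section of a cover is monodromy-invariant, is formal.
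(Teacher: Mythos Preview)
Your argument is correct and is the standard way to establish this kind of monodromy constraint. The paper itself does not supply a proof of this lemma at all --- it is stated and immediately followed by the next paragraph, the authors evidently regarding it as well-known. So there is nothing to compare against; your write-up simply fills in what the paper leaves implicit. The one point you flag as needing care (matching the algebraic and topological models of the $r$-spin covering so that parallel transport agrees with the action \eqref{eqn:rspinaction}) is indeed the only nontrivial ingredient, and it is handled by the naturality statements in \cite[Section 2]{strata3} as you indicate.
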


As discussed below in \Cref{subsection:nodaldegen}, the Picard-Lefschetz formula states that the monodromy associated to a nodal singularity is the Dehn twist about the vanishing cycle. This leads to the following constraint on the set of simple closed curves on $C$ that can be vanishing cycles.

\begin{lemma}\label{lemma:VCadmiss}
    In the above setting, let $c \subset C$ be a nonseparating simple closed curve, and suppose that $c$ is the vanishing cycle for some nodal degeneration in $U_\cL$. Then
    \[
    \phi(c) = 0,
    \]
    where $\phi$ is the $\Z/r\Z$-winding number function associated to the distinguished $r^{th}$ root of $K_C$.
\end{lemma}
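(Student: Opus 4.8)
The plan is to combine the monodromy constraint of the previous lemma with the Picard--Lefschetz formula. By that lemma, the monodromy $\Gamma_\cL \le \Mod(C)$ is contained in $\Mod(C)[\phi']$, where $\phi'$ is the cohomological $r$-spin structure associated to the $r^{th}$ root $\cL'$ of $\cL \otimes K_X$; let $\phi$ be the corresponding $\Z/r\Z$-winding number function via \Cref{prop:rspindefs}. Now suppose $c$ is the vanishing cycle for some nodal degeneration in $U_\cL$. The Picard--Lefschetz formula identifies the monodromy around the corresponding loop in the base with the Dehn twist $T_c$, so $T_c \in \Gamma_\cL \le \Mod(C)[\phi]$, i.e. $T_c$ fixes $\phi$.

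The next step is to extract $\phi(c) = 0$ from the invariance $T_c \cdot \phi = \phi$ using twist-linearity. For any oriented simple closed curve $d$, applying the definition of the action \eqref{eqn:rspinaction} and the twist-linearity axiom gives
\[
\phi(d) = (T_c \cdot \phi)(d) = \phi(T_c^{-1}(d)) = \phi(d) - \pair{[c],[d]}\phi(c),
\]
so $\pair{[c],[d]}\,\phi(c) = 0$ in $\Z/r\Z$ for every $d \in \cS$. Since $c$ is nonseparating, its class $[c]$ is nonzero and primitive in $H_1(C;\Z)$, so by Poincar\'e duality there exists a simple closed curve $d$ with $\pair{[c],[d]} = 1$; choosing such a $d$ forces $\phi(c) = 0$.

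There is essentially no serious obstacle here: the two inputs (the containment $\Gamma_\cL \le \Mod(C)[\phi]$ and Picard--Lefschetz) are already in hand, and the remaining argument is the standard computation converting twist-invariance of a winding number function into vanishing of its value on the twisting curve. The only point requiring a word of care is the existence of a dual curve $d$ with algebraic intersection number one with $c$, which is exactly where the nonseparating hypothesis is used: a separating curve is null-homologous and the argument collapses. One could alternatively phrase the last step as noting that $[c]$ primitive implies the functional $\pair{[c],-\,}$ on $H_1(C;\Z)$ is surjective onto $\Z$, hence nonzero mod $r$, so $\phi(c)$ must annihilate a nonzero element of $\Z/r\Z$ coming from an actual simple closed curve, giving $\phi(c)=0$.
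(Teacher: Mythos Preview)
Your proof is correct and follows essentially the same approach as the paper: invoke Picard--Lefschetz to get $T_c \in \Gamma_\cL \le \Mod(C)[\phi]$, then use twist-linearity together with a curve $d$ satisfying $\pair{[c],[d]}=1$ (which exists because $c$ is nonseparating) to conclude $\phi(c)=0$. The paper's version is slightly terser but the logic is identical.
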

\begin{proof}
    By the Picard-Lefschetz formula, $T_c \in \Gamma_{\cL}$, and hence preserves the winding numbers of all simple closed curves on $C$. Since $c$ is nonseparating, there is some simple closed curve $d \subset C$ for which $\pair{[c],[d]} = 1$. Since $\phi(T_c(d)) = \phi(d)$, the twist-linearity property of winding number functions implies that $\phi(c) = 0$.
\end{proof}

In light of this fact, we introduce the following terminology.

\begin{definition}[Admissible curve]
    Let $(S, \phi)$ be a surface equipped with an $r$-spin structure $\phi$. A simple closed curve $c \subset S$ is said to be {\em admissible} if it is nonseparating and if $\phi(c) = 0$.
\end{definition}

Thus \Cref{maincor:VC} shows that for complete intersection curves, admissibility gives a {\em complete} characterization of nonseparating vanishing cycles. 

\subsection{The framed change-of-coordinates principle} In the theory of mapping class groups, the {\em change-of-coordinates principle} is a body of results for working with simple closed curves on surfaces, e.g. guaranteeing the existence of a configuration of simple closed curves with specified combinatorics that extends some particular subconfiguration of pre-specified simple closed curves. See \cite[Section 1.3]{FM} for an exposition. On a technical level, change-of-coordinates amounts to a body of transitivity results for the action of the mapping class group on whatever type of configuration is being studied.

In the setting of $r$-spin or framed mapping class groups, we will be interested in similar such results concerning the orbits of $r$-spin mapping class groups on topological configurations, e.g. individual non-separating simple closed curves. Here there is at least one constraint not present in the classical setting: if $c, d \subset S$ are simple closed curves and $\phi(c) \ne \phi(d)$, then no element of $\Mod(S)[\phi]$ can take $c$ to $d$. The upshot of the framed change-of-coordinates principle is that this is {\em almost} the only new constraint (the other obstruction being the ``Arf invariant'' discussed below).

To give a precise formulation of the framed change-of-coordinates principle, we will restrict ourselves to a precise notion of ``configuration'' of simple closed curves.

\begin{definition}[Simple configuration, arboreal, of type $E$, filling]\label{def:simpleconfig}
    Let $S$ be a surface. A {\em simple configuration} on $S$ is a set $\cC = \{c_1, \dots, c_k\}$ of simple closed curves, subject to the condition that $i(c_i,c_j) \le 1$ for all pairs of curves $c_i, c_j \in \cC$ (here and throughout, $i(\cdot, \cdot)$ denotes the geometric intersection number).

    A simple configuration $\cC$ has an associated {\em intersection graph} $\Lambda_\cC$ with vertex set $\cC$ and with $c_i, c_j$ joined by an edge if and only if $i(c_i,c_j) = 1$. The {\em embedded type} of $\cC$ is the data of $\Lambda_\cC$ together with the homeomorphism type of the complement $S \setminus \cC$.
    
    A simple configuration is {\em arboreal} if $\Lambda_\cC$ is a tree, and is {\em $E$-arboreal} if moreover $\Lambda_\cC$ contains the $E_6$ Dynkin diagram as a full subgraph. $\cC$ is {\em filling} if $S \setminus \cC$ is a union of disks and boundary-parallel annuli.
\end{definition}

As usual, a {\em $k$-chain} is a simple configuration $c_1, \dots, c_k$ for which $i(c_i, c_{i+1}) = 1$ and $i(c_i, c_j) = 0$ otherwise.

\para{Arf invariants} In this paper, we will not encounter Arf invariants in any serious way, and so in the interest of concision we will keep this discussion rather terse. See \cite[Section 2.2]{strata3} for a fuller treatment. Suffice it to say that when $r$ is even and $\phi(c)$ is odd for every component $c$ of $\partial S$, there is a $\Z/2\Z$-valued invariant of $r$-spin structures known as the {\em Arf invariant} that classifies orbits of $r$-spin structures under the action of the mapping class group. As the framed change-of-coordinates principle (appearing below as \Cref{prop:ccp}) asserts, the existence of certain ``especially large'' configurations is obstructed by the Arf invariant, but otherwise does not play a role in our arguments.

\begin{proposition}[Framed change-of-coordinates principle]
    \label{prop:ccp}
    Let $(S, \phi)$ be a surface of genus $g \ge 2$ equipped with an $r$-spin structure $\phi$. A simple configuration $c_1, \dots, c_k$ of curves of prescribed embedded type and winding numbers $\phi(c_i) = w_i$ exists if and only if
    \begin{enumerate}[(a)]
        \item A simple configuration $\{c_1', \dots, c_k'\}$ of the prescribed embedded type exists in the ``unframed'' setting where the values $\phi(c_i')$ are allowed to be arbitrary,
        \item There exists some $r$-spin structure $\psi$ with $\psi(c_i') = w_i$ for all $i$,
        \item If $\Arf(\psi)$ exists and is constrained by (b), then $\Arf(\phi) = \Arf(\psi)$.
    \end{enumerate}
\end{proposition}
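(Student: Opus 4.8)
The plan is to derive the proposition from two established inputs together with a relative gluing argument. The first input is the classical (``unframed'') change-of-coordinates principle \cite[Section 1.3]{FM}, which describes the $\Mod(S)$-orbits of configurations of simple closed curves of a fixed embedded type. The second is the classification of $r$-spin structures up to the $\Mod(S)$-action recorded in \cite[Section 2]{strata3}: for $S$ of genus $g \ge 2$, two $r$-spin structures lie in a common $\Mod(S)$-orbit if and only if they have the same signature and, whenever it is defined, the same Arf invariant. We will also need the relative form of this statement: if $N = N(\cC)$ is a regular neighborhood of a configuration $\cC$, then the stabilizer of $\cC$ in $\Mod(S)$ acts on the set of $r$-spin structures on $S$ with a prescribed restriction to $N$, with orbits classified by the Arf invariant; and when $\cC$ is non-filling, the complement $S \setminus N$ carries enough topology that both values of the Arf invariant occur. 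Necessity of (a)--(c) is then immediate: (a) is obtained by forgetting winding numbers; $\phi$ itself witnesses (b); and (c) holds because the Arf invariant is a $\Mod(S)$-invariant, hence is determined by the winding numbers of any filling configuration.

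For the existence half, we invoke (a) and (b) to produce a configuration $\cD = \{d_1, \dots, d_k\}$ of the prescribed embedded type and an $r$-spin structure $\psi$, of the same signature as $\phi$, with $\psi(d_i) = w_i$ for all $i$. If the embedded type is filling, then the winding numbers determine $\psi$, so $\Arf(\psi)$ is the value ``constrained by (b)'', and hypothesis (c) gives $\Arf(\phi) = \Arf(\psi)$. If the embedded type is non-filling, then by the relative classification we may replace $\psi$ with an $r$-spin structure having the same restriction to $N(\cD)$ --- hence the same values $w_i$ --- and with Arf invariant equal to $\Arf(\phi)$. In either case $\psi$ and $\phi$ agree in signature and Arf invariant, so the classification provides $h \in \Mod(S)$ with $h \cdot \psi = \phi$. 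Then $h(\cD)$ has the prescribed embedded type, and $\phi(h(d_i)) = (h \cdot \psi)(h(d_i)) = \psi(d_i) = w_i$, so $h(\cD)$ is the desired configuration.

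For the uniqueness assertion, suppose in addition that the unframed orbit of configurations of the given embedded type is unique, and let $\cC = \{c_i\}$ and $\cC' = \{c_i'\}$ be two such configurations with $\phi(c_i) = \phi(c_i') = w_i$. Unframed change-of-coordinates yields $h_0 \in \Mod(S)$ with $h_0(c_i) = c_i'$ for all $i$; then $(h_0 \cdot \phi)(c_i') = \phi(c_i) = \phi(c_i')$, so $h_0 \cdot \phi$ and $\phi$ restrict to the same $r$-spin structure on $N = N(\cC')$. If $[\cC']$ spans $H_1(S;\Z)$ --- in particular whenever the embedded type is filling --- then $h_0 \cdot \phi = \phi$ by \Cref{lemma:specifyrss}, and $h := h_0$ lies in $\Mod(S)[\phi]$. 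Otherwise, $\Arf(h_0 \cdot \phi) = \Arf(\phi)$ since $\Arf$ is $\Mod(S)$-invariant, so the relative classification produces $h_1$ in the stabilizer of $\cC'$ with $h_1 \cdot (h_0 \cdot \phi) = \phi$; then $h := h_1 h_0$ satisfies $h(\cC) = \cC'$ and $h \cdot \phi = \phi$, so $\cC$ and $\cC'$ lie in a common $\Mod(S)[\phi]$-orbit.

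The main obstacle is the relative classification of $r$-spin structures invoked in both halves. Making it rigorous requires cutting $S$ along $\cC$, applying the closed and bordered classifications to the resulting pieces, and checking that the boundary signatures induced on the two sides of each cut are consistent, that the mapping classes of $S$ fixing $\cC$ realize the full mapping class group action on the pieces modulo boundary Dehn twists (which act trivially on $r$-spin structures), and --- the most delicate point --- that a non-filling embedded type always leaves a complementary region with enough genus or enough boundary components to adjust the Arf invariant. These are exactly the issues settled by the machinery of \cite[Section 2]{strata3}, on which this argument rests, and where the hypothesis $g \ge 2$ enters.
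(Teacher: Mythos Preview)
The paper does not prove this proposition at all: its entire proof is the single line ``See \cite[Proposition 2.15]{strata3}.'' So there is nothing to compare against except that citation.

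Your sketch is a reasonable unpacking of how that cited proof goes: reduce to the unframed change-of-coordinates principle together with the classification of $r$-spin structures by signature and Arf invariant, and bridge the two via a relative classification on the complement of the configuration. You are honest in your final paragraph that the load-bearing step---the relative classification, i.e.\ that the stabilizer of $\cC'$ acts transitively on $r$-spin structures with fixed restriction to $N(\cC')$ and fixed Arf invariant, and that for non-filling $\cC'$ both Arf values are realized---is exactly what is established in \cite[Section 2]{strata3}. So your argument is correct as a proof sketch, but it is not self-contained: it defers to the same source the paper cites, just with more of the scaffolding exposed.

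One small point to tighten: in the existence half you assert that a filling embedded type forces the winding numbers to determine $\psi$. This is true when the curves of $\cC$ span $H_1(S;\Z)$ (by the analogue of \Cref{lemma:specifyrss}), which does hold for a filling arboreal configuration on a closed surface, but for a general filling simple configuration on a surface with boundary the complement may include boundary-parallel annuli, and you should say a word about why the boundary winding numbers are already pinned down by the signature. This is minor and is handled in \cite{strata3}, but if you want your sketch to stand on its own it is worth flagging.
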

\begin{proof}
    See \cite[Proposition 2.15]{strata3}.
\end{proof}

\begin{remark}[When is $\Arf$ constrained?]
    For the readers' convenience, we comment here on some circumstances under which $\Arf(\phi)$ is or isn't constrained by the simple configuration, as mentioned in \Cref{prop:ccp}.c. First, as remarked above, $\Arf(\phi)$ is only defined when $r$ is even and $\phi(c)$ is odd for every boundary component $c$ of $\partial S$. In such a setting, a simple configuration $\cC$ {\em does not} constrain $\Arf(\phi)$ if there is a pair of simple closed curves $a,b \subset S$ such that $i(a,b) = 1$ and $a,b$ are disjoint from every $c_i \in \cC$. Thus, only ``large'' configurations $\cC$ for which it is not possible to find such a pair, possibly induce a constraint on $\Arf$.
\end{remark}

The result below is an existence result for ``large'' configurations.

\begin{lemma}\label{lemma:Earbexists}
    Let $S$ be a closed surface of genus $g(S) \ge 3$ and let $\phi$ be an $r$-spin structure on $S$. Then there is an $E$-arboreal simple configuration $\cC = \{c_1, \dots, c_{2g}\}$ of admissible curves on $S$, such that the associated homology classes $\{[c_i]\} \in H_1(S;\Z)$ form a basis. Moreover, $\cC$ can be chosen to take one of the two forms shown below in \Cref{fig:liftingvcs}, depending on the value of the Arf invariant (if present).
\end{lemma}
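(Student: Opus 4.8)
The plan is to reduce the construction to the classical (unframed) change-of-coordinates principle via \Cref{prop:ccp}. First I would write down an explicit model $E$-arboreal configuration $\cC_0 = \{c_1, \dots, c_{2g}\}$ on the standard genus-$g$ surface $\Sigma_g$ whose intersection graph is a tree containing an $E_6$ subgraph and whose homology classes are linearly independent: a natural choice is to take the $2g$ curves of a standard ``chain-plus-tripod'' configuration (a long chain $c_1, \dots, c_{2g-1}$ together with one extra curve $c_{2g}$ attached to $c_{2g-3}$, say, so that $c_{2g-3}, c_{2g-2}, c_{2g-1}, c_{2g}$ and two preceding chain curves realize $E_6$). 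For such a configuration one checks directly, e.g. via the Euler-characteristic/genus bookkeeping for chains, that $\Sigma_g \setminus \cC_0$ is connected and is a single disk (after capping), so $\cC_0$ is filling, and that the $[c_i]$ span $H_1(\Sigma_g;\Z)$ — indeed one can arrange the chain to realize a standard symplectic-type basis. This verifies hypothesis (a) of \Cref{prop:ccp} for the embedded type in question, and simultaneously shows the unframed orbit is unique (filling configurations of a fixed embedded type form a single mapping class group orbit by the classical change-of-coordinates principle).

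Next I would arrange the winding numbers. We want every $c_i$ admissible, i.e. $\phi(c_i) = 0$ for a suitable orientation and each $c_i$ nonseparating; nonseparation is automatic from the filling/connectedness computation above. So I must check hypothesis (b): that there exists \emph{some} $r$-spin structure $\psi$ on $\Sigma_g$ with $\psi(c_i) = 0$ for all $i$. Since the $[c_i]$ form (or contain) a basis of $H_1(\Sigma_g;\Z)$, \Cref{lemma:specifyrss} says an $r$-spin structure is freely determined by its values on a basis — but there is a genuine constraint, namely homological coherence forces $\sum_i \phi(c_i)$-type relations once the $c_i$ bound subsurfaces, and more to the point the total ``winding number'' is pinned by $\chi(\Sigma_g) = 2-2g$. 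Concretely, if $c_1, \dots, c_{2g}$ are arranged as $g$ disjoint dual pairs bounding a $2g$-holed sphere, homological coherence would read $\sum \pm\phi(c_i) = \chi$; but in an $E$-arboreal (hence highly connected) configuration this is not an obstruction because the curves pairwise intersect and one uses twist-linearity instead. The cleanest route is: start from \emph{any} $r$-spin structure $\psi_0$, and modify it by an element of $H^1(\Sigma_g;\Z/r\Z)$ (the affine structure in \Cref{prop:rspindefs}(5)) to kill the values $\psi_0(c_i)$ on a subset of the $c_i$ forming a basis; this is possible precisely because these values can be prescribed arbitrarily on a basis. This produces the desired $\psi$, giving (b). For (c): if $r$ is even and the (closed!) surface forces $\Arf(\psi)$ to be defined, then since $\partial S = \emptyset$ the Arf invariant is genuinely a $\Z/2$ invariant of $\psi$, and I would note that by adjusting $\psi_0$ within its $H^1(\Sigma_g;\Z/2\Z)$-coset one can hit \emph{either} value of the Arf invariant while keeping $\psi(c_i)=0$ — this is exactly why two cases appear in the statement, and it is the source of the two pictures in \Cref{fig:liftingvcs}. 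So (c) is arranged by matching $\Arf(\psi)$ to the given $\Arf(\phi)$.

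With (a), (b), (c) verified, \Cref{prop:ccp} produces a simple configuration $\{c_1, \dots, c_{2g}\}$ of the prescribed embedded type with $\phi(c_i) = 0$ for all $i$, i.e. admissible; linear independence of the $[c_i]$ is part of the embedded type (it is a homological consequence of the complement being a disk) and so is inherited. Finally the uniqueness clause of \Cref{prop:ccp}, combined with the uniqueness of the unframed orbit established in the first step, shows that $\cC$ is determined up to the $\Mod(\Sigma_g)[\phi]$-action by the Arf invariant alone, which is what lets us reduce to the two explicit normal forms depicted in \Cref{fig:liftingvcs}.

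\textbf{Main obstacle.} The delicate point is hypothesis (b) together with the Arf bookkeeping in (c): I need to be sure that demanding $\phi(c_i) = 0$ for \emph{all} $2g$ curves of an $E$-arboreal configuration is consistent with \emph{some} $r$-spin structure (no hidden homological-coherence obstruction from the tree's cycles-after-capping), and to correctly identify when $\Arf$ is constrained by the configuration — i.e. to confirm that an $E$-arboreal configuration is ``large enough'' that it may pin $\Arf$, forcing the two-case conclusion, but never over-determines $\phi$. Verifying that the explicit model configuration is filling with $[c_i]$ spanning $H_1$ is routine but must be done carefully, since everything downstream rests on the embedded type being the ``right'' one.
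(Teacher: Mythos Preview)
Your overall strategy via \Cref{prop:ccp} is correct and matches the paper's, but your handling of hypothesis (c) contains a genuine error. You claim that ``by adjusting $\psi_0$ within its $H^1(\Sigma_g;\Z/2\Z)$-coset one can hit \emph{either} value of the Arf invariant while keeping $\psi(c_i)=0$.'' This is false: you have already argued that $\{[c_i]\}$ is a basis for $H_1(\Sigma_g;\Z)$, and by \Cref{lemma:specifyrss} the condition $\psi(c_i)=0$ for all $i$ then determines $\psi$ \emph{uniquely}. There is no residual freedom in which to adjust $\Arf(\psi)$. Consequently, a single embedded type of $E$-arboreal configuration can only realize \emph{one} value of the Arf invariant, and if $\Arf(\phi)$ happens to be the other value, condition (c) of \Cref{prop:ccp} fails and no admissible configuration of that embedded type exists on $(S,\phi)$.

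The paper's fix, which is also what the statement of the lemma is pointing at, is to work with \emph{two} distinct embedded types from the outset (the Type A and Type B pictures of \Cref{fig:liftingvcs}, differing in where the extra curve $c_{2g}$ is attached). One checks directly that the unique $r$-spin structures making each of these configurations admissible have \emph{opposite} Arf invariants. Then, given $\phi$, one selects whichever type matches $\Arf(\phi)$ and applies \Cref{prop:ccp} to that one. Your ``Main obstacle'' paragraph in fact gestures at exactly this issue (``large enough that it may pin $\Arf$''), but the body of the argument does not follow through on it.
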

\begin{proof}
    The reader familiar with computing the Arf invariant (see \cite[Section 2.2]{strata3}) can check that the two simple configurations shown in \Cref{fig:liftingvcs} (on page \pageref{fig:liftingvcs}) have distinct Arf invariants. The result then follows from the framed change-of-coordinates principle (\Cref{prop:ccp}). 
\end{proof}

\subsection{Functoriality}
We will have occasion to consider various ``functorial'' aspects of the theory of $r$-spin structures. While one could imagine formulating a category of surfaces equipped with $r$-spin structures with morphisms given by inclusion, and a functor taking such a surface to its associated $r$-spin mapping class group, we do not attempt to pursue this rigorously; ``functoriality'' is meant here as a term of art to evoke aspects of the theory that involve passing from one surface to another, or from one $r$ to another. To prove these results, we will first need to develop some preliminary notions (\Cref{lemma:dualexists,lemma:pushaffectsphi}).

\begin{definition}[Dual configuration]
    Let $S$ be a surface with nonempty boundary, and let $\cC = \{c_1, \dots, c_k\}$ be a simple collection of simple closed curves. A {\em dual configuration based at $*$} for $\cC$ is a collection $\beta_1, \dots, \beta_k$ of simple closed curves on $S$, all {\em based} at some common point $*\in \partial S$, such that $i(c_i, \beta_j) = \delta_{i,j}$.
\end{definition}

Certainly not every set of simple closed curves admits a dual configuration. The next lemma gives a condition under which this is ensured.

\begin{lemma}\label{lemma:dualexists}
    Let $S$ be a connected surface with nonempty boundary, and let $\bar S$ be the closed surface obtained by capping all boundary components of $S$ with disks. Let $\cC = \{c_1, \dots, c_k\}$ be an arboreal simple configuration on $S$ for which the set of induced homology classes $\{[\bar{c_i}]\} \subset H_1(\bar{S};\Z)$ is linearly independent. Then, for any $* \in \partial S$, a dual collection for $\cC$ based at $*$ exists.
\end{lemma}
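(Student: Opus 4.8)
The plan is to reduce the existence of a dual configuration to a linear-algebra statement in $H_1(\bar S;\Z)$, exploiting that $\cC$ is arboreal so that a regular neighborhood of $\cC$ deformation retracts to a wedge-like surface that is itself a handlebody-type surface. First I would pass to a regular neighborhood $N$ of $\cC$ in $S$; since $\Lambda_\cC$ is a tree, $N$ is a surface with boundary whose genus equals the number of edges of $\Lambda_\cC$ needed to form a cycle-free graph — concretely, $N$ is built by plumbing annular neighborhoods of the $c_i$ along the edges of the tree. Because $\Lambda_\cC$ is a tree, the classes $[c_i]$ are automatically linearly independent in $H_1(N;\Z)$, and one checks that $H_1(N;\Z)$ is free of rank equal to the first Betti number of $N$, with the $[c_i]$ spanning a direct summand precisely when the $[\bar c_i]$ remain independent after capping — which is our hypothesis. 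Inside $N$ it is elementary to produce, for each $c_i$, an arc $\beta_i^0$ crossing $c_i$ once and crossing no other $c_j$: this is just the ``co-core''-type arc transverse to the annular band around $c_i$, and arboreality guarantees these arcs can be chosen disjointly near the vertices of the tree.

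Next I would arrange these arcs to have common endpoints on $\partial S$. The arcs $\beta_i^0$ live in $N$ and have endpoints on $\partial N$; I would push the endpoints out through the collar region $S \setminus N$ to a chosen basepoint $* \in \partial S$. The key point is that $S \setminus N$ is connected enough — or can be made so — that all the arc-ends can be routed to $*$; here is where I expect to use connectedness of $S$ together with the independence hypothesis, to ensure that closing up the arcs into simple closed curves $\beta_i$ based at $*$ does not force unwanted intersections with the $c_j$. Concretely, I would guide each arc-end along a path in $\bar S \setminus \cC$ (or rather in a once-punctured version) to $*$; since the $[\bar c_i]$ are independent in $H_1(\bar S;\Z)$, the complement $\bar S \setminus \cC$ has the right connectivity to allow this, and the routing paths can be taken to avoid $\cC$ by general position and a small perturbation.

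The main obstacle will be controlling the geometric intersection numbers after the routing step: a priori, pushing arc-endpoints around $\partial S$ and through $S \setminus N$ could create extra crossings with the $c_j$'s, and I need $i(c_i,\beta_j)$ to be \emph{exactly} $\delta_{i,j}$, not just congruent mod $2$ or equal up to bigon removal. I would handle this by working in the surface obtained from $S$ by collapsing $\partial S$ to the single point $*$ (or a small disk around $*$), and by choosing the routing paths to be embedded and mutually disjoint — possible because the relevant complement is connected and the homology hypothesis prevents the paths from being ``trapped'' on the wrong side of some $c_j$. After this, a bigon-removal/innermost-disk argument (the bigon criterion, \cite[Section 1.2]{FM}) shows the resulting $\beta_i$ realize their minimal intersection with each $c_j$, which by the algebraic count $\langle [\beta_i],[c_j]\rangle = \pm\delta_{i,j}$ forced by the construction equals $\delta_{i,j}$. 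The independence of $\{[\bar c_i]\}$ is exactly what makes the algebraic intersection count achievable simultaneously for all $i$, completing the proof.
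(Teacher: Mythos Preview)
Your plan has the right shape but skips the one nontrivial step. The heart of the matter is that $S$ cut along $\cC$ (equivalently $S\setminus\cC$) is \emph{connected}; once you know this, the construction is immediate --- pick a point on each side of $c_i$, join both to $*$ by arcs in the connected cut surface, and you are done, with no bigon argument needed. You assert this connectivity (``the complement $\bar S\setminus\cC$ has the right connectivity'') but never prove it, and ``general position'' cannot help here: an arc and a simple closed curve are both codimension one, so transversality yields crossings, not avoidance. The paper's proof is devoted entirely to this connectivity claim, and it is exactly where both hypotheses earn their keep. If the cut surface were disconnected, some piece would be bounded (apart from components in $\partial S$) by cycles of oriented segments of the $c_i$. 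The arboreal hypothesis rules out any such boundary cycle built from \emph{proper} segments of several $c_i$: following it around would trace a nontrivial closed walk without backtracking in $\Lambda_\cC$, impossible in a tree. Hence every such boundary component is an entire $c_i$, so some subset of $\cC$ bounds a subsurface of $\bar S$ --- contradicting the linear independence of the $[\bar c_i]$.

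Your bigon endgame is also unsound as a fallback: for curves in minimal position the geometric intersection need not equal $\lvert\langle[\beta_i],[c_j]\rangle\rvert$ (two curves can meet essentially twice with algebraic intersection zero), so the algebraic count does not pin down $i(c_i,\beta_j)$. Fortunately this repair is unnecessary once connectivity is established. Note finally that the $\beta_i$ need not be disjoint from one another, so they can be built one at a time.
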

\begin{proof}
    Choose representative curves for $c_1, \dots, c_k$ in minimal position, and cut $S$ along each such curve to obtain a surface $S^\circ$. We claim that $S^\circ$ is connected. If not, $\cC$ separates $S$ into nonempty proper subsurfaces $S_1, S_2$, and each $S_i$ has one or more boundary components consisting of a sequence of oriented subsegments of curves in $\cC$, possibly along with additional boundary components in $\partial S$. 
    
    View each boundary component of the first kind (segments of curves in $\cC$) as a singular $1$-cycle on $S$, and cancel any segments that appear twice, once with each orientation. By definition this is a homologous cycle, but the number of path components may have increased. Each such component is of one of two types: either it consists of a single $c_i \in \cC$ in its entirety, or else consists of a union of two or more proper subsegments of curves in $\cC$, cyclically ordered and with successive segments taken from distinct elements of $\cC$.
    
    Suppose that any such boundary component is of this second type. Since each segment is proper, the beginning and end points of each segment are distinct, and since $\cC$ is a simple configuration, the intersection points at beginning and end are with distinct elements of $\cC$. Then the sequence of curves seen when running through such a segment induces a nontrivial cycle in the intersection graph $\Lambda_\cC$ with no backtracking, contradicting the assumption that $\cC$ is arboreal.

    Thus, every boundary component of $S_1$ must consist of a single curve from $\cC$ in its entirety, or else be a boundary component of $S$ itself. Passing to $\bar S$, this shows that some subset of curves in $\cC$ bound a subsurface in $\bar S$, contrary to the hypothesis that they are linearly independent in $H_1(\bar S; \Z)$. 

    We thus conclude that $S^\circ$ is connected. For each $c_i \in \cC$, choose a pair of points $p_i, p_i'$ on $\partial S^\circ$ identified with the same point on $c_i$. Since $S^\circ$ is connected, it is then straightforward to construct simple arcs connecting each of $p_i, p_i'$ to $*$, disjoint except at the common endpoint $*$. Such a pair descends to a simple closed curve $\beta_i$ with $i(\beta_i, c_j) = \delta_{i,j}$ as required. Note that there are no constraints on intersections between distinct $\beta_i, \beta_j$, so that such $\beta_i$ can be constructed independently, giving the required construction.
\end{proof}

Recall that when a surface $S$ has nonempty boundary, a loop $\beta \in S$ based at $* \in \partial S$ determines a {\em disk-push map} $P_\beta \in \Mod(S)$. This is given as the composite of two Dehn twists
\[
    P_\beta = T_{\beta_R} T_{\beta_L}^{-1},
    \]
    where $\beta_R, \beta_L$ are the curves in the interior of $S$ lying to the right (resp. left) of $\beta$ in its direction of travel. Our next preparatory result explains the effect of a disk-push on winding numbers.

    \begin{lemma}
    \label{lemma:pushaffectsphi}
    Let $(S, \phi)$ be a surface equipped with an $r$-spin structure. Let $d \subseteq \partial S$ denote a fixed boundary circle. Let $c \subset S$ be a simple closed curve, and let $\beta \subset S$ be an oriented curve based at some point $* \in d \subset \partial S$. Then the disk-push map $P_\beta$ affects the winding number of $c$ as follows:
    \[
    \phi(P_\beta(c)) = \phi(c) - (\phi(d) + 1) \pair{[c],[\beta]}.
    \]
\end{lemma}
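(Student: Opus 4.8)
The plan is to compute directly from the definition $P_\beta = T_{\beta_R} T_{\beta_L}^{-1}$ together with the twist-linearity axiom of winding number functions. First I would recall that $\beta_R$ and $\beta_L$ are the two boundary curves of a regular neighborhood of $\beta \cup d$ (where $d$ is the boundary component containing the basepoint $*$); one of them, oriented appropriately, is isotopic to $\beta$ traversed and then returning along $d$, so that $[\beta_R]$ and $[\beta_L]$ differ from $[\beta]$ (viewed as a class pushed into the interior) by $\pm[d]$. More precisely I would fix orientations so that, in $H_1(S;\Z)$, we have $[\beta_R] = [\beta]$ and $[\beta_L] = [\beta] + [d]$ (or the analogous relation with signs dictated by the left/right convention), and then record the two intersection numbers $\pair{[c],[\beta_R]}$ and $\pair{[c],[\beta_L]}$ in terms of $\pair{[c],[\beta]}$ and $\pair{[c],[d]}$.

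Next I would apply twist-linearity twice. Writing $\phi(P_\beta(c)) = \phi(T_{\beta_R}(T_{\beta_L}^{-1}(c)))$, one expansion gives
\[
\phi(P_\beta(c)) = \phi(c) + \pair{[c],[\beta_L]}\,\phi(\beta_L) - \pair{[c],[\beta_R]}\,\phi(\beta_R),
\]
using that $\phi(T_x^{-1}(y)) = \phi(y) - \pair{[x],[y]}\phi(x)$ and that the intersection pairing is unchanged by the twists. (One must be slightly careful that the $\beta_R$-twist is applied to the already-twisted curve, but since twist-linearity only sees the homology class and $T_{\beta_L}$ fixes $[\beta_R]$, the stated formula is correct.) Then I would compute $\phi(\beta_R)$ and $\phi(\beta_L)$ using the homological coherence axiom: the neighborhood of $\beta \cup d$ is a subsurface (a pair of pants, or an annulus when $\beta$ is trivial) whose boundary is $\beta_R, \beta_L, d$ with appropriate orientations, so homological coherence pins down $\phi(\beta_R) + \phi(\beta_L)$ (or their difference, after orientation bookkeeping) in terms of $\phi(d)$ and an Euler characteristic, e.g. $\phi(\beta_R) - \phi(\beta_L) = \phi(d) + 1$ after accounting for $\chi$ of the relevant piece. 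Substituting $\pair{[c],[\beta_L]} - \pair{[c],[\beta_R]} = \pair{[c],[d]}$-type relations and collecting terms should yield exactly $\phi(P_\beta(c)) = \phi(c) - (\phi(d)+1)\pair{[c],[\beta]}$.

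The main obstacle will be getting the orientation conventions exactly right: which of $\beta_R, \beta_L$ carries the extra $[d]$, the sign in the homological-coherence evaluation (which depends on whether $d$ is oriented with the neighborhood to its left, per \Cref{remark:convention}), and making sure the $-1$ rather than $+1$ emerges. I would resolve this by drawing the annular/pants neighborhood of $\beta \cup d$ explicitly, labeling $\beta_R$ and $\beta_L$ via the direction of travel of $\beta$, and checking the formula against the sanity case $\beta = d$ (a boundary Dehn twist, where $P_\beta$ should be trivial, forcing consistency) and the case where $c$ is disjoint from $\beta$ (where both sides give $\phi(c)$). Once the conventions are nailed down the computation is a two-line substitution.
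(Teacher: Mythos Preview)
Your plan is correct and is essentially the paper's own argument: expand $P_\beta = T_{\beta_R}T_{\beta_L}^{-1}$ via twist-linearity, then use homological coherence on the pair of pants bounded by $\beta_L,\beta_R,d$ to compute $\phi(\beta_R)-\phi(\beta_L)$. The one streamlining the paper makes that you could adopt is to note at the outset that $\pair{[c],[\beta_R]} = \pair{[c],[\beta_L]} = \pair{[c],[\beta]}$ (since $c$ lies in the interior and hence $\pair{[c],[d]}=0$), which lets you factor out $\pair{[c],[\beta]}$ immediately and avoids carrying the separate $[\beta_R],[\beta_L]$ homology classes through the computation.
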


\begin{proof}
    Applying twist-linearity to $P_\beta = T_{\beta_R} T_{\beta_L}^{-1}$,
    \[
    \phi(P_\beta(c)) = \phi(c) + \phi(\beta_R)\pair{[c], [\beta_R]} - \phi(\beta_L) \pair{[c],[\beta_L]},
    \]
    with $\beta_R, \beta_L$ oriented so as to run in the same direction as $\beta$. Note that
    \[
    \pair{[c], [\beta_R]} = \pair{[c],[\beta_L]} = \pair{[c],[\beta]},
    \]
    so that the above simplifies to
    \[
    \phi(P_\beta(c)) = \phi(c) + (\phi(\beta_R)-\phi(\beta_L))\pair{[c], [\beta]}.
    \]
    $\beta_L, \beta_R$, and $d$ cobound a pair of pants. Under our specified orientations, this lies to the left of $\beta_R$ and $d$, but to the right of $\beta_L$. Consequently, by homological coherence,
    \[
    \phi(\beta_R) - \phi(\beta_L) = -(\phi(d) + 1),
    \]
    from which the claim follows.   
\end{proof}

We can now present our main results on functoriality (\Cref{lemma:framingextend,lemma:framedontospin,,lemma:rspincontainment}).

\begin{lemma}\label{lemma:framingextend}
    Let $(S, \phi)$ be a framed surface, and let $S^+$ be obtained from $S$ by attaching a $1$-handle along $\partial S$.
    Let $c \subset S^+$ be a simple closed curve for which $c \cap S$ is a single arc. 
    For any $w \in \Z$, there is a unique extension of $\phi$ to a framing $\phi^+$ of $S^+$ for which $\phi^+(c) = w$.
\end{lemma}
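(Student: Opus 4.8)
The plan is to deduce the statement from \Cref{lemma:specifyrss} after a short homological computation, using the cohomological dictionary of \Cref{prop:rspindefs}. First I would record the effect of the $1$-handle on homology: up to homotopy equivalence $S^+$ is $S$ with an arc (the core of the handle) glued on at two points of $\partial S$, hence a space homotopy equivalent to $S \vee S^1$. So the inclusion $S \hookrightarrow S^+$ is injective on $H_1$, and $H_1(S^+;\Z)/H_1(S;\Z) \cong \Z$, with the quotient map computed by algebraic intersection with the cocore of the handle (a properly embedded arc disjoint from $S$). A simple closed curve $c$ meeting $S$ in a single arc is not contained in $S$, so the complementary arc $c \setminus S$ runs over the handle and, up to isotopy, crosses the cocore exactly once; thus $[c]$ maps to a generator of this $\Z$. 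Fixing simple closed curves $c_1, \dots, c_k \subset S$ whose classes form a basis of $H_1(S;\Z)$, it follows that $[c_1], \dots, [c_k], [c]$ is a basis of $H_1(S^+;\Z)$.

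Next, since $S^+$ has nonempty boundary, \Cref{lemma:specifyrss} applies with $r = 0$ and no divisibility constraint, producing a unique framing $\phi^+$ of $S^+$ with $\phi^+(c_i) = \phi(c_i)$ for all $i$ and $\phi^+(c) = w$. I would then check that $\phi^+$ extends $\phi$ using point (5) of \Cref{prop:rspindefs}: viewing $\phi^+$ as the class in $H^1(UTS^+;\Z)$ with $\phi^+(\hat\delta) = 1$ and the prescribed values on the Johnson lifts $\hat c_i, \hat c$, its restriction along $UTS \hookrightarrow UTS^+$ takes the value $1$ on $\hat\delta$ (the inessential curve is unchanged) and the value $\phi(c_i)$ on each $\hat c_i$; since $[\hat c_1], \dots, [\hat c_k], [\hat\delta]$ form a basis of $H_1(UTS;\Z)$ (as recalled in the proof of \Cref{lemma:specifyrss}), this restricted class agrees with $\phi$ on a basis and hence equals $\phi$, i.e. $\phi^+|_S = \phi$. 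Uniqueness is then immediate: any framing $\psi^+$ of $S^+$ with $\psi^+|_S = \phi$ and $\psi^+(c) = w$ agrees with $\phi^+$ on the basis $[c_1], \dots, [c_k], [c]$, so $\psi^+ = \phi^+$ by \Cref{lemma:specifyrss}.

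The main point to get right is the homological input of the first paragraph: one must confirm that the purely local condition ``$c \cap S$ is a single arc'' really forces $[c]$ to be the new generator of $H_1(S^+;\Z)$, equivalently that such a $c$ crosses the cocore of the handle exactly once. (Note this is genuinely needed: if $c$ could be isotoped off the handle, the prescribed value $\phi^+(c) = w$ would be over-determined and the statement would be false, so this is the crux.) Everything else is bookkeeping with \Cref{lemma:specifyrss} and the correspondences of \Cref{prop:rspindefs}. One could alternatively argue more geometrically by extending the nonvanishing vector field (or $1$-form) defining $\phi$ over the handle and adjusting the extension to achieve the prescribed winding number along $c$, but the cohomological route above is cleaner and shorter.
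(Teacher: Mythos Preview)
Your proposal is correct and follows essentially the same route as the paper: choose a basis $c_1,\dots,c_k$ for $H_1(S;\Z)$, observe that $[c_1],\dots,[c_k],[c]$ is a basis for $H_1(S^+;\Z)$, invoke \Cref{lemma:specifyrss} to produce $\phi^+$, and verify the extension property cohomologically via \Cref{prop:rspindefs}(5). The paper simply asserts the homological fact that adjoining $[c]$ gives a basis, whereas you spell out the cocore argument; otherwise the two proofs are identical.
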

\begin{proof}
    According to \Cref{lemma:specifyrss}, a framing $\phi^+$ of $S^+$ is specified uniquely by the values of $\phi^+$ on any basis for $H_1(S^+;\Z)$. 
    Let $c_1, \dots, c_k \subset S$ be a set of curves whose homology classes form a basis for $H_1(S;\Z)$; then $[c_1], \dots, [c_k], [c]$ forms a basis for $S^+$. 
    Specify $\phi^+$ by setting $\phi^+(c_i) = \phi(c_i)$ and $\phi^+(c) = w$. 
    It remains to see that $\phi^+$ is indeed an extension of $\phi$, but this is straightforward from the cohomological perspective: $\phi^+ \in H^1(UTS^+; \Z)$ is evidently sent to $\phi \in H^1(UTS; \Z)$ under the pullback of the inclusion map $UTS \into UTS^+$.
\end{proof}

\begin{lemma}\label{lemma:framedontospin}
    Let $S \into \bar S$ be an inclusion of surfaces, given by capping off boundary components $d_1, \dots, d_N$ of $S$ with closed disks (note that $\bar S$ may still have boundary components and/or punctures). Let $\phi$ be a framing on $S$, and define
    \[
    \rho = \gcd(\phi(d_1)+1, \dots, \phi(d_N) +1),
    \]
    where $d_1, \dots, d_N$ are the boundary components of $S$ capped off in $\bar S$, oriented with $S$ to the left. Then the inclusion $S \into \bar S$ induces a surjection
    \[
    \Mod(S)[\phi] \onto \Mod(\bar S)[\bar \phi],
    \]
    where $\bar \phi$ is the $\rho$-spin structure on $\bar S$ obtained by reducing $\phi$ mod $\rho$.
\end{lemma}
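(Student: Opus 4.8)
\emph{Setup of the plan.} The plan is to realize both $\bar\phi$ and the homomorphism of the statement through the capping map $c\colon \Mod(S)\to\Mod(\bar S)$ that fills each boundary component $d_i$ with the corresponding disk of $\bar S\setminus S$, and then to prove surjectivity by lifting an element of $\Mod(\bar S)[\bar\phi]$ to $\Mod(S)$ and correcting it inside $\ker(c)$ using \Cref{lemma:pushaffectsphi}.

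\emph{Well-definedness of $\bar\phi$ and the containment.} Orienting $d_i$ with $S$ to its left, the capping disk $D_i\subset\bar S$ lies to the \emph{right} of $d_i$, so applying the homological coherence axiom to the subsurface $D_i$ (whose boundary, oriented with $D_i$ to the left, is $\bar d_i$) forces $\bar\phi(\bar d_i)=\chi(D_i)=1$, i.e.\ $\bar\phi(d_i)\equiv-1\pmod\rho$, for any $\rho$-spin structure on $\bar S$ restricting to $\phi\bmod\rho$. Hence such an extension exists exactly when $\rho\mid\phi(d_i)+1$ for all $i$, which is precisely the defining property of $\rho=\gcd_i(\phi(d_i)+1)$, and it is unique since $H^1(\bar S,S;\Z/\rho)=0$. (Consistently with \Cref{lemma:specifyrss}: homological coherence on $S$ gives $\sum_i\phi(d_i)=\chi(S)=2-2g-N$, so $\rho$ divides $\sum_i(\phi(d_i)+1)=2-2g$.) Since $\bar\phi|_S=\phi\bmod\rho$ and $c$ is equivariant for the mapping class group actions on the pertinent spin structures, $c$ carries $\Mod(S)[\phi]$ into $\Mod(\bar S)[\bar\phi]$ automatically.

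\emph{Surjectivity.} The map $c$ is onto, and by factoring it through the once-marked-disk capping homomorphism and the Birman exact sequence, $K:=\ker(c)$ is generated by the boundary Dehn twists $T_{d_1},\dots,T_{d_N}$ together with the point-push maps $P_\beta$ for simple loops $\beta$ based at points of $\partial S$ (see \cite{FM}). Fix $\bar g\in\Mod(\bar S)[\bar\phi]$, pick any lift $g_0\in\Mod(S)$, and set $\psi:=g_0^{-1}\cdot\phi$. Since $g_0$ fixes $\partial S$ pointwise, $\psi(d_i)=\phi(g_0(d_i))=\phi(d_i)$, so $\psi$ and $\phi$ have the same signature; and since $c$ is equivariant and $\bar g^{-1}$ fixes $\bar\phi$, the reductions mod $\rho$ of $\psi$ and $\phi$ agree. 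Viewing framings on $S$ as a torsor over $H^1(S;\Z)=\Hom(H_1(S;\Z),\Z)$ and letting $B\le H_1(S;\Z)$ be spanned by the boundary classes $[d_1],\dots,[d_N]$, the cohomological description \Cref{prop:rspindefs}(5) gives $(\psi-\phi)([d_i])=\psi(d_i)-\phi(d_i)=0$, so $\psi-\phi$ lies in the annihilator $B^{\circ}$ of $B$; together with $\psi-\phi\in\rho\,H^1(S;\Z)$ this yields $\psi-\phi=\rho\delta$ with $\delta\in B^{\circ}$. Capping off identifies $H_1(S;\Z)/B$ with $H_1(\bar S;\Z)$, whose intersection form is unimodular, so $B^{\circ}$ is exactly the image of the map $H_1(S;\Z)\to H^1(S;\Z)$, $[\beta]\mapsto\pair{\cdot,[\beta]}$; fix $\gamma\in H_1(S;\Z)$ with $\pair{\cdot,\gamma}=\delta$.

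\emph{Hitting $\rho\delta$ inside $K$.} Boundary twists fix every framing (each $[d_i]$ has vanishing algebraic intersection with all classes), while \Cref{lemma:pushaffectsphi} gives, for $\beta$ based at a point of $d_i$,
\[
(P_\beta\cdot\phi)-\phi=(\phi(d_i)+1)\,\pair{\cdot,[\beta]}\in H^1(S;\Z).
\]
Writing $\rho=\sum_i m_i(\phi(d_i)+1)$ by B\'ezout and choosing, for each $i$, a simple loop $\beta_i$ based at $d_i$ with $[\beta_i]=\gamma$, one checks that $k:=\prod_i P_{\beta_i}^{\,m_i}\in K$ and---since $\pair{\cdot,\gamma}\in B^{\circ}$ kills every $[d_j]$, so that each $P_{\beta_j}$ acts trivially on the contribution of every $P_{\beta_i}$ and the contributions simply sum---that $k\cdot\phi-\phi=\sum_i m_i(\phi(d_i)+1)\delta=\rho\delta=\psi-\phi$. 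Then $g_0k$ is a lift of $\bar g$ with $(g_0k)\cdot\phi=g_0\cdot\psi=\phi$, so $g_0k\in\Mod(S)[\phi]$ maps onto $\bar g$, and surjectivity follows. I expect the main obstacle to be the previous paragraph: pinning down $\psi-\phi$ in the exact sublattice $\rho B^{\circ}$ reachable by point-pushes, which requires combining the divisibility built into the definition of $\rho$ with the unimodularity of the intersection form of the capped-off surface $\bar S$.
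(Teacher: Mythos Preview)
Your overall strategy coincides with the paper's: lift $\bar g$ to some $g_0\in\Mod(S)$ via the capping map, then correct inside the kernel using point-pushes and \Cref{lemma:pushaffectsphi}. The difference lies in the correction step. The paper fixes an arboreal simple configuration $c_1,\dots,c_{2g}$ spanning $H_1(\bar S;\Z)$, invokes \Cref{lemma:dualexists} to produce dual systems of \emph{simple} based loops $\{\beta_i^j\}$ at each boundary component $d_j$, and adjusts each $\phi(f'(c_i))$ one at a time. You instead work globally with the cohomology class $\psi-\phi$, show it lies in $\rho B^\circ$, and use unimodularity of the intersection form on $\bar S$ together with B\'ezout to exhibit a single product of pushes realizing it. Your route is slicker and avoids the auxiliary \Cref{lemma:dualexists}; the paper's is more elementary in that it never needs to analyze the image of the push maps in $H^1(S;\Z)$.

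There is one genuine (though easily repaired) gap. You assert, for each $i$, the existence of a \emph{simple} based loop $\beta_i$ at $d_i$ with $[\beta_i]=\gamma$, for an arbitrary $\gamma\in H_1(S;\Z)$. This can fail: already on $\Sigma_{1,1}$, a simple based loop has primitive (or zero) homology class, since its push-off is a simple closed curve on the torus. Since \Cref{lemma:pushaffectsphi} is stated and proved only for simple $\beta$ (via $P_\beta=T_{\beta_R}T_{\beta_L}^{-1}$), you cannot invoke it directly. The fix is straightforward: observe (as you essentially do) that for pushes based at any $d_j$ one has $(P_\beta\cdot\phi')(d_i)=\phi'(d_i)$ because $\langle[d_i],\cdot\rangle=0$, so the map $\beta\mapsto (P_\beta\cdot\phi)-\phi$ is additive in $[\beta]$; hence the formula extends to arbitrary (not necessarily embedded) based loops, and you may simply drop the word ``simple'' when choosing $\beta_i$. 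Alternatively, decompose $\gamma$ as a finite sum of classes each represented by a simple based loop and take the corresponding product of pushes.
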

\begin{proof}
    The framing $\phi$ corresponds to an isotopy class of non-vanishing vector field on $S$. By basic differential topology, this can be extended to a vector field on $\bar S$ with a zero inside $d_i$ of order $-1-\phi(d_i)$. Let $\bar \phi$ be the $\rho$-spin structure associated to this vector field; it is then clear that the image of $\Mod(S)[\phi]$ is contained in $\Mod(\bar S)[\bar \phi]$.

    It remains to show that this is a surjection. To that end, let $\cC = \{c_1, \dots, c_{2g}\}$ be an arboreal simple configuration of simple closed curves on $S$ satisfying $i(c_i, c_j) \le 1$, whose homology classes generate $H_1(\bar S;\Z)$. Such a collection extends to a basis for $H_1(S;\Z)$ by appending the classes of $d_1, \dots, d_{N-1}$. By \Cref{lemma:specifyrss}, $f \in \Mod(S)$ then preserves $\phi$ if and only if $\phi(f(c_i)) = \phi(c_i)$ for all $i$, since each $d_i$ is fixed by definition. Given $\bar f \in \Mod(\bar S)[\bar \phi]$, let $f' \in \Mod(S)$ be an arbitrary lift. Since $f'$ lifts $\bar f \in \Mod(\bar S)[\bar \phi]$, each change in winding number $\phi(f'(c_i)) - \phi(c_i)$ is divisible by $\rho$.
    
    By \Cref{lemma:dualexists}, there exist dual systems $\{\beta_1^j, \dots, \beta_k^j\}$ for $\cC$ based at each boundary component $d_j$ of $S$. By \Cref{lemma:pushaffectsphi}, the application of the push map $P_{\beta_i^j}$ alters $c_i$ by $\pm(\phi(d_j)+1)$, while leaving the winding numbers of all other $c_j$ unchanged. Thus by some appropriate combination of such pushes, the winding numbers of each $f'(c_i)$ can be adjusted so that they equal $\phi(c_i)$. The composite $f$ of $f'$ with these pushes still maps onto $\bar f$, but now preserves each $\phi(c_i)$, so by \Cref{lemma:specifyrss}, $f \in \Mod(S)[\phi]$.  
\end{proof}

\begin{lemma}\label{lemma:rspincontainment}
    Let $S$ be a closed surface of genus $g(S) \ge 2$, and let $\psi$ (resp. $\phi$) be $r_\phi$- (resp. $r_\psi$-) spin structures for integers $r_\phi, r_\psi$. Suppose there is a containment
    \[
    \Mod(S)[\phi] \le \Mod(S)[\psi].
    \]
    Then $r_\psi$ divides $r_\phi$, and $\psi$ is given as the mod-$r_\psi$ reduction of $\phi$.
\end{lemma}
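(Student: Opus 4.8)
The plan is to reverse-engineer $\psi$ from $\phi$ by exploiting that $\Mod(S)[\phi]$ is large enough to ``see'' all the winding number data of $\psi$. First I would fix an $E$-arboreal simple configuration $\cC = \{c_1, \dots, c_{2g}\}$ of $\phi$-admissible curves whose homology classes $[c_i]$ are linearly independent in $H_1(S;\Z)$; since $S$ is closed of genus $g(S) \ge 2$ (bumping up to $g(S) \ge 3$ if needed, handling genus $2$ separately or noting the relevant configuration still exists), such a configuration is furnished by \Cref{lemma:Earbexists}. Because $\{[c_i]\}$ has rank $2g$ it is in fact a basis of $H_1(S;\Z)$, so by \Cref{lemma:specifyrss} both $\phi$ and $\psi$ are determined by their values on the $c_i$, and an element $f \in \Mod(S)$ lies in $\Mod(S)[\phi]$ (resp. $\Mod(S)[\psi]$) if and only if it preserves $\phi(c_i)$ (resp. $\psi(c_i)$) for all $i$.

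Next I would show $r_\psi \mid r_\phi$. The Dehn twist $T_{c_i}$ lies in $\Mod(S)[\phi]$ because $\phi(c_i) = 0$ (admissibility) and twist-linearity then shows $T_{c_i}$ fixes every winding number. By hypothesis $T_{c_i} \in \Mod(S)[\psi]$, so $T_{c_i}$ preserves $\psi$; applying twist-linearity for $\psi$ to any curve $d$ with $\pair{[c_i],[d]} = 1$ gives $\psi(c_i) = 0$ as well. Thus every $c_i$ is $\psi$-admissible too. Now the key point: by the framed change-of-coordinates principle (\Cref{prop:ccp}), since $\cC$ is $E$-arboreal and hence ``large'', one can produce mapping classes in $\Mod(S)[\phi]$ realizing prescribed modifications of the configuration. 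Concretely, I want to find, for each $i$, an element $f \in \Mod(S)[\phi]$ that moves $c_i$ to a curve $c_i'$ with $\phi(c_i') = \phi(c_i)$ but such that $\psi(c_i') - \psi(c_i)$ realizes a generator of $r_\psi \Z / $ (the subgroup of $\Z$ of ``achievable'' $\psi$-differences). Since $f$ preserves $\psi$ by the containment hypothesis, $\psi(c_i') = \psi(c_i)$, forcing that achievable difference to be $0$ mod $r_\psi$. Unwinding, the set of winding-number differences $\phi(c') - \phi(c)$ over curves $c'$ in the $\Mod(S)[\phi]$-orbit of a fixed nonseparating $c$ is exactly $r_\phi \Z$ (this is essentially the content of \Cref{prop:ccp} together with twist-linearity: Dehn twists about admissible curves change winding numbers by multiples of $r_\phi$ after reduction, and change-of-coordinates shows all such values occur), and likewise differences of $\psi$ over that same orbit land in $r_\psi \Z$ and must all be $0$ mod $r_\psi$; comparing, any multiple of $r_\phi$ is a multiple of $r_\psi$, i.e. $r_\psi \mid r_\phi$.

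Finally, with $r_\psi \mid r_\phi$ established, I would check that $\psi$ is the mod-$r_\psi$ reduction of $\phi$, which amounts to checking $\psi(c_i) = \phi(c_i) \bmod r_\psi$ for the basis curves $c_i$. We already have $\phi(c_i) = 0 = \psi(c_i)$ for the admissible configuration above, so this particular basis gives no information — instead I would run the orbit argument relative to a \emph{general} nonseparating curve $c$ with $\phi(c) = w$ arbitrary: by \Cref{prop:ccp} there is $f \in \Mod(S)[\phi]$ carrying the admissible $c_1$ to a nonseparating curve of $\phi$-winding number $w$ for any $w \equiv 0 \pmod{r_\phi}$, wait — more carefully, since the relevant constraint from \Cref{prop:ccp} is only on $\phi$-values, two nonseparating curves $c, c'$ with $\phi(c) = \phi(c')$ are related by $\Mod(S)[\phi]$ (modulo Arf, which does not obstruct a single curve), hence $\psi(c) = \psi(c')$; so $\psi(c)$ depends only on $\phi(c)$, defining a function $\Z/r_\phi\Z \to \Z/r_\psi\Z$. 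Twist-linearity (running $T_{c_i}$-type moves, or rather using a chain to additively shift $\phi$-values by $r_\phi$ and tracking the induced shift in $\psi$) shows this function is affine; since both $\phi$ and $\psi$ are normalized by $\phi(\hat\delta) = \psi(\hat\delta) = 1$ in the cohomological picture of \Cref{prop:rspindefs}(5), the affine map is forced to be the identity reduction, i.e. $\psi(c) = \phi(c) \bmod r_\psi$ for all nonseparating $c$, and hence (by \Cref{lemma:specifyrss}, choosing a basis of nonseparating curves) $\psi$ is the mod-$r_\psi$ reduction of $\phi$. The main obstacle I anticipate is the middle step: cleanly extracting from \Cref{prop:ccp} the exact statement that the $\Mod(S)[\phi]$-orbit of a nonseparating curve consists of \emph{all} nonseparating curves of the same $\phi$-winding number (watching the Arf-invariant caveat, which is harmless for a single s.c.c.), and organizing the twist-linearity bookkeeping so that the comparison of ``achievable differences'' rigorously yields $r_\psi \mid r_\phi$ rather than just a divisibility up to some correction.
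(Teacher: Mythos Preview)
Your opening move is correct and matches the paper: fix a basis $c_1,\dots,c_{2g}$ of $\phi$-admissible curves via \Cref{lemma:Earbexists}, observe that $T_{c_i}\in\Mod(S)[\phi]\le\Mod(S)[\psi]$ forces each $c_i$ to be $\psi$-admissible, and invoke \Cref{lemma:specifyrss}. Your later observation that $\psi(c)$ depends only on $\phi(c)$ (via the orbit description coming from \Cref{prop:ccp}), so that there is a well-defined function $F\colon\Z/r_\phi\Z\to\Z/r_\psi\Z$, is also sound.

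The gap is in your divisibility step. Statements like ``the set of winding-number differences is exactly $r_\phi\Z$'' and ``any multiple of $r_\phi$ is a multiple of $r_\psi$'' presuppose a common \emph{integer-valued} lift of $\phi$ and $\psi$ that you never construct; as written, $\phi$-differences over a $\Mod(S)[\phi]$-orbit live in $\Z/r_\phi\Z$ and are identically zero, so there is nothing to compare. Your attempt to pin down the slope of $F$ via $\psi(\hat\delta)=1$ fails for the same underlying reason: $\delta$ is null-homotopic, not nonseparating, so it is not in the domain of $F$. The paper's fix is to build exactly this missing common lift. Since $\phi$ and $\psi$ both vanish on $c_1,\dots,c_{2g}$, there is a unique $(2g-2)$-spin structure $\xi$ with $\xi(c_i)=0$, and by \Cref{lemma:specifyrss} both $\phi$ and $\psi$ are its reductions. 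Divisibility is then one line: choose a curve $c$ with $\xi(c)=r_\phi$; then $c$ is $\phi$-admissible, hence $\psi$-admissible, hence $r_\phi\equiv 0\pmod{r_\psi}$; the reduction statement is immediate. Your $F$-approach can in fact be repaired without introducing $\xi$ (twist-linearity makes $F$ additive, and completing two disjoint admissible basis curves to a pair of pants with nonseparating third boundary gives $F(-1)=-1$ by homological coherence, pinning the slope to $1$), but writing down $\xi$ is shorter.
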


\begin{proof}
    As discussed in \Cref{lemma:VCadmiss}, a nonseparating simple closed curve $c \subset S$ is $\phi$-admissible (i.e. $\phi(c) = 0 \pmod{r_\phi}$) if and only if $T_c \in \Mod(S)[\phi]$; the corresponding statement holds for $\psi$ as well. Thus every $\phi$-admissible curve is also $\psi$-admissible.

By \Cref{lemma:Earbexists}, there is a collection $c_1, \dots, c_{2g}$ of simple closed curves on $S$ such that $[c_1], \dots, [c_{2g}]$ forms a basis for $H_1(S;\Z)$, and such that each $c_i$ is $\phi$-admissible, and hence also $\psi$-admissible. By \Cref{lemma:specifyrss}, this set of admissible curves uniquely specifies both $\phi$ and $\psi$. Viewing $r$-spin structures as classes in $H^1(UTS; \Z/r\Z)$, it follows that $\phi$ and $\psi$ admit a common refinement to an $r = 2g-2$-spin structure $\xi$ again uniquely specified by the condition that each $a_i$ be $\xi$-admissible. Thus $\phi$ and $\psi$ are the mod-$r_\phi$ (resp. mod-$r_\psi$) reductions of $\xi$.

It remains to show that $r_\psi$ divides $r_\phi$. Let $c \subset S$ be a simple closed curve with $\xi(c) = r_\phi$. Thus $c$ is $\phi$-admissible, and by the above is also $\psi$-admissible. Since $\psi$ is the reduction of $\xi$ mod $r_\psi$, it follows that $r_\phi \equiv 0 \pmod{r_\psi}$, establishing the required divisibility.
\end{proof}

\section{The simple braid group}\label{section:simplebraids}

The purpose of this section is to prove \Cref{prop:generatesbr}, which gives a generating set for a certain subgroup of the surface braid group. This will be used in the following \Cref{section:genspinmcg} as a technical tool for generating framed mapping class groups, and the relationship between the simple braid group and monodromy of complete intersection curves is explored in \Cref{section:exhibitsimplebraids} as an important ingredient in the induction argument.

In \Cref{subsection:braidsetup}, we establish these notions and explain the connection with framed mapping class groups, and in \Cref{subsection:braidproof}, we proceed with the proof of \Cref{prop:generatesbr}. In \Cref{subsection:braidsdisks}, we briefly discuss a variant of this theory suitable for surfaces with boundary.

The simple braid group has been encountered and studied by various authors - here we give an account of the places in the literature we are aware of that it has appeared. It is mentioned by Dolgachev-Libgober \cite[p. 9]{DolgLib} as ``$\ker \mu_*$'' and is shown by Shimada \cite{shimada} to give the fundamental group of the complement of the dual to a smooth projective curve embedded by a suitably general line bundle of sufficiently high degree. Walker encountered a more general version of the simple braid group in her study of strata of quadratic differentials and gave a set of generators \cite{walker}. Most recently, the simple braid group was shown to be finitely presented by Qiu-Zhou \cite{QZ}. As discussed briefly in the introduction, this presentation is with respect to one fixed model for the surface, while we will require a more ``coordinate-free'' treatment, and so we give a separate proof of the finite generation of the simple braid group suitable for our purposes.

\subsection{Surface braids, simple braids, and the framed mapping class group}\label{subsection:braidsetup}

Let $S$ be a closed surface, and let $\bp = \{p_1, \dots, p_N\}$ be a configuration of $N \ge 2$ distinct points. The Birman exact sequence for the inclusion $S \setminus \bp \into S$ then takes the following form:
\[
1 \to \pi_1(\UConf_N(S)) \to \Mod(S\setminus \bp) \to \Mod(S) \to 1,
\]
where $\UConf_N(S)$ is the space of unordered $N$-tuples of distinct points in $S$. The subgroup $\pi_1(\UConf_N(S)) \le \Mod(S \setminus \bp)$ is known as the {\em surface braid group} and is denoted $Br_N(S)$.

An especially simple type of element of $Br_N(S)$ is a {\em half-twist}. Let $\alpha \subset S$ be an arc with endpoints at distinct points of $\bp$, and with interior disjoint from $\bp$. The {\em half-twist along $\alpha$}, written $P_\alpha$, is the element of $Br_N(S)$ obtained by enlarging $\alpha$ to a regular neighborhood $A$, and exchanging the endpoints $p_i, p_j$ by pushing them halfway around counterclockwise along the boundary of $A$.

Thinking of $\beta \in Br_N(S)$ as a collection of $N$ maps $[0,1] \to S$, and hence a singular $1$-chain, one sees that in fact this is a $1$-cycle, and that moreover there is a well-defined {\em cycle map}
\[
\eta: Br_N(S) \to H_1(S; \Z).
\]
More generally, suppose that the points of $\bp$ are assigned {\em weights} given by a function $\lambda: \bp \to \Z$. Let $Br_{N,\lambda}(S)$ denote the subgroup of $Br_N(S)$ consisting of braids whose associated permutations preserve the weight function. Then there is a {\em weighted cycle map}
\[
\eta_\lambda: Br_{N,\lambda}(S) \to H_1(S;\Z),
\]
where the strand $[0,1] \to S$ based at $p_i \in \bp$ is weighted by $\lambda(p_i)$ as a singular chain. Observe that $\eta_\lambda(P_\alpha) = 0$ for any half-twist along an arc $\alpha$ connecting points of equal weight.

\para{Framings and surface braids} In the presence of a framing, the weighted cycle map can be used to describe the effect of a surface braid on winding numbers. Let $S$ be a closed surface, and let $\bp = (p_1,\dots, p_N)$ be a configuration of $N \ge 1$ distinct points. Let $\phi$ be a framing of the punctured surface $S \setminus \bp$, and let $\lambda: \bp \to \Z$ be the weight function sending $p_i$ to $\phi(d_i)+1$, for $d_i \subset S$ a simple closed curve encircling $p_i$, oriented with $p_i$ to the right. 

\begin{lemma}\label{lemma:pushformula}
    In this setting, let $c \subset S \setminus \bp$ be a simple closed curve, and let $\beta \in Br_{N,\lambda}(S)$ be arbitrary. Then
    \[
    \phi(\beta(c)) = \phi(c) + \pair{[c], \eta_\lambda(\beta)}.
    \]
\end{lemma}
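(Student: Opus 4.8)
The plan is to reduce the statement to the point-push formula of \Cref{lemma:pushaffectsphi} by decomposing an arbitrary weight-preserving braid into a product of half-twists and ``point-push loops'', and then tracking both sides of the claimed identity under this decomposition. First I would observe that both sides of the asserted equation are, in a suitable sense, ``homomorphic'' in $\beta$: the left-hand side transforms under composition $\beta = \beta_1 \beta_2$ according to $\phi(\beta(c)) = \phi(\beta_1(\beta_2(c)))$, and since $\beta_1$ acts on the simple closed curve $\beta_2(c)$, one gets $\phi(\beta(c)) - \phi(c) = (\phi(\beta_1(\beta_2 c)) - \phi(\beta_2 c)) + (\phi(\beta_2 c) - \phi(c))$; meanwhile $\eta_\lambda$ is an honest homomorphism, so $\pair{[c], \eta_\lambda(\beta)} = \pair{[c],\eta_\lambda(\beta_1)} + \pair{[c], \eta_\lambda(\beta_2)}$. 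The subtlety is that the ``increment'' $\phi(\beta_1(d)) - \phi(d)$ for a half-twist or push depends on $[d]$, and we need it to depend only on $\pair{[d],\eta_\lambda(\beta_1)}$ — but note $[\beta_2(c)]$ and $[c]$ can differ. So the cleaner approach is: prove the formula for generators, and check directly that the ``error term'' is additive because the generator increments are controlled by homology classes that $\beta_2$ preserves up to the relevant pairing.

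The key steps, in order, would be: (i) recall that $Br_{N,\lambda}(S)$ is generated by half-twists $P_\alpha$ along arcs $\alpha$ connecting points of equal weight, together with point-push maps $P_\beta$ along loops $\beta$ based near a point $p_i$ (this is the standard generation statement for surface braid groups, using that weight-preservation forces the arcs to join equal-weight points); (ii) verify the formula for a half-twist $P_\alpha$: here $\eta_\lambda(P_\alpha) = 0$ as already observed, and one checks $\phi(P_\alpha(c)) = \phi(c)$ because $P_\alpha$ is supported in a disk neighborhood $A$ of $\alpha$ containing two punctures of equal weight $\lambda(p_i) = \lambda(p_j)$, so $\partial A$ bounds a subsurface whose $\phi$-boundary value is forced, and $P_\alpha$ is isotopic in $S$ (ignoring punctures) to a product of a Dehn twist and its inverse about curves that one computes have opposite effect — alternatively, invoke that $P_\alpha$ extends over $A$ as a homeomorphism preserving the vector field up to isotopy; (iii) verify the formula for a point-push $P_\beta$: this is essentially \Cref{lemma:pushaffectsphi} applied on the surface $S \setminus \bp$ with the boundary component $d_i$ around $p_i$, giving $\phi(P_\beta(c)) = \phi(c) - (\phi(d_i)+1)\pair{[c],[\beta]} = \phi(c) - \lambda(p_i)\pair{[c],[\beta]}$, and then observing that $\eta_\lambda(P_\beta) = \lambda(p_i)[\beta]$ so the right-hand side of the lemma reads $\phi(c) + \pair{[c], \lambda(p_i)[\beta]}$ — wait, the signs must be reconciled, which is just a matter of fixing the orientation convention (the lemma stipulates $d_i$ oriented with $p_i$ to the right, precisely so these signs agree); (iv) assemble: given $\beta = g_1 \cdots g_m$ a product of generators, induct on $m$, using that at each stage $g_{k+1}$ acts on the simple closed curve $g_{k+2}\cdots g_m(c)$, whose homology class is $[c] + \eta_\lambda(g_{k+2}\cdots g_m)$ (this is where the cycle-map interpretation is used: a braid changes the homology class of a curve it acts on by exactly its weighted cycle, when weights match — actually more carefully, $[g(c)] = [c] + (\text{something involving } \eta)$; one needs the precise statement that $[\beta(c)]$ depends on $[c]$ and $\eta_\lambda(\beta)$ appropriately), so the accumulated increments telescope to $\pair{[c], \eta_\lambda(\beta)}$.

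The main obstacle is step (iv), specifically the bookkeeping of how the homology class $[\gamma(c)]$ of a curve changes when a braid $\gamma$ is applied, and confirming that all the cross-terms $\pair{\eta_\lambda(g_i), \eta_\lambda(g_j)}$ that appear to arise actually cancel or vanish. I expect the resolution is that for a generator $g$ (half-twist or single push) supported near a point or arc, one has $[g(c)] = [c]$ in $H_1(S;\Z)$ whenever $c$ is a genuine simple closed curve in $S$ (the braiding of punctures does not change the homology class of a closed curve in the ambient closed surface $S$) — so in fact $[g_{k+2}\cdots g_m(c)] = [c]$ throughout, the increment at each step is exactly $\pair{[c],\eta_\lambda(g_{k+1})}$ independent of the earlier generators, and the sum is immediately $\pair{[c], \sum_i \eta_\lambda(g_i)} = \pair{[c],\eta_\lambda(\beta)}$. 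Verifying this invariance of $[c]$ under the braid action (plausibly because the surface braid group maps into the Torelli-type kernel on $H_1$ of the closed surface, or simply because punctures are filled back in) is the crux, and I would isolate it as a preliminary observation before the induction.
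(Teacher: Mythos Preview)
Your approach is correct but takes a genuinely different route from the paper's. The paper gives a direct geometric argument in a single paragraph: since $c$ and $\beta(c)$ are isotopic in the filled-in surface $S$, one tracks an isotopy between them; each time the moving curve crosses a marked point $p_i$, the curves immediately before and after, together with $d_i$, bound a pair of pants, and homological coherence forces the winding number to jump by $\pm(\phi(d_i)+1) = \pm\lambda(p_i)$, with sign governed by the sign of intersection with the braid strand. Summing over all crossings gives exactly $\pair{[c],\eta_\lambda(\beta)}$.

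Your decomposition-into-generators strategy also works, and the crux you isolate at the end---that $[\gamma(c)] = [c]$ in $H_1(S;\Z)$ for every braid $\gamma$, because $Br_N(S)$ is precisely the kernel of $\Mod(S\setminus\bp) \to \Mod(S)$---is exactly what makes the telescoping clean. What your approach buys is a reduction to \Cref{lemma:pushaffectsphi}, which is nice if one wants to see the two formulas as instances of a single computation; what it costs is the need to invoke a generation statement for $Br_{N,\lambda}(S)$ and to separately verify the half-twist case. On that last point, your step (ii) is the weakest link as written: neither of the two sketched justifications quite lands. The clean argument is that $P_\alpha$ swaps the loops $d_i$ and $d_j$ (and is the identity outside a disk), so by \Cref{lemma:specifyrss} it preserves $\phi$ precisely because $\phi(d_i) = \phi(d_j)$, which is the equal-weight hypothesis. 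The sign reconciliation in step (iii) is, as you say, a convention chase (the direction of the point-push versus the direction of the braid strand), and does work out with the stated orientation of $d_i$.
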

\begin{proof}
    This is a consequence of the homological coherence property of winding number functions. The curves $c$ and $\beta(c)$ are isotopic after the inclusion $S \setminus \bp \into S$. Consider some such isotopy from $c$ to $\beta(c)$. Each time $c$ crosses some point $p_i \in \bp$, the curves before and after bound a pair of pants along with the corresponding $d_i$. Homological coherence implies that the winding number changes by $\pm (\phi(d_i)+1)$, with the sign determined by the sign of the corresponding intersection between $c$ and the cycle representing $\eta_\lambda(\beta)$. Summing over all such points of intersection, the total change in winding number is seen to be $\pair{[c],\eta_\lambda(\beta)}$ as claimed.
\end{proof}

\begin{corollary}\label{cor:42}
      In this setting, $\Mod(S\setminus \bp)[\phi]\cap Br_N(S) = \ker(\eta_\lambda)$.
\end{corollary}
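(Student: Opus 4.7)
The plan is to set up the two containments using \Cref{lemma:pushformula} as the main engine, with some minor bookkeeping about the domain of $\eta_\lambda$.

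First I would observe that the intersection on the left-hand side is automatically contained in $Br_{N,\lambda}(S)$, so $\eta_\lambda$ is defined there. Indeed, if $\beta \in \Mod(S\setminus \bp)[\phi]$ and $\beta$ sends the puncture $p_i$ to $p_j$, then $\beta$ sends the encircling curve $d_i$ to $d_j$ (up to isotopy); since $\beta$ preserves $\phi$, we get $\phi(d_i) = \phi(d_j)$, hence $\lambda(p_i) = \lambda(p_j)$.

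For the forward containment, suppose $\beta$ preserves $\phi$. Then for every simple closed curve $c \subset S\setminus \bp$, \Cref{lemma:pushformula} gives
\[
\phi(c) = \phi(\beta(c)) = \phi(c) + \pair{[c], \eta_\lambda(\beta)},
\]
so $\pair{[c], \eta_\lambda(\beta)} = 0$ for every such $c$. Since the natural map $H_1(S\setminus \bp;\Z) \to H_1(S;\Z)$ is surjective and the intersection pairing on $H_1(S;\Z)$ is nondegenerate, this forces $\eta_\lambda(\beta) = 0$. For the reverse containment, suppose $\beta \in \ker(\eta_\lambda)$. Then \Cref{lemma:pushformula} directly gives $\phi(\beta(c)) = \phi(c)$ for every simple closed curve $c \subset S\setminus \bp$. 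Applying this to any collection $c_1,\dots,c_k$ whose homology classes form a basis of $H_1(S\setminus \bp;\Z)$, \Cref{lemma:specifyrss} then yields $\beta \in \Mod(S\setminus \bp)[\phi]$.

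There is no real obstacle here; the only point requiring a moment of care is the passage from ``pairing vanishes against every simple closed curve on $S\setminus \bp$'' to ``class vanishes in $H_1(S;\Z)$,'' which is handled by surjectivity of the capping-off map on $H_1$ together with nondegeneracy of the intersection form on the closed surface.
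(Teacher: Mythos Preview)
Your proof is correct and is exactly the argument the paper intends; the paper states this as an immediate corollary of \Cref{lemma:pushformula} with no further proof, and you have simply written out the two containments explicitly, including the small but necessary observation that $\Mod(S\setminus \bp)[\phi]\cap Br_N(S)\subseteq Br_{N,\lambda}(S)$ so that $\eta_\lambda$ is defined.
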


An important special case occurs when $\phi$ has constant signature. 
\begin{definition}[Braid twist group]
    Let $S$ be a closed surface, $\bp \subset S$ be a set of $N$ distinct points, and let $\phi$ be a framing of $S \setminus \bp$ of constant signature. The {\em simple braid group} $SBr_N(S)$ is defined as the kernel of the cycle class map $\eta$. Elements of $SBr_N(S)$ are called {\em simple braids}. 
\end{definition}

Thus $\Mod(S \setminus \bp)[\phi] \cap Br_N(S) = SBr_N(S)$ whenever $\phi$ has constant signature.

\subsection{Generating the simple braid group}\label{subsection:braidproof}

In the following, $(S, \bp)$ will denote a topological surface equipped with a configuration $\bp = \{p_1, \dots, p_N\}$ of $N \ge 2$ distinct points. It will be slightly more convenient to shift our perspective and consider $\bp$ as a set of distinguished points on $S$, instead of working with the punctured surface $S \setminus \bp$. By an {\em arc} $\alpha$ on $S$, we will mean an arc whose endpoints lie at distinct points of $\bp$ and whose interior is disjoint from $\bp$. We are free to adjust $\alpha$ by an isotopy through arcs of this same form; in particular, isotopies are always taken rel $\bp$.

\begin{proposition}\label{prop:generatesbr}
    Let $(S, \bp)$ be a surface equipped with a configuration $\bp = \{p_1, \dots, p_N\}$ of $N \ge 3$ distinct points. Let $\alpha_1, \dots, \alpha_k$ be a sequence of arcs on $S$ that satisfy the following properties:
    \begin{enumerate}
        \item The interiors of $\alpha_1, \dots, \alpha_{N-1}$ are pairwise disjoint, 
        \item A neighborhood of $\alpha_1 \cup \dots \cup \alpha_{N-1}$ is a disk containing every point of $\bp$,
        \item Defining $S_i \subset S$ as a neighborhood of $\alpha_1, \dots, \alpha_i$, for $i \ge N$, $\alpha_i$ exits and re-enters $S_{i-1}$ exactly once,
        \item $S \setminus S_k$ is a union of disks and annuli, and for every annular component, one boundary component is a component of $\partial S$.
    \end{enumerate}
    Then the simple braid group $SBr_N(S)$ is generated by the half-twists $P_{\alpha_1}, \dots, P_{\alpha_k}$.
\end{proposition}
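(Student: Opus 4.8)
The plan is to induct along the filtration $S_{N-1}\subset S_N\subset\dots\subset S_k$, where (following hypothesis (3)) $S_i$ denotes a regular neighborhood of $\alpha_1\cup\dots\cup\alpha_i$, proving at each stage that the half-twists $P_{\alpha_1},\dots,P_{\alpha_i}$ generate $SBr_N(S_i):=\ker\big(\eta\colon Br_N(S_i)\to H_1(S_i;\Z)\big)$. For the \emph{base case}, hypotheses (1)--(2) say that $\alpha_1,\dots,\alpha_{N-1}$ together with the points of $\bp$ form an embedded tree spanning $\bp$ inside the disk $S_{N-1}$; since $H_1(S_{N-1};\Z)=0$ one has $SBr_N(S_{N-1})=Br_N(S_{N-1})\cong Br_N$, and the classical fact that the half-twists along the edges of a spanning tree generate $Br_N$ (reduce to the standard Artin generators by elementary slide moves, each of which conjugates one edge-half-twist by another; cf.\ Sergiescu's planar graph presentations) finishes this case. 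For the \emph{final reduction}, hypothesis (4) says that $S$ is obtained from $S_k$ by capping some boundary circles with disks containing no marked points and by attaching exterior collars; hence $Br_N(S_k)\to Br_N(S)$ is surjective and carries $SBr_N(S_k)$ onto $SBr_N(S)$ (a point-push around a capped boundary circle is trivial in $Br_N(S)$, and this kills exactly the homology that disappears under capping), so it suffices to treat $S_k$.

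For the \emph{inductive step}, suppose $N\le i\le k$ and that $P_{\alpha_1},\dots,P_{\alpha_{i-1}}$ generate $SBr_N(S_{i-1})$. By hypothesis (3), $\alpha_i$ exits and re-enters $S_{i-1}$ exactly once, so $S_i$ is obtained from $S_{i-1}$ by attaching a single $1$-handle $h$, with all of $\bp$ lying in $S_{i-1}$; thus $S_i\simeq S_{i-1}\vee S^1$, giving $\pi_1(S_i)\cong\pi_1(S_{i-1})*\Z$ and $H_1(S_i;\Z)\cong H_1(S_{i-1};\Z)\oplus\Z\langle\ell_i\rangle$, where $\ell_i$ is the class of a loop passing once through $h$. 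Applying the Fadell--Neuwirth fibrations (forgetting points) and inducting on the number of points shows $Br_N(S_i)=\langle Br_N(S_{i-1}),\ \tau_i\rangle$, where $\tau_i$ pushes one marked point once around $\ell_i$ and every other handle-push is $Br_N(S_{i-1})$-conjugate to $\tau_i^{\pm1}$. Now $\eta(\tau_i)=\ell_i$, while $\eta$ kills $Br_N(S_{i-1})$ and kills every half-twist (the closed-up cycle of a half-twist $P_\alpha$ is the boundary of a regular neighborhood of $\alpha$, hence null-homologous). Iterating down to the base case gives $Br_N(S_k)=\langle P_{\alpha_1},\dots,P_{\alpha_k},\ \tau_N,\dots,\tau_k\rangle$ with $\eta(P_{\alpha_j})=0$ for all $j$ and with $\{\ell_N,\dots,\ell_k\}$ a basis of $H_1(S_k;\Z)$.

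It therefore remains to establish a purely group-theoretic statement together with its geometric inputs: in a group $G=\langle A,\ \tau_N,\dots,\tau_k\rangle$ with a homomorphism $\eta$ killing $A$ and sending $(\tau_N,\dots,\tau_k)$ to a basis of a free abelian group, one has $\langle A\rangle=\ker\eta$ provided (i) each $\tau_i$ normalizes $\langle A\rangle$, and (ii) $[\tau_i,\tau_j]\in\langle A\rangle$ for all $i,j$. Indeed, (i) makes $\langle A\rangle$ normal in $G$, (ii) makes $G/\langle A\rangle$ abelian, and then $\eta$ descends to an isomorphism $G/\langle A\rangle\to\Z^{k-N+1}$ (a surjection between free abelian groups of rank $r$ sending generators to a basis is injective), whence $\ker\eta=\langle A\rangle=SBr_N(S_k)$. \textbf{The main obstacle is verifying (i) and (ii) geometrically.} Assertion (i) amounts to showing that a conjugate $\tau_i P_{\alpha_j}\tau_i^{-1}$---a half-twist along an arc that has been dragged once around the $i$-th handle---is a product of the prescribed half-twists $P_{\alpha_1},\dots,P_{\alpha_k}$, and (ii) that the commutator of two handle-pushes is likewise such a product. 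This is the ``bare-hands'' heart of the proof: using the relations $P_\alpha P_{\alpha'}P_\alpha^{-1}=P_{P_\alpha(\alpha')}$ together with hypotheses (1)--(4), one must ``comb'' any such translated arc back to the standard configuration while staying inside $\langle P_{\alpha_1},\dots,P_{\alpha_k}\rangle$, by an induction on a complexity such as the number of intersections of the arc with a cut system adapted to the filtration $\{S_i\}$ --- the hypothesis $N\ge 3$ entering to guarantee enough room to perform these combing moves within $SBr_N$ of the relevant subsurface.
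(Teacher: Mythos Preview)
Your reduction is sound and runs parallel to the paper's: both arguments come down to showing that the subgroup $\Gamma=\langle P_{\alpha_1},\dots,P_{\alpha_k}\rangle$ is normal in $Br_N(S)$ and that the quotient $Br_N(S)/\Gamma$ is free abelian of rank $2g$. The paper packages this slightly differently, first proving as a standalone lemma that \emph{every} half-twist lies in $\Gamma$ (its Lemma~4.3), from which normality is immediate, and then invoking the Bellingeri--Godelle presentation of $Br_N(S')$ for $S'$ with one boundary component to see that killing all half-twists abelianizes and leaves at most $2g$ generators. Your Fadell--Neuwirth setup and the group-theoretic reduction to conditions (i) and (ii) are a perfectly good alternative scaffolding.

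The genuine gap is that you have not carried out the work you yourself flag as ``the bare-hands heart of the proof.'' Your condition (i) requires showing that $P_{\tau_i(\alpha_j)}\in\Gamma$ for all $i,j$; the paper's corresponding Lemma~4.3 proves the stronger statement that $P_\beta\in\Gamma$ for \emph{every} arc $\beta$, and does so by a rather delicate triple induction (on the subsurface index $i$, on the number $k(\beta)$ of handle crossings, and on $i(\beta,\alpha_{i+1})$), with explicit constructions of auxiliary arcs $\gamma$ at each stage and several figures to track the cases. The hypothesis $N\ge 3$ is used in a specific place: to guarantee a marked point off of $\beta$ toward which one can divert the auxiliary arc $\gamma$. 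A one-sentence gesture at ``induction on a complexity'' does not substitute for this.

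Your condition (ii), that $[\tau_i,\tau_j]\in\Gamma$, is a second issue. This is \emph{not} a statement about half-twists along arcs, so the combing argument you sketch for (i) does not address it. The paper sidesteps (ii) entirely: once all half-twists lie in $\Gamma$, the quotient $Br_N(S)/\Gamma$ equals $Br_N(S)/\langle\!\langle\sigma_i\rangle\!\rangle$, and inspecting the Bellingeri--Godelle relations shows that modding out the $\sigma_i$ forces the remaining generators $\delta_1,\dots,\delta_{2g}$ to commute. If you want to avoid citing a presentation, you would need a direct geometric argument that the commutator of two point-pushes around distinct handles is a product of half-twists, which is additional work not contained in your sketch.
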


This relies on the following lemma, the proof of which will occupy the bulk of the remainder of the section.

\begin{lemma}\label{lemma:alltwists}
    Let $(S, \bp)$ be a pointed surface as above, and let $\alpha_1, \dots, \alpha_k$ be a sequence of arcs satisfying the hypotheses of \Cref{prop:generatesbr}. Let $\alpha\subset S$ be an arc connecting distinct points of $\bp$ and otherwise disjoint from $\bp$. Then the half-twist $P_\alpha$ is contained in the subgroup of $Br_N(S)$ generated by $P_{\alpha_1}, \dots, P_{\alpha_k}$.
\end{lemma}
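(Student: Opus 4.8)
The plan is to show that every half-twist $P_\alpha$ lies in the group $H := \langle P_{\alpha_1}, \dots, P_{\alpha_k}\rangle$ by a ``connect-the-dots'' argument: first establish $P_\alpha \in H$ for arcs $\alpha$ supported in the disk neighborhood $D$ of $\alpha_1 \cup \dots \cup \alpha_{N-1}$, then bootstrap outward through the subsurfaces $S_i$ using the conjugation identity $P_{g(\alpha)} = g P_\alpha g^{-1}$ for any $g \in \Mod(S)$. The key algebraic fact I would use throughout is that if $\gamma$ is an arc and $\delta$ an arc sharing exactly one endpoint with $\gamma$ and otherwise disjoint, then $P_{\gamma}(\delta)$, $P_\gamma^{-1}(\delta)$ are again arcs, and $P_{P_\gamma(\delta)} = P_\gamma P_\delta P_\gamma^{-1}$; more generally any half-twist about an arc obtained from the $\alpha_i$'s by a sequence of half-twists along already-controlled arcs is again in $H$. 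So the real content is a change-of-coordinates statement: any arc $\alpha$ can be carried to one of the ``standard'' arcs $\alpha_i$ by a product of half-twists drawn from a controlled collection.

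\textbf{Step 1 (the disk case).} Inside the disk $D$ containing all $N$ points of $\bp$ (hypothesis (2)), the arcs $\alpha_1, \dots, \alpha_{N-1}$ have pairwise disjoint interiors (hypothesis (1)) and, by hypothesis (2), form a tree spanning $\bp$; after an isotopy we may take this tree to be a standard path $p_{\sigma(1)} \, \alpha_{i_1}\, p_{\sigma(2)}\, \alpha_{i_2} \cdots p_{\sigma(N)}$ or star. The classical fact (Birman--Hilden / the standard presentation of the disk braid group) is that the half-twists along a spanning tree of arcs generate the full braid group $Br_N(D)$ of the disk, and hence every half-twist along an arc in $D$ is a product of the $P_{\alpha_1}, \dots, P_{\alpha_{N-1}}$ and their inverses. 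I would cite this directly. So $P_\alpha \in H$ whenever $\alpha \subset D$.

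\textbf{Step 2 (induction on the subsurfaces $S_i$).} Now suppose inductively that $P_\alpha \in H$ for every arc $\alpha$ contained (up to isotopy rel $\bp$) in $S_{i-1}$; the base case $i-1 = N-1$ is Step 1, since $S_{N-1}$ deformation retracts onto the tree $\alpha_1 \cup \dots \cup \alpha_{N-1}$, which lies in $D$, and in fact $S_{N-1} \simeq D$. By hypothesis (3), $\alpha_i$ exits and re-enters $S_{i-1}$ exactly once, so $S_i$ is obtained from $S_{i-1}$ by attaching a single $1$-handle along $\partial S_{i-1}$, with $\alpha_i$ crossing the cocore once. Let $\alpha$ be any arc in $S_i$. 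I want to find an element $g \in H$ with $g(\alpha) \subset S_{i-1}$; then $P_\alpha = g^{-1} P_{g(\alpha)} g \in H$ by the inductive hypothesis. To build $g$, isotope $\alpha$ into minimal position with the cocore $\mu$ of the handle; if $\alpha$ is disjoint from $\mu$ it already lies in $S_{i-1}$ (or in the handle, which retracts into $S_{i-1}$) and we are done. Otherwise each time $\alpha$ runs over the handle it does so via a sub-arc parallel to a copy of $\alpha_i$; by repeatedly applying suitable powers of $P_{\alpha_i}$ — whose support is a neighborhood of $\alpha_i$ and which ``unwinds'' one strand over the handle at a time — together with half-twists supported in $S_{i-1}$ (available by induction) to rearrange the endpoints, one reduces $i(\alpha, \mu)$. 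A downward induction on $i(\alpha,\mu)$ then produces the required $g \in H$. Iterating from $i = N$ up to $i = k$ gives $P_\alpha \in H$ for every arc $\alpha$ in $S_k$.

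\textbf{Step 3 (arcs not contained in $S_k$).} Finally, by hypothesis (4), $S \setminus S_k$ is a union of disks and annuli each of whose ``outer'' boundary circle lies on $\partial S$ (if $S$ is closed this means $S = S_k$ up to filling disks, and there is nothing more to do). Any arc $\alpha$ on $S$ connecting points of $\bp \subset S_k$ can, after isotopy, be pushed off the complementary disks and boundary-annuli, since an arc cannot be trapped in a region containing no points of $\bp$ with connected or $\partial S$-touching complement; hence $\alpha$ is isotopic rel $\bp$ into $S_k$, and Step 2 applies.

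\textbf{Main obstacle.} The genuinely delicate point is Step 2: making precise the claim that powers of the single half-twist $P_{\alpha_i}$, combined with half-twists already known to be in $H$, suffice to reduce the geometric intersection number of an arbitrary arc $\alpha$ with the cocore of the new handle. One must track how the two endpoints of $\alpha$ on $\bp$ interact with the two ``feet'' of the handle and argue that no configuration is irreducible — essentially a bigon/innermost-arc surgery argument on the surface $S_i$ relative to $\mu$ — and be careful that the $P_{\alpha_i}$ used act the way one expects on strands running over the handle. This is the step where the hypotheses (1)–(3) on how the $\alpha_i$ are assembled get used in full force, and it is where I would expect to spend the bulk of the proof.
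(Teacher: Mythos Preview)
Your overall architecture---disk case, then induction through the $S_i$, then push into $S_k$---is exactly the paper's. Steps~1 and~3 are fine; the paper handles Step~1 by a small inner induction rather than citing a result, but either works.

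The gap is precisely where you flag it, and the mechanism you propose for Step~2 is not quite right. It is \emph{not} the case that powers of $P_{\alpha_i}$ together with half-twists in $S_{i-1}$ always reduce the cocore-crossing number $k(\beta)$; the paper's reduction uses auxiliary arcs $\gamma$ that \emph{themselves} cross the handle, just fewer times than $\beta$ does. Concretely, the paper runs a double induction. First on $k(\beta)$: for $k(\beta)\ge 2$, construct $\gamma$ by following $\beta$ from one endpoint through the handle until $\beta$ first emerges into a component of $S_{i-1}\setminus\beta$ containing a marked point off $\beta$ (here the hypothesis $N\ge 3$ is essential), and terminate $\gamma$ there; then $k(\gamma)<k(\beta)$, so $P_\gamma\in\Gamma_i$ by induction on $k$, and $k(P_\gamma(\beta))<k(\beta)$. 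Second, for the base case $k(\beta)=1$, there is a further induction on the interior intersection number $i(\beta,\alpha_i)$: when $i(\beta,\alpha_i)=0$, use $N\ge 3$ to arrange that $\beta$ and $\alpha_i$ share exactly one endpoint and are parallel through the handle, so that $P_{\alpha_i}(\beta)\subset S_{i-1}$; when $i(\beta,\alpha_i)>0$, build an auxiliary $\gamma$ by following $\beta$ to its crossing with $\alpha_i$ nearest an endpoint of $\alpha_i$ not on $\beta$, then running along $\alpha_i$ to that endpoint---this $\gamma$ has either $k(\gamma)=0$ or $k(\gamma)=1$ with strictly smaller $i(\gamma,\alpha_i)$, so $P_\gamma$ is available and reduces either $k(\beta)$ or $i(\beta,\alpha_i)$. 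The single half-twist $P_{\alpha_i}$ does appear, but only at the very bottom of this nested induction; it does not by itself ``unwind one strand at a time'' for a general $\beta$.
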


\begin{proof}[Proof of \Cref{prop:generatesbr} assuming \Cref{lemma:alltwists}]
Let $\Gamma$ be the subgroup of $Br_N(S)$ generated by the half-twists $P_{\alpha_1}, \dots, P_{\alpha_k}$. By \Cref{lemma:alltwists}, $\Gamma$ can be redefined as the subgroup of $Br_N(S)$ generated by {\em all} half-twists. Thus $\Gamma$ is a normal subgroup of $Br_N(S)$. Note that by construction, $\Gamma$ is in fact a subgroup of the simple braid group $SBr_N(S)$. 

Let $S' \subset S$ be a subsurface for which $S \setminus S'$ is a single disk that contains no point of $\bp$. Since every braid in $S$ can be pushed into $S'$, the homomorphism $Br_N(S') \to Br_N(S)$ induced by the inclusion $S' \into S$ is a surjection. According to \cite[Theorem 2.1]{bellingerigodelle}, for a surface $S'$ with one boundary component, $Br_N(S')$ is generated by two types of elements $\sigma_1, \dots, \sigma_{N-1}$ and $\delta_1, \dots, \delta_{2g}$, with $\sigma_i$ a half-twist; it follows that $Br_N(S)$ is likewise generated by such elements. Moreover, $Br_N(S)$ admits a presentation obtained from the presentation of $Br_N(S')$ given in \cite[Theorem 2.1]{bellingerigodelle} by adding unspecified additional relations.

Since $Br_N(S)$ acts transitively by conjugation on the set of half-twists, it follows that there is an isomorphism
\[
Br_N(S)/\Gamma \cong Br_N(S)/\pair{\pair{\sigma_i}},
\]
where $\pair{\pair{\sigma_i}}$ denotes the normal closure of the set of elements $\sigma_i$.

We claim that $Br_N(S)/\Gamma$ is an abelian group generated by at most $2g$ elements. In light of the surjection $Br_N(S') \to Br_N(S)$ discussed above and the fact that a half-twist on $S'$ is sent to a half-twist on $S$ under the inclusion $S' \into S$, it suffices to show this same claim for $Br_N(S')/\pair{\pair{\sigma_i}}$. This follows from an inspection of the six families of relations (BR1)-(SCR1) appearing in \cite[Theorem 2.1]{bellingerigodelle}. Upon modding out by the $\sigma_i$, relations (BR1), (BR2), (CR1), and (CR2) become trivial, while (CR3) and (SCR1) become equivalent to the relations that the remaining $2g$ generators $\delta_1, \dots, \delta_{2g}$ commute. 

By construction, $Br_N(S)/SBr_N(S) \cong H_1(S;\Z)$ is a free abelian group generated by $2g$ elements. Since $\Gamma \le SBr_N(S)$, there is a surjection $Br_N(S)/\Gamma \onto Br_N(S)/SBr_N(S)$. The source is an abelian group generated by at most $2g$ elements, and the target is a free abelian group on $2g$ generators. It follows that this surjection must be an isomorphism, showing the desired equality $\Gamma = SBr_N(S)$.
\end{proof}

\begin{proof}[Proof of \Cref{lemma:alltwists}]
    For $1 \le i \le k$, let $\Gamma_i \le Br_N(S)$ denote the subgroup generated by the half-twists about $\alpha_1, \dots, \alpha_i$. We proceed by induction, showing that for $i = N-1, \dots, k$, the group $\Gamma_i$ contains all half-twists about arcs $\beta$ supported on $S_i$. By property (4), any arc on $S$ is isotopic to one supported on $S_k$, from which the result will follow.

    We first consider the base case $i = N-1$. This requires its own inductive step. Let the arcs $\alpha_1, \dots, \alpha_{N-1}$ be ordered in such a way that a regular neighborhood $D_j$ of $\alpha_1 \cup \dots \cup \alpha_j$ is a disk containing $j+1$ points of $\bp$. We claim that for $1 \le j \le N-1$, the half-twists about $\alpha_1, \dots, \alpha_j$ generate the $j+1$-strand braid group $B_{j+1}$ on $D_j$. This is immediate for $j = 1$; assuming the result for $j$, one can append to $\alpha_{j+1}$ a sequence $\alpha_1', \dots, \alpha_j'$ of arcs supported on $D_j$, such that $\alpha_1', \dots, \alpha_j', \alpha_{j+1}$ is the standard configuration of arcs whose half-twists generate $B_{j+2}$. By the inductive hypothesis, each of the twists about $\alpha_1', \dots, \alpha_j'$ are contained in $\Gamma_j$, from which the claim follows.

    We now assume that for some $i \ge N-1$, every half-twist supported on $S_i$ lies in $\Gamma_i$. Note that $S_{i+1}$ is obtained from $S_i$ by attaching a $1$-handle with core given by the segment of $\alpha_{i+1}$ on $S_{i+1} \setminus S_i$. Consider an arc $\beta$ supported on $S_{i+1}$. To exhibit $P_\beta$ as an element of $\Gamma_{i+1}$, we define $k(\beta)$ as the number of times $\beta$ crosses through the $1$-handle along $\alpha_{i+1}$, and we proceed by induction on $k(\beta)$.

    The case $k(\beta) = 0$ is immediate, as in this case, $\beta \subset S_i$. The case $k(\beta) = 1$ will require special consideration. Define $i(\beta,\alpha_{i+1})$ as the minimal number of intersection points on the {\em interiors} of arcs in the isotopy class of $\beta, \alpha_{i+1}$. We will obtain the case $k(\beta) = 1$ by induction on $i(\beta,\alpha_{i+1})$.

\begin{figure}
\centering
		\labellist
        \small
		\endlabellist
\includegraphics[width=\textwidth]{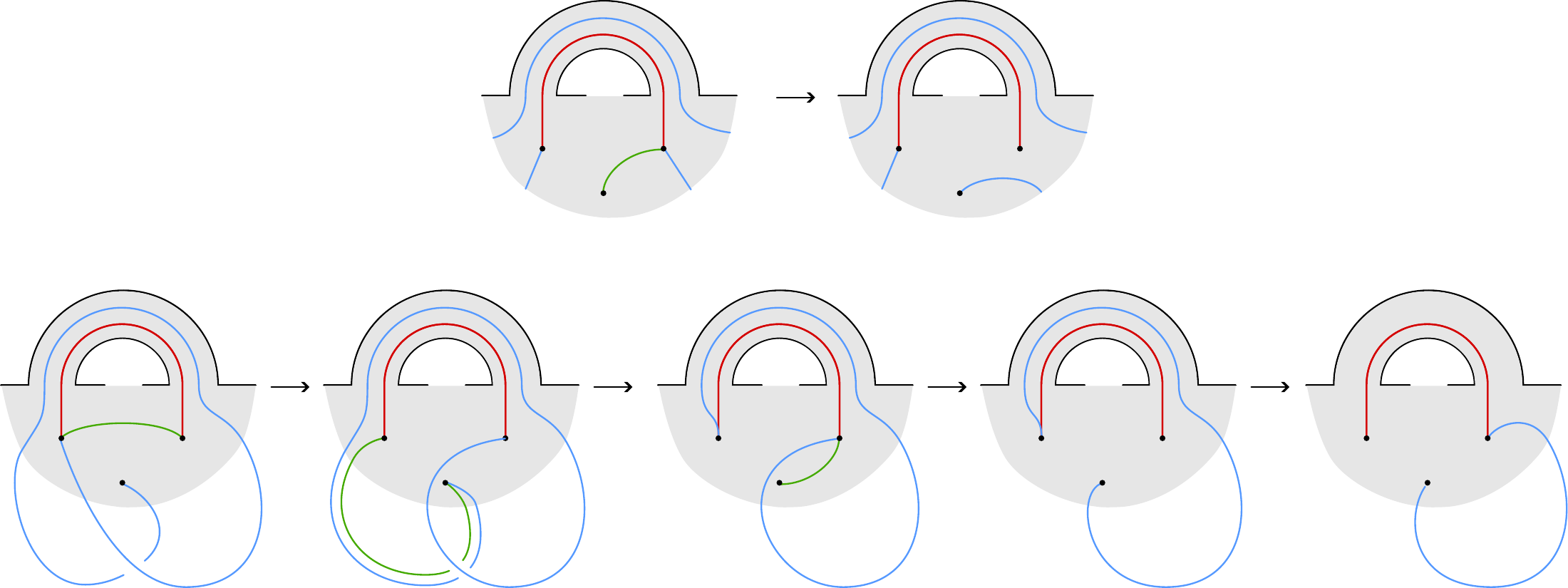}
\caption{Exhibiting $\beta$ with $i(\beta,\alpha_{i+1}) = 0$ in $\Gamma_{i+1}$. The arc $\alpha_{i+1}$ is shown in its entirety in red; $\beta$ is depicted schematically in blue. Top row: if necessary, apply a half-twist supported in $S_i$ so that $\alpha_{i+1}$ and $\beta$ share exactly one endpoint. Bottom row: it may be necessary to switch the common endpoint, so that both arcs cross the handle in the same direction when starting at their unique common endpoint. Then a further sequence of half-twists in $\Gamma_i$, followed by a half-twist about $\alpha_{i+1}$, take $\beta$ to an arc supported on $S_i$.}
\label{fig:0int}
\end{figure}

    The base case $i(\beta,\alpha_{i+1})=0$ is treated in \Cref{fig:0int}. There we see that by the hypothesis $N \ge 3$, $\beta$ can be adjusted by the application of half-twists on $S_i$ so that $\beta$ and $\alpha_{i+1}$ share exactly one endpoint, and the segments of $\beta$ and $\alpha_{i+1}$ running from this endpoint through the handle are parallel. Applying $P_{\alpha_{i+1}}$ to $\beta$ then produces an arc $P_{\alpha_{i+1}}(\beta)$ supported on $S_i$, which by hypothesis is contained in $\Gamma_i$. We conclude that $P_\beta = P_{\alpha_{i+1}}^{-1} P_{P_{\alpha_{i+1}}(\beta)} P_{\alpha_{i+1}}$ is contained in $\Gamma_{i+1}$.

\begin{figure}
\centering
		\labellist
        \small
		\endlabellist
\includegraphics[scale=0.7]{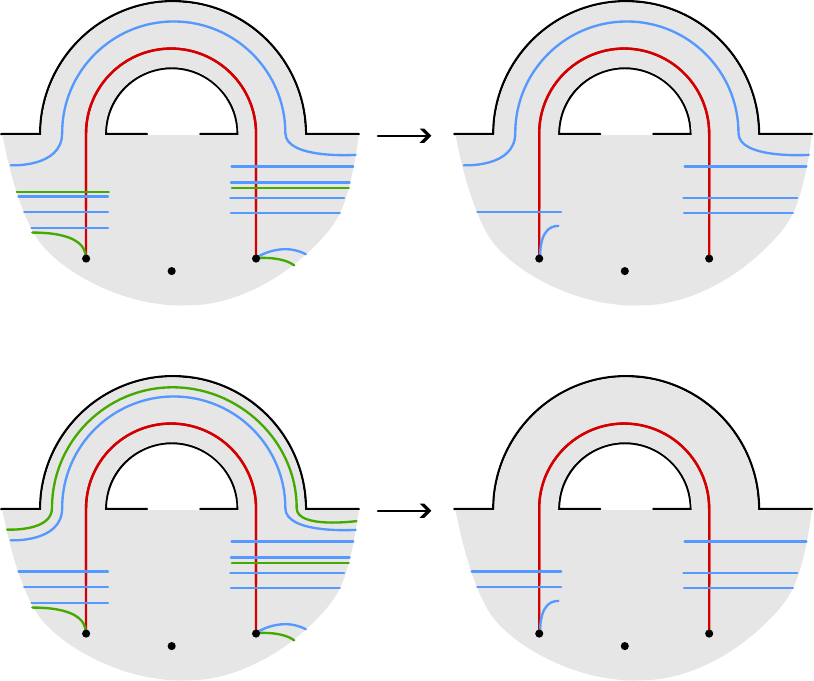}
\caption{Inductively exhibiting $\beta$ with $k(\beta) = 1$ in $\Gamma_{i+1}$. As before, $\alpha_{i+1}$ is shown in red and $\beta$ is shown schematically in blue. $\gamma$ is shown schematically in green. The top and bottom rows depict the two possibilities: either $\gamma$ goes through the new handle, or it does not.}
\label{fig:kint}
\end{figure}

    We proceed to the inductive step: we assume that $\Gamma_{i+1}$ contains all half-twists about arcs $\gamma \subset S_{i+1}$ for which $i(\gamma, \alpha_{i+1}) \le p$, and consider $\beta \subset S_{i+1}$ with $i(\beta,\alpha_{i+1}) = p+1$. \Cref{fig:kint} shows how to proceed: we construct an arc $\gamma$ by following $\beta$ from its initial point to the point of crossing with $\alpha_{i+1}$ closest to an endpoint of $\alpha_{i+1}$ not shared by $\beta$. If $\gamma$ does not leave $S_i$, then $P_\gamma \in \Gamma_i$, and 
    \[
    i(\alpha_{i+1},P_\gamma(\beta)) < i(\alpha_{i+1},\beta),
    \]
    showing inductively that $P_{P_\gamma(\beta)}$, and hence $P_\beta$ itself, is contained in $\Gamma_{i+1}$. If $\gamma$ does leave $S_i$, then note that 
    \[
    i(\alpha_{i+1},\gamma) < i(\alpha_{i+1}, \beta)
    \]
    by construction, so that in this case as well, $P_\gamma \in \Gamma_{i+1}$. Then $P_\gamma(\beta) \subset S_i$, so that we conclude as above that $P_\beta \in \Gamma_{i+1}$.

\begin{figure}
\centering
		\labellist
        \small
		\endlabellist
\includegraphics[width=\textwidth]{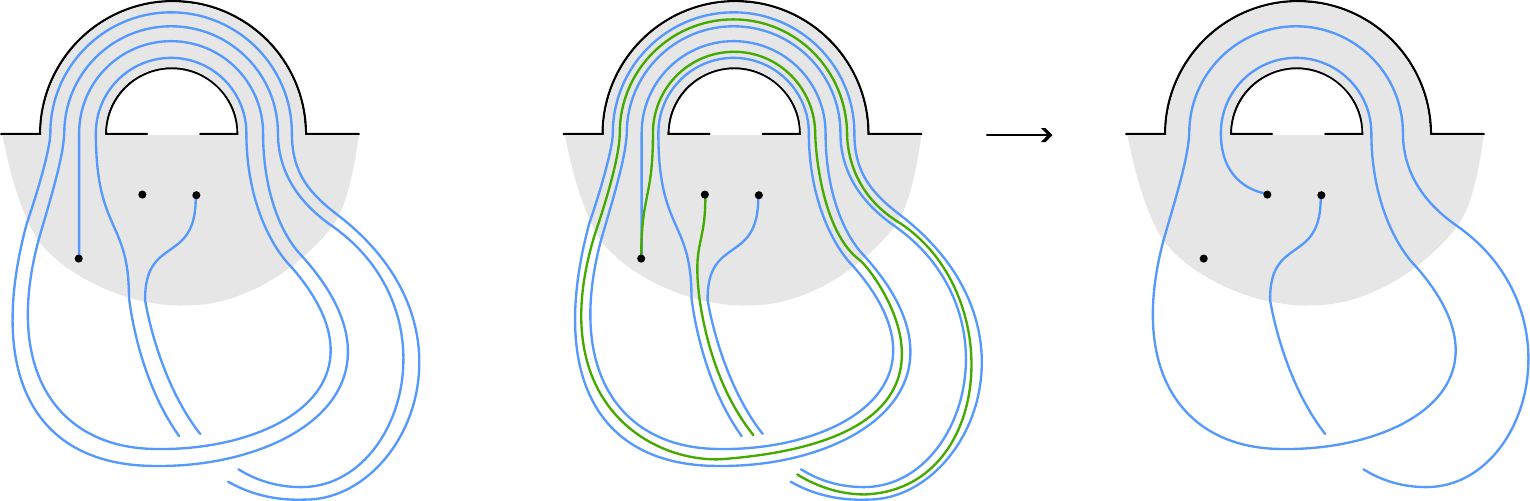}
\caption{The inductive step: decreasing $k(\beta)$. As before, $\beta$ is shown in blue and $\gamma$ is shown in green.}
\label{fig:kcrossings}
\end{figure}

    This completes the base case $k(\beta) = 1$. In general, assume $\Gamma_{i+1}$ contains all half-twists about arcs $\gamma \subset S_{i+1}$ with $k(\gamma) \le q$, and consider $\beta \subset S_{i+1}$ with $k(\beta) = q+1$. \Cref{fig:kcrossings} shows how to proceed: we construct $\gamma$ by running parallel to $\beta$ from its initial point through the handle at least once, and subsequently until the first time $\gamma$ emerges from the handle into a component of $S_i \setminus \beta$ containing some marked point disjoint from $\beta$ (such a point always exists, due to the assumption $N \ge 3$). Necessarily, $k(\gamma) < k(\beta)$, since the region of $S_i \setminus \beta$ containing the marked point must be bounded by some segment of $\beta$ other than the one making the final handle crossing, and so is accessible by following $\beta$ through some strict subset of its handle crossings. By induction, $P_\gamma \in \Gamma_{i+1}$, and so adjusting $\beta$ by $P_{\gamma} \in \Gamma_{i+1}$, we find $k(P_{\gamma}(\beta)) < k(\beta)$, completing the inductive step.  
\end{proof}

\subsection{From marked points to boundary components: disk pushing}\label{subsection:braidsdisks} Thus far, we have been considering framings on punctured surfaces $S \setminus \bp$. For the applications of the results in this section, we need to understand what happens when the points $p_i \in \bp$ are replaced with boundary components.

We briefly recall the classical theory. Let $S$ be a surface with $N$ boundary components, and let $\bar S$ be the closed surface obtained by capping each of these with a disk. Then the Birman exact sequence for the inclusion $S \into \bar S$ has the form
\[
1 \to \tilde{Br}_N(\bar S) \to \Mod(S) \to \Mod(\bar S) \to 1,
\]
where $\tilde{Br}_N(\bar S)$ admits a description as a central extension
\[
1 \to \Z^N \to \tilde{Br}_N(\bar S) \to Br_N(\bar S) \to 1,
\]
the subgroup $\Z^N$ being generated by the twists around each of the boundary components. In our setting of ``absolute'' framings (see \Cref{footnote:absfr}), the twist about each boundary component preserves $\phi$, so that this restricts to give an extension
\[
1 \to \Z^N \to \Mod(S)[\phi] \cap \tilde{Br}_N(\bar S) \to \ker(\eta_\lambda) \to 1. 
\]
Here $\lambda: \bp \to \Z$ is the weight function associated to our boundary components, given a component $p_i$, $\lambda(p_i)$ is the winding number around $p_i$.
In particular, the preimage $\tilde{SBr}_N(S)$ of $SBr_N(S) = \ker(\eta_\lambda)$ in $\tilde{Br}_N(\bar S)$ is generated by lifts of the same set of half-twists as in \Cref{prop:generatesbr}, along with twists about all boundary components. By abuse of terminology, we will also call $\tilde{SBr}_N(S)$ a ``simple braid group''.

\section{Generating $r$-spin mapping class groups}\label{section:genspinmcg}

In this section we state and recall or prove the results on generating sets for framed and $r$-spin mapping class groups that we will employ in the proof of \Cref{mainthm:ambientPn}. A completely satisfactory theory exists for genus $g \ge 5$, as recalled in \Cref{theorem:assemblagegenset} below. Unfortunately, our inductive approach will require us to consider framed mapping class groups on surfaces of genus as low as three, and so it will be necessary to develop some additional results suitable for this regime.

\subsection{Assemblages and generation in genus at least five}\label{subsection:genusge5}
For $(S, \phi)$ a framed surface of genus $g(S) \ge 5$, a criterion for generation of $\Mod(S)[\phi]$ was obtained in \cite{strata3}. We recall that here. 

The generating sets consist of finitely many Dehn twists about admissible curves. They are ``coordinate-free'' in the sense that we do not just give one specific configuration of curves that generate (like the classical Humphries generating set for $\Mod(S)$), but rather provide a criterion under which some collection generates. There is a great deal of flexibility in the allowable intersection patterns, as captured in the notion of an {\em assemblage}, defined below.

\begin{construction}[Handle attachment]
    Let $S$ be a surface and $S' \subset S$ a subsurface. Let $c \subset S$ be a simple closed curve such that $c \cap S'$ is a single essential arc. Let $\nu(S \cup c)$ be a regular neighborhood of the union $S \cup c$. Then $\nu(S \cup c)$ is said to be obtained from $S$ by {\em attaching a handle along $c$}.
\end{construction}

\begin{definition}[Assemblage]\label{def:assemblage}
    Let $S$ be a surface and let $S' \subset S$ be a subsurface. Let $\cC = (c_1, \dots, c_k)$ be an ordered collection of simple closed curves on $S$. Define $S_0 = S'$. For $i \ge 1$ suppose that $c_i \cap S_{i-1}$ is a single essential arc, and define $S_i$ by attaching a handle to $S_{i-1}$ along $c_i$. We say that $S_i$ is {\em assembled from} $S'$ and the union of curves $c_1 \cup \dots \cup c_i$.

    An ordered configuration $\cC$ as above is said to be an {\em assemblage extending $S'$} if each $S_i$ for $1 \le i \le k$ is obtained by attaching a handle to $S_{i-1}$ along $c_i$, and if $S \setminus S_k$ is a union of disks and boundary-parallel annuli. 
\end{definition}

\begin{definition}[$h$-assemblage, type $E$]
    Let $\cC = (c_1, \dots, c_k)$ be an ordered collection of simple closed curves on $S$. Suppose that for some $\ell \le k$, the subcollection $\{c_1, \dots, c_\ell\}$ forms a simple configuration in the sense of \Cref{def:simpleconfig}, and that a regular neighborhood of their union is a subsurface $S'$ of genus $h$. Suppose moreover that $\cC$ is an assemblage extending $S'$. Then $\cC$ is said to be an {\em $h$-assemblage}. If moreover this subcollection is $E$-arboreal, then $\cC$ is said to be an $h$-assemblage {\em of type $E$}.
\end{definition}

Having established the theory of assemblages, we can now state the generating sets for framed mapping class groups obtained in \cite{strata3}.

    \begin{theorem}[Theorem B.II of \cite{strata3}]\label{theorem:assemblagegenset}
        Let $(S, \phi)$ be a framed surface. Let $\cC = (c_1, \dots, c_\ell)$ be an $h$-assemblage of type $E$ on $S$ for some $h \ge 5$. If $\phi(c) = 0$ for all $c \in \cC$, then
        \[
        \Mod(S)[\phi] = \pair{T_c\mid c \in \cC}.
        \]
    \end{theorem}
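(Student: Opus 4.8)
\smallskip
\noindent\textbf{Proof proposal.} This statement is quoted as Theorem B.II of \cite{strata3}, so strictly speaking its proof is a citation; here is the shape of the argument I would give. Set $G = \pair{T_c \mid c \in \cC}$. That $G \le \Mod(S)[\phi]$ is immediate from twist-linearity (exactly as in the proof of \Cref{lemma:VCadmiss}), so the content is the reverse containment. The plan is to induct along the assemblage. Let $S' = S_0 \subset S_1 \subset \dots \subset S_k = S$ be the subsurfaces produced by the handle attachments of \Cref{def:assemblage}, where $S_0$ is a regular neighborhood of the $E$-arboreal simple configuration $\{c_1, \dots, c_\ell\}$ and has genus $h \ge 5$. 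I would show by induction on $i \ge \ell$ that $\pair{T_{c_1}, \dots, T_{c_i}} = \Mod(S_i)[\phi|_{S_i}]$, where the framed mapping class group of a subsurface means the mapping classes supported there that preserve the restricted framing; taking $i = k$ and invoking the hypothesis that $S \setminus S_k$ is a union of disks and boundary-parallel annuli then yields $G \supseteq \Mod(S)[\phi]$.

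The base case $i = \ell$ is the heart of the matter: the Dehn twists about an $E$-arboreal simple configuration of admissible curves filling a surface of genus $h \ge 5$ generate its framed mapping class group. I would prove this by first applying the framed change-of-coordinates principle (\Cref{prop:ccp}), whose hypotheses are met precisely because the configuration is $E$-arboreal and $h$ is large, to reduce to a single explicit standard model configuration; then I would analyze that model by hand, comparing against a standard Dehn twist generating set for the ambient group $\Mod(S_0)$ and using the Birman exact sequence for capping boundary components to bridge the gap between $\Mod(S_0)$ and $\Mod(S_0)[\phi|_{S_0}]$. The hypothesis $h \ge 5$ enters here: it is what makes the relevant star, chain, and lantern relations available for rewriting everything in terms of admissible twists. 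Indeed, the breakdown of this step in genus $3$ and $4$ is exactly what forces the bespoke arguments of \Cref{section:genspinmcg} in the present paper.

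For the inductive step, suppose the claim holds for $S_{i-1}$ and let $S_i$ be obtained by attaching a handle along $c_i$. Here I would use a handle-attachment lemma: the subsurface inclusion $S_{i-1} \hookrightarrow S_i$ fits into a Birman-type exact sequence whose kernel is generated by point-push (equivalently disk-push, cf.\ \Cref{subsection:braidsdisks}) maps along loops running through the new handle, and one checks that $\Mod(S_i)[\phi|_{S_i}]$ is generated by the image of $\Mod(S_{i-1})[\phi|_{S_{i-1}}]$ together with $T_{c_i}$. The point is that a framing-preserving point-push has vanishing weighted cycle class (\Cref{lemma:pushformula}) and so can be expressed as a product of Dehn twists about admissible curves supported in $S_i$ via chain and lantern relations; each such twist is then absorbed into $\pair{\Mod(S_{i-1})[\phi|_{S_{i-1}}], T_{c_i}}$ by a change-of-coordinates argument internal to $S_i$. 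I expect the base case to be the main obstacle: it is the one place where the $E$-arboreal hypothesis and the bound $h \ge 5$ are genuinely indispensable, both for pinning down a unique standard model and for having enough room to absorb the point-pushing subgroups.
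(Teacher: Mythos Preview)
The paper does not prove this theorem at all: it is quoted verbatim as Theorem~B.II of \cite{strata3} and used as a black box. You correctly note this at the outset, so there is nothing in the present paper to compare your sketch against.

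For what it is worth, your outline is broadly accurate as a description of the argument in \cite{strata3}: the proof there does induct along the assemblage, with the base case (the $E$-arboreal simple configuration in genus $h \ge 5$) being the substantial part and the inductive step handled by a handle-attachment mechanism. The actual machinery in \cite{strata3} is phrased in terms of the framed subsurface push subgroup $\tilde\Pi(\cdot)$ rather than a direct Birman-sequence analysis---indeed, the present paper imports exactly those tools as \Cref{prop5.11}, \Cref{prop3.10}, \Cref{lemma3.3}, and \Cref{lemma3.5}---but your description of the inductive step via disk-pushes and admissible-twist factorizations is morally the same thing. One small correction: in your base case you speak of the $E$-arboreal configuration ``filling'' $S_0$, but by definition $S_0$ is a \emph{regular neighborhood} of that configuration, so filling is automatic; the nontrivial content is that admissible twists on an $E$-arboreal tree of genus $\ge 5$ already generate the framed mapping class group of their span.
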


\subsection{Low genus}
\Cref{theorem:assemblagegenset} requires the existence of a simple configuration of genus at least $5$. In the course of our work, we will encounter situations where the total genus of the surface is at least $5$, but for which we can only produce a simple configuration on a surface of smaller genus. This will be handled by \Cref{prop:gencriterion}, to be proved in this subsection.

In preparation, we recall the notion of the {\em admissible subgroup} of a framed or $r$-spin mapping class group.

\begin{definition}[Admissible subgroup]
    Let $(S, \phi)$ be a surface equipped with an $r$-spin structure. The {\em admissible subgroup} $\cT_S \le \Mod(S)[\phi]$ is the group generated by all admissible twists on $S$:
    \[
    \cT_S = \pair{T_c \mid c \subset S,\ \mbox{$c$ nonseparating, }\phi(c) = 0}.
    \]
\end{definition}

\begin{remark}
    In prior work, the admissible subgroup was notated $\cT_\phi$, not $\cT_S$. Here we will have occasion to consider admissible subgroups on subsurfaces, for which the present notation is better suited.
\end{remark}

\begin{proposition}\label{prop:gencriterion}
    Let $(S,\phi)$ be a framed surface of genus $g \ge 5$, and let $S' \subset S$ be a subsurface of genus $g(S') \ge 2$ and of constant signature $-2$ (recall \Cref{def:signature}). 
    Let $c_1, \dots, c_{k}$ be a set of simple closed curves on $S$ such that $(c_1, \dots, c_k)$ forms an assemblage extending $S'$, with the special property that each $c_i$ enters and exits $S'$ exactly once. 
    Suppose $\Gamma \le \Mod(S)[\phi]$ contains the admissible subgroup $\cT_{S'}$ for $(S', \phi\left |_{S'}\right.)$, as well as the twists $T_{c_1}, \dots, T_{c_{k}}$. 
    Then $\Gamma = \Mod(S)[\phi]$.
\end{proposition}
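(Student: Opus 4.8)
The plan is to bootstrap from the genus-$\ge 5$ generation theorem \Cref{theorem:assemblagegenset} by enlarging the configuration of admissible curves we control. We are handed the admissible subgroup $\cT_{S'}$ and the twists $T_{c_1}, \dots, T_{c_k}$ of the assemblage; the goal is to show $\Gamma$ contains enough admissible twists to invoke \Cref{theorem:assemblagegenset}, and then to show it contains {\em all} of $\Mod(S)[\phi]$. First I would record that since $S'$ has genus $g(S') \ge 2$ and constant signature $-2$, \Cref{lemma:Earbexists} (applied to the capped-off surface, or directly) produces an $E$-arboreal simple configuration of admissible curves inside $S'$ whose homology classes are independent; the twists about these curves lie in $\cT_{S'} \le \Gamma$. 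This is our genus-$h$ ``core'' with $h = g(S') \ge 2$ — but note this is {\em not yet} $\ge 5$, which is precisely the difficulty the proposition is designed to circumvent, so the core theorem cannot be applied to $S'$ alone.

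The key step is to upgrade each assemblage curve $c_i$ — which by hypothesis enters and exits $S'$ exactly once — into an admissible twist lying in $\Gamma$, and more importantly to show that conjugating the core configuration by products of the $T_{c_i}$ and elements of $\cT_{S'}$ sweeps out admissible twists on larger and larger subsurfaces $S_i$. The mechanism is a standard ``lantern/handle'' induction: given that $\Gamma$ contains $\cT_{S_{i-1}}$ (the admissible subgroup of the subsurface assembled so far) and the twist $T_{c_i}$, one shows $\Gamma \supseteq \cT_{S_i}$. Here the constant-signature-$(-2)$ hypothesis on $S'$ is what guarantees, via the framed change-of-coordinates principle \Cref{prop:ccp}, that there is enough room to find the auxiliary admissible curves needed: any admissible curve in $S_i$ can be connected through the new handle to admissible curves in $S_{i-1}$ in a controlled embedded type, and its twist expressed as a product of conjugates of twists we already control. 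The point-push formula \Cref{lemma:pushaffectsphi} and \Cref{lemma:pushformula} will be used to verify that the relevant curves are genuinely admissible (winding number $0$) after the handle attachments, using that the signature is constant. Iterating from $i = 1$ to $k$, and using that $S \setminus S_k$ is a union of disks and boundary-parallel annuli, we conclude $\Gamma \supseteq \cT_S$, and in particular $\Gamma$ contains an $E$-arboreal simple configuration of admissible curves whose regular neighborhood has genus $\ge g(S') \ge 2$; but since $S$ itself has genus $g \ge 5$, we can moreover arrange (again by \Cref{prop:ccp}, extending the core across enough handles) an $h$-assemblage of type $E$ with $h \ge 5$ all of whose twists lie in $\Gamma$.

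With that $h$-assemblage of type $E$, $h \ge 5$, in hand and all its twists inside $\Gamma \le \Mod(S)[\phi]$, \Cref{theorem:assemblagegenset} immediately gives $\Gamma = \Mod(S)[\phi]$, completing the proof.

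The main obstacle I anticipate is the inductive handle step: one must argue carefully that $\cT_{S_{i-1}} + T_{c_i}$ generates $\cT_{S_i}$, i.e. that {\em every} admissible twist on the enlarged surface is reachable. This is where the hypotheses ``constant signature $-2$'' and ``each $c_i$ enters and exits $S'$ exactly once'' do the real work — the former so that admissibility is preserved under the change-of-coordinates moves and no Arf obstruction intervenes (signature $-2 \equiv 0$ is even only when $r$... but here $r = 0$, a framing, so Arf is irrelevant), and the latter so that the handle attachments are ``clean'' and the curve $c_i$ can be slid to meet any prescribed admissible curve in $S_{i-1}$ in geometric intersection $\le 1$. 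I would structure this step as a lemma: if $S''$ is obtained from a constant-signature subsurface by attaching a handle along an admissible curve $c$ meeting it in one arc, then $\cT_{S''} = \langle \cT_{S''\setminus(\text{handle})}, T_c\rangle$, proved by the change-of-coordinates principle together with a Humphries-type generating set for $\cT_{S''}$ relative to the core.
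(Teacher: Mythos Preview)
Your high-level plan (build an inductive chain $\cT_{S'} \subset \cT_{S_1} \subset \dots \subset \cT_S$, then use the $g \ge 5$ theory) is reasonable, but the inductive mechanism you propose has a real gap, and it diverges from what the paper actually does.

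The problem is your key lemma: ``if $S''$ is obtained from a constant-signature subsurface by attaching a handle along an admissible $c$, then $\cT_{S''} = \langle \cT_{S''\setminus(\text{handle})}, T_c\rangle$.'' You invoke constant signature to run change-of-coordinates, but after one handle attachment the surface $S_1$ will in general \emph{not} have constant signature, so your induction cannot proceed past $i=1$ on its own terms. You also never actually use the special hypothesis that each $c_i$ enters and exits $S'$ (not just $S_{i-1}$) exactly once; in your sketch that condition plays no role, which is a signal that the argument is not tracking the right invariant. Finally, the aside that ``$r=0$, a framing, so Arf is irrelevant'' is not the correct reason: Arf \emph{can} exist for framings; here it is undefined because the boundary values $-2$ are even, not because $r=0$.

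The paper's proof avoids the signature-collapse problem by not tracking $\cT_{S_i}$ at all during the induction. Instead it works with the framed subsurface push subgroups $\tilde\Pi(S_i \setminus \{b\})$ for a nonseparating curve $b$ with $\phi(b) = -1$. \Cref{lemma3.5} seeds $\tilde\Pi$ on a small pair-of-pants piece, and \Cref{lemma3.3} propagates $\tilde\Pi$ across a handle attachment provided the new curve is disjoint from $b$. The constant-signature-$(-2)$ hypothesis is used exactly once, and only on $S'$: via \Cref{lemma:admisstrans,lemma:0con1con,lemma:c0con} it yields that $\cT_{S'}$ acts transitively on $\{b \subset S' : \phi(b)=-1\}$. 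The ``enters and exits $S'$ exactly once'' hypothesis is then what guarantees that for each new $c_i$ there is \emph{some} such $b \subset S'$ disjoint from it, and transitivity lets you swap to that $b$ before invoking \Cref{lemma3.3}. After exhausting the assemblage you have $\tilde\Pi(S \setminus \{b\}) \le \Gamma$, and \Cref{prop3.10} together with \Cref{prop5.11} finishes. So the paper's induction carries the datum $(S_i, b)$ with $b$ always living in $S'$, rather than trying to prove the stronger (and unproven) statement that $\cT_{S_i} \le \Gamma$ at each stage.
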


This will mostly follow from the techniques of \cite{strata3}. Let us recall the relevant results therein.

\begin{proposition}[Cf. Proposition 5.11 of \cite{strata3}]\label{prop5.11}
    Let $(S, \phi)$ be a framed surface of genus $g \ge 5$. Then there is an equality
    \[
    \cT_{S} = \Mod(S)[\phi].
    \]
\end{proposition}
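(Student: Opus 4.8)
The plan is to prove the two inclusions $\cT_S \le \Mod(S)[\phi]$ and $\Mod(S)[\phi] \le \cT_S$ separately; the first is immediate, and the second reduces directly to the generation criterion \Cref{theorem:assemblagegenset} (which, together with \Cref{lemma:Earbexists}, is presumably recorded above precisely for this purpose). For $\cT_S \le \Mod(S)[\phi]$: by definition $\cT_S$ is generated by the Dehn twists $T_c$ about nonseparating simple closed curves $c$ with $\phi(c) = 0$, and the twist-linearity axiom for winding number functions gives, for every $d \in \cS$,
\[
\phi(T_c^{\pm 1}(d)) = \phi(d) \pm \pair{[c],[d]}\,\phi(c) = \phi(d),
\]
so each such $T_c$ fixes $\phi$ under the action \eqref{eqn:rspinaction} and hence lies in $\Mod(S)[\phi]$; therefore so does all of $\cT_S$.

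For the reverse inclusion $\Mod(S)[\phi] \le \cT_S$ it suffices to exhibit a single $h$-assemblage of type $E$ on $S$, with $h \ge 5$, all of whose curves are admissible: then \Cref{theorem:assemblagegenset} identifies $\Mod(S)[\phi]$ with the group generated by those twists, which is contained in $\cT_S$ by definition. When $S$ is closed this is exactly what \Cref{lemma:Earbexists} supplies: an $E$-arboreal simple configuration $\cC = \{c_1, \dots, c_{2g}\}$ of admissible curves whose homology classes form a basis of $H_1(S;\Z)$; the explicit models of \Cref{fig:liftingvcs} fill $S$, so $\cC$ is in fact a $g$-assemblage of type $E$ (needing no handle-attaching curves beyond the simple configuration itself), with $g \ge 5$ and every $T_{c_i}$ an admissible twist. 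When $S$ has nonempty boundary there is no off-the-shelf version of \Cref{lemma:Earbexists}, so instead I would take any topological $E$-arboreal simple configuration spanning a subsurface of genus at least $5$, adjust all of its winding numbers to $0$ using the framed change-of-coordinates principle \Cref{prop:ccp} (choosing the model so as to match $\Arf(\phi)$ when that invariant is constrained), and then extend it to an assemblage filling $S$ by attaching handles along further admissible curves, again produced via \Cref{prop:ccp}; \Cref{theorem:assemblagegenset} then applies as before.

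I do not anticipate a genuine obstacle here: the mathematical content sits inside the recalled results \Cref{theorem:assemblagegenset} and \Cref{lemma:Earbexists}, and the remaining work is bookkeeping — confirming that the configuration of \Cref{lemma:Earbexists} genuinely meets the ``filling $h$-assemblage of type $E$ with $h \ge 5$'' hypotheses of \Cref{theorem:assemblagegenset}, and, in the case $\partial S \ne \emptyset$, producing the initial admissible configuration of genus at least $5$ together with an admissible handle extension via framed change-of-coordinates. (One could alternatively simply cite \cite[Proposition~5.11]{strata3}.)
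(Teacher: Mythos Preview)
The paper does not prove this proposition at all: it is simply recalled from \cite{strata3} as a black box, exactly as your final parenthetical suggests. So your proposal is strictly more than what the paper does.

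Your derivation from \Cref{theorem:assemblagegenset} is logically valid \emph{within the framework of this paper}, where both results are imported from \cite{strata3}, but two remarks are in order. First, your ``closed case'' is vacuous: a framing is a $0$-spin structure, i.e.\ a nowhere-vanishing vector field, and a closed surface of genus $g\ge 2$ admits none. So every framed surface of genus $g\ge 5$ has nonempty boundary, and the only case is the one you treat by sketch via \Cref{prop:ccp}; \Cref{lemma:Earbexists} (which is about closed surfaces with honest $r$-spin structures) does not apply here. Second, be aware that in \cite{strata3} itself the logical dependence almost certainly runs the other way --- results like Theorem~B.II are established \emph{using} the equality $\cT_S=\Mod(S)[\phi]$ --- so your argument, while a legitimate deduction inside this paper, would be circular as a proof from first principles. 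The honest move, which is what the paper does, is just to cite \cite[Proposition~5.11]{strata3}.
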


This will allow us to reduce the problem of exhibiting $\Mod(S)[\phi]$ to instead exhibiting $\cT_S$. To accomplish this latter task, we appeal to another result of \cite{strata3}. This is formulated in terms of a ``framed subsurface push subgroup''.

\begin{definition}[Framed subsurface push subgroup]
    Let $S' \subset S$ be a subsurface, and let $\Delta \subset \partial S'$ be a boundary component, not necessarily a boundary component of $S$. Let $\bar S'$ be the surface obtained from $S'$ by capping $\Delta$ with a disk. Then there is a {\em disk-pushing homomorphism} 
    \[
    \cP: \pi_1(UT \bar S') \to \Mod(S').
    \]
    The inclusion $S' \into S$ induces a homomorphism $i: \Mod(S') \to \Mod(S)$, and the {\em subsurface push subgroup} is defined as the image of $i \circ \cP$. When $S$ is equipped with a framing $\phi$, the {\em framed subsurface push subgroup}, notated $\tilde \Pi(S')$, is the intersection with $\Mod(S)[\phi]$:
    \[
    \tilde \Pi(S') = \im(i \circ \cP) \cap \Mod(S)[\phi].
    \]
\end{definition}

There is an important special case of this construction. Suppose $b \subset S$ is an oriented simple closed curve with $\phi(b) = -1$. Then $S' = S \setminus \{b\}$ has a distinguished boundary component $\Delta$ corresponding to the left side of $b$, satisfying $\phi(\Delta) = -1$. For this choice of $(S', \Delta)$, we streamline notation, setting
\[
\tilde \Pi(b) := \tilde \Pi(S \setminus \{b\}).
\]

\begin{proposition}[Cf. Proposition 3.10 of \cite{strata3}]\label{prop3.10}
    Let $(S, \phi)$ be a framed surface of genus $g \ge 5$. Let $(a_0,a_1,b)$ be an ordered $3$-chain of simple closed curves with $\phi(a_0) = \phi(a_1) = 0$ and $\phi(b) = -1$. Let $H \le \Mod(S)$ be a subgroup containing $T_{a_0}, T_{a_1}$, and the framed subsurface push subgroup $\tilde \Pi(b)$. Then $H$ contains $\cT_{S}$.
\end{proposition}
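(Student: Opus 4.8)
The plan is to deduce the statement from, and in effect reproduce the argument of, \cite[Proposition~3.10]{strata3}; within the present paper it serves only as a black box feeding into the proof of \Cref{prop:gencriterion}. The target is $H \supseteq \cT_S$, and since $\cT_S$ is by definition generated by the Dehn twists $T_c$ about admissible (nonseparating, $\phi(c)=0$) curves, it suffices to exhibit every such twist inside $H$.

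First I would record the structural consequence of the hypothesis $\phi(b) = -1$. The distinguished boundary component $\Delta$ of $S' = S \setminus \{b\}$ inherits $\phi(\Delta) = -1$, so by \Cref{lemma:pushaffectsphi} every point-push $P_\beta$ along a loop $\beta$ based on $\Delta$ satisfies $\phi(P_\beta(c)) = \phi(c) - (\phi(\Delta)+1)\pair{[c],[\beta]} = \phi(c)$ for all simple closed curves $c$. Hence $P_\beta \in \Mod(S)[\phi]$ for \emph{every} such $\beta$, so $\tilde\Pi(b) \subseteq H$ already contains all of these point-pushes (equivalently, the full point-pushing subgroup of the capped-off surface $\bar{S'}$, with marked point $*$ the center of the cap); this transparency to the framing is precisely why the $\phi = -1$ hypothesis on $b$ is imposed.

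Next I would use the admissible chain $(a_0, a_1)$. Since $i(a_0, b) = 0$, the curve $a_0$ lies in $S'$ disjoint from $\Delta$ and so descends to an admissible curve of $\bar{S'}$ disjoint from $*$; conjugating $T_{a_0}$ and $T_{a_1}$ by point-pushes $P_\beta$ along loops with $\phi(\beta) = 0$ then produces twists about a large family of admissible curves, from which, using the chain and lantern relations among $T_{a_0}$, $T_{a_1}$ and their push-translates, one extracts individual admissible twists $T_c$. Finally I would close with a connectivity step: by the framed change-of-coordinates principle (\Cref{prop:ccp}) — using that $g(S) \ge 5$, so that admissible curves of each fixed embedded type form a single $\Mod(S)[\phi]$-orbit and the Arf invariant, when defined and relevant, is preserved by all the mapping classes in play — the curves produced above exhaust all admissible curves. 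Therefore $H$ contains $T_c$ for every admissible $c$, i.e. $H \supseteq \cT_S$.

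The hard part will be the middle step. The point-pushes enter as products $P_\beta = T_{\beta_R} T_{\beta_L}^{-1}$ of admissible twists, so isolating \emph{single} admissible twists from them — and then verifying that the orbit of curves so obtained really is all of the admissible curves — requires a careful bookkeeping of winding numbers under point-pushing (and, where the boundary data forces it, of the Arf invariant). This is exactly the analysis carried out in \cite[Section~3]{strata3}, and I would invoke it rather than redo it here.
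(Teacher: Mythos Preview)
The paper does not supply its own proof of this proposition; it is stated with the attribution ``Cf.\ Proposition~3.10 of \cite{strata3}'' and used thereafter as a black box, exactly as you identify. Your sketch captures the essential strategy of the cited argument---the observation that $\phi(\Delta)=-1$ makes every point-push framing-preserving (so $\tilde\Pi(b)$ is the full disk-pushing subgroup), followed by transporting the admissible twists $T_{a_0}, T_{a_1}$ around via these pushes and closing with a transitivity/connectivity argument---and you appropriately defer the technical core to \cite{strata3} rather than reproduce it. Since the paper itself provides nothing to compare against, your treatment is in line with the paper's own.
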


This will reduce the problem of generating $\cT_S$ to generating the subgroup $\tilde \Pi(b)$ for some suitable $b$. To accomplish this, we will need a final pair of results from \cite{strata3}.

\begin{lemma}[Cf. Lemma 3.3 of \cite{strata3}]\label{lemma3.3}
    Let $S' \subset S$ be a subsurface and let $\Delta$ be a boundary component of $S'$ such that $\phi(\Delta) = -1$, giving rise to the framed subsurface push subgroup $\tilde \Pi(S')$. Let $a \subset S$ be an admissible curve disjoint from $\Delta$ such that $a \cap S'$ is a single essential arc. Let $a' \subset S'$ be an admissible curve satisfying $i(a,a') = 1$. Let $S'^+$ be the subsurface given by attaching a handle to $S'$ along $a$. Then $\tilde \Pi(S'^+) \le \pair{T_a, T_{a'}, \tilde \Pi(S')}$.
\end{lemma}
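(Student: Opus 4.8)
The plan is to realize $\tilde\Pi(S'^+)$ inside $\pair{T_a, T_{a'}, \tilde\Pi(S')}$ by exhibiting explicit generators of the left side as products of the listed elements. Recall that $\tilde\Pi(S'^+)$ is the image (intersected with $\Mod(S)[\phi]$) of the disk-pushing homomorphism $\cP^+ : \pi_1(UT\bar{S'^+}) \to \Mod(S)$, where $\bar{S'^+}$ caps off the distinguished boundary $\Delta$. Since $S'^+$ is obtained from $S'$ by attaching a handle along $a$, we have $\pi_1(UT\bar{S'^+})$ generated by $\pi_1(UT\bar{S'})$ together with one new loop class coming from the handle (the core of the attached handle, which intersects $a'$ once since $i(a,a')=1$), plus the fiber class of the unit tangent bundle. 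The fiber class maps under $\cP^+$ to the boundary twist $T_\Delta$, which already lies in $\tilde\Pi(S') \le \pair{T_a,T_{a'},\tilde\Pi(S')}$, and the loops supported in $\bar{S'}$ push to elements of $\tilde\Pi(S')$ directly. So the entire content is to show that a disk-push along a loop $\gamma$ that traverses the new handle once lies in the subgroup generated by $T_a$, $T_{a'}$, and $\tilde\Pi(S')$.

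The key step is the standard identity expressing a point-push (or disk-push) as a product of two Dehn twists: pushing the capped disk around a loop $\gamma$ equals $T_{\gamma_L} T_{\gamma_R}^{-1}$ where $\gamma_L,\gamma_R$ are the boundary curves of a regular neighborhood of $\gamma \cup \Delta$. The strategy is to choose the new handle loop $\gamma$ so that it is built from an arc crossing the handle once (parallel to a push-off of $a$) together with the curve $a'$; then one of $\gamma_L,\gamma_R$ can be arranged to be $a$ itself (or a curve isotopic to $a$), while the other is a curve supported in $S'^+$ whose push-off differs from a curve supported in $S'$ by a twist about $a'$. More precisely, I would use the lantern-type or chain relations relating $T_a$, $T_{a'}$, and twists about curves on $S'$ to rewrite $T_{\gamma_L}T_{\gamma_R}^{-1}$: conjugating a disk-push in $\tilde\Pi(S')$ by $T_a^{\pm 1}T_{a'}^{\pm 1}$ moves it across the handle. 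This is exactly the mechanism by which \cite[Lemma 3.3]{strata3} is proved, so I would cite and adapt that argument; the point of restating it here is that the hypotheses ($a$ admissible, disjoint from $\Delta$, meeting $S'$ in a single essential arc, and $a'$ admissible in $S'$ with $i(a,a')=1$) are precisely what is needed to make the conjugation land back in $\Mod(S)[\phi]$ and to ensure $a$ (after the handle attachment) bounds appropriately.

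To assemble: I would (i) fix a generating set for $\pi_1(UT\bar{S'^+})$ adapted to the handle decomposition, consisting of the fiber class, loops in $\bar{S'}$, and one handle-traversing loop $\gamma_0$; (ii) observe $\cP^+(\text{fiber}) = T_\Delta \in \tilde\Pi(S')$ and $\cP^+(\pi_1(UT\bar{S'}))$ generates $\tilde\Pi(S')$ up to the fiber, both visibly in the target subgroup; (iii) write $\cP^+(\gamma_0) = T_{\gamma_{0,L}}T_{\gamma_{0,R}}^{-1}$, arrange by isotopy that $\gamma_{0,L} \simeq a$, identify $\gamma_{0,R}$ as a curve obtained from a curve $\delta \subset S'$ (with $P_\delta \in \tilde\Pi(S')$) by $\gamma_{0,R} = T_{a'}^{\pm1}T_a^{\pm1}(\delta)$ or similar, hence $T_{\gamma_{0,R}} = (T_{a'}^{\pm1}T_a^{\pm1}) T_\delta (T_{a'}^{\pm1}T_a^{\pm1})^{-1} \in \pair{T_a,T_{a'},\tilde\Pi(S')}$; (iv) conclude every generator of $\tilde\Pi(S'^+)$, and hence all of $\tilde\Pi(S'^+)$, lies in $\pair{T_a, T_{a'}, \tilde\Pi(S')}$. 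The main obstacle I anticipate is step (iii): making the geometric identification of $\gamma_{0,L}$ and $\gamma_{0,R}$ precise — i.e. verifying that the boundary curves of a neighborhood of $\gamma_0 \cup \Delta$ can genuinely be normalized to $a$ and to a twisted image of a curve in $S'$ — which requires a careful picture of how the handle sits relative to $\Delta$, $a$, and $a'$, and checking the admissibility/winding-number bookkeeping so the twists used are all in $\Mod(S)[\phi]$. Everything else is a bookkeeping translation of the disk-pushing formula and an appeal to the corresponding argument in \cite{strata3}.
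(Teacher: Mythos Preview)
The paper does not give its own proof of this lemma: it is stated with the citation ``Cf.\ Lemma 3.3 of \cite{strata3}'' and invoked as a black box. Your proposal, which sketches the disk-pushing argument and explicitly defers to \cite[Lemma 3.3]{strata3} for the geometric identification in step (iii), is therefore aligned with the paper's treatment---both ultimately appeal to the same external result.
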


\begin{lemma}[Cf. Lemma 3.5 of \cite{strata3}]\label{lemma3.5}
Let $S' \subset S$ be a subsurface with a boundary component $\Delta$ satisfying $\phi(\Delta) = -1$. Let $\cC$ be a filling arboreal simple configuration of admissible curves on $S'$, and suppose there exist $a, a' \in \cC$ for which $a \cup a' \cup \Delta$ form a pair of pants. Then $\tilde \Pi(S')$ is contained in the group $\cT_{\cC}$ generated by the twists about curves in $\cC$. 
\end{lemma}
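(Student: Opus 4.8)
The plan is to reduce the statement to one about the \emph{unframed} disk-pushing subgroup, then induct on the complexity of $S'$ by peeling leaves off the tree $\Lambda_\cC$ and invoking \Cref{lemma3.3} at each step, with a direct base case that exploits the pair-of-pants hypothesis together with the chain relation. First, since $\phi(\Delta) = -1$, \Cref{lemma:pushaffectsphi} shows every disk-push about $\Delta$ preserves all winding numbers: $\phi(P_\beta(c)) = \phi(c) - (\phi(\Delta)+1)\pair{[c],[\beta]} = \phi(c)$, and the boundary twist $T_\Delta$ preserves $\phi$ as well. Hence $i\circ\cP$ already lands inside $\Mod(S)[\phi]$, so $\tilde\Pi(S') = \im(i\circ\cP)$ and it suffices to prove $\im(i\circ\cP)\le\cT_\cC$. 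Writing $\pi_1(UT\bar{S'})$ as a central extension of $\pi_1(\bar{S'})$ by the fiber class $\zeta$ (with $\cP(\zeta) = T_\Delta$), the group $\im\cP$ is generated by $T_\Delta$ together with the disk-pushes $P_\gamma = T_{\gamma_L}T_{\gamma_R}^{-1}$, one for each $\gamma$ in a generating set of $\pi_1(\bar{S'},*)$ by simple closed curves; here $\Delta,\gamma_L,\gamma_R$ cobound a pair of pants and $\gamma_L\simeq\gamma$, while $\gamma_R$ is the copy of $\gamma$ ``wrapped once around $\Delta$''.

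\textbf{Inductive step.} I would induct on $-\chi(S')$. Assuming $\cC$ properly contains the minimal configuration below: since $\cC$ fills $S'$ and $\Lambda_\cC$ is a tree, one can choose a leaf $c\in\cC$ whose boundary arcs avoid the $\Delta$-parallel annular component of $S'\setminus\cC$, with unique neighbor $c'\in\cC$. Then $\cC\setminus\{c\}$ is arboreal and fills the subsurface $S''$ obtained from $S'$ by removing the handle carrying $c$; because $c$ is admissible (hence disjoint from $\Delta$) and avoids the $\Delta$-collar, $\Delta$ persists as a boundary component of $S''$ with $\phi(\Delta)=-1$, and $\{a,a',\Delta\}$ still bound a pair of pants. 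Now $c$ is admissible, meets $S''$ in a single essential arc, and has admissible partner $c'\subset S''$ with $i(c,c')=1$, so \Cref{lemma3.3} yields $\tilde\Pi(S')\le\pair{T_c,T_{c'},\tilde\Pi(S'')}$. By the inductive hypothesis $\tilde\Pi(S'')\le\cT_{\cC\setminus\{c\}}\le\cT_\cC$, and the step is complete.

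\textbf{Base case.} Since no two curves can simultaneously fill a surface and cobound a pair of pants with $\Delta$, the minimal configuration is a $3$-chain $\cC=\{a,c_0,a'\}$ (with $\{a,a',\Delta\}$ bounding the pair of pants $P$) filling $S'=\Sigma_1^2$. Here one checks $\tilde\Pi(S')\le\pair{T_a,T_{c_0},T_{a'}}$ directly: $\pi_1(\bar{S'},*)$ is free of rank $2$, with generators a loop $\mu$ such that $\cP(\mu)=T_aT_{a'}^{-1}$ (push $\Delta$ across $P$) and a based representative $\nu\simeq c_0$, for which $\cP(\nu)=T_{\nu_L}T_{\nu_R}^{-1}$ with $\nu_L\simeq c_0$ and $\nu_R$ isotopic to a twist-image of $c_0$ under twists about $a,a'$ (tracked from $\partial P=\Delta\cup a\cup a'$ and $\pair{[c_0],[a]}=\pair{[c_0],[a']}=1$), so that $T_{\nu_R}\in\cT_\cC$; finally the chain relation for the $3$-chain, whose regular neighborhood is $\Sigma_1^2$ with one boundary component $\simeq\Delta$ and the other, $f$, a component of $\partial S'$, gives $(T_aT_{c_0}T_{a'})^4=T_\Delta T_f$, whence $T_\Delta\in\cT_\cC$ after disposing of $T_f$.

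\textbf{Principal difficulty.} The crux is entirely in the base case: identifying the ``wrapped'' curve $\nu_R$ with a $\cT_\cC$-twist image of $c_0$, and extracting $T_\Delta$ cleanly from the chain relation while disposing of the auxiliary twist $T_f$ (in the genuine base case $f\in\partial S'$ and this requires bookkeeping in the eventual applications; in the inductive step the analogous curve is interior and gets absorbed by a later handle attachment, which is why the clean statement is the present one). One must also justify that a leaf avoiding the $\Delta$-parallel annulus is always available in the inductive step — this is precisely where the \emph{filling} and \emph{arboreal} hypotheses do essential work, the former also being what supplies the extra curve $c_0$ that makes $T_\Delta$ visible at all.
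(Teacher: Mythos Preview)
The paper does not give its own proof of this lemma; it is quoted (with ``Cf.'') from \cite{strata3}, so there is nothing to compare against here. Your reduction of $\tilde\Pi(S')$ to the unframed disk-push group via \Cref{lemma:pushaffectsphi} is correct, and the inductive scheme via \Cref{lemma3.3} is the natural one and almost certainly the one used in \cite{strata3}.

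There is a genuine gap in your base case, and you have correctly located it but not closed it. You need $T_\Delta\in\cT_\cC$, not merely $T_\Delta T_f\in\cT_\cC$; ``disposing of $T_f$'' is not bookkeeping but the heart of the matter, and since $f$ may well be essential and non-isotopic to $\Delta$ in $S$ there is no free pass. The fix is as follows. Once you know $P_\mu=T_aT_{a'}^{-1}\in\cT_\cC$ and (for a suitable based representative of $\nu$) $\nu_R=P_\mu^{\pm1}(c_0)$ so that $P_\nu=T_{c_0}T_{\nu_R}^{-1}\in\cT_\cC$, their commutator $[P_\mu,P_\nu]=P_{[\mu,\nu]}$ is the push around the boundary loop $[\mu,\nu]\in\pi_1(\bar{S'})=\pi_1(\Sigma_1^1)$, which equals $T_\gamma^{\pm1}T_f^{\mp1}$ where $\gamma$ is the separating curve with $\gamma\cup\Delta\cup f$ bounding a pair of pants. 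But $T_\gamma=(T_aT_{c_0})^6\in\cT_\cC$ by the $2$-chain relation, whence $T_f\in\cT_\cC$, and then the $3$-chain relation gives $T_\Delta\in\cT_\cC$. Your identification of $\nu_R$ as a ``twist-image of $c_0$'' is right in spirit; the clean statement is $\nu_R=P_\mu^{\pm1}(c_0)$.

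Your inductive step also needs one more sentence: you must guarantee a leaf $c\notin\{a,a'\}$. If the only leaves of $\Lambda_\cC$ are $a$ and $a'$, then $\Lambda_\cC$ is a path with endpoints $a,a'$, i.e.\ $\cC$ is a $k$-chain $a=c_1,c_2,\dots,c_k=a'$; you should check that for $k>3$ the endpoints of a $k$-chain never cobound a pair of pants with a boundary component of its regular neighborhood, so that the pair-of-pants hypothesis forces $k=3$, landing you in the base case. You should also verify that after removing such a leaf $c$, the boundary component $\Delta$ persists as a boundary component of $S''=N(\cC\setminus\{c\})$ with the pair of pants $a\cup a'\cup\Delta$ intact --- this is true, but it uses that $c\ne a,a'$ and deserves a line.
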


\begin{remark}
    Our statement of \Cref{lemma3.5} does not match \cite[Lemma 3.5]{strata3} (nor the original result \cite[Lemma 9.4]{saltertoric} it quotes) exactly; we comment here on how to reconcile the two. The statement there is in terms of {\em networks} of simple closed curves, a notion we are now trying to deprecate in favor of simple configurations. In the language of the present paper, a network $\cN$ can be defined as a set of simple closed curves ({\em not} merely isotopy classes) whose isotopy classes form a simple configuration, subject to the condition that there are no triple intersections of curves in $\cN$. The statement of \cite[Lemma 3.5]{strata3} asserts that if $\cN$ is a network satisfying the hypotheses of \Cref{lemma3.5}, then the conclusion of \Cref{lemma3.5} holds. But by hypothesis, the simple configuration in \Cref{lemma3.5} is required to be arboreal; in particular, there can be no triple intersections of curves, as this would appear as a triangle in the intersection graph.
\end{remark}

To prove \Cref{prop:gencriterion}, we will also require a few additional new results (\Cref{lemma:admisstrans,lemma:0con1con,lemma:c0con}).

\begin{definition}
    Let $(S, \phi)$ be a framed surface, and let $w,k$ be integers, with $k = 0$ or $k = 1$. The graph $\cG_{w,k}(S)$ is defined as follows:
    \begin{itemize}
        \item Vertices of $\cG_{w,k}(S)$ consist of nonseparating simple closed curves $c \subset S$ with winding number $\phi(c) = w$,
        \item There is an edge between vertices $c, c'$ whenever $i(c,c') = k$.
    \end{itemize}
\end{definition}

\begin{lemma}\label{lemma:admisstrans}
    Let $(S, \phi)$ be a framed surface for which $\cG_{-1,1}(S)$ is connected. Then $\cT_S$ acts transitively on the vertices of $\cG_{-1,1}(S)$.
\end{lemma}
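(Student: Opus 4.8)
The statement is a transitivity result: $\cT_S$ acts transitively on the vertices of $\cG_{-1,1}(S)$, under the hypothesis that this graph is connected. The natural approach is to exploit the edge structure: since $\cG_{-1,1}(S)$ is connected, any two vertices $c, c'$ with $\phi(c) = \phi(c') = -1$ are joined by a finite edge-path, and it suffices to show that for each edge of the graph — i.e., each pair $c, c'$ of nonseparating simple closed curves with $\phi(c) = \phi(c') = -1$ and $i(c,c') = 1$ — there is an element of $\cT_S$ carrying $c$ to $c'$. Composing along the path then gives the result. So the whole proof reduces to the single-edge case.

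\textbf{The single-edge case.} Fix such a pair $c, c'$. The classical trick for moving one curve of a $1$-chain to the other is the \emph{braid relation}: $T_c T_{c'} T_c (c) = c'$ (equivalently, $T_{c'} T_c$ conjugates $c$ to $c'$, or one of the standard variants). The obstacle is that $T_c$ and $T_{c'}$ are twists about curves of winding number $-1$, which are \emph{not} admissible, so they do not obviously lie in $\cT_S$. I would instead look for an admissible curve $a$ with $i(a,c) = i(a,c') = 1$ — or more carefully, a short configuration of admissible curves interpolating between $c$ and $c'$ — and use twists about such curves, which do lie in $\cT_S$ by definition, to realize the motion. Concretely, given the $1$-chain $(c,c')$, one wants an admissible $a$ completing it to a configuration where a product of admissible twists (like a chain relation, or a lantern-type relation) sends $c$ to $c'$ while fixing winding numbers. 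The framed change-of-coordinates principle (\Cref{prop:ccp}) is the right tool to produce such an $a$: one needs to check the unframed configuration type $(c, c', a)$ exists (it does — this is a standard chain on a genus-$\ge 1$ subsurface), that some $r$-spin structure realizes the winding numbers $(-1,-1,0)$ (also routine, since these are unconstrained), and that no Arf obstruction intervenes (the configuration is small enough — a $3$-chain leaves a complementary handle, so Arf is unconstrained per the remark following \Cref{prop:ccp}). Having produced $a$, an explicit element of $\pair{T_a, T_c, T_{c'}}$... but we still need $T_c, T_{c'} \notin \cT_S$ — so the cleaner route is: find \emph{two} admissible curves $a, a'$ with $i(a,c)=i(a',c)=1$ and $i(a,a')$ suitable, so that $c' = g(c)$ for some $g$ built purely from $T_a, T_{a'}$ and other admissible twists. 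A lantern or star relation on a subsurface containing $c, c'$ and several admissible curves is the standard device here, and this is where the bookkeeping lives.

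\textbf{Where the difficulty is.} The genuine obstacle is the winding-number bookkeeping: $T_c, T_{c'}$ themselves are off-limits, so every motion must be assembled from twists about admissible ($\phi = 0$) curves, and one must verify both that such curves with the needed intersection pattern and winding numbers exist (via \Cref{prop:ccp}) and that the resulting mapping-class relation genuinely transports $c$ to $c'$. A subtlety is that $\cG_{-1,1}(S)$ uses $i(c,c')=1$, not $=0$, so the ``edge move'' is inherently a $1$-chain move, which meshes well with chain relations. I would also need to confirm that the subsurface filled by $c, c'$, and the auxiliary admissible curves carries enough genus for \Cref{prop:ccp} to apply (genus $\ge 2$), perhaps enlarging the subsurface first; since no lower bound on $g(S)$ is assumed in the statement beyond what connectivity of $\cG_{-1,1}(S)$ forces, I would track exactly which genus hypothesis is actually used. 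Modulo these checks, the argument is: reduce to an edge, then use framed change-of-coordinates plus a standard admissible-twist relation to realize each edge move inside $\cT_S$.
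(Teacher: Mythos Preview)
Your reduction to the single-edge case is exactly right and matches the paper. Where you diverge is in how you handle that edge, and here you are making the problem much harder than it is.

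The paper's edge move is a one-liner. Given adjacent vertices $b_i, b_{i+1}$ with $\phi(b_i)=\phi(b_{i+1})=-1$ and $i(b_i,b_{i+1})=1$, set
\[
a_i := T_{b_i}^{\pm 1}(b_{i+1}),
\]
choosing the sign so that, by twist-linearity, $\phi(a_i) = -1 \mp (-1) = 0$. Thus $a_i$ is admissible, so $T_{a_i}\in\cT_S$. Since $i(a_i,b_i)=1$, the elementary identity $T_x(y)=T_y^{-1}(x)$ for curves with $i(x,y)=1$ (a rearrangement of the braid relation) gives $T_{a_i}^{\pm 1}(b_i)=b_{i+1}$ directly. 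That is the entire argument: one admissible curve, one admissible twist, no auxiliary configurations, no appeal to \Cref{prop:ccp}, no genus hypothesis beyond what is already implicit in the graph being nonempty.

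Your route via framed change-of-coordinates and chain/lantern relations is not wrong in spirit, but it is incomplete as written: you never specify which relation actually realizes $c\mapsto c'$ using only admissible twists, and you correctly flag that \Cref{prop:ccp} carries a genus $\ge 2$ hypothesis that the lemma does not assume. These worries evaporate once you notice that the needed admissible curve is sitting right there as $T_{b_i}^{\pm 1}(b_{i+1})$.
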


\begin{proof}
    Let $b, b' \in \cG_{-1,1}(S)$ be given. By hypothesis, there is a path $b = b_0, \dots, b_n = b'$ in $\cG_{-1,1}(S)$. It therefore suffices to exhibit $T_{a_i} \in \cT_S$ such that $T_{a_i}(b_i) = b_{i+1}$.
    By twist-linearity, $T_{b_i}^{\pm 1}(b_{i+1})$ is admissible for an appropriate choice of sign; set $a_i$ to whichever of these is. Then $b_{i+1} = T_{a_i}^{\pm 1}(b_i)$ for an appropriate choice of sign.
\end{proof}

We will require some results that allow us to establish the connectivity of $\cG_{-1,1}(S)$.

\begin{lemma}\label{lemma:0con1con}
    Let $(S,\phi)$ be a framed surface for which $\cG_{-1,0}(S)$ is connected. Then $\cG_{-1,1}(S)$ is likewise connected.
\end{lemma}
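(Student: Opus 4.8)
The plan is to show that any two vertices $c, c'$ of $\cG_{-1,1}(S)$ can be joined by a path, using the hypothesis that $\cG_{-1,0}(S)$ is connected. The basic idea is that a path in $\cG_{-1,0}(S)$ connects $c$ to $c'$ through a sequence of \emph{disjoint} winding-number-$(-1)$ curves, and each consecutive disjoint pair can be re-connected by a short path in $\cG_{-1,1}(S)$ by inserting an intermediate curve of winding number $-1$ that meets each of them once. So the core of the argument is the local claim: if $b, b'$ are nonseparating with $\phi(b) = \phi(b') = -1$ and $i(b,b') = 0$, then there is a vertex $b''$ of $\cG_{-1,1}(S)$ with $i(b'', b) = i(b'', b') = 1$.

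For this local claim I would argue via the framed change-of-coordinates principle (\Cref{prop:ccp}). First reduce to the case that $b \cup b'$ does not separate $S$: if it does, one can first find an intermediate disjoint curve of winding number $-1$ on a nonseparating "side," but in fact it is cleaner to handle all cases uniformly by directly invoking \Cref{prop:ccp} for the embedded type of the configuration we want. Concretely, we seek a curve $b''$ such that $\{b, b''\}$ is a $2$-chain (geometric intersection $1$) and also $i(b', b'') = 1$, i.e., $\{b, b'', b'\}$ realizes a specific embedded type (an "$H$-shaped" or "$\Theta$-shaped" configuration depending on whether $b \cup b'$ separates). In the unframed setting such a configuration visibly exists, by the classical change-of-coordinates principle, once $g(S)$ is large enough; for small genus one checks the finitely many cases by hand, or notes that the only potential obstruction is genus being too small to fit the configuration, which does not occur here since $\cG_{-1,0}(S)$ being connected and nonempty already forces enough room. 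The winding number constraint $\phi(b'') = -1$ is achievable by part (b) of \Cref{prop:ccp}: one needs some $r$-spin structure assigning $-1$ to all three curves, which exists because the three curves (filling a genus-one or genus-zero-with-boundary piece) impose no obstruction beyond the already-satisfied homological coherence, and the Arf constraint (c) is vacuous here since $r = 0$. This produces the desired $b''$.

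Granting the local claim, the global argument is immediate: given $c, c' \in \cG_{-1,1}(S)$, by connectivity of $\cG_{-1,0}(S)$ there is a path $c = b_0, b_1, \dots, b_n = c'$ with $\phi(b_j) = -1$ and $i(b_j, b_{j+1}) = 0$ for all $j$ (note $c, c'$ are themselves valid vertices of $\cG_{-1,0}(S)$). For each $j$, the local claim supplies $b_j'' $ with $i(b_j'', b_j) = i(b_j'', b_{j+1}) = 1$, so $b_j, b_j'', b_{j+1}$ is an edge-path in $\cG_{-1,1}(S)$. Concatenating over $j$ connects $c$ to $c'$ in $\cG_{-1,1}(S)$, as desired.

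The main obstacle I anticipate is the local claim, and specifically verifying hypothesis (b) of \Cref{prop:ccp}, i.e.\ that there genuinely exists an $r$-spin structure (here a framing, $r=0$) realizing winding number $-1$ on the whole three-curve configuration; this amounts to checking that the homological coherence relations forced by the configuration's subsurfaces are consistent with the value $-1$, which is where one must be slightly careful about the two cases (whether or not $b \cup b'$ separates $S$) and about orientations. Everything else — the global concatenation, and the existence of the underlying unframed configuration — is routine change-of-coordinates bookkeeping.
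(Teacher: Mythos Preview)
Your global structure matches the paper's exactly: both reduce to the local claim that adjacent vertices $b,b'$ in $\cG_{-1,0}(S)$ are connected by a length-two path in $\cG_{-1,1}(S)$. The difference is in how the local claim is proved.

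You invoke the framed change-of-coordinates principle (\Cref{prop:ccp}) to produce $b''$. This is heavier than necessary, and your sketch contains a genuine error: you assert that the Arf constraint is vacuous because $r=0$, but $r=0$ is even, so the Arf invariant is present (whenever $\phi$ is odd on every boundary component). You would instead need to argue that a three-curve configuration of this type is too small to constrain $\Arf$, by exhibiting a disjoint pair of once-intersecting curves in its complement; this is doable but requires a genus hypothesis that the lemma as stated does not carry. You also need $g(S)\ge 2$ just to apply \Cref{prop:ccp}.

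The paper bypasses all of this with a direct twist-linearity argument. Given disjoint $b,b'$ of winding number $-1$, take any curve $d$ with $i(b,d)=i(b',d)=1$ (this exists by ordinary, unframed change-of-coordinates), and set $b'' = T_b^{k}(d)$ for the unique $k$ making $\phi(b'')=-1$; such $k$ exists because $\phi(b)=-1$ and $\langle[b],[d]\rangle=\pm 1$, so twisting shifts $\phi(d)$ by $\pm 1$ each time. Since $T_b$ fixes $b$ and (because $i(b,b')=0$) also fixes $b'$, the intersection numbers $i(b,b'')=i(b',b'')=1$ are inherited from $d$. This avoids \Cref{prop:ccp}, the Arf discussion, and any genus hypothesis.
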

\begin{proof}
    It suffices to exhibit, for adjacent vertices $c, c' \in \cG_{-1,0}(S)$, a third curve $c''$ such that $c, c'', c'$ determines a path in $\cG_{-1,1}(S)$. To do this, let $d$ be an arbitrary curve with $i(c,d) = i(c',d) = 1$; set $w = \phi(d)$. By twist-linearity, $c'' = T_c^{\pm(w-1)}(d)$ satisfies $\phi(c'') = -1$ for appropriate choice of sign, and also satisfies $i(c,c'') = i(c,c') = 1$. 
\end{proof}

\begin{lemma}\label{lemma:c0con}
    Let $(S,\phi)$ be a framed surface of constant signature $-2$, with $g(S) \ge 2$. Then $\cG_{-1,0}$ is connected.
\end{lemma}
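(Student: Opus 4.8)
The plan is to induct on the geometric intersection number $n \coloneq i(c,c')$ of two vertices $c,c'$ of $\cG_{-1,0}(S)$, realized in minimal position. Before starting the induction I would note that $\cG_{-1,0}(S)$ is nonempty, and in fact that $\Mod(S)[\phi]$ acts transitively on its vertices: a nonseparating simple closed curve exists in the unframed setting, some framing of $S$ assigns such a curve winding number $-1$ (framings of a surface with boundary form a torsor under $H^1(S;\Z)$, and may be adjusted by a class dual to the curve), and since every boundary component carries the even value $\phi(d_i)=-2$ the Arf invariant is undefined and imposes no constraint, so \Cref{prop:ccp} applies. The base case $n=0$ is immediate, as then $c$ and $c'$ are disjoint, hence equal or adjacent. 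For the inductive step, with $n\ge 1$, it suffices to produce a vertex $c''\in\cG_{-1,0}(S)$ disjoint from one of $c,c'$ --- say $c'$ --- with $i(c,c'')<n$: then $c''$ is adjacent to (or equal to) $c'$, while $c$ and $c''$ are connected by the inductive hypothesis.

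To construct $c''$ I would separate the cases $n=1$ and $n\ge 2$. When $n=1$, the curves $c$ and $c'$ fill a subsurface $F\cong\Sigma_{1,1}$; since $g(S)\ge 2$ the complement $S\setminus F$ has positive genus, and inside it one can find a curve $c''$ that is nonseparating in $S$ and has winding number $-1$ --- this uses $g(S)\ge 2$ and the flexibility of framings on the complementary piece, via the same reasoning as the nonemptiness above --- and this $c''$ is disjoint from both $c$ and $c'$. When $n\ge 2$, I would run the standard surgery underlying connectivity of the complex of nonseparating curves (see, e.g., \cite{FM}): choosing $x,y\in c\cap c'$ consecutive along $c$, the subarc $\epsilon\subset c$ between them has interior disjoint from $c'$, and replacing one of the two subarcs of $c'$ cut off by $x,y$ with a pushoff of $\epsilon$ yields two candidates $c_1'',c_2''$, each disjoint from $c'$, satisfying $[c_1'']+[c_2'']=[c']$ in $H_1(S;\Z)$ (so at least one is nonseparating), and --- after a careful choice of subarc and smoothing --- each having geometric intersection with $c$ strictly less than $n$.

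The genuinely new point --- and where I expect the real difficulty --- is controlling the winding number of the surgered curve. Using homological coherence one expresses $\phi(c_i'')$ in terms of $\phi(c)$, $\phi(c')$, and the local intersection data at $x$ and $y$ (each transverse resolution contributing a further $\pm 1$ to the winding number, from the turning of the unit tangent around the smoothing); the hypothesis $\phi(c)=\phi(c')=-1$ is then precisely what makes it possible to choose, among the available surgery points and resolutions, a candidate $c''$ that is at once nonseparating, of winding number $-1$, and of reduced intersection number with $c$. Reconciling these three requirements simultaneously is the crux of the argument; a secondary task is the genus bookkeeping --- verifying in the $n=1$ step, and whenever one cuts $S$ along a winding-number-$(-1)$ nonseparating curve, that the resulting subsurfaces retain enough genus and a sufficiently flexible framing --- where the hypotheses $g(S)\ge 2$ and constant signature $-2$ are used.
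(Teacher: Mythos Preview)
Your strategy---induction on $i(c,c')$ via the usual nonseparating-curve surgery---is different from the paper's, and the step you flag as ``the crux'' is a genuine gap, not just a detail to fill in.

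In the $n\ge 2$ step, the pair-of-pants bounded by $c',c_1'',c_2''$ (a neighborhood of $c'\cup\epsilon$) and homological coherence give only
\[
\phi(c_1'')+\phi(c_2'') \;=\; -1-\phi(c') \;=\; -2
\]
(for the orientation of $c'$ with the pants to its left), and nothing forces either summand to equal $-1$; one may perfectly well get $(0,-2)$ or $(5,-7)$. The hypothesis $\phi(c)=-1$ does not enter this relation at all, so your assertion that ``$\phi(c)=\phi(c')=-1$ is precisely what makes it possible'' is unsupported. Nor can you repair this by a post-hoc point-push: a push based at a boundary component will adjust $\phi(c'')$ by $\pm 1$ (this is where constant signature $-2$ helps), but in general it destroys both the disjointness from $c'$ and the bound $i(c,c'')<n$. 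Note too that your $n\ge 2$ argument never invokes the constant-signature hypothesis, which is a warning sign.

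The paper sidesteps this entirely. It first uses constant signature $-2$ to show (via \Cref{lemma:framedontospin}) that $\Mod(S)[\phi]$ surjects onto $\Mod(\bar S)$, so that $\Mod(S)[\phi]$ is generated by finitely many admissible twists together with generators of $\tilde{SBr}_N(S)$ as in \Cref{prop:generatesbr}. Since $\Mod(S)[\phi]$ acts transitively on vertices of $\cG_{-1,0}(S)$ (which you correctly observe), the Putman trick reduces connectivity to checking, for a \emph{fixed} vertex $b$ and each generator $g$, that $b$ and $g(b)$ are joined by a path. By choosing $b$ and the generators compatibly, almost all generators fix $b$, and the one or two that move it do so with $i(b,g(b))=1$, handled by finding a single $b'$ disjoint from both. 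This avoids any need to control winding numbers through surgery.

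Your $n=1$ case is closer to correct, but also incomplete when $g(S)=2$: then $S\setminus F$ has genus~$1$, so \Cref{prop:ccp} does not directly apply there. One can rescue it by taking any nonseparating curve in $S\setminus F$ and point-pushing around a boundary component of $S$ (each such push changes $\phi$ by $\pm 1$ since $\phi(d_i)+1=-1$), but you should say this explicitly rather than appealing to ``flexibility of framings.''
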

\begin{proof}
     By \Cref{lemma:framedontospin}, since $(S,\phi)$ has constant signature $-2$, the inclusion $S \into \bar S$ (where $\bar S$ is obtained from $S$ by capping all boundary components of $S$ with disks) induces a surjection $\Mod(S)[\phi] \onto \Mod(\bar{S})$, the full mapping class group of $S$. The Birman exact sequence for the pair $S, \bar S$ therefore restricts to the following short exact sequence (cf. \Cref{cor:42}, which takes place in the punctured setting):
     \[
     1 \to \tilde{SBr}_N(S) \to \Mod(S)[\phi] \to \Mod(\bar{S}) \to 1.
     \]
    It follows that $\Mod(S)[\phi]$ admits a generating set consisting of the following elements:
    \begin{enumerate}
        \item A set $T_{a_1}, \dots, T_{a_m}$ of admissible twists whose images generate $\Mod(\bar{S})$,
        \item Any generating set for $\tilde{SBr}_N(S)$, e.g. of the form given in \Cref{prop:generatesbr} along with the boundary twists.
    \end{enumerate}

    By the framed change-of-coordinates principle (\Cref{prop:ccp}), $\Mod(S)[\phi]$ acts transitively on the vertices of $\cG_{-1,0}(S)$. Let $b \in \cG_{-1,0}(S)$ be an arbitrary vertex. According to the {\em Putman trick} \cite[Theorem XX]{putmantrick}, to show $\cG_{-1,0}(S)$ is connected, it suffices to exhibit paths in $\cG_{-1,0}(S)$ connecting $b$ and $g_i(b)$, for some set $g_1, \dots, g_n$ of generators for $\Mod(S)[\phi]$.

    To proceed, we choose a maximally convenient generating set of the type described above. Choosing $b \in \cG_{-1,0}(S)$ arbitrarily, the framed change-of-coordinates principle ensures that there is a set of curves $a_1, \dots, a_m$ such that the images of $T_{a_1}, \dots, T_{a_m}$ generate $\Mod(\bar{S})$, and such that $i(a_i, b) = 0$ for $i \le m-1$ and $i(a_m,b) = 1$. 
    
    $T_{a_i}(b) = b$ for $i \le m-1$, so there is nothing to show for these generators. Since $g(S) \ge 2$, there exists $b' \in \cG_{-1,0}(S)$ disjoint from both $b, a_m$, and then $b,b',T^{\pm 1}_{a_m}(b)$ is a path in $\cG_{-1,0}(S)$.

    It remains to consider generators of the second type, for $\tilde{SBr}_N(S)$. Certainly the boundary twists fix $b$, so there is nothing to show here. Let $\alpha_1 \subset S$ connect distinct boundary components of $S$ and satisfy $i(\alpha_1, b) = 1$, and then choose arcs $\alpha_2, \dots, \alpha_k$ satisfying the hypotheses of \Cref{prop:generatesbr} and moreover satisfying $i(\alpha_i, b) = 0$ for $i \ge 2$. Then the half-twists about $\alpha_i$ for $i > 1$ fix $b$, so there is again nothing to show. Finally, by the framed change-of-coordinates principle, there is $b' \in \cG_{-1,0}(S)$ disjoint from both $b$ and $\alpha_1$, and $b, b', P_{\alpha_1}(b)$ forms a path in $\cG_{-1,0}(S)$.
\end{proof}

We are now in position to prove \Cref{prop:gencriterion}.

\begin{proof}[Proof of \Cref{prop:gencriterion}]
Let $b \subset S'$ be a nonseparating curve with $\phi(b) = -1$. By the framed change-of-coordinates principle (\Cref{prop:ccp}), there is a $3$-chain $a_0,a_1, a_2$ of admissible curves such that $b \cup a_0 \cup a_2$ bounds a pair of pants and such that $i(b,a_1) = 0$. Let $S'' \subset S'$ be the subsurface given as a neighborhood of $a_0 \cup a_1 \cup a_2$ (noting that one boundary component of $S''$ is $b$). Then $a_0, a_1, a_2$ satisfy the hypotheses of \Cref{lemma3.5}, so that $\tilde \Pi(S'') \le \Gamma$. 

Continuing to apply the framed change-of-coordinates principle, extend $a_0, a_1, a_2$ to an assemblage $a_0, \dots, a_p$ of admissible curves for $S'$. By repeated applications of \Cref{lemma3.3} (invoking change-of-coordinates and the hypothesis (1) that $\cT_{S'} \le \Gamma$ as necessary), it follows that the framed subsurface push subgroup $\tilde \Pi(S' \setminus \{b\})$ is contained in $\Gamma$.

Next apply \Cref{lemma3.3} to the surface $S_1$ obtained by attaching a handle to $S'$ along $c_1$, choosing $b\subset S'$ to be any nonseparating curve satisfying $\phi(b) = -1$ and $i(c_1, b) = 0$; we conclude $\tilde \Pi(S_1 \setminus \{b\}) \le \Gamma$. Since $S'$ has constant signature $-2$ by hypothesis, \Cref{lemma:admisstrans,lemma:0con1con,lemma:c0con} together show that $\cT_{S'}$ acts transitively on nonseparating curves $b \subset S'$ with $\phi(b) = -1$. It follows that $\Gamma$ contains $\tilde \Pi(S_1 \setminus \{b'\})$ for {\em any} such $b' \subset S'$.

As $c_2$ enters and exits $S'$ once, there is some nonseparating $b_2 \subset S'$ disjoint from $c_2$ with $\phi(b_2) = -1$. Applying \Cref{lemma3.3}, it follows that $\tilde \Pi(S_2 \setminus \{b_2\}) \le \Gamma$.

This argument can be continued with $c_3, \dots, c_k$, leading to the conclusion that $\Gamma$ contains $\tilde \Pi (S \setminus \{b\})$ for some $b \subset S'$. By the framed change-of-coordinates principle, $b$ can be completed to a chain $a_0, a_1, b$ with $a_0, a_1 \subset S'$ admissible. By \Cref{prop3.10}, $\Gamma$ contains the admissible subgroup $\cT_S$, and since $g(S) \ge 5$, by \Cref{prop5.11}, we conclude that $\Mod(S)[\phi] \le \Gamma$; the reverse containment holds by hypothesis.
\end{proof}

\subsection{A miscellaneous result}

The lemma below provides a useful criterion for generating $r$-spin mapping class groups in the closed setting; it will be used in the final stages of the proof of \Cref{mainthm:ambientPn}.

\begin{lemma}\label{lemma:rspingencrit}
    Let $S$ be a closed surface of genus $g(S) \ge 5$, and let $r \mid \rho$ be integers. Let $\phi$ be a $\rho$-spin structure, and let $\bar \phi$ be the mod-$r$ reduction of $\phi$. Then $\Mod(S)[\bar \phi]$ is generated by $\Mod(S)[\phi]$ along with any Dehn twist $T_c$ for $c$ nonseparating satisfying $\phi(c) = r$.
\end{lemma}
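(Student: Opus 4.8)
The plan is to prove the two inclusions separately, writing $\Gamma := \langle \Mod(S)[\phi],\, T_c\rangle$ for the subgroup in question. The inclusion $\Gamma \le \Mod(S)[\bar\phi]$ is immediate: an element preserving the $\rho$-spin structure $\phi$ certainly preserves its mod-$r$ reduction $\bar\phi$, and since $\bar\phi(c) = \phi(c) \bmod r = 0$ the nonseparating curve $c$ is $\bar\phi$-admissible, so $T_c$ preserves all $\bar\phi$-winding numbers by twist-linearity and hence lies in $\Mod(S)[\bar\phi]$. The substantive content is the reverse inclusion $\Mod(S)[\bar\phi] \le \Gamma$.

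The first step is to observe that $\Gamma$ already contains many Dehn twists. Conjugating $T_c$ by elements of $\Mod(S)[\phi] \le \Gamma$ shows $\Gamma \supseteq \{ T_{c'} : c' \in \Mod(S)[\phi]\cdot c\}$, and since a single nonseparating simple closed curve imposes no Arf constraint (here $g(S) \ge 5 \ge 2$), the framed change-of-coordinates principle \Cref{prop:ccp} identifies this orbit with the set of \emph{all} nonseparating $c'$ with $\phi(c') = \pm r$. Thus $T_{c'} \in \Gamma$ for every such $c'$, and of course $T_d \in \Gamma$ for every $\phi$-admissible $d$, as then $T_d \in \Mod(S)[\phi]$.

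The second and main step is to upgrade this to: $T_d \in \Gamma$ for every $\bar\phi$-admissible $d$, i.e. every nonseparating $d$ with $\phi(d) = mr$ for some integer $m$. If $m = 0$ this is the previous step. In general I would shift the winding number down to zero: the framed change-of-coordinates principle supplies a $2$-chain on $(S,\phi)$ with winding numbers $(mr, r)$ (the required $\rho$-spin structure with these prescribed values on a $2$-chain exists by \Cref{lemma:specifyrss}, and no Arf constraint is imposed by so small a configuration), and since any two nonseparating curves of equal winding number lie in one $\Mod(S)[\phi]$-orbit we may take $d$ to be its first curve, obtaining $e$ with $i(d,e) = 1$ and $\phi(e) = r$. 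By twist-linearity, for the appropriate choice of sign $T_e^{\pm 1}(d)$ is nonseparating with winding number $(m - \operatorname{sgn}(m))r$, and $T_e^{\pm1} \in \Gamma$ by the first step. Iterating $|m|$ times produces $g \in \Gamma$ with $\phi(g(d)) = 0$; then $T_{g(d)} \in \Mod(S)[\phi] \le \Gamma$, so $T_d = g^{-1}T_{g(d)}g \in \Gamma$.

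Finally, since $g(S) \ge 5$, the $\bar\phi$-admissible twists generate $\Mod(S)[\bar\phi]$ (the $r$-spin counterpart of \Cref{theorem:assemblagegenset} and \Cref{prop5.11}; cf.\ \cite{strata2}), and the previous step places all of these in $\Gamma$, giving $\Mod(S)[\bar\phi] \le \Gamma$ and completing the proof. The main obstacle is the winding-number-shifting induction of the third step: one must check at each stage that the change-of-coordinates principle genuinely delivers the auxiliary curve $e$ with the prescribed winding number and prescribed intersection with $d$, and that the sign of the twist can be chosen to decrease $|m|$. The appeal to the $r$-spin generation theorem in the last step is an external input, but a standard one parallel to the framed case already recalled in the excerpt.
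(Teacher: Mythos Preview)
Your argument is correct, and it follows a genuinely different route from the paper's. The paper's proof extends $c$ to an $E$-arboreal filling simple configuration on $S$ whose remaining curves are $\phi$-admissible, invokes \cite[Corollary~3.11]{strata3} to conclude that the twists about this configuration generate \emph{some} spin mapping class group $\Mod(S)[\psi]$, and then appeals to \Cref{lemma:rspincontainment} to see that $\psi$ must be a reduction of $\phi$; the value $\phi(c)=r$ then pins down $\psi=\bar\phi$. Your approach instead stays closer to individual twists: after observing (via \Cref{prop:ccp}) that $\Gamma$ contains $T_{c'}$ for every nonseparating $c'$ with $\phi(c')=\pm r$, you run a winding-number-shifting induction to capture $T_d$ for every $\bar\phi$-admissible $d$, and then cite the $r$-spin analogue of \Cref{prop5.11}. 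Both proofs ultimately rest on the same external generation theorem from \cite{strata3}; the paper's version is shorter and exploits the structural \Cref{lemma:rspincontainment} already proved, while yours is more self-contained and makes the mechanism (twisting winding numbers down to zero) explicit.
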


\begin{proof}
    By the framed change-of-coordinates principle (in the guise of \Cref{lemma:Earbexists}), $c$ can be extended to an $E$-arboreal filling simple configuration on $S$ of $\phi$-admissible curves. By \cite[Corollary 3.11]{strata3}, the set of such twists generates some higher spin mapping class group that contains $\Mod(S)[\phi]$, and by construction is contained in $\Mod(S)[\bar \phi]$. By \Cref{lemma:rspincontainment}, this must therefore be some reduction of $\phi$ mod some $\rho'$ dividing $\rho$ but divisible by $r$. Since it contains $T_c$ with $\phi(c) = r$ we must have $\rho' = r$.
\end{proof}

\section{Topology of complete intersection curves}\label{section:topologyCI}

Having established the necessary preliminaries about framed/$r$-spin mapping class groups, we turn now to the study of families of complete intersection curves. The proof of \Cref{mainthm:ambientPn} is ultimately by induction, proceeding by deforming to reducible curves. In this section, we gather some facts about the topology and geometry of such curves. 

\subsection{Nodal degenerations}\label{subsection:nodaldegen}

In the course of this paper will often deal with nodal degenerations of a curve $C$ embedded in a surface $X$. Here we establish some basic facts, notation, and conventions. 

\begin{definition}[Nodal degeneration]
   Let $\cL$ be a line bundle on a smooth projective surface $X$. Let $f \in H^0(X, \LL)$, with zero locus $Z(f) = C$, assumed to be smooth. 
   
    Let $\disk \subset \C$ denote the unit disk, and let $\varphi: \disk \to H^0(X, \LL) $ determine a family, with $C_t$ the zero locus of $\varphi(t)$. 
We say that $\varphi$ is a {\em nodal degeneration} of $C$ if it satisfies the following conditions:
   
   \begin{enumerate}
       \item  $C_1 = C$.
       \item  The curves $C_t$ are smooth for $t \neq 0$.
       \item  The curve $C_0$ has a single node at some point $p \in C_0$.
       \item $\frac{\partial }{\partial  \bar t} (\varphi) (p)|_{t =0} =0$ and $\frac{\partial}{\partial t} (\varphi) (p)|_{t =0}  \neq 0 .$
   \end{enumerate}
\end{definition}

A nodal degeneration as above determines a family of smooth curves over $\disk^ \setminus \{0\}$ arising from pulling back the universal family along $\varphi$. 

The {\em Picard-Lefschetz formula} states that the monodromy associated to this family is given by a right-handed Dehn twist $T_\alpha$ about some simple closed curve $\alpha \subseteq C$. If $C_0$ is irreducible, $\alpha$ is nonseparating.

\subsection{Smoothing reducible curves}\label{subsection:CDEsetup}
Let $\bd = (d_1, \dots, d_{n-1})$ be a multidegree. We consider a smooth complete intersection surface $X$ of multidegree $(d_1, \dots, d_{n-2})$; in case $n = 2$ we take $X = \CP^2$. We let $C \subset X$ be a smooth complete intersection curve of multidegree $\bd$, and we take $D \subset X$ to be a smooth complete intersection curve of multidegree $\bd':= (d_1, \dots, d_{n-2},1)$ that intersects $C$ transversely in $N:= \deg(C) = \Pi(\bd)$ points.

For $1 \le i \le n-1$, let $f_i \in H^0(\CP^n; \cO(d_i))$ be equations that together define $C \subset \CP^n$, and such that $f_1, \dots, f_{n-2}$ defines $X$. Let $h \in H^0(\CP^n; \cO(1))$ define the hyperplane $H$ such that $D = X \cap H$. Then $C \cup D$ is defined by the equations $f_1, \dots, f_{n-2}, f_{n-1}h$.

Let 
\[
U \subset H^0(\CP^n; \cO(d_{n-1}+1))
\]
be the Zariski-open subset of elements defining smooth curves and let the complement of $U$ be $\Sigma$; note $f_{n-1}h \in \Sigma$. However $f_{n-1} h$ is not a smooth point of $\Sigma$. We shall now study the local topology of $\Sigma$ at this point.

    Let $\tilde \Sigma = \{(x,f) \in X \times \Sigma | x \in \textrm{Sing}(f)\}.$ The variety  $\tilde \Sigma$ is well known to be smooth and there is a surjective birational map  $\pi: \tilde \Sigma \to \Sigma$ (cf. \cite[p. 30]{GKZ} -  note that they consider projectivized sections as opposed to sections, but the proofs go through {\em mutatis mutandis}). 

\begin{lemma}\label{injder}
    Let $(f,x) \in \tilde \Sigma$ be such that $x$ is a nodal singularity of $Z(f).$ Then $\pi_* : T_{(f,p)} \tilde \Sigma \to T_f \Sigma$ is injective.
\end{lemma}
\begin{proof}
    This is essentially a local computation. Let $x_1, x_2$ be analytic parameters near $x$, such that locally $f= x_1^2 + x_2^2$. Then $T_{(f,x)}\tilde \Sigma \subseteq H^0(X,\LL ) \times T_x X$ can be identified with the set of $(g,v)$ satisfying the following linear constraints:
\begin{enumerate}
    \item $g(x)=0$.
    \item $\partial_i g(x) = - 2 v_i$, where $v_i$ is the $i^{th}$ coordinate of $v$.
\end{enumerate}

The map $\pi_*$ is then just giving by forgetting $v$. But this map is injective since $v_i = \partial_i g(x).$
\end{proof}
\begin{proposition}\label{prop:smallB}
Let $\{x_1, \dots x_N\}$ denote the intersection points of $C \cap D$. Let $V_i$ denote small balls (in $X$) around the $x_i $. Let $B$
denote a sufficiently small ball around $f_{n-1}h$ in $H^0(\CP^n; \cO(d_{n-1} +1))$. Then the following assertions hold. 
\begin{enumerate}
    \item $B \cap \Sigma = \cup \Sigma_i$, where the $\Sigma_i$ are certain smooth closed (in $B$) analytic (not algebraic) hypersurfaces.
    \item Any $g \in \Sigma_i \setminus \cup_{j \neq i} \Sigma_j$ has singularities only in $V_i$.
\end{enumerate}
\end{proposition}
\begin{proof}
    Let $\pi: \tilde \Sigma \to \Sigma$ be as above. Let $\pi_X: \tilde \Sigma \to X$ denote the other projection. Let us note that $\pi^{-1}(f_{n-1}h)$ consists of the points $(x_i, f_{n-1}h)$. Let $U_i$ denote   balls (in $\tilde \Sigma$) around each of these points, taken to be small enough that $\pi_{X}(U_i)$ are disjoint sets contained in the corresponding $V_i$. Let $ B$ be a ball (in $H^0(\CP^n ,\OO(d_{n-1}+1))$) around $f_{n-1}h$, small enough that $\pi^{-1}(B)$ is contained in the union of  the $U_i.$

    By Lemma \ref{injder}, the derivative of $\pi$ is injective at each point in  $\pi^{-1}(f_{n-1}h)$. Furthermore, each of the $\Sigma_i$ are distinct analytic hypersurfaces in $B$ - to see this consider $g \in H^0(\CP^n ,\OO(d_{n-1}+1))$ such that $Z(g)$ is singular at $x_i$ and $Z(g)$ does not contain any of the other points; for a generic such $g$ the curve $f_{n-1}h + tg$ lies on $\Sigma_i$ but not on the others.
    This then implies:
    \begin{enumerate}
    \item Let $\tilde \Sigma_i = \pi^{-1}(B \cap \Sigma) \cap U_i$. By the inverse function theorem, $\tilde \Sigma_i$ is mapped isomorphically onto its image, which we denote $\Sigma_i$.
    \item Any $f \in B \cap \Sigma$ has singularities only in $\bigcup_i V_i$, and further if $\textrm{Sing}(f) \cap V_i$ is nonempty, $f \in \Sigma_i$.
    \end{enumerate}
\end{proof}

For any $g \in U \cap B$ , the equations $f_1, \dots, f_{n-2}, g$ define a smooth curve $E \subset X$ of multidegree $\bd^+:= (d_1, \dots,d_{n-2}, d_{n-1}+1)$. Now if $B$ is sufficiently small, such an $E$ naturally decomposes as 
\[
E = \tilde C \cup \tilde D,
\]
where $\tilde C$ (resp. $\tilde D$) is homeomorphic to $C$ (resp. $D$) with $N = \Pi(\bd)$ punctures blown up into boundary components $\beta^{C/D}_1, \dots, \beta^{C/D}_N$. Then $E$ is realized by gluing $\tilde C$ to $\tilde D$ via identifying $\beta_i^C$ to $\beta_i^D$ for all $i$. Denote by $\beta_i$ the curve in $E$ obtained by the identification of $\beta_i^C$ and $\beta_i^D$.

More intrinsically, $\tilde C$ (resp. $\tilde D$) is obtained as the {\em real oriented blowup} of $C$ (resp. $D$) at the points of intersection $C \cap D$, so that there is a natural identification of $\beta_i^{C/D}$ with the unit tangent space to $C$ or $D$ at the corresponding point of $C \cap D$.

The next lemma shows that $\tilde C$ is endowed with an enhancement of the $r(\bd)$-spin structure naturally present on a complete intersection curve. To proceed, recall the notion of {\em signature} from \Cref{def:signature}, as the tuple of $\phi$-values of boundary components. 

\begin{lemma}\label{lemma:tildeDframed}
    $\tilde C$ is equipped with a canonical framing $\phi_{C,D}$ that descends to the $r$-spin structure $\phi_\bd$ on $C$. The framing $\phi_{C,D}$ has constant signature $-r(\bd)-1$.
\end{lemma}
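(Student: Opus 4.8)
The plan is to produce the framing $\phi_{C,D}$ on $\tilde C$ directly from the algebraic geometry of $C$ together with the embedding $C \subset X$ and the auxiliary curve $D$, and then verify the two asserted properties (descent to $\phi_\bd$, and constant signature $-r(\bd)-1$) using the characterizations of $r$-spin structures from \Cref{prop:rspindefs}. The cleanest construction uses point of view (3) of \Cref{prop:rspindefs}: an $r$-spin structure on a closed surface corresponds to an isotopy class of $1$-form all of whose zeros have order divisible by $r$. By \Cref{lemma:multidegreegenus}, $K_C \cong \cO_C(r(\bd))$, so a global section $s$ of $\cO_C(1)$ defines, via the $r(\bd)$-fold multiplication $s^{\otimes r(\bd)} \in H^0(C, K_C)$, an abelian differential on $C$ whose zero divisor is $r(\bd)$ times the divisor of $s$; this is exactly the $r$-spin structure $\phi_\bd$ of \Cref{prop:rspindefs}(6). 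To frame $\tilde C$, I would choose the section $s$ so that its zero locus on $C$ is \emph{contained in} $C \cap D$ — concretely, restrict the defining equation $h \in H^0(\CP^n; \cO(1))$ of the hyperplane $H$ to $C$, so that $Z(s) = C \cap H = C \cap D$ as divisors on $C$ (using that $D = X \cap H$). Then on $\tilde C$, which is the real oriented blowup of $C$ at the points of $C \cap D$, the meromorphic $1$-form $s^{\otimes r(\bd)}$ (a genuine section of $K_C$, with zeros at $C \cap D$) pulls back to a $1$-form with no zeros in the interior of $\tilde C$: the real oriented blowup replaces each zero of order $d_{n-1}$ (the local intersection multiplicity, here $1$ since $C$ meets $D = X\cap H$ transversely) by a boundary circle, and a nonvanishing-in-the-interior $1$-form on a surface with boundary is exactly a framing by the lemma on vector fields/$1$-forms in \Cref{section:rspin}.

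**Descent.** The first property — that $\phi_{C,D}$ descends to $\phi_\bd$ — is then essentially by construction: capping the boundary components $\beta_i^C$ of $\tilde C$ with disks recovers $C$, and under this capping the framing (i.e. the nonvanishing $1$-form $s^{\otimes r(\bd)}$ on $\tilde C$) extends to the meromorphic differential on $C$ with zeros of order $r(\bd)$ at the points of $C \cap D$. By \Cref{lemma:framedontospin} (or more directly by tracking point of view (3) of \Cref{prop:rspindefs} through the capping), the resulting $r$-spin structure is precisely the one with an order-$r(\bd)$ zero at each capped point, which is $\phi_\bd$ by \Cref{lemma:multidegreegenus}. Here I should double-check that the $r$-spin structure $\phi_\bd$ of the introduction — defined as the restriction of $\cO(1)$ — agrees with the one just described; this is again \Cref{lemma:multidegreegenus}, since $\cO_C(1)^{\otimes r(\bd)} \cong \cO_C(r(\bd)) \cong K_C$ and the distinguished root is $\cO_C(1)$ itself.

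**Signature.** For the second property, I need $\phi_{C,D}(\beta_i^C) = -r(\bd)-1$ for each $i$, with $\beta_i^C$ oriented with $\tilde C$ to the left (per \Cref{remark:convention}). This is a local computation at a single point $p_i \in C \cap D$. The real oriented blowup identifies $\beta_i^C$ with the unit tangent circle of $C$ at $p_i$; the winding number $\phi_{C,D}(\beta_i^C)$ measures how the chosen $1$-form (the local model of $s^{\otimes r(\bd)}$, which near $p_i$ looks like $z^{r(\bd)}\,dz$ in a local coordinate $z$ vanishing at $p_i$) rotates relative to the tangent framing as one traverses $\beta_i^C$ once. The standard computation (the same one that underlies homological coherence, or directly: the index contribution of a zero of order $k$ of a $1$-form is $-k$, combined with the $+1$ from going around a boundary circle oriented with the surface to the left, cf. the proof of \Cref{lemma:pushaffectsphi}) gives $\phi_{C,D}(\beta_i^C) = -(r(\bd)+1)$. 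Since this is the same at every $p_i$, the signature is constant and equal to $-r(\bd)-1$.

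**Main obstacle.** I expect the genuinely delicate step to be making the local computation of the signature rigorous — in particular, being careful about orientation conventions (the $S$-to-the-left convention of \Cref{remark:convention}, the handedness built into the real oriented blowup identification of $\beta_i^{C/D}$ with the unit tangent space, and the sign in the relation between the order of a zero of a $1$-form and its index) so that one lands on $-r(\bd)-1$ rather than $r(\bd)+1$ or $-r(\bd)$. A secondary subtlety is checking that the isotopy class of the resulting framing is genuinely independent of the choices made (the section $s$ with $Z(s) = C \cap D$ is unique up to scalar once $h$ is fixed, but one should confirm that the framing depends only on the data $(C, D)$ already fixed in \Cref{subsection:CDEsetup}, not on further auxiliary choices); this should follow from the connectedness of the relevant space of choices plus the discreteness of isotopy classes of framings, but it is worth stating. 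Everything else is a direct application of \Cref{prop:rspindefs}, \Cref{lemma:multidegreegenus}, and the differential-topological dictionary between nonvanishing $1$-forms and framings recalled in \Cref{section:rspin}.
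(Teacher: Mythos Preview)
Your proposal is correct and follows essentially the same approach as the paper: construct the framing via the abelian differential $\sigma_D^{\otimes r(\bd)}$ on $C$ (where $\sigma_D$ is the restriction of $h$ to $C$, with divisor $C \cap D$), then use the real oriented blowup to obtain a nonvanishing $1$-form on $\tilde C$; descent to $\phi_\bd$ is by construction, and the signature computation is the standard fact that a zero of order $k$ becomes a boundary component of winding number $-k-1$. The paper is terser, outsourcing the local signature computation to \cite[Section 7.2]{strata3} rather than sketching it as you do, but the argument is the same.
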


\begin{proof}
   $D$ induces a section $\sigma_D$ of $\cO_C(1)$ with divisor $C \cap D$, giving the $r(\bd)$-spin structure $\phi_\bd$. By adjunction, $\sigma_D^{\otimes r(\bd)}$ gives a section of $K_C$. Up to a $\C^*$-ambiguity, this can be interpreted as a $1$-form $\omega_D \in H^0(C; \Omega^1)$ with zeroes of order $r(\bd)$ at each point of $C \cap D$. As discussed in greater detail in \cite[Section 7.2]{strata3}, the real oriented blowup procedure associates to $\omega_D$ a framing on $\tilde C$. A zero of order $k$ of $\omega_D$ is transformed into a boundary component of winding number $-k-1$. Since each zero of $\omega_D$ is of order $r(\bd)$, the signature of the associated framing of $\tilde C$ is seen to be constant, equal to $-r(\bd) - 1$ as claimed.
\end{proof}

\para{Basepoint conventions} The results of \Cref{mainthm:ambientPn} and \ref{mainthm:ambientX} concern the {\em global} monodromy representation, the definition of which requires a choice of basepoint in the family of curves under study that is fixed once and for all. Here we discuss the conventions that we adopt.

We establish the following notation. Given $g \in B$, define
\[
E(g) := Z(f_1, \dots, f_{n-2},  g) \subset \CP^n.
\]

\begin{convention}\label{convention:basepoint}
    Recall from the Introduction that for $X$ a smooth complete intersection surface of multidegree $\bd' = (d_1,\dots, d_{n-2})$ and $(n-1)$-tuple $\bd =  (d_1, \dots, d_{n-2}, d_{n-1})$, the space $U_{X, \bd}$ is the parameter space of smooth complete intersection curves $C \subset X$ of multidegree $\bd$; recall also the notation $\bd^+ = (d_1, \dots, d_{n-2}, d_{n-1}+1)$ established just above, and set $\bd'_1:=(d_1, \dots, d_{n-2},1)$. The basepoint $E \in U_{X,\bd^+}$ is specified as follows: choose curves $C \in U_{X,\bd}$ and $D \in U_{X,\bd'_1}$ meeting transversely. Then $E$ is chosen to be $E(g)$ for some in $g \in B$, where $B$ is a sufficiently small ball around $f_{n-1}h$ as in \Cref{prop:smallB}.
\end{convention}

Of course, this does not specify $E$ completely, leading to a mild ambiguity in the monodromy representation. To formulate this, choose a Riemannian metric on $X$, and let $V_\eta \subset X$ be a union of $N$ disjoint open metric balls centered at each of the points of $C \cap D$, each of radius $\eta>0$. 

\begin{lemma}
    With $C,D$ as above, there are choices of $B, \eta$ such that the family of curves $E(g) \setminus V_\eta$ for $g \in B \cap U$ is topologically trivial.
\end{lemma}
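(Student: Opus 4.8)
### Proof strategy

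The plan is to exhibit the family $E(g)\setminus B_\eta$, $g\in V_{C,D}^\epsilon$, as the restriction of a single smooth fiber bundle over a contractible (in fact convex) base. The key point is that removing the small balls $B_\eta$ surgically deletes exactly the region of $X$ where the defining equation $f_{n-1}h+g$ is sensitive to the perturbation $g$, while outside $B_\eta$ the zero locus of $f_{n-1}h+g$ stays a smooth curve transverse to $\partial B_\eta$ and varies in a controlled way. First I would fix, by the compactness arguments already implicit in \Cref{subsection:CDEsetup}, a pair $(\epsilon,\eta)$ small enough that: (i) every $E(g)$ with $g\in V_{C,D}^\epsilon$ is smooth; (ii) outside $B_{\eta/2}$ the quantity $|f_{n-1}h|$ is bounded below by a positive constant, so that for $\|g\|<\epsilon$ the curve $E(g)$ meets $\bar B_\eta\setminus B_{\eta/2}$ only in a small annular neighborhood of each $\beta_i^C\cup\beta_i^D$ and, crucially, is transverse to each sphere $\partial B_{\eta'}$ for $\eta/2\le\eta'\le\eta$; and (iii) the intersection $E(g)\cap(X\setminus B_\eta)$ is a smooth curve-with-boundary whose boundary lies on $\partial B_\eta$.

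The main step is then a standard Ehresmann/Thom isotopy argument. Consider the incidence variety
\[
\mathcal{E} \;=\; \{(x,g)\in (X\setminus B_\eta)\times V_{C,D}^\epsilon \;:\; (f_{n-1}h+g)(x)=0\}
\]
together with the projection $\pi\colon\mathcal{E}\to V_{C,D}^\epsilon$. By the transversality arrangement above, $\pi$ is a proper submersion of manifolds-with-boundary (properness because $X\setminus B_\eta$ is compact; submersion because $g$ ranges over a full linear system of sections which cuts out a smooth universal hypersurface away from the base locus, and the base locus meets $X\setminus B_\eta$ not at all after shrinking $\eta$), and moreover the boundary map $\partial\mathcal E \to V_{C,D}^\epsilon$ (the family $E(g)\cap\partial B_\eta$) is also a proper submersion. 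Ehresmann's theorem for manifolds with boundary then gives that $\pi$ is a locally trivial fiber bundle; since $V_{C,D}^\epsilon$ is convex, hence contractible, the bundle is globally trivial. This is exactly the assertion that the family $E(g)\setminus B_\eta$ is topologically trivial over $V_{C,D}^\epsilon$.

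The step I expect to be the main obstacle is verifying the uniform transversality in (ii): one needs a single choice of $\eta$ (and then $\epsilon$) that simultaneously keeps $E(g)$ transverse to all the spheres $\partial B_{\eta'}$ for the full range of $g$, so that the resulting family of manifolds-with-corners has no boundary strata colliding. This is where the precise geometry near the nodes $p_i\in C\cap D$ enters: near each $p_i$ one has local coordinates in which $f_{n-1}h$ looks like the product of two transverse linear forms and $g$ is a nonvanishing constant of size $<\epsilon$, so $E(g)$ is locally a small smoothing of a node; one must check that such a smoothing meets each small coordinate sphere transversally, uniformly in the perturbation. Once this local model is pinned down (and it is essentially the standard picture of the Milnor fiber of a node), the uniform choice of $(\epsilon,\eta)$ follows by compactness of $C\cap D$, and the rest of the argument is the routine Ehresmann packaging above.
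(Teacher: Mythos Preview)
Your Ehresmann packaging is correct, but the argument has a genuine gap: $V_{C,D}^\epsilon$ is \emph{not} convex, and not even contractible. By definition $V_{C,D}^\epsilon = B_\epsilon(0)\cap\bigl(H^0(\CP^n;\cO(d_{n-1}+1))\setminus\bigcup_i H_i\bigr)$ is a small ball with $N$ hyperplanes through the origin removed; its fundamental group is generated by the meridians about the $H_i$ and is free abelian of rank $N$ (this is precisely what is exploited in \Cref{lemma:boundaryVC}). So Ehresmann only gives you a locally trivial bundle over a base with nontrivial $\pi_1$, and you cannot conclude global triviality from contractibility. Indeed, the full family $E(g)$ over $V_{C,D}^\epsilon$ has nontrivial monodromy (Dehn twists about the $\beta_i$); what you must show is that this monodromy becomes trivial once you excise $B_\eta$.

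The missing idea, and the paper's key observation, is that the family $E(g)\setminus B_\eta$ extends over the \emph{entire} ball $B_\epsilon(0)$, not just over $V_{C,D}^\epsilon$. The point is that for $g$ lying on one of the excluded hyperplanes $H_i$, the curve $E(g)$ acquires a singularity only at the corresponding point $p_i\in C\cap D$, which is inside $B_\eta$; thus $E(g)\setminus B_\eta$ remains smooth (and transverse to $\partial B_\eta$) for every $g\in B_\epsilon(0)$ once $\epsilon$ is small enough. Your Ehresmann argument then applies verbatim to the incidence variety over the convex base $B_\epsilon(0)$, yielding a trivial bundle there and hence on the open subset $V_{C,D}^\epsilon$. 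All of your transversality and compactness considerations are otherwise on target; you just need to run them over the larger, genuinely contractible, parameter space.
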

\begin{proof}
    Fix $\eta > 0$ small enough so that $V_\eta$ indeed consists of $N$ disjoint open balls. Since smoothness is an open condition and since $C, D$ are smooth, for $B$ a small enough ball the complements $E(g)\setminus  V_\eta$ is a surface with two components homeomorphic to $\tilde C$ and $\tilde D$, for all $g \in B \subset H^0(\CP^n; \cO(d_{n-1}+1))$, not just for $g \in B \cap U$. Being a family over a contractible base, it is topologically trivial as claimed.
\end{proof}

It follows that there is a canonical way to identify the curves $E(g)$ for $g \in B$ away from neighborhoods of the curves $\beta_i$. Consequently, any ambiguity in the monodromy introduced by a change of basepoint within $B$ is supported on such a neighborhood, and thus consists of a product of Dehn twists about the curves $\beta_i$. The next lemma shows that the monodromy of the family of curves $E(g)$ for $g \in B$ is as large as possible, ultimately removing this ambiguity.

\begin{lemma}\label{lemma:boundaryVC}
    The monodromy 
    \[
    \rho_{loc}: \pi_1(B\cap U)   \to \Mod(\Sigma_{g(\bd^+)})
    \]
    of the family of curves $E(g)$ for $g \in B$ is the subgroup generated by the Dehn twists
    \[
    T_{\beta_1}, \dots, T_{\beta_N}.
    \]
    In particular, each curve $\beta_i \subset E$ is a vanishing cycle.
\end{lemma}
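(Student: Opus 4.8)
The plan is to prove the two containments separately, using the triviality lemma just established for $\im(\rho_{loc}) \subseteq \pair{T_{\beta_1}, \dots, T_{\beta_N}}$ and a meridian computation for the reverse inclusion. For the first containment: by the preceding lemma the family $E(g) \setminus B_\eta$ over $B_\epsilon(0)$ is topologically trivial, so after fixing a trivialization the monodromy $\rho_{loc}(\gamma)$ of any loop $\gamma \subset V_{C,D}^\epsilon$ is represented by a diffeomorphism of $E$ equal to the identity outside $E \cap B_\eta = \bigsqcup_{i=1}^N (E \cap B_\eta(p_i))$. For $g \in V_{C,D}^\epsilon$ each piece $E(g) \cap B_\eta(p_i)$ is the Milnor fibre of the node of $C \cup D$ at $p_i$, hence an annulus whose core is (isotopic to) the curve $\beta_i$ of the decomposition $E = \tilde C \cup \tilde D$. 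A diffeomorphism supported on a disjoint union of annuli and equal to the identity near their boundaries lies in the subgroup generated by the Dehn twists about their cores, so $\rho_{loc}(\gamma) \in \pair{T_{\beta_1}, \dots, T_{\beta_N}}$. I record two facts for later: this subgroup is free abelian (the $\beta_i$ are disjoint and pairwise non-isotopic), and each of its elements fixes every $\beta_i$ up to isotopy.

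For the reverse containment I would compute the monodromy of meridian loops. Each $H_i$ is a linear hyperplane through the origin, so the inclusion $V_{C,D}^\epsilon = B_\epsilon(0) \setminus \bigcup_i H_i \hookrightarrow V_{C,D}$ is a homotopy equivalence (both sides deformation retract onto the link of this central arrangement). A standard argument --- quotienting out the normal closure of a meridian for each $H_i$ in turn, terminating at $\pi_1$ of the ambient vector space, which is trivial --- shows that $\pi_1(V_{C,D}^\epsilon)$ is normally generated by meridians $\mu_1, \dots, \mu_N$, with $\mu_i$ linking $H_i$. Now $\mu_i$ is freely homotopic in $V_{C,D}^\epsilon$ to a small circle $\sigma_i$ transverse to $H_i$ at some $q_i \in (H_i \cap B_\epsilon(0)) \setminus \bigcup_{j \neq i} H_j$; the disk that $\sigma_i$ bounds there is a nodal degeneration of the ambient smooth curve with its single node at $p_i$, so by the Picard--Lefschetz formula $\rho_{loc}(\sigma_i)$ is the Dehn twist about the vanishing cycle, which is the core of the annulus $E(g) \cap B_\eta(p_i)$. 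Transporting along the free homotopy --- a path in $V_{C,D}^\epsilon$, over which the annuli $E(g) \cap B_\eta(p_i)$ form a trivial family --- carries this core to $\beta_i$, whence $\rho_{loc}(\mu_i) = T_{\beta_i}^{\pm 1}$.

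To finish: since $\pi_1(V_{C,D}^\epsilon)$ is normally generated by $\mu_1, \dots, \mu_N$, the image $\im(\rho_{loc})$ is generated by the $\im(\rho_{loc})$-conjugates of $\rho_{loc}(\mu_1), \dots, \rho_{loc}(\mu_N)$; but by the first step every element of $\im(\rho_{loc})$ fixes each $\beta_i$, so these conjugates are again $T_{\beta_1}^{\pm 1}, \dots, T_{\beta_N}^{\pm 1}$, and hence $\im(\rho_{loc}) = \pair{T_{\beta_1}, \dots, T_{\beta_N}}$. The ``in particular'' follows at once, since the disk bounded by $\sigma_i$ is an honest nodal degeneration of $E$ with vanishing cycle $\beta_i$.

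The step I expect to be the main obstacle is the identification $\rho_{loc}(\mu_i) = T_{\beta_i}^{\pm 1}$: a priori the vanishing cycle of the local degeneration at $q_i$ lives on a smooth fibre far from the basepoint $E$, and one must check that transporting it back to $E$ yields exactly $\beta_i$ rather than some curve involving the other $\beta_j$. What makes this go through cleanly is precisely the rigidity extracted in the first step --- every monodromy element preserves each $\beta_j$, and the ambient group $\pair{T_{\beta_1}, \dots, T_{\beta_N}}$ is free abelian --- so that the identification is insensitive to the choices of meridian and return path; I expect the write-up to turn on making that robustness precise.
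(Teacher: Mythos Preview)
Your proof is correct and follows essentially the same route as the paper: identify the meridian monodromy as $T_{\beta_i}$ via Picard--Lefschetz, then use the commutativity of the $T_{\beta_i}$ to dispose of conjugation. Your treatment is in fact slightly more careful than the paper's own proof: you explicitly invoke the preceding triviality lemma to establish the containment $\im(\rho_{loc}) \subseteq \langle T_{\beta_1}, \dots, T_{\beta_N} \rangle$ first, which is exactly what makes the conjugation step non-circular (the paper establishes this in the remark before the lemma but does not re-invoke it inside the proof).
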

\begin{proof}
Following \Cref{prop:smallB}, $B \cap U$ has the structure of a complement of analytic hypersurfaces $\Sigma_i$ intersected with a small ball. Recall that the $\Sigma_i$ are distinct. Let $g \in B \cap U$.

As is well-known, the fundamental group $\pi_1(B \cap U)$ is therefore generated by the $N$ conjugacy classes of meridians around each of the components $\Sigma_i$. Let $V_i$ denote a small ball in $X$ centred at $x_i$. We then note that any meridian around $\Sigma_i$ has to be sent to a Dehn twist about a simple closed curve in $E(g)$ that is contained in $E(g) \cap V_i$. But for $B$ sufficiently small $E(g) \cap V_i$ is a cylinder around $\beta_i$. Thus the meridianal loop is sent to   $T_{\beta_i}$. Since the $\beta_i$ are pairwise disjoint, the twists $T_{\beta_i}$ pairwise commute. Thus, the entire conjugacy class of a meridian is sent under $\rho_{loc}$ to the twist $T_{\beta_i}$, showing that the image of $\rho_{loc}$ is the free Abelian group generated by $T_{\beta_1}, \dots, T_{\beta_N}$ as claimed. 
\end{proof}

\subsection{Pencils}
In the sequel, we will probe the topology of the family of complete intersection curves in $X$ by holding $D$ fixed and letting $C$ vary. Here we prove that a family of such $C$ can be chosen that avoids pathologies. 

\begin{definition}[Maximally generic pencil]\label{def:generic}
    A pencil $C_t$ of curves in $X$ is {\em maximally generic rel $D$} if the following conditions hold:
    \begin{enumerate}
        \item $D$ is disjoint from the base locus of $C_t$,
        \item $D$ intersects every singular fiber $C_{t_s}$ transversely and in the smooth locus,
        \item Each $C_t$ has at most one simple tangency with $D$.
        \item Let $C_{t_i}$ be a curve that intersects $D$ at $p$ with a simple tangency. Then there is a local coordinate $s$ for $\CP^1$ centered at $t_i$ and a local coordinate $z$ for $D$ centered at $p$ such that the equation of $C_t$ restricted to a neighborhood of $p$ is $z^2 - s$.
    \end{enumerate}
\end{definition}

\begin{lemma}\label{lemma:maxgenexists}
    Let $X$ be a smooth complete intersection surface, and let $C,D$ be smooth complete intersection curves in $X$ intersecting transversely. Then $C$ admits an extension to a pencil $C_t$ that is maximally generic rel $D$.
\end{lemma}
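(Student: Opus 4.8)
The plan is to realize the pencil $C_t$ as a generic line in the projective space $|\mathcal{O}_X(d_{n-1})|$ (or more precisely the sub-linear-system of complete intersection curves of multidegree $\bd$) passing through the point $[C]$, and to verify that each of the four conditions in \Cref{def:generic} is satisfied on a Zariski-dense (or at least nonempty Zariski-open) set of such lines. Since a generic line through a fixed point of a projective space meets a Zariski-open dense set, the only real content is to check that each condition defines such an open set, and that condition (4) — the precise local normal form $z^2 - s$ — is automatic once the tangency in (3) is \emph{simple}. I would organize the argument as a sequence of genericity claims, each phrased as: ``the locus of pencils failing condition $(k)$ is contained in a proper closed subvariety of the space of lines through $[C]$.''

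The key steps, in order: First, set up the parameter space. Let $W \subset |\mathcal{O}_X(d_{n-1})|$ be the locus of curves that are complete intersections of multidegree $\bd$ in $X$ (cut out by a section of the relevant bundle together with the fixed $f_1,\dots,f_{n-2}$); this is a nonempty Zariski-open subset, and $[C] \in W$. A pencil is a line $\ell \subset |\mathcal{O}_X(d_{n-1})|$; we restrict to lines through $[C]$, parametrized by a projective space $\mathbb{P}^{*}$. Second, condition (1): the curves in $|\mathcal{O}_X(d_{n-1})|$ through a fixed point $p \in D$ form a hyperplane; since $D$ is a curve and $|\mathcal{O}_X(d_{n-1})|$ is base-point-free (indeed very ample, as $\mathcal{O}(d_{n-1})$ is restricted from $\CP^n$), the base locus of a generic pencil is a finite set of $\Pi(\bd)$ reduced points, and a dimension count shows a generic pencil through $[C]$ has base locus disjoint from the $1$-dimensional $D$. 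Third, conditions (2) and (3): consider the incidence variety of pairs (singular curve in the system, its singular point), which has dimension one less than the system; requiring a singular point to lie on $D$ or to be a point where the curve meets $D$ non-transversely is a further codimension-$1$ condition, so a generic pencil avoids it; likewise the locus of curves having two distinct simple tangencies with $D$, or a tangency of order $\ge 3$, is a proper closed subset, so a generic pencil meets it in at most finitely many points and each such fiber has exactly one simple tangency. Fourth, condition (4): at a point $p$ of simple tangency, choosing a local coordinate $z$ on $D$ centered at $p$, the restriction $C_t|_D$ is, in a local coordinate $s$ on $\CP^1$ near $t_i$, a holomorphic function $F(z,s)$ with $F(0,t_i)$ vanishing to order exactly $2$ in $z$ (that is what ``simple tangency'' means) and with $\partial F/\partial s(0,t_i) \ne 0$ (this is the genericity of the pencil direction, again a codimension-$1$ condition to fail); the holomorphic Morse lemma / Weierstrass preparation then lets us change coordinates so that $F = z^2 - s$ after re-centering $s$, giving (4).

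The main obstacle I anticipate is not any single one of these claims individually — each is a standard Bertini-type or transversality argument — but rather the bookkeeping needed to ensure that \emph{all} four conditions hold \emph{simultaneously} for one and the same generic pencil, and in particular that the finitely many tangency fibers in (3) do not coincide with the singular fibers in (2) and that the coordinate change in (4) can be made consistently. The cleanest way around this is to note that the finite intersection of nonempty Zariski-open subsets of the irreducible variety $\mathbb{P}^{*}$ of pencils through $[C]$ is again nonempty and Zariski-open, so it suffices to establish each condition separately as an open dense constraint; the genericity of $D$ relative to $C$ (their transverse intersection, hypothesized in the statement) is what guarantees $[C]$ itself does not lie on the bad loci, so that the relevant open sets are genuinely nonempty after imposing passage through $[C]$. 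One should also invoke the very ampleness of $\mathcal{O}(d_{n-1}+1)$-type bundles (available since everything is restricted from $\CP^n$) to guarantee enough sections to separate points and tangent directions along $D$, which is what makes the transversality statements work.
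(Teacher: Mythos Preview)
Your proposal is correct and follows essentially the same genericity-of-pencils strategy as the paper. The paper's version is a bit more streamlined: it restricts the linear system $|C|$ to $D$ (where it is very ample) and packages conditions (3) and (4) together as the single assertion that a general line in $|C|$ through $[C]$ meets the dual variety $D^{\vee}\subset |C|$ transversely at smooth points---transversality at a smooth point of $D^{\vee}$ already encodes both the simple-tangency condition and the local normal form $z^2-s$, so no separate Morse-lemma step is needed.
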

\begin{proof}
   The linear system $|C|$ is very ample on $D$, and furthermore the dual variety $D^{\vee} \subseteq |C| $ is a proper closed subvariety. A general line $L$ in $|C|$ passing through $C$ meets $D^{\vee}$ transversely at finitely many points $H_1 , \dots H_k$, corresponding to curves $C_1, \dots, C_k$ in $\abs{C}$. Each $C_i$ intersects $D$ transversely except at one non-reduced point $p$ at which $C_i$ and $D$ have a simple tangency. Transversality of the intersection also implies that if $H \in D^{\vee} \cap L$, we may pick analytic local coordinates for $L$ centered at $H$, denoted $s$, and analytic local coordinates for  $D$ centered at $p$, denoted $z$, such that the section $H_s$ is in coordinates  $H_s(z) = z^2 -s$.
\end{proof}

\section{Simple braids in the monodromy}\label{section:exhibitsimplebraids}

The purpose of this section is twofold. In \Cref{subsection:monobraids}, we prove that the monodromy group $\Gamma_{\bd^+} \le \Mod(E)$ contains a subgroup that restricts on $\tilde C$ to the simple braid group studied in \Cref{section:simplebraids}; it likewise contains some other subgroup restricting to the simple braid group on $\tilde D$. In \Cref{subsection:lifting}, we leverage this to lift configurations of vanishing cycles on $C$ to configurations on $\tilde C \subset E$.

\subsection{Simple braids in the monodromy}\label{subsection:monobraids}
Following \Cref{lemma:maxgenexists}, let $C_t$ be maximally generic rel $D$. Enumerate the fibers with simple tangencies to $D$ as $C_{t_1}, \dots, C_{t_k}$. Then there is a map 
\begin{align*}
\beta: \CP^1 \setminus \{t_1, \dots, t_k\} &\to \UConf_N(D)\\
t &\mapsto C_t \cap D.
\end{align*}
Note that by (2) of \Cref{def:generic}, the map $\beta$ extends over $t_s \in \CP^1$ with $C_{t_s}$ singular. 
\begin{proposition}\label{prop:simplebraidmonodromy}
In the above setting, for $N = \abs{C_t \cap D} \ge 3$, the induced map 
\[
\beta_*: \pi_1(\CP^1 \setminus \{t_1, \dots, t_k\}) \to Br_N(D)
\]
has image $\im(\beta_*) = SBr_N(D)$, the simple braid group. The same is true with the roles of $C$ and $D$ exchanged.
\end{proposition}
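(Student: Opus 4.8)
The plan is to verify that the arcs traced out by the map $\beta$ satisfy the four hypotheses of \Cref{prop:generatesbr}, applied to the surface $S = D$ equipped with the marked-point set $\bp = C \cap D$. First I would observe that the image of $\beta_*$ is automatically contained in $SBr_N(D)$: the cycle class map $\eta\colon Br_N(D) \to H_1(D;\Z)$ records the homology class swept out by the moving configuration $C_t \cap D$, and since this configuration is cut out on $D$ by the restriction of the pencil of sections of $\cL = \cO_X(d_{n-1})$ — all of which lie in a single linear system, hence are linearly equivalent as divisors on $D$ — the total cycle is nullhomologous. (Alternatively, this is exactly \Cref{lemma:pushformula}: the winding-number function $\phi_{C,D}$ on $\tilde D$ has constant signature by \Cref{lemma:tildeDframed}, so the framed mapping class group restricted to braids is $SBr_N(D)$, and the monodromy of a family of algebraic curves preserves the $r$-spin structure $\phi_\bd$, hence preserves $\phi_{C,D}$.) So the content is the reverse inclusion $SBr_N(D) \subseteq \im(\beta_*)$.

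For that, I would choose the pencil $C_t$ and a spanning tree of paths in $\CP^1 \setminus \{t_1,\dots,t_k\}$ carefully. Fix a basepoint $t_0$ and loops $\gamma_1,\dots,\gamma_k$ based at $t_0$, each encircling exactly one tangency value $t_j$, generating $\pi_1(\CP^1 \setminus \{t_1,\dots,t_k\})$. By \Cref{lemma:maxgenexists}, specifically condition (4) of \Cref{def:generic} together with the last sentence of its proof, the local monodromy of $\beta$ around each $t_j$ is a \emph{half-twist} $P_{\alpha_j}$ about an arc $\alpha_j$ on $D$ joining the two points of $C_{t_0}\cap D$ that collide at the tangency point over $t_j$; thus $\im(\beta_*) = \pair{P_{\alpha_1},\dots,P_{\alpha_k}}$. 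It remains to arrange that, after possibly reindexing and after choosing the loops $\gamma_j$ appropriately (dragging the basepoint along auxiliary paths changes each $\alpha_j$ by an ambient braid, which does not change the generated subgroup once we have all of them), the arcs $\alpha_1,\dots,\alpha_k$ can be taken to satisfy conditions (1)--(4) of \Cref{prop:generatesbr}: the first $N-1$ of them form a tree whose neighborhood is a disk containing all of $\bp$; each subsequent arc enters and exits the already-built subsurface exactly once; and the arcs together fill $D$ (complement a union of disks and boundary-parallel annuli). The existence of a pencil realizing such a collection of arcs is where one uses genericity of the pencil, combined with a change-of-coordinates / connectivity argument on $D$: any arc on $D$ between two points of $\bp$ is realized as a vanishing arc of \emph{some} tangency, and by choosing the pencil and the Hurwitz system of paths suitably we can hit a prescribed filling arc system.

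The main obstacle I anticipate is precisely this last step — showing that the vanishing arcs of a maximally generic pencil can be organized into a system satisfying the combinatorial hypotheses (1)--(4) of \Cref{prop:generatesbr}, rather than being some uncontrolled collection. The key technical point is that the set of tangency directions of fibers $C_t$ with $D$, as $t$ ranges over $\CP^1$, is governed by the dual variety $D^\vee \subset |C|$ and a general line $L \subset |C|$ meets $D^\vee$ in a rich enough configuration; one must check that $\abs{L \cap D^\vee} = k$ is large enough and the arcs flexible enough to build a filling assemblage. I would handle this by first producing, by hand, one explicit pencil and path system for which (1)--(4) visibly hold (e.g. degenerating part of $D$ or working in coordinates near $D$), then invoking that the monodromy group $\im(\beta_*)$ is independent of these auxiliary choices. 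The statement with $C$ and $D$ interchanged is symmetric: one simply applies \Cref{lemma:maxgenexists} with the roles reversed, which is legitimate since both $C$ and $D$ are smooth complete intersection curves in $X$ meeting transversely, and $|D|$ is very ample on $C$ as well.
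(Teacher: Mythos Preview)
Your overall strategy is correct: identify the local monodromy around each $t_j$ as a half-twist $P_{\alpha_j}$, and then feed the resulting arc system into \Cref{prop:generatesbr}. You also correctly flag the verification of hypotheses (1)--(4) as the crux. However, your proposed method for handling that verification---constructing an explicit pencil by hand, or invoking a change-of-coordinates argument to realize arbitrary arcs as vanishing arcs---is both harder than necessary and not how the paper proceeds. The claim that ``any arc on $D$ between two points of $\bp$ is realized as a vanishing arc of some tangency'' is not established and not needed.

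The missing idea is the branched-covering description of the arcs. The pencil map $\pi: X \dashrightarrow \CP^1$ restricts to a simple branched cover $\pi: D \to \CP^1$ of degree $N$, with branch points exactly $t_1,\dots,t_k$ and fiber over $t_0$ equal to $\bp = C_{t_0}\cap D$. The arc $\alpha_j$ associated to the meridian around $t_j$ (based via a path $\tau_j$ from $t_0$ to $t_j$) is nothing but the distinguished two-sheeted lift of $\tau_j$ under $\pi$. Once you see this, hypotheses (1)--(4) of \Cref{prop:generatesbr} become almost automatic for \emph{any} maximally generic pencil: choose the $\tau_j$ to have pairwise disjoint interiors in $\CP^1$, so their lifts have disjoint interiors in $D$, giving (1). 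Connectedness of the cover $D$ lets you reorder and relabel so that the first $N-1$ lifts successively connect new sheets, and an Euler-characteristic count shows their union is a tree, giving (2). For $i\ge N$, the lift $\tilde\tau_i$ lies outside the previous neighborhood except near its endpoints (since the $\tau_j$ are disjoint downstairs), giving (3). Finally the complement of $\bigcup \tau_j$ in $\CP^1$ is a disk with no branch points, so its $\pi$-preimage is $N$ disjoint disks, giving (4). No special pencil, no explicit coordinates, and no change-of-coordinates principle are required.
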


\begin{proof}
    \Cref{lemma:maxgenexists} holds regardless of the multidegrees of $C$ and $D$, so we are free take either one to play either role. For simplicity, in the argument below we will hold $D$ fixed and let $C$ vary.
    
    For ease of notation, set
    \[
    B:= \CP^1 \setminus \{t_1, \dots, t_k\},
    \]
    and choose a basepoint $t_0 \in B$. 
    For the purposes of this proof, a {\em path} shall mean an embedding $\tau: [0,1] \to \CP^1$ such that $\tau(0) = t_0$, $\tau(1) = t_i$ for some $1 \le i \le k$, and $\tau\left |_{(0,1)}\right. \subset B$.
    
    The pencil map $\pi: X \dashedrightarrow \CP^1$ restricts to a realization $\pi: D \to \CP^1$ of $D$ as a simple branched cover of degree $N$. Fix an identification of $\pi^{-1}(t_0)$ with the integers $1, \dots, N$. We then obtain a monodromy homomorphism $\mu: \pi_1(B) \to S_N$, where $S_N$ denotes the symmetric group on $N$ letters. In particular, each based meridian (i.e. a loop in $B$ obtained by a small circle around some $t_i$, based at $t_0$ by some choice of path) is assigned a well-defined transposition in $S_N$, called the {\em local monodromy} of the meridian.
    
    For each point $t \in \CP^1$, the preimage $\pi^{-1}(t) \subset D$ is given by the intersection $C_t \cap D$. It follows that we can study the map $\beta: B \to \UConf_N(D)$ by understanding the effect of dragging $t$ around some loop in $B$. In particular, let $\gamma \in \pi_1(B)$ be a meridian based via the path $\tau \subset \CP^1$, with local monodromy $(ij) \in S_N$. Then $\beta(\gamma) \in Br_N(D)$ is the half-twist in $D$ along the arc comprised of the lifts of $\tau$ based at $i,j \in \pi^{-1}(t_0)$. In this way, we can associate to each path $\tau$ in $\CP^1$ an arc $\tilde \tau$ in $D$ and the associated half-twist.

    To prove the claim, we will appeal to \Cref{prop:generatesbr}, exhibiting a set $\alpha_1, \dots, \alpha_k$ of arcs on $D$ satisfying the necessary hypotheses. Let $\tau_1, \dots, \tau_k$ be a system of paths with disjoint interiors, with $\tau_i$ terminating at $t_i$. We claim that the corresponding lifts $\tilde{\tau_i}$ satisfy properties (1)-(4) of \Cref{prop:generatesbr}. An example illustrating our argument is shown in \Cref{fig:example}.

    \begin{figure}
\centering
		\labellist
        \small
		\endlabellist
\includegraphics[width=\textwidth]{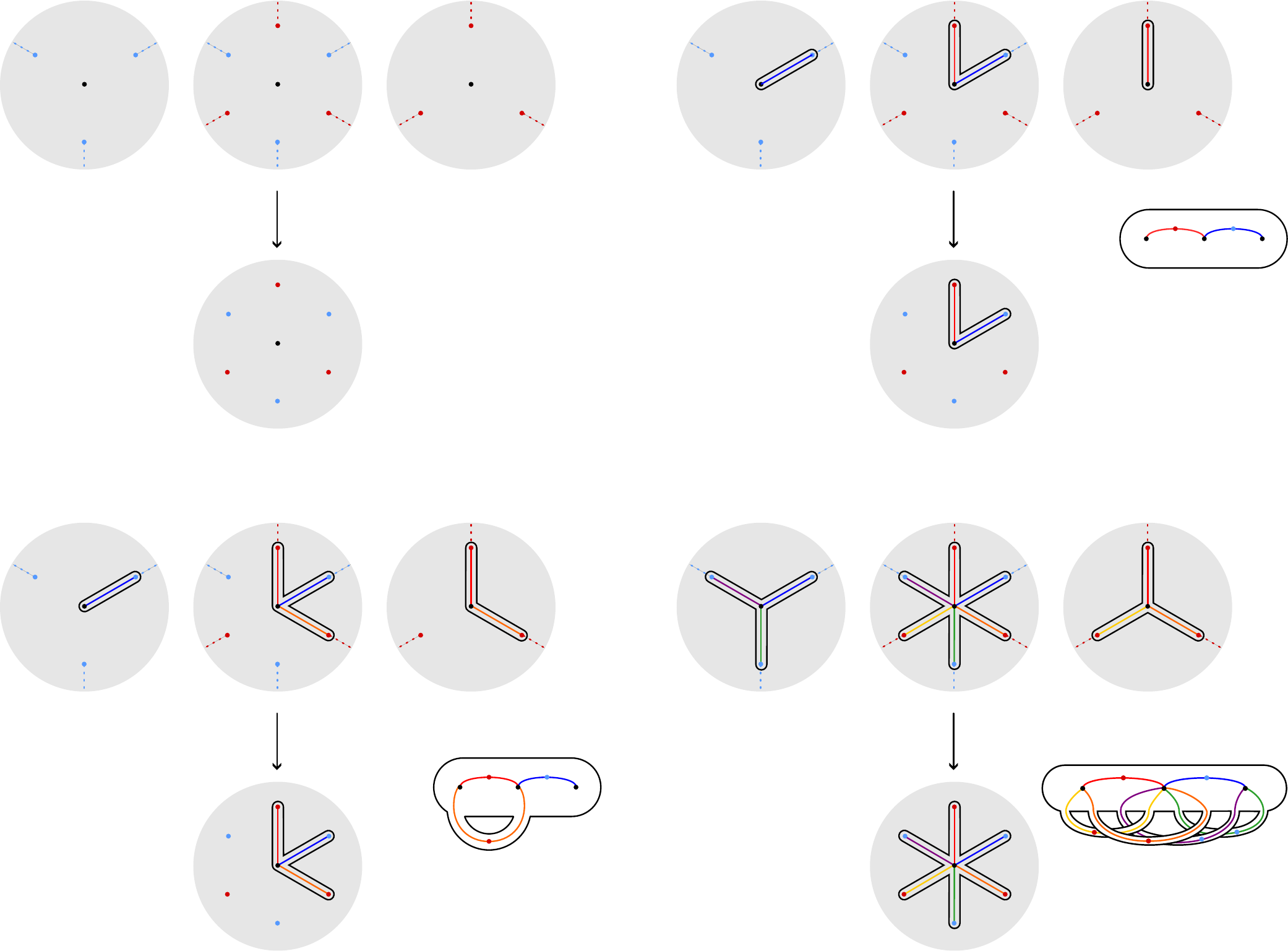}
\caption{Illustrating the construction in the case of $D$ an elliptic curve represented as a $3$-sheeted simple branched covering. First panel: a schematic picture of the branched covering, with branch cuts indicated as dashed line segments. Second panel: selecting the first $N-1 = 2$ arcs so as to span a disk. Third panel: each additional arc attaches a $1$-handle to the existing surface. Fourth panel: When all arcs have been added, the neighborhood fills $D$ away from a union of disks. A simple rendering of the neighborhood is included in the lower-right corner of panels 2-4.}
\label{fig:example}
\end{figure}
    
    First note that the disjointness property (1) will be satisfied for {\em any} collection of arcs of the form $\tilde \tau$ (lifted from paths on $\CP^1$), so long as the corresponding paths are disjoint on their interiors. We next determine a subcollection of $N-1$ such arcs that satisfy property (2). To start with, set $\alpha_1 = \tilde{\tau_1}$. Since the identification between $\pi^{-1}(t_0)$ and the set $\{1, \dots, N\}$ is arbitrary, we may assume that the local monodromy around $\tau_1$ is the transposition $(12)$. Since $D$ is connected and since the meridians associated to $\tau_1, \dots, \tau_k$ generate $\pi_1(B)$, there is some path $\tau_m$ with local monodromy $(1i)$ or $(2i)$ for some $i \ge 3$. Reordering the paths and adjusting the labeling of $\pi^{-1}(t_0)$, we may assume that the local monodromy around $\tau_2$ is $(23)$, and we set $\alpha_2 = \tilde{\tau_2}$. We proceed in this way: assuming $\alpha_1,\dots,\alpha_i$ connect the points $1, \dots, i$, connectedness of $C$ implies that there is some $\tau_m$ (without loss of generality, $\tau_{i+1}$) with local monodromy $(j\,i+1)$ (possibly after relabeling the points of $\pi^{-1}(t_0)$ assigned to $\ell > i$) for some $j \le i$, and we take $\alpha_{i+1} = \tilde{\tau_{i+1}}$. The system of arcs $\alpha_1, \dots, \alpha_{d-1}$ then have disjoint interiors and connect all points of $\pi^{-1}(t_0)$. An Euler characteristic calculation shows that the union of such $\alpha_i$'s forms an embedded tree in $D$, a regular neighborhood of which is a disk containing $\pi^{-1}(t_0)$ as required.

    We claim that {\em any} ordering of the remaining arcs $p_N, \dots, p_k$ will then satisfy properties (3) and (4). Let $D_i$ denote a regular neighborhood of $\tilde \tau_1, \dots, \tilde \tau_i$. Since the interiors of $\{\tau_j\}$ are pairwise disjoint, the lift $\tilde{\tau_{i+1}}$ is seen to lie outside of $D_i$ except for small segments containing each of the endpoints, establishing (3). For (4), we observe that the complement of a neighborhood of $\tau_1 \cup \dots \cup \tau_k$ on $B$ is a disk containing no branch points, which therefore lifts along $\pi$ to a union of $N$ disjoint disks, showing that $D \setminus D_k$ is a union of disks as required.
\end{proof}

\begin{corollary}\label{corollary:braidmonodromy}
    Let $\bd$ be a multidegree for which $r(\bd) \ge 1$, and let $C,D,E, \tilde C, \tilde D$ be as defined in \Cref{subsection:CDEsetup}. Then the monodromy group $\Gamma_{\bd^+} \le \Mod(E)$ contains a subgroup $\cB_C$ that fixes $\tilde C$ and restricts there to the simple braid group $\tilde{SBr}_N(\tilde C)$ (cf. \Cref{subsection:braidsdisks}), and likewise contains a subgroup $\cB_D$ fixing $\tilde D$ and restricting to $\tilde{SBr}_N(\tilde D)$.
\end{corollary}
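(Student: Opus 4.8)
The plan is to read both subgroups directly off the pencil constructions of this section, treating \Cref{prop:simplebraidmonodromy} as a black box. I will construct $\cB_D$; the construction of $\cB_C$ is identical after interchanging the roles of $C$ and $D$, which is legitimate because \Cref{lemma:maxgenexists} produces a pencil extending $D$ that is maximally generic rel $C$ regardless of the multidegrees involved, and because \Cref{prop:simplebraidmonodromy} is symmetric in its two curves.

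First I would fix a pencil $C_t$ maximally generic rel $D$ with $C_{t_0}=C$, let $t_1,\dots,t_k$ be its tangency parameters with $D$, and let $B^\circ\subset\CP^1$ be the complement of the $t_i$ together with the finitely many parameters over which $C_t$ is singular. Over $B^\circ$ the reducible curve $C_t\cup D$ is nodal with exactly its $N=\Pi(\bd)$ nodes at $C_t\cap D$, and carrying out the construction of \Cref{subsection:CDEsetup} in this family produces a family of smooth complete intersection curves $E_t=\tilde C_t\cup\tilde D$ of multidegree $\bd^+$ over $B^\circ$, well defined up to the $\pair{T_{\beta_1},\dots,T_{\beta_N}}$-ambiguity of \Cref{lemma:boundaryVC}. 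Identifying $E_{t_0}$ with the basepoint curve $E$ via \Cref{convention:basepoint} and the discussion following it, this yields a monodromy homomorphism $\rho\colon\pi_1(B^\circ,t_0)\to\Gamma_{\bd^+}$, defined modulo $\pair{T_{\beta_i}}$.

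The crucial observation is that, since $D$ is held fixed throughout this family, the subsurface $\tilde D\subset E_t$ is exactly the real oriented blowup of the fixed curve $D$ at the moving configuration $C_t\cap D=\beta(t)\in\UConf_N(D)$. Hence every $\rho(\gamma)$ fixes $\tilde D$ (maps it to itself up to isotopy), and $\rho(\gamma)|_{\tilde D}$ is a lift to $\Mod(\tilde D)$ of the braid $\beta_*(\gamma)\in Br_N(D)$; since $\beta_*(\gamma)$ lies in $SBr_N(D)=\ker\eta$, any such lift lies in the preimage $\tilde{SBr}_N(\tilde D)$. By \Cref{prop:simplebraidmonodromy} the image of $\beta_*$ equals $SBr_N(D)$ — here $N=\Pi(\bd)\ge3$ since $r(\bd)\ge1$ (cf.\ \Cref{lemma:smallgenustable}) — and since $\pi_1(B^\circ)$ surjects onto $\pi_1(\CP^1\setminus\{t_1,\dots,t_k\})$ (the extra punctures being parameters over which $\beta$ extends, by condition (2) of \Cref{def:generic}), the restriction of $\rho(\pi_1(B^\circ))$ to $\tilde D$ maps onto all of $SBr_N(D)$. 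Finally the twists $T_{\beta_1},\dots,T_{\beta_N}$ lie in $\Gamma_{\bd^+}$ by \Cref{lemma:boundaryVC} and fix $\tilde D$, so setting $\cB_D:=\pair{\im(\rho),T_{\beta_1},\dots,T_{\beta_N}}\le\Gamma_{\bd^+}$ and recalling from \Cref{subsection:braidsdisks} that $\tilde{SBr}_N(\tilde D)$ is generated by lifts of half-twists together with the boundary twists, we conclude that $\cB_D$ fixes $\tilde D$ and restricts on $\tilde D$ onto exactly $\tilde{SBr}_N(\tilde D)$.

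The step that needs genuine care — and the only real content beyond invoking \Cref{prop:simplebraidmonodromy} — is the middle one: that the restriction to the $\tilde D$-side of the global monodromy $\rho(\gamma)$ agrees with the braid monodromy of $\beta$. This amounts to checking that the local smoothing model attached to each node of $C_t\cup D$ varies continuously with $t$ and affects the monodromy only through the motion of the boundary circles of $\tilde D$ (up to Dehn twists about the gluing curves $\beta_i$, which are absorbed into $\cB_D$), i.e.\ to the naturality of the real-oriented-blowup description of \Cref{subsection:CDEsetup}. Everything else — the surjectivity $\pi_1(B^\circ)\onto\pi_1(\CP^1\setminus\{t_i\})$, the inequality $N\ge3$, alignment of basepoints, and the passage from $SBr_N(D)$ to $\tilde{SBr}_N(\tilde D)$ via boundary twists — is routine bookkeeping.
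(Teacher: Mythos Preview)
Your proposal is correct and follows essentially the same approach as the paper's proof: deduce the result from \Cref{prop:simplebraidmonodromy} by smoothing the family of reducible curves $C_t\cup D$ (and symmetrically $C\cup D_s$). The paper's own argument is three sentences long and leaves implicit precisely the points you take care to spell out---the passage from $B^\circ$ to $\CP^1\setminus\{t_i\}$, the smoothing ambiguity absorbed by the boundary twists of \Cref{lemma:boundaryVC}, and the use of \Cref{subsection:braidsdisks} to pass from $SBr_N$ to $\tilde{SBr}_N$---so your version is a faithful expansion rather than a different route.
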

\begin{proof}
    The hypothesis $r(\bd) \ge 1$ ensures that $N = \Pi(\bd)$ is at least $3$. The result now follows from \Cref{prop:simplebraidmonodromy}: given some loop in $B \subset \CP^1$ realizing a given simple braid, one obtains a family of reducible curves over $S^1$, and for an appropriate choice of smoothing, this induces a family of smooth curves based at $E$ inducing the same (preimage of a) simple braid on $\tilde C$ or $\tilde D$ as appropriate. The twists about the boundary curves of $\tilde C$ needed to complete a generating set for $\tilde{SBr}_N(\tilde C)$ or $\tilde{SBr}_N(\tilde D)$ (cf. \Cref{subsection:braidsdisks}) are supplied by \Cref{lemma:boundaryVC}.
\end{proof}

\subsection{Lifting configurations}\label{subsection:lifting}
To leverage the inductive hypothesis, we will need to understand how configurations of vanishing cycles on $C$ transfer to $\tilde C$. We accomplish this in this subsection, establishing \Cref{lemma:liftconfig}.

We first establish one piece of terminology. Suppose $S$ is a surface either with nonempty boundary or else with a collection $\bp = (p_1, \dots, p_N)$ of distinguished points, and let $c, c' \subset S$ be simple closed curves. We say that $c$ and $c'$ are in the same {\em unpointed isotopy class} if $c$ and $c'$ are isotopic after filling in all boundary components and forgetting all marked points.
 
\begin{lemma}\label{lemma:simplebraidtrans}
    Let $(S, \bp)$ be a surface with distinguished points $\bp = (p_1, \dots, p_N)$, and let $c, c' \subset S$ be nonseparating simple closed curves in the same unpointed isotopy class. Let $\phi$ be a framing of $S \setminus \bp$ of constant signature $w \ne -1$, and suppose that $\phi(c) = \phi(c')$. Then there is $\beta \in SBr_N(S)$ such that $\beta(c) = c'$. The same result holds when the points $\bp$ are converted into boundary components via the real oriented blowup.
\end{lemma}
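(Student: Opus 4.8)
The plan is to realize the required element of $SBr_N(S)$ in two moves: first produce \emph{some} surface braid carrying $c$ to $c'$, ignoring its cycle class, and then correct it by a point-push so that it lands in the kernel of the cycle map $\eta$. The framing hypothesis enters only at the second stage, and the hypothesis $w \neq -1$ is used there in an essential way.

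For the first step, I would argue directly. Since $c$ and $c'$ lie in the same unpointed isotopy class, there is an ambient isotopy $F_t$ of $S$ (forgetting the marked points, and filling boundary in the blown-up version) with $F_0 = \id$ and $F_1(c) = c'$. The path $t \mapsto F_t(\bp)$ joins $\bp$ to $F_1(\bp)$ in $\UConf_N(S)$. Because $c'$ is nonseparating, $S \setminus c'$ is connected, so $\UConf_N(S \setminus c')$ is path-connected, and I can concatenate with a path from $F_1(\bp)$ back to $\bp$ supported in $S \setminus c'$, obtaining a loop in $\UConf_N(S)$ based at $\bp$. Pushing the marked points once around this loop gives $\beta_0 \in Br_N(S)$; choosing the ambient isotopy realizing the loop to agree with $F_t$ over its first half and to be supported off $c'$ over its second half, the underlying homeomorphism carries $c$ to $c'$, so $\beta_0(c) = c'$.

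For the second step, write $\eta \colon Br_N(S) \to H_1(S;\Z)$ for the cycle map, so $SBr_N(S) = \ker \eta$, and let $\lambda$ be the constant weight $w+1$ (constant because $\phi$ has constant signature $w$; note $Br_{N,\lambda}(S) = Br_N(S)$ since a constant weighting is preserved by every braid), so that $\eta_\lambda = (w+1)\eta$. Then \Cref{lemma:pushformula} gives
\[
\phi(c') = \phi(\beta_0(c)) = \phi(c) + (w+1)\pair{[c],\eta(\beta_0)},
\]
and since $\phi(c) = \phi(c')$ and $w \neq -1$ we conclude $\pair{[c],\eta(\beta_0)} = 0$. Hence $\eta(\beta_0)$ lies in the image of $H_1(S \setminus c;\Z) \to H_1(S;\Z)$, so it is represented by an oriented loop $\delta$ disjoint from $c$ and based at one of the points of $\bp$. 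Let $\psi \in Br_N(S)$ be the element obtained by pushing that marked point once around $\delta$; it is supported in a regular neighborhood of $\delta$, so $\psi(c) = c$, while $\eta(\psi) = [\delta] = \eta(\beta_0)$. Then $\beta := \beta_0 \psi^{-1}$ satisfies $\eta(\beta) = 0$, i.e. $\beta \in SBr_N(S)$, and $\beta(c) = \beta_0(\psi^{-1}(c)) = \beta_0(c) = c'$. For the statement with $\bp$ blown up to boundary components, I would run the identical argument inside $\tilde{Br}_N(\bar S)$, using the boundary-surface form of \Cref{lemma:pushformula} and the description of $\tilde{SBr}_N(S)$ from \Cref{subsection:braidsdisks}: the additional central generators (twists about the new boundary components) already lie in $\tilde{SBr}_N(S)$ and act trivially on curves disjoint from the boundary, so they cause no difficulty.

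I expect the only genuinely subtle point to be the first step, where nonseparatedness of $c'$ is exactly what allows the point-dragging path to be closed up without disturbing $c'$; without it there is no reason such a $\beta_0$ should exist. The second step is a bookkeeping computation with the cycle map, and it is there that $w \neq -1$ is indispensable: when $w = -1$ the weighted cycle map $\eta_\lambda$ vanishes identically, the framing imposes no constraint via \Cref{lemma:pushformula}, and $\eta(\beta_0)$ need not be orthogonal to $[c]$.
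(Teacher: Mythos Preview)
Your proof is correct and follows essentially the same two-step strategy as the paper: first produce an arbitrary braid carrying $c$ to $c'$, then use \Cref{lemma:pushformula} together with $w \ne -1$ to see that its cycle class is orthogonal to $[c] = [c']$, and finally correct by a braid supported off $c$ (the paper uses $c'$) to kill the cycle class. The only cosmetic difference is that you realize the correction as a single point-push around a loop in $S \setminus c$, whereas the paper extends $c'$ to a homology basis and specifies the correcting braid via its intersection numbers with the remaining basis elements.
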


\begin{proof}
    Since $c$ and $c'$ are in the same unpointed isotopy class, there is some braid $\beta' \in Br_N(S)$, not necessarily simple, such that $\beta'(c) = c'$. By \Cref{lemma:pushformula}, 
    \[
    \phi(c') = \phi(\beta'(c)) = \phi(c) - (w + 1) \pair{[c],\eta(\beta')},
    \]
    showing that $\pair{[c],\eta(\beta')} = 0$. Extend $c'$ to a set of curves $c', c_2, \dots, c_{2g}$ forming a basis for $H_1(S, \Z)$. Append to $\beta'$ a surface braid $\beta'' \in Br_N(S)$ disjoint from $c'$ and such that $\pair{[c_i], \eta(\beta'')} = - \pair{[c_i],\eta(\beta')}$ for $i \ge 2$. The resulting $\beta = \beta' \beta''$ has $\eta(\beta) = 0$ by construction (and so $\beta \in SBr_N(S)$), and still satisfies $\beta(c) = \beta'(c) = c'$.
    Following the discussion of \Cref{subsection:braidsdisks}, it is clear that this argument goes through unchanged in the setting of boundary components as opposed to punctures.
\end{proof}

\begin{lemma}[Lifting vanishing cycles]\label{lemma:liftVC}
     Let $\alpha \subset \tilde C$ be a nonseparating simple closed curve with $\phi_{C,D}(\alpha) = 0$, and suppose that the image $\bar \alpha \subset C$ is the vanishing cycle for some nodal degeneration of $C$. If $r(\bd) \ge 1$, then $\alpha$ is a vanishing cycle for some nodal degeneration of $E$.
\end{lemma}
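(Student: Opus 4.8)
The plan is to first exhibit \emph{some} lift $\alpha_0 \subset \tilde C$ of $\bar\alpha$ that is a vanishing cycle of $E$, and then to carry $\alpha_0$ to the prescribed lift $\alpha$ by a mapping class in $\Gamma_{\bd^+}$ coming from the simple braid group of $\tilde C$. The delicate point, explained below, is that the lift produced by the first step must be shown to have winding number exactly $0$, not merely $0$ modulo $r(\bd)$.

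\para{Producing a vanishing cycle on $\tilde C$}
Fix a nodal degeneration of $C$ with vanishing cycle $\bar\alpha$, which exists by hypothesis. Since $\alpha$, hence $\bar\alpha$, is nonseparating, its nodal fiber is irreducible. By a standard genericity argument inside the very ample linear system $|\cO_X(C)|$ (perturbing the degenerating line among lines through $C$, and using that the vanishing cycle is locally constant along the smooth locus of the discriminant), I would realize this degeneration inside a pencil $C_t$ on $X$ with $C_1 = C$, having a nodal fiber $C_{t_0}$ with exactly one node $p$, with $p\notin D$, with $C_{t_0}$ transverse to $D$, and with vanishing cycle $\bar\alpha$. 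Fix $g\in V_{C,D}$ small and set $E_t := Z(f_1,\dots,f_{n-2}, f_{n-1}^{(t)}h + g)$, with $f_{n-1}^{(t)}$ ranging over the pencil; for $g$ small these are smooth of multidegree $\bd^+$ away from finitely many parameters, decompose as $\widetilde{C_t}\cup\tilde D$, and $E_1$ may be taken to be the basepoint $E$. In ambient coordinates $x,y$ near $p$ with $C_{t_0}=\{xy=0\}$ (and $h(p)\neq 0$), the curve $E_t$ is locally $xy = \epsilon(t) - g(p)/h(p) + \mathrm{h.o.t.}$, which acquires a single node at a unique parameter $s_0$ near $t_0$; since the pencil is holomorphic, the arc $t\mapsto E_t$ meets the discriminant of $U_{X,\bd^+}$ transversely there, so a small loop about $s_0$ is a nodal degeneration of $E$. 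Its vanishing cycle $\alpha_0$ is the standard vanishing cycle $\{|x|=|y|\}$ of the local conic; as $p\notin D$ it lies in $\widetilde{C_{s_0}}\cong \tilde C\subset E$ and collapses to $\bar\alpha$ under $\tilde C\to C$, so $\alpha_0$ is a nonseparating lift of $\bar\alpha$ that is a vanishing cycle of $E$.

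\para{The winding number of $\alpha_0$ is zero}
By \Cref{lemma:tildeDframed}, $\phi_{C,D}$ is the framing induced by the real oriented blow-up from the holomorphic $1$-form $\omega_D$ on $C$, which has zeros of order $r(\bd)$ along $C\cap D$; thus $\phi_{C,D}(\alpha_0)$ is the winding number of $\alpha_0$ relative to $\omega_D$. Near $p$ the forms $\omega_D|_{C_t}$ are holomorphic and nonvanishing (since $p\notin D$), and as $t\to t_0$ they limit to a form on the nodal curve $C_{t_0}$ with a simple pole of nonzero residue on each branch of $p$: in the local model $\omega_D$ is a nonvanishing multiple of $\Res\frac{dx\wedge dy}{xy-\epsilon(t)} = \frac{dx}{x}\big|_{C_t}$. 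Consequently, metrically, the flat structure of $\omega_D$ near $p$ is a half-infinite cylinder, and $\alpha_0$—the standard vanishing cycle—is isotopic to its core geodesic; a closed geodesic of a translation surface has winding number $0$. Hence $\phi_{C,D}(\alpha_0)=0$; compare the discussion of differentials and vanishing cycles in \cite[Section 7.2]{strata3}.

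\para{Transferring to $\alpha$}
Because $r(\bd)\ge 1$, \Cref{lemma:tildeDframed} gives that $\phi_{C,D}$ has constant signature $-r(\bd)-1\neq -1$. The curves $\alpha_0$ and $\alpha$ are nonseparating, lie in the same unpointed isotopy class (both collapse to $\bar\alpha$), and have equal winding number $0$, so \Cref{lemma:simplebraidtrans} (in its boundary-component version) produces $\beta\in\tilde{SBr}_N(\tilde C)$ with $\beta(\alpha_0)=\alpha$. By \Cref{corollary:braidmonodromy} there is $\hat\beta\in\Gamma_{\bd^+}$ restricting to $\beta$ on $\tilde C$, so $\hat\beta(\alpha_0)=\alpha$; since the monodromy image of a vanishing cycle is again a vanishing cycle (conjugate the meridian realizing $\alpha_0$ by $\hat\beta$), $\alpha$ is a vanishing cycle of $E$. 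I expect the second step to be the main obstacle: the winding number of an arbitrary lift of $\bar\alpha$ need only vanish modulo $r(\bd)$, and it cannot be adjusted within $\Gamma_{\bd^+}$ by a point-push (such pushes have nonzero cycle class, hence leave $\Gamma_{\bd^+}$), so the on-the-nose equality $\phi_{C,D}(\alpha_0)=0$—i.e.\ the identification of the local flat geometry of $\omega_D$ at the degenerating node with a half-cylinder—is genuinely needed.
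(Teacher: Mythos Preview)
Your proof is correct and follows the same two-step strategy as the paper: first produce \emph{some} vanishing cycle $\alpha_0 \subset \tilde C$ lifting $\bar\alpha$ by running the nodal degeneration of $C$ alongside the smoothing of $C\cup D$ to $E$ (the paper packages this as an explicit two-parameter family $\Phi$, you as a one-parameter family $E_t$ with fixed small $g$; the local analysis is the same), then invoke \Cref{lemma:simplebraidtrans} and \Cref{corollary:braidmonodromy} to carry $\alpha_0$ to the prescribed $\alpha$.

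Where you go further than the paper is exactly at the step you flag as delicate: showing $\phi_{C,D}(\alpha_0)=0$ as an integer, not merely modulo $r(\bd)$. The paper writes only ``Consequently, $\phi_{C,D}(\alpha') = \phi_{C,D}(\alpha) = 0$'' after noting that $\alpha'$ is in the unpointed isotopy class of $\bar\alpha$; as you correctly observe, that alone gives only congruence mod $r(\bd)$. Your translation-surface argument---that near the node $p\notin D$ the adjunction residue makes $\omega_D$ a unit multiple of $dx/x$, so the Milnor vanishing cycle $|x|=|y|$ is a core curve of a flat cylinder and hence has holonomy (winding number) zero on the nose---is the right way to close this gap, and is implicit in the reference to \cite[Section 7.2]{strata3} that underlies \Cref{lemma:tildeDframed}.
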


\begin{proof}
    Let $\varphi : [0,1]  \to H^0(X, \OO(d_{n-1} +1))$ be the polynodal degeneration of $E$ into $C \cup D$, with $\varphi(1) = E$ and $\varphi(0) = C \cup D$.  Let $\psi: \disk \to H^0(X, \OO(d_{n-1}))$ denote the given nodal degeneration of $C$. We denote the curve defined by $\psi(t)$ by $C_t$. Let $\bar \alpha \subseteq C$ be the associated vanishing cycle. 
    Let $\psi' : \disk \to H^0(X, \OO(d_{n-1}+1))$ denote the map $\psi'(t) = \psi(t) h$, where $h \in H^0(X, \OO(1))$ is the equation for $D$. Then $\psi'(t)$ defines the family of reducible curves $C_t \cup D$.
    
We wish to simultaneously extend the maps $\varphi$ and $\psi'$ to obtain a map 
\[\Phi: [0,1] \times \disk \to H^0(X, \OO(d_{n-1}+1))
\]
satisfying :

\begin{enumerate}
    \item $\Phi(z, 1)$  is homotopic to $ \varphi(z),$ via a homotopy that fixes the boundary of $[0,1]$,
    \item  $\Phi(0, t) = \psi'(t)$,
    \item $\Phi(z,t)$ defines a smooth curve if both $z ,t$ are nonzero,
    \item For $z \neq 0$, $\Phi|_{z \times \disk}$ defines a  nodal degeneration of $\Phi(z,1).$
\end{enumerate}

There are many ways to construct such a $\Phi$; what follows is one reasonably explicit way of doing so. For some $0< R <1$, there is a $C^{\infty}$ function $\eta: \disk \to H^0(X, \OO(d_{n-1}+1))$ such that:
\begin{enumerate}
    \item For $z \neq 0$, $\psi'(z) +t \eta(z) $ is smooth for $0 <|t| \le  R$,
    \item $\psi'(0) +  t\eta (0)$ has a single node at the nodal point of $C_0$, and $\eta(0)$ is nodal at the nodal point of $C_0$,
    \item $\eta(1)=\frac{\partial\varphi}{\partial z}(0)$.
\end{enumerate}

Define $\Phi$ on $[0,R] \times \disk \subset [0,1] \times \disk$ by 
\[
\Phi(z,t) = \psi'(z) +t \eta(z),
\]

One can then extend $\Phi$ to all of $[0,1] \times \disk$ in such a way that it has the required properties. Then $\Phi$ extends the nodal degeneration of  $C\cup D$ via the family $C_t \cup D$ to a family of nodal degenerations of smooth curves. In particular $\Phi(z,1)$ defines  a nodal degeneration  of $E$ which we will denote $E_t'$. By construction the vanishing cycle $\alpha'$ associated to $E_t'$ lies in $\tilde C$ under the identification of $E$ with $\tilde C \cup \tilde D$, and furthermore $\alpha'$ is in the unpointed isotopy class of the image $\bar{\alpha} \subset C$.  Consequently, $\phi_{C,D}(\alpha') = \phi_{C,D}(\alpha) = 0$. By \Cref{lemma:tildeDframed}, $\phi_{C,D}$ assigns the value $w = -1 - r(\bd) \ne 0$ to each boundary component of $\tilde C$. From the hypothesis $g(\bd) > 1$, we conclude that $r(\bd) > 0$, and hence $w \ne -1$. The result now follows from \Cref{lemma:simplebraidtrans} and \Cref{corollary:braidmonodromy}.
\end{proof}

\begin{lemma}[Configuration lifting lemma]\label{lemma:liftconfig}
        Let $C,D \subset X$ be smooth curves intersecting transversely with $g(C)>1$, and let $E$ be a smoothing of $C \cup D$.
        Let $C_{t_1}, \dots C_{t_k}$ be a system of nodal degenerations on $C$ with associated vanishing cycles $\alpha_1, \dots, \alpha_k \subset C$ forming an arboreal simple configuration on $C$ with $\{[\alpha_i]\} \subset H_1(C;\Z)$ linearly independent.
        
        Then there is a system $E_{t_1}, \dots, E_{t_k}$ of nodal degenerations of $E$ with vanishing cycles $\{\tilde \alpha_i\}$ all contained in $\tilde{C}$ such that $\tilde \alpha_i$ is in the unpointed isotopy class of $\alpha_i$ for all $i$, and $i(\tilde \alpha_i, \tilde \alpha_j) = i(\alpha_i,\alpha_j)$ for all indices $i,j$.
    \end{lemma}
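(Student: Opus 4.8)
The plan is to reduce the statement to the single-curve result \Cref{lemma:liftVC}. If we can produce, \emph{inside} $\tilde C$, a configuration $\tilde\alpha_1,\dots,\tilde\alpha_k$ of nonseparating simple closed curves with $\phi_{C,D}(\tilde\alpha_i)=0$ for all $i$, with each image $\bar{\tilde\alpha_i}$ equal to $\alpha_i$, and with $i(\tilde\alpha_i,\tilde\alpha_j)=i(\alpha_i,\alpha_j)$ for all $i,j$, then applying \Cref{lemma:liftVC} to each $\tilde\alpha_i$ in turn (legitimate since $g(C)>1$ forces $r(\bd)\ge 1$) produces the required system $E_{t_1},\dots,E_{t_k}$ of nodal degenerations of $E$. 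So the entire task is to build this configuration in $\tilde C$.

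First I would lift the configuration. Put the $\alpha_i$ in minimal position and perform a $C^\infty$-small isotopy moving them off the $N$ points of $C\cap D$; since this changes no geometric intersection number and no edge of the intersection graph, we obtain an arboreal simple configuration $\{\alpha_i'\}$ on $\tilde C$ (the real oriented blowup of $C$ along $C\cap D$ described in \Cref{subsection:CDEsetup}) with $i(\alpha_i',\alpha_j')=i(\alpha_i,\alpha_j)$ and with each $[\alpha_i']\in H_1(\tilde C;\Z)$ mapping to $[\alpha_i]\in H_1(C;\Z)$; in particular the classes $[\alpha_i']$ are linearly independent and each $\alpha_i'$ is nonseparating. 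Because $\alpha_i=\bar\alpha_i'$ is the vanishing cycle of a nodal degeneration of $C$, \Cref{lemma:VCadmiss} gives $\phi_\bd(\alpha_i)=0$, hence $\phi_{C,D}(\alpha_i')\equiv 0\pmod{r(\bd)}$; write $\phi_{C,D}(\alpha_i')=r(\bd)\ell_i$ with $\ell_i\in\Z$.

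Next I would correct all the winding numbers to $0$ simultaneously. Apply \Cref{lemma:dualexists} to $\{\alpha_i'\}$ (arboreal, with linearly independent classes in $H_1(\bar{\tilde C};\Z)=H_1(C;\Z)$) to obtain a dual collection $\gamma_1,\dots,\gamma_k$ on $\tilde C$, all based at a common point of a boundary component $\beta_1$ of $\tilde C$, with $i(\alpha_i',\gamma_j)=\delta_{ij}$ and, by the construction in that proof, pairwise disjoint. Disjointness makes the point-push maps $P_{\gamma_i}$ commute, so $f:=\prod_i P_{\gamma_i}^{m_i}$ is well defined for any exponents, and $P_{\gamma_j}^{m_j}$ fixes every $\alpha_i'$ and every $\gamma_i$ with $i\ne j$; thus $f(\alpha_j')=P_{\gamma_j}^{m_j}(\alpha_j')$ up to isotopy. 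Since $\phi_{C,D}(\beta_1)=-r(\bd)-1$ by \Cref{lemma:tildeDframed}, \Cref{lemma:pushaffectsphi} gives
\[
\phi_{C,D}\big(P_{\gamma_j}^{m_j}(\alpha_j')\big)=\phi_{C,D}(\alpha_j')+m_j\,r(\bd)\,\pair{[\alpha_j'],[\gamma_j]},
\]
with $\pair{[\alpha_j'],[\gamma_j]}=\pm1$; choosing the integer $m_j$ accordingly makes this vanish. Set $\tilde\alpha_i:=f(\alpha_i')$. As $f$ is a homeomorphism, $i(\tilde\alpha_i,\tilde\alpha_j)=i(\alpha_i',\alpha_j')=i(\alpha_i,\alpha_j)$; as $f$ is a product of point-pushes it maps to the identity in $\Mod(C)$ under capping all boundary components and acts trivially on $H_1(\tilde C;\Z)$, so $\tilde\alpha_i$ lies in the unpointed isotopy class of $\alpha_i$ (hence $\bar{\tilde\alpha_i}=\alpha_i$) and $[\tilde\alpha_i]=[\alpha_i']\ne0$, so $\tilde\alpha_i$ is nonseparating; and $\phi_{C,D}(\tilde\alpha_i)=0$ by construction. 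Now \Cref{lemma:liftVC} applies to each $\tilde\alpha_i$, completing the proof.

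The hard part is exactly the middle step: correcting the framing on the whole configuration at once without disturbing its embedded type. The linear independence of $\{[\alpha_i]\}$ is precisely what makes the dual curves available, and taking them pairwise disjoint is what decouples the corrections, so that each push $P_{\gamma_j}^{m_j}$ moves only $\alpha_j'$ and the adjustments do not interfere. (Equivalently, one could realize a single disk-pushing element whose weighted cycle class is any prescribed value and apply \Cref{lemma:pushformula}, using unimodularity of the intersection form on $H_1(C;\Z)$; this amounts to the same computation.)
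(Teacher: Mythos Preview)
Your overall strategy is the same as the paper's: lift the configuration arbitrarily to $\tilde C$, correct the winding numbers using point-pushes along a dual configuration from \Cref{lemma:dualexists}, and then invoke \Cref{lemma:liftVC}. The reduction to \Cref{lemma:liftVC} and the use of \Cref{lemma:pushaffectsphi} are exactly right.

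There is, however, a genuine (if easily repaired) gap in the middle step. You assert that ``by the construction in that proof'' the dual curves $\gamma_1,\dots,\gamma_k$ are pairwise disjoint, and use this to conclude that the $P_{\gamma_i}$ commute and that $f(\alpha_j')=P_{\gamma_j}^{m_j}(\alpha_j')$. But \Cref{lemma:dualexists} does \emph{not} provide disjointness of the $\gamma_i$; the proof there explicitly says ``there are no constraints on intersections between distinct $\beta_i,\beta_j$.'' Moreover, even if the $\gamma_i$ were disjoint away from the common basepoint $*$, point-pushes along based loops sharing a basepoint do not commute in general (their commutator is governed by $\pi_1$, not by geometric disjointness), so the identity $f(\alpha_j')=P_{\gamma_j}^{m_j}(\alpha_j')$ is not justified. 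Relatedly, the claim that $f$ ``acts trivially on $H_1(\tilde C;\Z)$'' is off: a point-push $P_\gamma$ sends $[x]$ to $[x]+\langle[\gamma],[x]\rangle[d]$, which is nontrivial on $H_1(\tilde C;\Z)$ (though trivial after capping).

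None of these claims are actually needed, and dropping them recovers the paper's argument. Fix any order and set $f=P_{\gamma_k}^{m_k}\cdots P_{\gamma_1}^{m_1}$. Since each $P_{\gamma_i}$ alters homology only by multiples of the boundary class $[d]$, and $[d]$ pairs trivially with every class, the algebraic intersection $\langle[\,\cdot\,],[\gamma_i]\rangle$ is unchanged by previously-applied pushes. Applying \Cref{lemma:pushaffectsphi} iteratively then gives
\[
\phi_{C,D}\big(f(\alpha_j')\big)=\phi_{C,D}(\alpha_j')+r(\bd)\sum_i m_i\,\langle[\alpha_j'],[\gamma_i]\rangle=\phi_{C,D}(\alpha_j')\pm m_j\,r(\bd),
\]
so suitable $m_j$ make each $\tilde\alpha_j:=f(\alpha_j')$ admissible. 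As $f$ is a homeomorphism, the intersection pattern is preserved; as $f$ is a product of point-pushes it becomes the identity in $\Mod(C)$ upon capping, so each $\tilde\alpha_j$ lies in the unpointed isotopy class of $\alpha_j$ (hence is nonseparating). This is precisely the paper's proof.
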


\begin{proof}
    Let $\tilde \alpha_1', \dots, \tilde \alpha_k'$ be a lifting of the configuration $\{\alpha_i\}$ to $\tilde C$, with the same intersection pattern $i(\alpha_i',\alpha_j') = i(\alpha_i,\alpha_j)$, but with $\phi_{C,D}(\alpha_i')$ unconstrained. 
    Since $\phi_{C,D}$ descends to the $r(\bd)$-spin structure on $C$ associated to $\cO(1)$ and each $\alpha_i$ is a vanishing cycle on $C$, in fact $\phi_{C,D}(\tilde \alpha_i) \equiv 0 \pmod{r(\bd)}$.

    Since $\{\alpha_i\}$ is an arboreal simple configuration, so is $\{\tilde \alpha_i'\}$, and $\{[\alpha]\} \subset H_1(C;\Z)$ is linearly independent by hypothesis.
    By \Cref{lemma:dualexists}, there exists a dual configuration $\{\beta_i\} \subset \tilde C$ based at some $* \in \partial \tilde C$.
    Recall from \Cref{lemma:tildeDframed} that $\tilde C$ has constant signature $-r(\bd) - 1$.
    According to \Cref{lemma:pushaffectsphi}, applying the push map $P_{\beta_i}$ to the configuration $\{\tilde \alpha_i'\}$ changes $\phi_{C,D}(\tilde \alpha_i'\}$ by $\pm r(\bd)$ and leaves the winding numbers of the remaining $\tilde \alpha_j'$ unchanged. 
    Thus by applying the appropriate power of each $P_{\beta_i}$ in some arbitrary order, the configuration $\{\tilde \alpha_i'\}$ is sent to a configuration $\{\tilde \alpha_i\}$ with the same intersection pattern but with each $\tilde \alpha_i$ admissible. 
    By \Cref{lemma:liftVC}, each $\tilde \alpha_i$ is a vanishing cycle.
\end{proof}

\section{Tacnodal degenerations}\label{section:tacnode}
In this section, we establish the ``Tacnode construction lemma'' (\Cref{lemma:tacnodelocal}), which provides us with a large supply of vanishing cycles that arise from deforming $C \cup D$ to a curve with a single tacnode singularity inherited from a point of simple tangency between $C$ and $D$. 

We continue with the working environment of the previous section. 
Choose an arbitrary ordering $p_1, \dots, p_N$ of the points $C \cap D$, and let $\beta_1, \dots, \beta_N \subset E$ denote the corresponding vanishing cycles on $E$, i.e. the boundary components of $\tilde C, \tilde D$.

We adopt the same conventions about the meaning of ``path'' as in the proof of \Cref{prop:simplebraidmonodromy}; let $\tau \subset \CP^1$ be such a path. $\tau$ connects the basepoint $t_0$ to some simple branch point $t_i$ for the branched covering $D \to \CP^1$ induced by the pencil $C_t$. 
Thus the preimage of $\tau$ in $D$ has a distinguished component covering its image two-to-one (all other components cover with degree $1$). We define the {\em distinguished lift} of $\tau$ to $\tilde D$ as the proper transform of this distinguished component under the real oriented blowup map $\tilde D \to D$. See the bottom row of \Cref{fig:tacnodes}.

\begin{lemma}[Tacnode construction]\label{lemma:tacnodelocal}
    In the above setting, let $\tau \subset \CP^1$ be a path connecting $t_0$ to some point $t_i$ for which $C_{t_i}$ has a simple tangency with $D$. 
    Then $\tau$ induces a degeneration $E_s$ of $E$ to a tacnodal curve for which there is a vanishing cycle $a$ such that $a \cap \tilde D$ is a single arc given as the distinguished lift of $\tau$ to $\tilde D$.
\end{lemma}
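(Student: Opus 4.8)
The plan is to realize $E_s$ as a concatenation of two explicit one-parameter families of sections of $\cO(d_{n-1}+1)$ and then to read off the $A_3$ vanishing cycles in a local model near the tangency point. First I would fix that model. By \Cref{def:generic}(4) there are analytic coordinates $(x,y)$ on $X$ near the tangency point $p$ and a coordinate $s$ on $\CP^1$ centered at $t_i$ in which $D=\{y=0\}$ and $C_t=\{y=x^2-s\}$; taking the perturbing section $g$ defining $E$ to be generic (hence, up to the boundary-twist ambiguity of \Cref{lemma:boundaryVC}, without loss of generality ours) and small, the smoothed curve $E_t$ is locally $\{y(y-x^2+s)=\epsilon\}$ for a small $\epsilon\ne 0$. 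Completing the square, $w=y-x^2/2$, presents this as a double cover of a disk in the $x$-line branched at the four points with $(x^2-s)^2=-4\epsilon$, which for small $s\ne 0$ split into two pairs, one pair clustering near each of the two points $q_1(t),q_2(t)\in C_t\cap D$ that lie over $p$.

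Next I would build $E_s$: drag $t$ along $\tau$ from $t_0$ to $t_i$ keeping $g$ fixed, then scale $g$ down to $0$. The first leg is a path of smooth curves, since a generic $g$ does not vanish on the $1$-complex $\pi^{-1}(\tau)\subset D$, so $g$ continues to resolve the tacnode of $C_{t_i}\cup D$; after reparametrizing, the concatenation is a disk-family with $E_1=E$, central fiber $E_0=C_{t_i}\cup D$, and all other fibers smooth. The singularities of $E_0$ are an $A_3$ point at $p$ (the two smooth branches $C_{t_i}$ and $D$ meeting there with contact two) together with ordinary nodes at the remaining $\Pi(\bd)-2$ intersection points; if one prefers $E_0$ to carry a single singular point one may instead scale $g$ to a generic section vanishing to sufficiently high order at $p$, smoothing those nodes while keeping the $A_3$, but this is not needed.

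Finally I would identify the vanishing cycle. The $A_3$ point contributes a chain $c_1,c_2,c_3$ of vanishing cycles to the family $E_s$, realized in a nearby fiber as the double covers of three consecutive segments joining the four branch points. Transporting these back to $E=E_1$ along the reversed family, the four branch points separate into the two clusters above: the outer cycles $c_1,c_3$ collapse onto the small loops encircling $q_1(t_0)=p_{j_1}$ and $q_2(t_0)=p_{j_2}$ (two of the boundary curves $\beta_i\subset E$), while the middle cycle $a:=c_2$ stretches into a simple closed curve that leaves $\tilde C$ near $p_{j_1}$, runs along $D$, and re-enters near $p_{j_2}$. A simple closed curve double-covering a single segment meets each sheet of the double cover in one arc, so $a\cap\tilde D$ is a single arc; and since dragging $q_1,q_2$ out from $p$ along $\tau$ sweeps exactly the distinguished component of $\pi^{-1}(\tau)$, this arc is isotopic in $\tilde D$, rel its endpoints on $\partial\tilde D$, to the distinguished lift of $\tau$.

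The hard part will be this last step: verifying that the degeneration produces the \emph{middle} cycle of the $A_3$ chain and, up to isotopy, the \emph{distinguished} arc of $\pi^{-1}(\tau)$ --- rather than a curve differing from it by twisting accumulated during the drag along $\tau$. This requires carefully pinning down the monodromy of the degree-two part of the branched cover $D\to\CP^1$ over a neighborhood of $\tau$ and the matching of the four local branch points with $q_1,q_2$; the two-step construction of $E_s$ and the identification of the singular fiber $E_0$ are routine by comparison.
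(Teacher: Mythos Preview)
Your overall strategy is sound and close to the paper's, but there is a real gap in the smoothness argument for the first leg. The condition ``a generic $g$ does not vanish on $\pi^{-1}(\tau)\subset D$'' guarantees that $g$ is nonzero at every singular point of $C_t\cup D$ for $t\in\tau$, but this does \emph{not} imply that the perturbed curve $Z(f_t h+g)$ is smooth for all such $t$. In your own local model $y(y-x^2+s)=\epsilon$, the discriminant in $s$ (for fixed $\epsilon\ne0$) lies at $s^2+4\epsilon=0$: two points at distance $2\sqrt{|\epsilon|}$ from $s=0$. At such an $s$ the unperturbed curve $C_t\cup D$ has two \emph{distinct} transverse nodes near $p$, and $g$ is nonzero at both --- yet the perturbed curve has acquired a new node at $(0,-s/2)$, between them. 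Since $\tau$ ends at $s=0$ it must enter the disk $|s|\le 2\sqrt{|\epsilon|}$; whether it avoids the two discriminant points depends on the phase of $g(p)$ relative to the direction of approach of $\tau$, not on non-vanishing along $\pi^{-1}(\tau)$. For truly generic $g$ the path does miss them, so your conclusion is recoverable, but the argument you give does not detect this, and the homotopy class of the first leg relative to these two points is exactly what determines which of the three $A_3$ cycles you obtain after the second leg --- so this is the same issue you flag as ``the hard part,'' arising one step earlier than you anticipated.

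The paper organizes things so as to sidestep this. It first works with a basepoint $t_0'$ close to $t_i$ and, rather than scaling $g$ to zero, degenerates along a one-parameter \emph{nodal} family inside the two-parameter versal space (in the local model: fix $s_0$ and let $t\to 2\sqrt{s_0}$), reading off from the real picture that the unique vanishing cycle meets $D$ in the short segment over $[0,t_0']$. It then globalizes by embedding the versal deformation of the tacnode into $H^0(X;\cO(d_{n-1}+1))$ via explicit sections chosen to span the Jacobian ring and to be nonzero at the remaining nodes of $C_0\cup D$. Finally it transports from $t_0'$ back to $t_0$ along $\tau$ using the pencil trivialization of $\tilde D$, exactly as you propose in your last step. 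Your construction has the advantage of being global from the outset, avoiding the versality argument; the cost is the discriminant-crossing issue above, which you would need to address directly.
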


\para{Local behavior} To prove \Cref{lemma:tacnodelocal}, we first analyze the local situation. Without loss of generality, the tangency occurs for $t_i = 0$. In a neighborhood of the tangency $q$ between $C_0$ and $D$, there are analytic local coordinates $(z,w)$ on $X$ for which $D = w$ and $C_t = w-z^2 + t$. We define the $2$-parameter family of curves on a neighborhood of $q$
\[
E^{loc}_{s,t} = Z(w(w-z^2+t) + s)
\]
and take $E^{loc}_{s_0,t_0}$ for $s_0,t_0$ positive, real, and suitably small, as basepoint. For later use it will be convenient to further assume $t_0 > 2 \sqrt{s_0}$. This is depicted in the top row of \Cref{fig:tacnodes}. 

\begin{figure}
\centering
		\labellist
        \small
        \pinlabel $a$ at 160 850
        \pinlabel $a$ at 460 600
        \pinlabel $\tilde{C_{t_0}}$ at 360 540
        \pinlabel $\tilde{D}$ at 780 540
        \pinlabel $\beta_i$ at 560 660
        \pinlabel $\beta_j$ at 560 495
        \pinlabel $\tau$ at 835 240
        \pinlabel $\CP^1$ at 900 150
        \pinlabel $\tilde{D}$ at 250 220
        \pinlabel $a$ at 340 110
        \pinlabel $a$ at 340 350
        \pinlabel $\pi$ at 560 250
        \tiny  
        \pinlabel $0$ at 790 210
        \pinlabel $t_0$ at 860 210
        \pinlabel $\beta_i$ at 310 320
        \pinlabel $\beta_j$ at 390 320
        \pinlabel $\beta_i$ at 310 80
        \pinlabel $\beta_j$ at 390 80
		\endlabellist
\includegraphics[width=\textwidth]{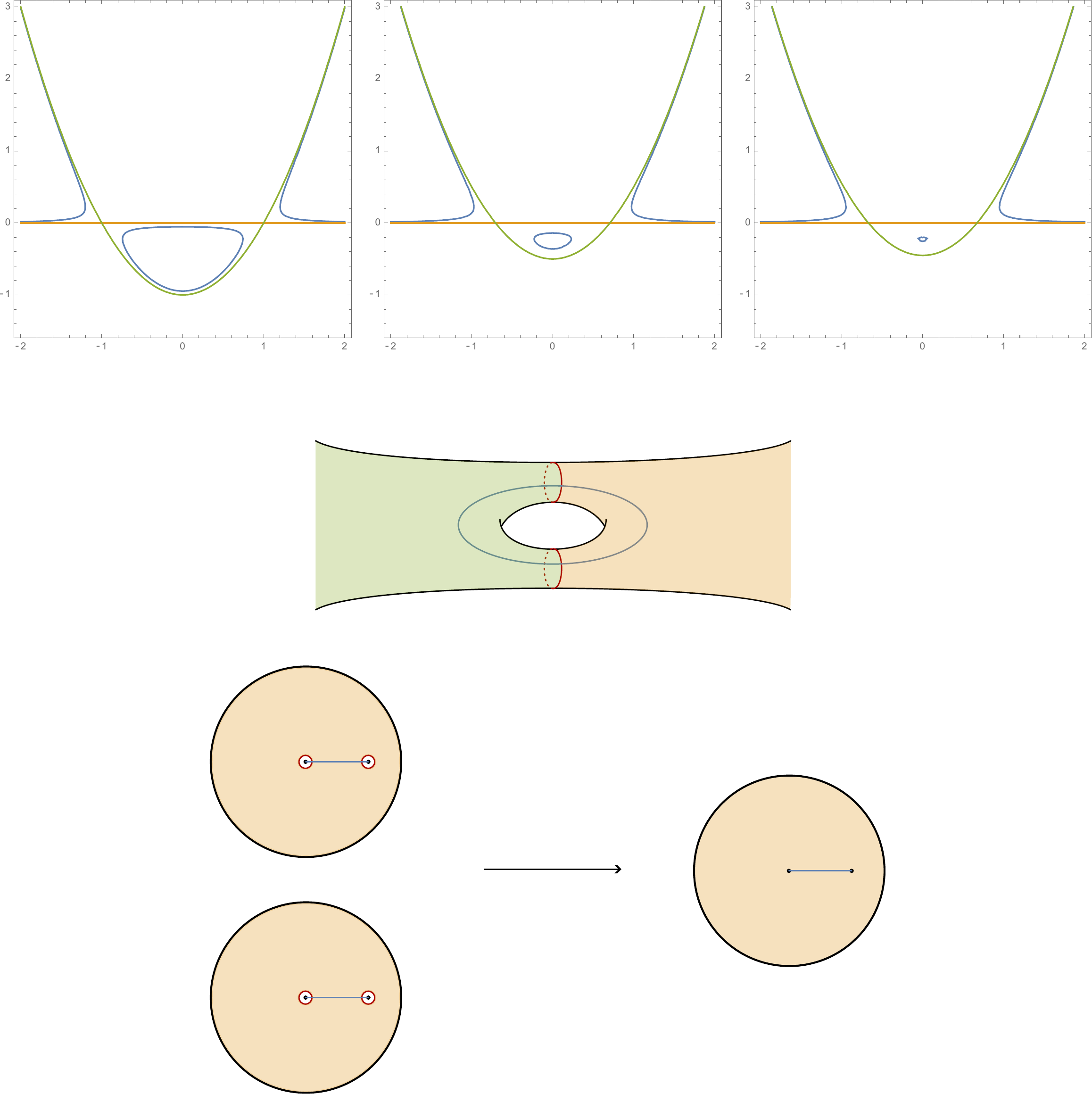}
\caption{Three points of view on the tacnode singularity. Top: A vanishing cycle associated to the tacnodal singularity $w(w-z^2)$. In the plot, $s_0 = 0.05$, making the critical fiber occur for $t_0 = 2 \sqrt{s_0} \approx 0.447$. The figure shows the real points of $E^{loc}_{s_0,t}$ for $t = 1, 0.5, 0.451$ from left to right. $D$ appears as the line (orange), and $C_t$ is the parabola (green) of varying heights. Middle: a topological picture of $E^{loc}_{s_0,t_0}$. The left (green) half is $\tilde C_{t_0}$, the right is $\tilde D$, and these are joined along the curves $\beta_i, \beta_j$. $a$ appears as the curve shown in the middle. Bottom: representing $\tilde D$ as a branched covering of $\CP^1$ via the pencil map for $C_t$. Observe that the path $\tau \subset \CP^1$ has distinguished lift $a \cap \tilde D$.}
\label{fig:tacnodes}
\end{figure}

\begin{lemma}\label{lemma:tacnodeverylocal}
    The curve $a$ in \Cref{fig:tacnodes} is a vanishing cycle. Moreover, under the projection $\pi: X \dashedrightarrow \CP^1$ given by the pencil $C_t$, the intersection $a \cap \tilde D$ is isotopic to the distinguished lift of the straight line segment connecting $t_0$ to $0$.
\end{lemma}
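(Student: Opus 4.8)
The plan is to work entirely inside the explicit two-parameter family $E^{loc}_{s,t} = Z(w(w-z^2+t)+s)$ set up above: first exhibit $a$ as the vanishing cycle of a nodal degeneration obtained by moving $t$ with $s$ fixed, and then read off the pencil map $\pi(z,w) = z^2-w$ along the resulting real oval. For Step~1, fix $s = s_0$ and write $F_t = w(w-z^2+t) + s_0$. The Jacobian criterion gives $\partial_z F_t = -2wz$, which forces $z = 0$ (the locus $w=0$ is excluded, as there $F_t = s_0 \ne 0$); then $\partial_w F_t = 2w+t=0$ gives $w = -t/2$ and $F_t(0,-t/2) = s_0 - t^2/4$. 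So $E^{loc}_{s_0,t}$ is smooth near $t_{\mathrm c} := 2\sqrt{s_0}$ except at $t = t_{\mathrm c}$, where it has the single singular point $p = (0,-\sqrt{s_0})$; translating $w = -\sqrt{s_0}+u$, $z = v$ puts $F_{t_{\mathrm c}}$ into the form $u^2 + \sqrt{s_0}\,v^2 - uv^2$, so $p$ is a node. Since $F_t$ is holomorphic in $t$ and $\partial_t F_t(p) = w(p) = -\sqrt{s_0} \ne 0$, the family $E^{loc}_{s_0,t}$ as $t\to t_{\mathrm c}^{+}$ is a nodal degeneration in the sense of \Cref{subsection:nodaldegen} (it is realized by a global nodal degeneration of $E$, and since the nodal stratum $\{s = t^2/4\}$ carrying its special fibre accumulates onto the tacnodal point $(s,t)=(0,0)$, $a$ will in particular be a vanishing cycle of the tacnodal degeneration of \Cref{lemma:tacnodelocal}). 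The local form above shows that for $t > t_{\mathrm c}$ the real locus near $p$ is a small ellipse contracting to $p$; transporting along the real segment from $t_{\mathrm c}$ to $t_0$, this ellipse grows to the real oval $\{\,w = w_\pm(z),\ z^2 \le t_0 - 2\sqrt{s_0}\,\}$ with $w_\pm(z) = \tfrac12\big((z^2-t_0)\pm\sqrt{(z^2-t_0)^2-4s_0}\,\big)$, which is the curve $a$ of \Cref{fig:tacnodes}. By Picard--Lefschetz, $a$ is the vanishing cycle.

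For Step~2, observe that the branch $w_+(z)$ (with $w \approx 0$ near $z = 0$) lies in the piece $\tilde C$'s counterpart $\tilde D$ near $D = \{w=0\}$, while $w_-(z)$ (with $w \approx z^2 - t_0$) lies in $\tilde C_{t_0}$; the two branches meet only at the inner ramification points $z = \pm\sqrt{t_0-2\sqrt{s_0}}$, and for $s_0$ small relative to $t_0$ these two points are within distance $O(\sqrt{s_0}/\sqrt{t_0})$ of $z = \pm\sqrt{t_0}$, hence sit in the neck regions around the curves $\beta_i,\beta_j$. Thus $a$ meets each neck exactly once, so $a\cap\tilde D$ is a single arc joining $\beta_i$ to $\beta_j$, isotopic in $\tilde D$ (rel $\partial\tilde D$) to the graph of $w_+$. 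Along that arc, $\pi = z^2 - w_+(z)$ is real; writing $u = z^2-t_0$ one computes $\tfrac{d\pi}{dz} = 2z\big(1 - \tfrac{dw_+}{du}\big)$ with $\tfrac{dw_+}{du} \le 0$ on $u\in[-t_0,-2\sqrt{s_0}]$, so $\pi$ is minimized at $z=0$, with value $\pi_- := \tfrac12(t_0 - \sqrt{t_0^2-4s_0})$, and increases monotonically in $|z|$ to $t_0 - \sqrt{s_0}$ at the endpoints. Hence $\pi$ carries $a\cap\tilde D$ two-to-one onto the real segment $[\pi_-,\,t_0-\sqrt{s_0}]$, folded once at the branch value $\pi_-$.

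For Step~3, note that $0 < \pi_-$ and $\pi_- < t_0 - \sqrt{s_0} < t_0$, with both $\pi_-$ and $t_0-\sqrt{s_0}$ arbitrarily close to $0$ and $t_0$ for $s_0$ small. The distinguished lift of $[0,t_0]$ is by definition the arc of $\tilde D$ double-covering $[0,t_0]$ under $\pi|_D\colon z\mapsto z^2$ (branched over $\pi=0$), i.e. the real segment $z\in[-\sqrt{t_0},\sqrt{t_0}]$ with endpoints blown up; this arc is characterized up to isotopy in $\tilde D$ by double-covering, with a single fold, a real subarc running from near $0$ (the branch value) to near $t_0$ (the deleted fibre). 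Since $a\cap\tilde D$ has exactly this description, a small isotopy of the $\pi$-disk moving $\pi_-$ to $0$ and $t_0-\sqrt{s_0}$ to $t_0$ carries $a\cap\tilde D$ onto the distinguished lift, giving the claim.

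The step I expect to be the genuine obstacle is Step~2: making precise that $a\cap\tilde D$ is a single arc running cleanly from $\beta_i$ to $\beta_j$, i.e. that the inner ramification points of $E^{loc}_{s_0,t_0}$ fall in the two necks and that $a$ does not re-enter $\tilde D$. I would pin this down by an explicit comparison of the real picture of $E^{loc}_{s_0,t_0}$ — whose inner oval $a$ is separated from the outer real locus precisely by the two necks — against the neck sizes, using that $s_0$ may be taken small relative to $t_0$ (compatibly with $t_0 > 2\sqrt{s_0}$); the remaining ingredients are elementary calculus and the Picard--Lefschetz formula.
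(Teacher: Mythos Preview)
Your proof is correct and follows essentially the same approach as the paper: fix $s_0$, exhibit $a$ as the vanishing cycle of the nodal degeneration $E^{loc}_{s_0,t}$ as $t\to 2\sqrt{s_0}$, and then read off $a\cap\tilde D$ under the pencil map $\pi(z,w)=z^2-w$. The one notable difference is in execution: where you work at fixed small $s_0$ and do explicit calculus on the branch $w_+(z)$ to pin down the image $[\pi_-, t_0-\sqrt{s_0}]$, the paper instead passes to the limit $s_0\to 0$, where $a$ literally decomposes into arcs on $C_{t_0}$ and on $D=\{w=0\}$, and then reads off $\pi|_D\colon z\mapsto z^2$ directly to get the segment $[0,t_0]$ without any computation. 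The limiting argument is shorter and sidesteps your Step~2 ``neck'' estimates entirely, at the cost of being less explicit; your version has the virtue of making the picture at the actual basepoint $(s_0,t_0)$ fully transparent.
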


\begin{proof}
    For $s_0$ fixed, the family of curves $E^{loc}_{s_0,t}$ centered at $t = 2 \sqrt{s_0}$ is a nodal degeneration, and it is easy to see that as $t \to 2 \sqrt{s_0}$, the vanishing cycle is $a$. In the limit $s_0 \to 0$, the curve $a$ decomposes into a union of two arcs, one on $C_{t_0}$ and one on $D$. In these coordinates, the pencil map $\pi$ sends $(z,0) \in D$ to $z^2$, so that the image of the segment on $D$ is given as the straight line segment along the real axis from $t_0$ to $0$.
\end{proof}

\begin{proof}[Proof of \Cref{lemma:tacnodelocal}]
    \Cref{lemma:tacnodeverylocal} shows that tacnodal degenerations locally have the structure asserted in \Cref{lemma:tacnodelocal}. To complete the proof, it is necessary to exhibit a family $E_{s,t}$ of curves in $H^0(X; \cO(d_{n-1}+1))$ that is analytically equivalent to $E^{loc}_{s,t}$ near the point of tangency $q$ and smooth away from $q$; we must also understand the effect of changing basepoints.

     A choice of trivialization of $\cO(1)$ in a neighborhood of $q$ gives an identification of sections of any $\cO(d)$ with function germs. It will be convenient to work with coordinates giving a different local model of the tacnode, as represented by the function germ $w^2 - z^4$. In these coordinates, $D$ is represented by $f(z,w) = w-z^2$ and $C_0$ by $g(z,w) = w + z^2$. We can identify the completed local ring of $X$ at $q$ with $\C[[z,w]]$, and under this identification, the Jacobian ideal $J$ is generated by $w$ and $z^3$. Thus the versal deformation space $\C[[z,w]]/J$ is isomorphic to $\C[z]/(z^3)$. 
    
    We will exhibit global sections of $\cO(d_{n-1}+1)$ whose restriction to a neighborhood of $q$ give the required germs, and which do not vanish near any of the other points of intersection $C_0 \cap D$. To that end, let $h_1 \in H^0(\CP^n; \cO(d_{n-1}))$ be nonvanishing at all points of $C_0 \cap D$ (including, of course, $q$). Such $h_1$ exists because $\cO(d_{n-1})$ is very ample. For this same reason, there exists $h_2 \in H^0(\CP^n; \cO(d_{n-1}+1))$ with $h_2(q) = 0$ and $\partial_z h_2(q) \ne 0$, as well as $h_3 \in H^0(\CP^n; \cO(d_{n-1}+1))$ with $h_3(q) \ne 0$; moreover such $h_2$ and $h_3$ can be selected so as to be nonvanishing at the nodes of $C_0 \cup D$.

    We claim that the sections $fh_1, h_2, h_3$ generate $\C[[z,w]]/J$ as a vector space. To see this, note that $f \equiv z^2 \pmod J$, and the $\C$-linear span of $z^2$ in $\C[[z,w]]/J$ equals the ideal $(z^2)$. Furthermore, $h_1$ is a unit in $\C[[z,w]]/ J$ and $fh_1 = a f$ for some $a \in \C^*$. Thus to establish the proposition it suffices to establish that $h_2, h_3$ generate $\C[z,w]/ \pair{J, z^2} \cong \C[z]/ (z^2)$. But by hypothesis, $h_2 \equiv bz \pmod J$ in $\C[z]/(z^2)$ for some $b \in \C^*$, and thus it suffices to establish that $h_3$ generates $\C[z]/(z)$, which is equivalent to $h_3$ not vanishing at $q$, which holds by assumption.

    From the claim, we see that the versal deformation space of the tacnode embeds into $H^0(X;\cO(d_{n-1}+1))$, and since $h_2, h_3$ were constructed so as to be nonvanishing at the nodal points of $C_0 \cup D$, a sufficiently small deformation of $C_0 \cup D$ realizing the nodal degeneration with vanishing cycle $a$ will be globally smooth.

    We have seen that when the basepoint $t_0 \in \CP^1$ is close to a point of tangency between $D$ and the pencil $C_t$, there is a vanishing cycle $a \subset E$ for which $a \cap \tilde D$ is the distinguished lift of a path on $\CP^1$. All that remains to be done is analyze the general situation when the basepoint is  arbitrary. Recall from \Cref{subsection:CDEsetup} that the basepoint curve $E$ is obtained from $C_{t_0} \cup D$ by a small perturbation, realizing $E$ topologically as $\tilde C_{t_0} \cup \tilde D$, with $\tilde C_{t_0}$, resp. $\tilde D$, given as the real oriented blowup of $C_{t_0}$, resp. $D$, at $C_{t_0} \cap D$. 
    
    The pencil map $\pi: X \dashedrightarrow \CP^1$ gives a representation of $D$ for which $C_t \cap D$ is given as the fiber $\pi^{-1}(t)$ for any $t \in \CP^1$. In this model, $\tilde D$ is realized as the $\pi$-preimage of the complement of a neighborhood of $t$. Thus, as $t$ traces along the path $\tau \subset \CP^1$, there is a canonical isotopy class of trivialization of the subsurfaces $\tilde D$ obtained by smoothing the various $C_t \cup D$, given by lifting the disk-pushing map along $\tau$ . When $t_0'$ is sufficiently close to the tangency point, the local analysis performed above shows that the vanishing cycle intersects $\tilde D$ as the distinguished lift of the path from $t_0'$ to the tangency. Parallel transporting back from $t_0'$ to the original basepoint $t_0$ along the specified trivialization over $\tau$ takes the distinguished lift of this short path to the distinguished lift of $\tau$ itself, as required. 
\end{proof}

For later convenience, we package together a collection of ``sufficiently many'' tacnodal degenerations. 

    \begin{lemma}\label{lemma:lotsoftacnodeVCs}
        In the above setting, there is a set $E_{s_1}, \dots,  E_{s_m}$ of degenerations of $E$ to tacnodal curves via the construction of \Cref{lemma:tacnodelocal}, such that the distinguished vanishing cycles $a_1, \dots, a_m$ admit representatives depositing pairwise disjoint arcs on $\tilde D$, and the complement $\tilde D \setminus \{a_1, \dots, a_m\}$ is a union of disks. 
    \end{lemma}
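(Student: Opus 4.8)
The plan is to carry out the same construction as in the proof of \Cref{prop:simplebraidmonodromy}, but to feed the resulting system of paths into \Cref{lemma:tacnodelocal} rather than into \Cref{prop:generatesbr}. The key point to exploit is that a single well-chosen system of paths on $\CP^1$ does double duty: interior-disjoint arcs running from the basepoint $t_0$ to the various tangency values, which together form an embedded tree containing every tangency value, produce distinguished lifts to $\tilde D$ that are simultaneously pairwise disjoint (because $\pi$-preimages of interior-disjoint paths are disjoint away from $\pi^{-1}(t_0)$) and filling (because the complement of a regular neighborhood of a tree in $\CP^1$ is a disk containing no branch value, hence lifts along $\pi$ to $N$ disjoint disks).

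Concretely, I would first invoke \Cref{lemma:maxgenexists} to fix the maximally generic pencil $C_t$ rel $D$ with tangency fibers $C_{t_1}, \dots, C_{t_k}$, and recall that it realizes $D$ as a simple branched cover $\pi\colon D \to \CP^1$ of degree $N$ with branch values $t_1, \dots, t_k$. Next I would choose paths $\tau_1, \dots, \tau_k$, with $\tau_i$ running from $t_0$ to $t_i$, having pairwise disjoint interiors and leaving $t_0$ in pairwise distinct directions, so that $\Lambda := \tau_1 \cup \dots \cup \tau_k$ is an embedded tree having $t_0$ as a vertex. Applying \Cref{lemma:tacnodelocal} to each $\tau_i$ produces a degeneration $E_{s_i}$ of $E$ to a tacnodal curve whose distinguished vanishing cycle $a_i$ meets $\tilde D$ in the distinguished lift of $\tau_i$; I would take $m := k$ and $E_{s_1}, \dots, E_{s_k}$ as the desired family. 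Disjointness is then immediate: since $\tau_i \cap \tau_j = \{t_0\}$ for $i \neq j$ and $\pi^{-1}(t_0)$ is exactly the set blown up in passing from $D$ to $\tilde D$, the proper transforms of $\pi^{-1}(\tau_i)$ and $\pi^{-1}(\tau_j)$ to $\tilde D$ are disjoint, so in particular the distinguished lifts $a_i \cap \tilde D$ have disjoint interiors; and distinctness of the directions of the $\tau_i$ at $t_0$ ensures that their endpoints on the boundary circles of $\tilde D$ are distinct as well.

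For the filling assertion I would reuse the computation already made in the proof of \Cref{prop:simplebraidmonodromy} when it verifies property (4) of \Cref{prop:generatesbr}: there one shows that a regular neighborhood of $\CP^1 \setminus \Lambda$ lifts along $\pi$ to $N$ disjoint disks, equivalently that $D$ cut along the distinguished lifts $\tilde\tau_1, \dots, \tilde\tau_k$ is a union of $N$ disks. Passing from $D$ to the real oriented blowup $\tilde D$ alters $D$ only within arbitrarily small neighborhoods of the points of $\pi^{-1}(t_0)$, and these points are precisely the endpoints of the arcs $\tilde\tau_i$; under the blowup each such endpoint is replaced by a point on one of the new boundary circles of $\tilde D$, while the exceptional circles are absorbed into a regular neighborhood of $\bigcup_i (a_i \cap \tilde D)$. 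It follows that $\tilde D$ cut along the arcs $a_1 \cap \tilde D, \dots, a_m \cap \tilde D$ is again a union of $N$ disks, which is the assertion.

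I expect the only genuine obstacle to lie in this last compatibility between the cut-open decomposition and the real oriented blowup: one must verify that blowing up $\pi^{-1}(t_0)$ merely refashions the endpoints of the $\tilde\tau_i$ into points on the new boundary circles and absorbs the exceptional circles, without merging or altering the complementary disks. (A related bookkeeping point, already subsumed in \Cref{prop:simplebraidmonodromy}, is that only the distinguished component of each $\pi^{-1}(\tau_i)$ --- not the remaining sheets --- is used, so one is tacitly relying on the fact that restoring those sheets to the cut-open surface amounts to a sequence of boundary folds, each of which preserves the disk condition.) Everything else is a direct transcription of the corresponding steps from the proof of \Cref{prop:simplebraidmonodromy}.
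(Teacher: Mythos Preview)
Your proposal is correct and takes essentially the same approach as the paper: choose interior-disjoint paths $\tau_1,\dots,\tau_k$ from $t_0$ to the tangency values, apply \Cref{lemma:tacnodelocal} to each, and invoke the filling argument from the proof of \Cref{prop:simplebraidmonodromy}. The paper's own proof is extremely terse---it simply asserts that the required system of paths exists ``\emph{exactly} as in the proof of \Cref{prop:simplebraidmonodromy}''---so your version is in fact more detailed, and the bookkeeping concerns you flag (compatibility with the real oriented blowup, and the role of the non-distinguished sheets) are points the paper leaves implicit.
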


\begin{proof}
    Let $\tau \subset \CP^1$ be a path.
    In light of \Cref{lemma:tacnodelocal}, to prove the claim, it suffices to exhibit paths $\tau_1, \dots, \tau_m \subset \CP^1$ that are pairwise disjoint on their interiors, such that the distinguished lifts of $\{\tau_i\}$ decompose $\tilde D$ into a union of disks. 
    This follows {\em exactly} as in the proof of \Cref{prop:simplebraidmonodromy}, taking $\{\tau_i\}$ to be some set of mutually disjoint paths from $t_0$ to each of the branch points.
\end{proof}

\section{Proof of \Cref{mainthm:ambientX}, base cases}\label{section:basecase}

Recall that \Cref{lemma:smallgenustable} gives a complete enumeration of those multidegrees $\bd$ for which $r(\bd) \le 1$. Here we study the monodromy of such families as the base case in our inductive argument.

\begin{proposition}\label{prop:basecases}
    For any multidegree $\bd = (d_1, \dots, d_{n-1})$ such that $r(\bd) \le 1$ and any smooth complete intersection surface $X$ of multidegree $(d_1, \dots, d_{n-2})$, the topological monodromy group $\Gamma_{X,\bd}$ is computed as
    \[
    \Gamma_{X,\bd} = \Mod(\Sigma_{g(\bd)}).
    \]
\end{proposition}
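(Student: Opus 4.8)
The plan is to reduce to the seven reduced multidegrees enumerated in \Cref{lemma:smallgenustable} and treat each in turn. First I would observe that if some $d_i = 1$, then the hyperplane $\{f_i = 0\} \cong \CP^{n-1}$ identifies $U_{X,\bd}$ with the family of complete intersection curves for the multidegree obtained by deleting $d_i$ (the base parametrizing the choice of hyperplane is simply connected, so no monodromy is lost); iterating, we may assume $\bd$ is reduced, leaving $\bd \in \{1, 2, 3, (2,2), 4, (3,2), (2,2,2)\}$. In every case the containment $\Gamma_{X,\bd} \le \Mod(\Sigma_{g(\bd)})$ is automatic, and by \Cref{remark:lowedegree} the asserted $r$-spin mapping class group is simply the full $\Mod(\Sigma_{g(\bd)})$, so only the reverse containment is at issue. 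When $g(\bd) = 0$ (the cases $\bd = 1, 2$) there is nothing to prove, since $\Mod(\Sigma_0)$ is trivial.

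For the genus-one cases $\bd = 3$ and $\bd = (2,2)$ the members of the family are elliptic curves, and since $\Mod(\Sigma_1) \cong \SL_2(\Z)$ acts faithfully on $H_1(\Sigma_1;\Z)$, it suffices to show that the monodromy on homology is all of $\SL_2(\Z)$. This is classical: the discriminant hypersurface is irreducible, so all vanishing cycles lie in a single monodromy orbit; nodal ($I_1$) degenerations with nonseparating vanishing cycles exist; and two such vanishing cycles with geometric intersection number one give transvections $T_a, T_b$ that already generate $\SL_2(\Z)$. (Equivalently one may exhibit a Legendre- or Hesse-type pencil inside each family.)

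The two remaining small-genus cases with $r(\bd) = 1$, namely $\bd = 4$ (smooth plane quartics on $\CP^2$) and $\bd = (3,2)$, I would handle by realizing the family on a smooth toric surface. For $\bd = (3,2)$ the surface $X$ is a smooth quadric $\cong \CP^1 \times \CP^1$, and since $\cO_{\CP^3}(3)|_X = \cO_X(3,3)$, the family $U_{X,(3,2)}$ is exactly the family of smooth curves in $|\cO(3,3)|$ on $\CP^1 \times \CP^1$; in both cases the adjoint bundle $K_X \otimes \cO(d)$ is $\cO(1)$ (resp. $\cO(1,1)$), which has no nontrivial root in $\Pic(X)$, so the relevant $r$-spin mapping class group is the whole mapping class group, and the monodromy computations for linear systems on smooth toric surfaces \cite{saltertoric} (respectively for smooth plane curves, in prior work of the second-named author) yield $\Gamma_{X,\bd} = \Mod(\Sigma_{g(\bd)})$.

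The genuinely new case — and what I expect to be the main obstacle — is $\bd = (2,2,2)$, where $g(\bd) = 5$ and $X$ is a quartic del Pezzo surface, which is not toric, so none of the cited results apply and a hands-on argument is required. Here the plan is to invoke the assemblage generation criterion \Cref{theorem:assemblagegenset}: since $g(\bd) = 5$, it suffices to exhibit in $\Gamma_{X,(2,2,2)}$ the Dehn twists about a $5$-assemblage of type $E$ of vanishing cycles (automatically admissible, as $r = 1$). To produce such a configuration, degenerate a smooth $(2,2,2)$-curve to a transverse union $C \cup D$ of two genus-one curves and pass to a smoothing $E = \tilde C \cup \tilde D$: one lifts a filling configuration of vanishing cycles from $C$ into $\tilde C \subset E$ and adjoins tacnodal vanishing cycles (as in \Cref{lemma:lotsoftacnodeVCs}) depositing a filling system of arcs on $\tilde D$, then reassembles the union into a configuration of the required embedded type. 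The subtlety is that here $g(C) = 1$, so the clean lifting machinery of \Cref{section:exhibitsimplebraids} does not apply verbatim; the transfer of vanishing cycles from $C$ to $\tilde C$ and the verification of the type-$E$ assemblage combinatorics must be carried out by hand in this one small case. (Alternatively, one can analyze a maximally degenerate stable reduction of a $(2,2,2)$-curve directly and read off an explicit filling configuration of vanishing cycles with the needed combinatorics.)
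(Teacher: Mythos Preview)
Your approach diverges substantially from the paper's for the three $r(\bd)=1$ cases, and the divergence is where the trouble lies.

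The paper first proves the reduction $\Gamma_{X,\bd}=\Gamma_{\bd}$ (\Cref{lemma:bigsmall}) via a Lefschetz hyperplane argument: a generic pencil in $X$ surjects on $\pi_1$ of the full parameter space $U_{\bd}$. You skip this step, and it bites you already for $\bd=(3,2)$: the proposition must hold for \emph{any} $X$ of the appropriate multidegree, so in particular for $X$ a smooth cubic surface, which is not toric, and \cite{saltertoric} does not apply. Once you have \Cref{lemma:bigsmall}, of course, computing $\Gamma_{X,\bd}$ for a single convenient $X$ suffices, and your toric citations for $\bd=4$ and $\bd=(3,2)$ (quadric) would then be legitimate.

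The real gap is $\bd=(2,2,2)$. Your plan is to degenerate to $C\cup D$ with $g(C)=g(D)=1$ and invoke \Cref{theorem:assemblagegenset}. But that theorem requires the \emph{initial} $E$-arboreal simple configuration of the assemblage to span a subsurface of genus $h\ge 5$, i.e.\ essentially all of $E$; nothing lifted from a genus-one $C$ gets you close. The lifting lemma (\Cref{lemma:liftconfig}) needs $g(C)>1$, the low-genus substitute \Cref{prop:gencriterion} needs $g(S')\ge 2$, and \Cref{corollary:braidmonodromy} needs $r(\bd)\ge 1$ whereas here the reducible piece has $r=0$. So none of the paper's inductive machinery is available, and you are left proposing to build a filling $E$-arboreal configuration of ten admissible vanishing cycles on a genus-five surface entirely by hand from tacnodal and nodal degenerations---possible in principle, but you have not done it, and it is not a small verification.

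The paper's route is much cleaner and treats all three $r(\bd)=1$ cases uniformly: a general curve of genus $3$, $4$, or $5$ is a canonical complete intersection of the corresponding multidegree, so $U_{\bd}\to\cM_g$ has Zariski-open image $\cU$; one checks the map factors through fiber bundles with connected fibers onto the space of canonically embedded curves and then through the $\PGL$-quotient, giving $\pi_1(U_{\bd})\twoheadrightarrow\pi_1^{\orb}(\cU)$; and a Zariski-open suborbifold of $\cM_g$ has surjective $\pi_1^{\orb}$ (\Cref{result1}). No vanishing-cycle combinatorics are needed at all.
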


We first show that \Cref{prop:basecases} follows from an apparently weaker statement, where we consider all smooth complete intersection curves, not just those embedded in a given $X$.

\begin{lemma}\label{lemma:bigsmall}
    With $X, \bd$ as in \Cref{prop:basecases}, let $\Gamma_\bd$ denote the topological monodromy group of the family $U_\bd$ of all smooth complete intersection curves of multidegree $\bd$. Then there is an equality
    \[
    \Gamma_{X,\bd} = \Gamma_\bd.
    \]
\end{lemma}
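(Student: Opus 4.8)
\medskip\noindent\textbf{Proof proposal.}
The containment $\Gamma_{X,\bd}\le\Gamma_\bd$ is free: recording $X$ by a fixed choice of its defining equations embeds the family $U_{X,\bd}$ into $U_\bd$ compatibly with basepoints. So the whole content is $\Gamma_\bd\le\Gamma_{X,\bd}$, and when $n=2$ (so $X=\CP^2$ and $U_{X,\bd}=U_\bd$) there is nothing to prove; assume $n\ge3$. The plan is to realize $U_{X,\bd}$ as a fibre of a fibration whose base is the space of ambient surfaces, and then to show that the $\pi_1$ contributed by that base acts trivially in the mapping class group. To set this up, realize $U_\bd$ as an open subset of $\prod_{i=1}^{n-1}\P H^0(\CP^n;\cO(d_i))$ and let $\tilde U_\bd\subseteq U_\bd$ be the (dense, open) locus on which the first $n-2$ equations additionally cut out a smooth surface. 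Since $U_\bd\setminus\tilde U_\bd$ is a complex-analytic subset of codimension $\ge1$ (the vanishing of the discriminant of $f_1,\dots,f_{n-2}$), the inclusion induces a surjection $\pi_1(\tilde U_\bd)\onto\pi_1(U_\bd)$, so $\Gamma_\bd$ is already the image of $\pi_1(\tilde U_\bd)$. Forgetting $f_{n-1}$ now gives a morphism $p\colon\tilde U_\bd\to B$ onto the space $B$ of smooth complete intersection surfaces of multidegree $(d_1,\dots,d_{n-2})$; using surjectivity of $H^0(\CP^n;\cO(d_{n-1}))\to H^0(X;\cO_X(d_{n-1}))$ (a form of the Lefschetz theorem), the fibre of $p$ over $[X]$ is connected and is, up to an affine-space redundancy invisible to $\pi_1$, the parameter space of $U_{X,\bd}$. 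I would argue that $p$ is a locally trivial fibration: $\tilde U_\bd$ is the complement, inside the trivial bundle $B\times\P H^0(\cO(d_{n-1}))$, of the proper family of ``dual'' hypersurfaces of singular sections, and that family is equisingular over the smooth connected base $B$. Granting this, the homotopy exact sequence of $p$ yields $\Gamma_{X,\bd}=\rho(\ker p_*)$ for $\rho$ the monodromy of $\tilde U_\bd$ based at some $C_0\in U_{X,\bd}$, so it remains to prove $\rho(g)\in\Gamma_{X,\bd}$ for every $g\in\pi_1(\tilde U_\bd,C_0)$.

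Here is where the geometry enters. The group $\pi_1(B)$ is generated by meridians, and their conjugates, about the irreducible components of the surface-discriminant $\Delta\subset\prod_{i=1}^{n-2}\P H^0(\cO(d_i))$. A generic point of a codimension-one component of $\Delta$ is a surface $X'$ whose singular locus is a single point $q$, and by $\PGL_{n+1}$-equivariance the point $q$ ranges over a dense subset of $\CP^n$ as $X'$ varies; hence I may take each meridian to be a lollipop $\mu=w\,\sigma\,w^{-1}$ with $\sigma$ a small loop about such an $X'$ for which $q\notin Y_0$, where $Y_0=Z(f^0_{n-1})$ is the fixed hypersurface with $C_0=X\cap Y_0$. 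Keeping $f_{n-1}=f^0_{n-1}$ fixed while $f_1,\dots,f_{n-2}$ traverse $\mu$, the curves $X_t\cap Y_0$ stay smooth (because $Y_0$ is transverse to $X'$ away from $q$ and avoids $q$, hence is transverse to every surface occurring along $\mu$), producing a lift $\tilde\mu\in\pi_1(\tilde U_\bd,C_0)$; and $\rho(\tilde\mu)=1$, since the curves $X_t\cap Y_0$ over the disk bounded by $\sigma$ form a smooth family -- smooth even over the centre, where $X'\cap Y_0$ is smooth -- hence a topologically trivial one. The same argument applies with the basepoint replaced by any point in the fibre over $[X]$. Finally, given $g\in\pi_1(\tilde U_\bd,C_0)$, write $p_*(g)=\prod_j\gamma_j\,\mu_j^{\pm1}\,\gamma_j^{-1}$ with each $\mu_j$ a meridian of the above type, and choose a lift $s_g=\prod_j\tilde\gamma_j\,\tilde\mu_j^{\pm1}\,\tilde\gamma_j^{-1}\in\pi_1(\tilde U_\bd,C_0)$ (lifting the $\gamma_j$ to paths in $\tilde U_\bd$ and the $\mu_j$ to loops of the kind just built, based at the relevant endpoints). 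Then $p_*(s_g)=p_*(g)$ and $\rho(s_g)=1$, so $g\,s_g^{-1}\in\ker p_*$ and $\rho(g)=\rho(g\,s_g^{-1})\in\rho(\ker p_*)=\Gamma_{X,\bd}$, giving $\Gamma_\bd\le\Gamma_{X,\bd}$ as wanted.

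The step I expect to be the real obstacle is the claim that $p$ is a fibration over \emph{all} of $B$: although only the homotopy exact sequence at the prescribed $X$ is used in the end, $\pi_1(B)$ is analyzed globally, so local triviality is needed everywhere. This should follow from Thom--Mather equisingularity applied to the proper family of dual varieties over the smooth connected base $B$, but it warrants careful treatment (with a little extra care in the degenerate low-genus cases, where some ambient surface may be dual-defective -- there, however, the mapping class group is small enough that the statement can also be checked directly). The subsidiary point -- that a generic singular member of $\Delta$ has an isolated singularity that can be placed off the fixed hypersurface $Y_0$ -- is routine given the $\PGL_{n+1}$-action.
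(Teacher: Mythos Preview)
Your argument is conceptually sound, but the paper's proof is dramatically shorter and sidesteps exactly the technical point you flag as the obstacle. The paper does not fiber over the space $B$ of ambient surfaces at all. Instead, it regards $U_\bd$ as the complement of the discriminant $\Sigma_\bd$ inside the affine space $V_\bd = \prod_i H^0(\CP^n;\cO(d_i))$, picks a generic two--dimensional affine plane $L\subset V_\bd$ of the special form $\{(f_1,\dots,f_{n-2})\}\times\{sf_{n-1}+tg_{n-1}\}$ through the basepoint, and invokes the Zariski--Lefschetz hyperplane theorem for quasi-projective varieties: since $L$ can be chosen transverse to $\Sigma_\bd$, the map $\pi_1(L\setminus\Sigma_\bd)\to\pi_1(V_\bd\setminus\Sigma_\bd)$ is surjective. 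As $L\setminus\Sigma_\bd$ already sits inside $U_{X,\bd}$, this immediately gives $\Gamma_\bd\le\Gamma_{X,\bd}$.

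By contrast, your route genuinely requires the fibration property of $p\colon\tilde U_\bd\to B$, since the final step uses $\ker p_*=i_*\pi_1(\text{fiber})$ to conclude that $g s_g^{-1}$ lands in $\Gamma_{X,\bd}$; without that exactness you only know $g s_g^{-1}\in\ker p_*$, which is not enough. The equisingularity you would need is not obviously automatic: the fiberwise discriminant is a projective dual of $X$ in the $|\cO(d_{n-1})|$-embedding, and dual varieties can change stratification type as $X$ acquires or loses special subvarieties (lines, etc.). There is also a small wrinkle you pass over: when you lift $\gamma_j\mu_j\gamma_j^{-1}$, the meridian $\tilde\mu_j$ must be built with the hypersurface $Y_j$ occurring at the \emph{endpoint} of $\tilde\gamma_j$, not with the original $Y_0$, so the condition $q_j\notin Y_j$ must be arranged by choosing each $\tilde\gamma_j$ suitably. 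This is fixable, but worth saying. In short: your approach explains \emph{why} moving the ambient surface contributes no monodromy, which is pleasant, but buying that explanation costs you a Thom--Mather argument that the paper's two-plane slice avoids entirely.
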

\begin{proof}
    The base space of the family $U_\bd$ of smooth complete intersection curves of multidegree $\bd$ is defined as the complement of the discriminant divisor $\Sigma_{\bd}$ in the product space
\[
V_\bd := \prod_{i =1}^{n-1} H^0(\CP^n; \cO(d_i)).
\]

Let $X$ be a smooth complete intersection surface of multidegree $(d_1, \dots, d_{n-2})$ defined by 
\[
X = Z(f_1, \dots, f_{n-2}).
\]
Then a pencil of curves in $X$ of multidegree $\bd$ can be constructed by taking $sf_{n-1}+tg_{n-1}$ for $s,t \in \C$ and $f_{n-1}, g_{n-1} \in H^0(\CP^n;\cO(d_{n-1})$; this determines an affine subspace $L \subset V_\bd$ of dimension $2$. As is well-known, $f_{n-1}$ and $g_{n-1}$ can be chosen so that $L$ intersects $\Sigma_\bd$ transversely. By the Lefschetz hyperplane theorem, $\pi_1(L \setminus \Sigma_\bd)$ then surjects onto $\pi_1(V_\bd \setminus \Sigma_\bd)$, showing $\Gamma_\bd = \Gamma_{X,\bd}$.
\end{proof}

\Cref{prop:basecases} thus will follow from \Cref{prop:basecasesbig} stated below.

\begin{proposition}\label{prop:basecasesbig}
    For any multidegree $\bd$ such that $r(\bd) \le 1$, there is an equality
    \[
    \Gamma_\bd = \Mod(\Sigma_{g(\bd)}).
    \]
\end{proposition}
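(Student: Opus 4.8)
The plan is to cut down to the finite list of reduced multidegrees of \Cref{lemma:smallgenustable} and then, case by case, recognize the family $U_\bd$ as essentially the universal curve over an open piece of moduli, where the monodromy is the full mapping class group by construction. For the reduction: if some $d_i = 1$ and $n \ge 3$, every complete intersection of multidegree $\bd$ lies in the hyperplane $Z(f_i) \cong \CP^{n-1}$ and is there a complete intersection of the reduced multidegree $\bd^{\mathrm{red}}$ obtained by deleting $d_i$. Restricting $U_\bd$ to the sub-family in which $f_i$ cuts out a fixed hyperplane recovers the family $U_{\bd^{\mathrm{red}}}$ (up to the irrelevant rescaling of $f_i$), so $\Gamma_{\bd^{\mathrm{red}}} \le \Gamma_\bd \le \Mod(\Sigma_{g(\bd)})$; since deleting $d_i = 1$ changes neither $\Pi$ nor $r$ (hence nor $g$), it suffices to treat reduced $\bd$. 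By \Cref{lemma:smallgenustable} these are $\bd \in \{1,2,3,(2,2),4,(3,2),(2,2,2)\}$, of genera $0,0,1,1,3,4,5$.

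When $g(\bd) = 0$ there is nothing to prove, since $\Mod(\Sigma_0) = 1$. When $g(\bd) = 1$, the assertion $\Gamma_\bd = \Mod(\Sigma_1) = \SL_2(\Z)$ is classical: a smooth plane cubic, resp.\ a smooth $(2,2)$ curve in $\CP^3$, is an elliptic curve, and conversely every elliptic curve arises this way, so the quotient of $U_\bd$ by $\PGL_{n+1}$ dominates the modular orbifold $\mathcal{M}_{1,1}$ with connected fibers and $\pi_1^{\orb}(\mathcal{M}_{1,1}) = \SL_2(\Z) = \Mod(\Sigma_1)$.

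For the three remaining cases $\bd \in \{4, (3,2), (2,2,2)\}$ (genus $3, 4, 5$), I would invoke the classical structure theory of canonical curves (Petri, Enriques--Babbage, or the explicit low-genus classification). A smooth complete intersection of multidegree $4$ is a smooth plane quartic, hence exactly a canonically embedded non-hyperelliptic genus-$3$ curve; a smooth $(3,2)$ complete intersection in $\CP^3$ is exactly a canonical non-hyperelliptic genus-$4$ curve (lying on its unique quadric); and a smooth $(2,2,2)$ complete intersection in $\CP^4$ is exactly a canonical genus-$5$ curve that is neither hyperelliptic nor trigonal. Consequently the stack quotient $[U_\bd/\PGL_{n+1}]$ is identified with a dense open substack $\mathcal{M}_g^\circ \subseteq \mathcal{M}_g$ whose complement (the hyperelliptic locus, together with the trigonal locus when $g = 5$) has positive codimension, so the monodromy of $U_\bd$ factors as
\[
\pi_1(U_\bd) \onto \pi_1^{\orb}(\mathcal{M}_g^\circ) \onto \pi_1^{\orb}(\mathcal{M}_g) = \Mod(\Sigma_g),
\]
where the first surjection holds because $\PGL_{n+1}$ is connected, the second because a dense open substack induces a surjection on orbifold fundamental groups, and the final equality is the tautological description of the monodromy of the universal curve over $\mathcal{M}_g$, to which the universal curve over $U_\bd$ pulls back. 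Hence $\Gamma_\bd = \Mod(\Sigma_g)$, and together with the reduction this settles all cases.

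The main obstacle I anticipate is bookkeeping rather than conceptual: in the genus $3,4,5$ cases one must verify carefully that $[U_\bd/\PGL_{n+1}]$ really is an \emph{open substack} of $\mathcal{M}_g$ — so that automorphisms are accounted for and the monodromy genuinely factors through $\pi_1^{\orb}(\mathcal{M}_g)$ rather than merely through $\pi_1$ of the coarse space — and that the excised loci have positive codimension so that no part of the fundamental group is lost; the genus-$1$ case requires a similar, but entirely standard, identification with $\mathcal{M}_{1,1}$.
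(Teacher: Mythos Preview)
Your approach is essentially the paper's: reduce to the finite list of reduced multidegrees, dispose of genus $\le 1$ directly (the paper cites Dolgachev--Libgober for the genus-$1$ cases), and for $\bd \in \{4,(3,2),(2,2,2)\}$ recognize the curves as canonically embedded so that $U_\bd$ maps onto a dense open in $\mathcal{M}_g$ with connected fibers, whence surjectivity onto $\pi_1^{\orb}(\mathcal{M}_g) = \Mod(\Sigma_g)$. The paper presents this by explicitly factoring $U_\bd \to Y \to \mathcal{U} \subset \mathcal{M}_g$ (with $Y$ the variety of canonically embedded smooth curves in $\CP^n$) as a composite of fiber bundles with connected fibers, rather than via stack language, but the content is the same.

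One wrinkle worth flagging: your identification $[U_\bd/\PGL_{n+1}] \cong \mathcal{M}_g^\circ$ is not literally correct for $\bd = (3,2)$ or $(2,2,2)$. A point of $U_\bd$ records not just the curve $C \subset \CP^n$ but a specific tuple of defining equations, and two tuples cutting out the same $C$ need not be $\PGL_{n+1}$-related. One must also quotient by the (connected) group acting on the equations themselves --- $\GL_3(\C)$ for $(2,2,2)$, and a slightly more intricate group for $(3,2)$. This is precisely the ``bookkeeping'' you anticipate, and since these extra groups are connected your $\pi_1$-surjectivity survives intact; the paper's case-by-case bundle analysis is one way to make this step explicit.
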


\begin{proof}[Proof of \Cref{prop:basecasesbig}]
As discussed in the introduction, the cases of $r(\bd) < 0$ are trivial, as the associated curve is of genus 0 and has trivial mapping class group. Consider next the the cases when $r(\bd) =0$; by \Cref{lemma:smallgenustable}, these are the cases $\bd = (3)$ and $\bd = (2,2)$. In both of these cases the corresponding curves are of genus $1$ and the mapping class group is simply $\SL_2 (\Z)$.
\begin{proposition}
    If $\bd = (3)$ or $(2,2)$, then $\Gamma_{\bd} = \SL_2(\Z).$
\end{proposition}
\begin{proof}
This is a result of Dolgachev--Libgober - see \cite[Section 4]{DolgLib}.
\end{proof}

Finally we consider the cases where $r(\bd) = 1$. This will follow immediately from the next two statements (\Cref{result1,result2}).

\begin{lemma}\label{result1}
    Let $U \subseteq \cM_g$ be a Zariski-open suborbifold. Then the induced map $\pi_1^{orb}(U) \to \pi_1^{orb}(\cM_g)$ is surjective.
\end{lemma}
\begin{proof}
    Let $\tilde\cM$ denote a finite connected $G$ cover of $\cM_g$ that is a variety. Let $\tilde U$ denote the pullback of this cover to $U$. We note that $\tilde U$ is connected and that $\pi_1(\tilde U) \to \pi_1(\tilde \cM)$ is surjective, since $\tilde U$ is Zariski-open in $\tilde \cM$. We now have the following diagram where both rows are exact:

\[\xymatrix{
1 \ar[r] & \pi_1(\tilde U) \ar[r] \ar[d]  & \pi_1^{orb}(U) \ar[r] \ar[d]  & G \ar[r] \ar[d] & 1\\
1 \ar[r] & \pi_1(\tilde \cM) \ar[r]       & \pi_1^{orb}(\cM_g) \ar[r]      & G \ar[r] & 1.
}\]
 Since the first and third columns are surjective, so is the middle one.
\end{proof}

\begin{lemma}\label{result2}
    Let $\bd$ be one of $4, (3,2)$ and $(2,2,2)$. Let $\rho: U_{\bd} \to \cM_g$ denote the natural map defining the family of complete intersection curves where $g = 3,4,5$ respectively. Let $\cU$ denote the image of $\rho$. Then $\cU$ contains a Zariski-open set and $\rho_*: \pi_1(U_{\bd}) \to \pi_1^{orb}(\cU)$ is surjective. 
\end{lemma}
\begin{proof}
    The fact that for these particular multidegrees, the image $\cU$ is Zariski open is classical. We also note that it is well known that curves arising as complete intersections of these particular multidegrees are \emph{canonically} embedded in $\P^n$. Let $Y$ denote the parameter space of canonically embedded smooth curves in $\P^{g-1}$. This is known to be a smooth projective variety that embeds into the Hilbert scheme of curves in $\P^{g-1}$.
    
    To understand the map $\rho_*$ we will factorize $\rho$ into two fiber bundles with connected fibers (in the case when $\bd =4$ the first fiber bundle will be trivial). We note that a curve defined by an element of $U_{\bd}$ naturally lies in $\P^{g-1}$, and we have a natural map $\phi: U_{\bd} \to Y.$ Let $Y'$ denote the image of $\phi$.
    
      We also have a quotient map $\pi: Y' \to \cM_g$ whose image is $\cU$. Clearly $\rho = \pi \circ \phi.$
    Let us first understand  the map $\pi.$ It is well known that a general curve of genus $3,4,5$ 
    has a canonical model that is a complete intersection of multidegree $\bd$, and furthermore, this model is unique up to automorphisms of the ambient projective space. As a result one has that the map $\pi: Y' \to \cU$ is a quotient map under the action of $\PGL_{g}(\C)$ and that $\cU$ is the orbifold quotient $Y' / \PGL_{g}(\C).$ Orbifold quotients satisfy the associated long exact sequence in homotopy groups and so there is an exact sequence 
    \[
    \pi_1(Y') \to \pi_1^{orb}(\cU) \to \pi_0(\PGL_{g}(\C)).
    \]
    Since $\PGL_{g}(\C)$ is connected, the map $\pi_1(Y') \to \pi_1^{orb}(\cU)$ is surjective.

    We now consider the map $\phi: U_{\bd} \to Y'$. Our arguments will differ slightly based on what the value of $\bd$ is.
   If $\bd = 4$, the map $\phi$ is a quotient map by $\C^*$, since a plane curve in $\P^2$ is uniquely determined by its equation up to scaling. This makes the map $\phi$ a $\C^*$ bundle.
    
    Now let $\bd =(3,2).$ Let $(f,g) \in U_{\bd}$, and let $C$ be the common zero locus of $f$ and $g$. Define 
    \[
    U' = \{(f,g)\in H^0(\P^3, \O(3)) \times H^0(\P^3, \O(2)) \mid g\neq 0, f \not \in g H^0(\P^3, \O(1)) \}.
    \]
    Also set
    \[
    Y'' = \{(V_1, V_2) \in Gr_5 (H^0(\P^3, \O(3))) \times \P H^0(\P^3, \O(2)) \mid V_2 H^0(\P^3, \O(1)) \subseteq V_1\}.
    \]
    Let $\bar \phi : U' \to Y''$ be defined by 
    \[
    \bar \phi(f,g) = ( f + g H^0(\P^3, \O(1)),\Span f).
    \]
    $\bar \phi$ is a fiber bundle with fiber $\C^* \times \C^*  \times \C^4$. 
    
     There is an injective map $i: Y' \to Y''$ defined by $i(C) = (H^0(\P^3, I_C(3)),H^0(\P^3, I_C(2)))$. 
   We now have a Cartesian square

 \[
    \xymatrix{
    U \ar[r]\ar[d] & U' \ar[d]\\
    Y' \ar[r] & Y''
    }
    \]
   Thus $\phi$ is a fiber bundle with connected fibers.

    Finally, let $\bd =(2,2,2)$. Let $(f_1, f_2, f_3) \in U_{\bd}$, and let $C = Z(f_1) \cap Z(f_2) \cap Z(f_3)$. Let $W = H^0(\P^4,\O(2) )$, and define
    \[
    \bar U = \{(f,g,h) \in W^3 \mid (f,g,h) \textrm{ are linearly independent}\}.
    \]
    Let $\bar \phi : \bar U \to Gr_3(W)$ be defined by $\bar \phi(f,g,h) = \Span {f,g,h}$. Clearly $\bar \phi$ is a fiber bundle with fibers $\GL_{3}(\C)$. Furthermore, $Y'$ can be identified with a subspace of $Gr_3( W)$, via the map 
    \begin{align*}
    i : Y' &\to Gr_3(W)\\
    C &\mapsto H^0(\P^4, I_C(2)).
    \end{align*}
    We now have a Cartesian diagram:
    \[
    \xymatrix{
    U \ar[r]\ar[d] & U' \ar[d]\\
    Y' \ar[r] & Gr_3(W)
    }
    \]

    Thus the map $\phi$ is a fiber bundle with connected fibers in all cases- therefore $\phi$ induces a surjective map at the level of $\pi_1$ and hence $\rho$ does as well.
\end{proof}
This completes the proof of \Cref{prop:basecasesbig}.
\end{proof}

\section{Proof of main theorem, inductive step}\label{section:inductionstep}
We formulate the following inductive hypothesis:\\

IH($\bd$): {\em For any smooth complete intersection surface $X$ of multidegree $(d_1, \dots, d_{n-2})$, the equality $\Gamma_{X,\bd} = \Mod(\Sigma_{g(\bd)})[\phi_{\bd}]$ of \Cref{mainthm:ambientX} holds for the multidegree $\bd$.\\}

As usual, if $n = 2$, then $X = \CP^2$ by convention. If $\bd$ is any multidegree for which $r(\bd^+) \ge 2$, then $r(\bd) = r(\bd^+)-1 \ge 1$.
Also note that IH($d_1, \dots, d_{n-1}$) trivially implies IH($d_1, \dots, d_{n-1},1)$.
Taken along with the base cases established in \Cref{prop:basecases}, these observations show that to establish \Cref{mainthm:ambientX} (and hence \Cref{mainthm:ambientPn}), it suffices to establish IH($\bd^+$) assuming IH($\bd$) under the additional assumption that $r(\bd) \ge 1$, or equivalently that $g(\bd) \ge 3$.

\para{Proof outline} Suppose that IH($\bd$) holds. Let $D \subset X$ be a smooth complete intersection curve of multidegree $\bd' = (d_1, \dots, d_{n-2}, 1)$. By \Cref{lemma:maxgenexists}, there is a pencil $C_t$ of curves in $X$ of multidegree $\bd$ that is maximally generic rel $D$; let $C := C_{t_0}$ be a smooth member of the pencil. As usual, we take $E$ to be a smoothing of $C \cup D$, chosen in accordance with the basepoint conventions of \Cref{convention:basepoint}.

Ultimately we will establish IH($\bd^+$) by constructing an assemblage (recall \Cref{def:assemblage}) of vanishing cycles. Using the results of \Cref{section:genspinmcg}, the associated Dehn twists will generate a {\em framed} mapping class group on the subsurface with boundary $E^\circ \subset E$ determined by the assemblage. To descend to an $r$-spin mapping class group, we appeal to \Cref{lemma:framedontospin}. 

The argument proceeds in four steps. In Step 1, we will begin building our assemblage by finding a suitable configuration of vanishing cycles on a subsurface $\tilde C' \subset \tilde C$ of genus $g(C)$ but with one boundary component. In Step 2, we will extend the assemblage to encompass all of $\tilde C$. In Step 3, we use the technique of tacnodal degeneration (\Cref{lemma:lotsoftacnodeVCs}) to extend the assemblage onto $\tilde D$. At this stage, we will have constructed a framed subsurface $E^\circ \subset E$ with $E \setminus E^\circ$ a union of disks, and exhibited sufficient vanishing cycles supported on $E^\circ$ to generate its framed mapping class group. In Step 4, we show that the inclusion $E^\circ \into E$ induces a surjection onto the $r$-spin mapping class group of $E$ induced from $\cO(1)$.

\para{Step 1: Lifting from $C$} By IH($\bd$) and \Cref{lemma:Earbexists}, since $g:=g(C) \ge 3$ by assumption, there is an $E$-arboreal configuration $\bar{\cC_C} = \{\bar{c_1}, \dots, \bar{c_{2g}}\}$ of homologically-independent simple closed curves on $C$ that are all admissible with respect to the $r(\bd)$-spin structure $\phi_{\bd}$ on $C$. This takes one of the two forms shown in \Cref{fig:liftingvcs}, depending on the parity of an Arf invariant, should one be present. Thus each $\bar{c_i}$ is the vanishing cycle for some nodal degeneration $C_{t_i}$ of $C$. Again since we assume $g(C) \ge 3$, by \Cref{lemma:liftconfig}, this configuration lifts to a configuration $\cC_C = \{c_1, \dots, c_{2g}\}$ of vanishing cycles on $E$. 

\begin{figure}
\centering
		\labellist
        \small 
        \pinlabel $\red{c_{2g}}$ at 100 130
        \pinlabel $\red{c_{2g}}$ at 400 130
        \pinlabel {Type A} at 120 40
        \pinlabel {Type B} at 420 40
		\endlabellist
\includegraphics[width=\textwidth]{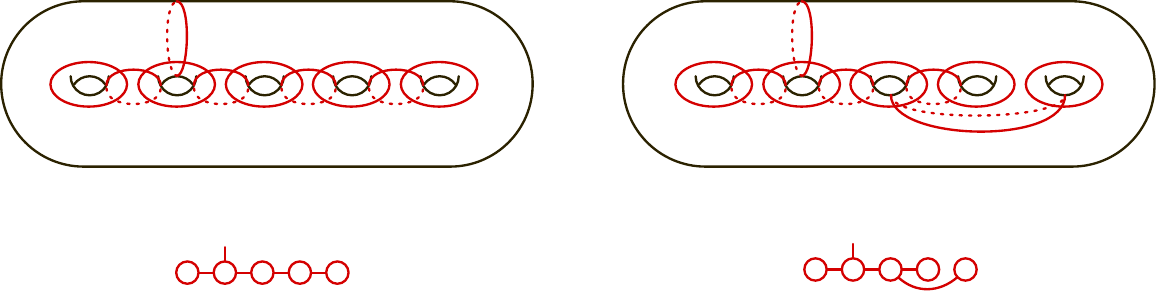}
\caption{The two configurations of vanishing cycles on $C$ invoked in Step 1 (top row), and a more efficient schematic rendering (bottom row), depicted in the case $g = 5$. A type A configuration can be found on $C$ whenever $r(\bd)$ is odd, or for one value of the Arf invariant when $r(\bd)$ is even; Type B is present when $r(\bd)$ is even for the other value of Arf. With reference to the diagrams at bottom, these differ only in whether the rightmost circle is attached to the circle to its immediate left, or two over.}
\label{fig:liftingvcs}
\end{figure}

\para{Step 2: Extending to $\tilde{C}$} 
Recall that by \Cref{lemma:tildeDframed}, $D$ endows $\tilde C$ with a framing $\phi_{C,D}$ that descends to the $r(\bd)$-spin structure $\phi_{\bd}$ on $C$. 
As before, we denote the boundary components of $\tilde C$ by $\beta_1, \dots, \beta_N$, noting that $N = \Pi(\bd) = d_1\dots d_{n-1}$.

Suppose that $a_{1}, \dots, a_{r+2} \subset \tilde C$ are pairwise disjoint simple closed curves such that $a_1 \cup \dots \cup a_{r+2} \cup \beta_j$ bounds an $r(\bd)+3$-holed sphere, and such that $\phi(a_1) = \dots = \phi(a_{r+1}) = 0$. 
By \Cref{lemma:tildeDframed}, $\phi_{C,D}(\beta_j) = -r(\bd)-1$. 
Homological coherence then implies that $\phi(a_{r+2}) = 0$ as well. 

We apply this reasoning to the curves $c_{2g+1}, \dots, c_{2g+N-1} \subset \tilde C$ described as follows. Represent the configuration $\cC_C$ schematically as in the bottom row of \Cref{fig:liftingvcs}. 
There are $g(\bd) = \tfrac{1}{2} \Pi(\bd) r(\bd) + 1$ circles; number these from left to right as $0, 1, \dots, g(\bd)-1$. 
For notational convenience, let $c_{2g} \in \cC_C$ be the exceptional curve depicted as the segment coming out of the top of circle $1$ (again see \Cref{fig:liftingvcs}). 
Define $c_{2g+1}$ to intersect only $c_{r(\bd)+1} \in \cC_D$, and such that $c_{2g}, c_{2g+1}, \beta_1$, and the $r(\bd)$ curves in $\cC_C$ connecting circles $1, \dots, r(\bd)+1$ together bound an $r(\bd)+3$-holed sphere.
Schematically, this is indicated by a segment attached to the top of circle $r(\bd)+1$.
By the discussion above, $\phi(c_{2g+1})$ is admissible.

Continue in this way, defining $c_{2g+k}$ schematically as attached to the top of circle $kr(\bd)+1$, so long as $kr(\bd)+1 \le g(\bd) - 1$. 
Then $c_{2g+k-1}, c_{2g+k}, \beta_k$, and $r(\bd)$ curves in $\cC_C$ together bound an $r(\bd)+3$-holed sphere, implying by the above discussion that each $c_{2g+k}$ constructed in this way is admissible. 
The remaining curves are constructed by attaching along the {\em bottoms} of circles $kr(\bd) - 1$, for $1 \le k \le \tfrac1 2 \Pi(\bd)$. 
Special care must be taken to analyze what happens in the case $r(\bd) = 2$; there is an exceptional case when $g(\bd)$ is odd in type B that requires a modified construction shown in \Cref{fig:specialcaser2B}.
We leave it to the interested reader to assure themselves that there are no further unexpected subtleties here.

\begin{figure}
\centering
		\labellist
        \small 
		\endlabellist
\includegraphics[width=\textwidth]{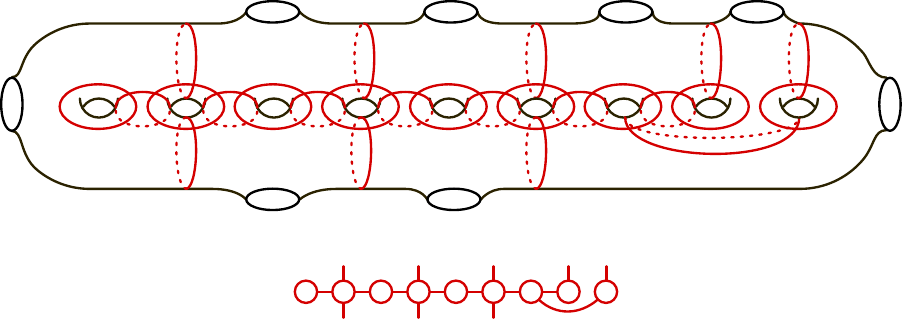}
\caption{When $r(\bd) = 2, g(\bd)$ is odd, and the configuration is of type B, a special modification has to be made to the construction of the curves $c_{2g+1}, \dots c_{2g+N-1}$, as shown here for $\bd = (4,2)$.}
\label{fig:specialcaser2B}
\end{figure}

The curves $c_{2g+k}$ of this latter sort are likewise admissible. For the most part, the same logic applies, although at the left (and sometimes right) ends of the surface, one obtains a surface of genus one bounded by some $\beta_j$, some $c_{2g+k}$ not known to be admissible, and $r(\bd)-1$ curves in $\cC_C$, giving an Euler characteristic of $-r(\bd)-1$. Arguing as in the case of an $r(\bd)+3$-holed sphere, it again follows that $c_{2g+k}$ is admissible. 

Though there will be variable numbers of curves of the first and second sort, depending on the parities of $\Pi(\bd)$ and $r(\bd)$, an analysis of the possibilities shows that this always produces $N-1 = \Pi(\bd) - 1$ additional admissible curves as claimed.
Their images $\bar{c_{2g+1}}, \dots, \bar{c_{2g+N-1}}$ are likewise admissible on $C$. 
By the inductive hypothesis IH($\bd$), they are moreover vanishing cycles on $C$. 
By \Cref{lemma:liftVC}, it follows that $c_{2g+1}, \dots, c_{2g+N-1}$ are vanishing cycles on $\tilde C$ as required.

\para{Step 3: Extending across $\tilde D$} We now turn our attention to $\tilde D$. Using the maximally generic pencil $C_t$ given by \Cref{lemma:maxgenexists}, we represent $D$ as a branched cover of $\CP^1$. Exactly as in the proof of \Cref{prop:simplebraidmonodromy}, we exhaust $\tilde D$ with a sequence $\alpha_1, \dots, \alpha_m$ of distinguished arcs lifted from pairwise-disjoint paths $\tau_i$ on $\CP^1$ from the basepoint to each of the branch points. By \Cref{lemma:lotsoftacnodeVCs}, each such path $\tau_i$ gives rise to a tacnodal degeneration of $E$ for which one of the vanishing cycles is a curve $a_i \subset E$, with $a_i \cap \tilde D$ in the isotopy class of $\alpha_i$. 

As in the proof of \Cref{prop:simplebraidmonodromy}, adding $a_{i+1}$ to the subsurface assembled from $\tilde C \cup a_1 \cup \dots \cup a_i$ decreases the Euler characteristic by $1$. As the configuration $\{c_1, \dots, c_{2g+k-1}\}$ constructed in Steps 1 and 2 is $E$-arboreal, it follows that $\{c_1,\dots,c_{2g+k-1}, a_1, \dots, a_m\}$ is an $h$-assemblage of type $E$ of genus $h = g(C)$. Let the subsurface assembled from these curves be denoted $E^\circ$; note that $E \setminus E^\circ$ is a union of disks. By \Cref{lemma:framingextend}, the natural framing $\phi_{C,D}$ of $\tilde C$ induced by $D$ extends to a framing $\phi^+$ of $E^\circ$ by the stipulation that each $a_i$ be admissible. 

We first consider the case $g(C) \ge 5$. In this setting, we apply \Cref{theorem:assemblagegenset} to see that the twists about $c_i$ and $a_j$ generate the framed mapping class group $\Mod(E^\circ)[\phi^+]$. 
In the case $g(C) \le 4$, we will appeal to \Cref{prop:gencriterion}. According to \Cref{lemma:smallgenustable}, there are exactly two possibilities for $\bd$ with $3 \le g(\bd) \le 4$: either $\bd = 4$ or $\bd = (3,2)$. In each of these cases, $r(\bd) = 1$, so that by \Cref{lemma:tildeDframed}, $\tilde C$ has constant signature $-2$. Also note that by construction, the curves $a_1, \dots, a_m$ enter and exit $\tilde C$ exactly once. 

In order to apply \Cref{prop:gencriterion}, it remains to see that $\Gamma_{\bd^+}$ contains the admissible subgroup $\cT_{\tilde C}$. Let $a \subset \tilde C$ be admissible for $\phi_{C,D}$. By \Cref{prop:basecases}, the monodromy $\Gamma_{\bd}$ for $C$ is the full mapping class group $\Mod(C)$, so that every nonseparating simple closed curve is a vanishing cycle. Then by \Cref{lemma:liftVC}, $a$ is a vanishing cycle, and hence $T_a \in \Gamma_{\bd^+}$.

\para{Step 4: From $\mathbf{E^\circ}$ to $\mathbf{E}$} By \Cref{lemma:framedontospin}, the inclusion $E^\circ \into E$ induces a surjection
\[
\Mod(E^\circ)[\phi^+] \onto \Mod(E)[\bar{\phi^+}],
\]
where $\bar{\phi^+}$ is the mod-$\rho$ reduction of $\phi^+$, and $\rho$ is computed as follows:
\[
\rho = \gcd(\phi^+(d_1)+1, \dots, \phi^+(d_p)+1),
\]
where $d_1, \dots, d_p$ are the boundary components of $E^\circ$, oriented with $E^\circ$ to the left. 

Since we have exhibited a generating set for $\Mod(E^\circ)[\phi^+]$ consisting of Dehn twists about vanishing cycles for $E$, it follows that the image of $\Mod(E^\circ)[\phi^+]$ in $\Mod(E)$ is contained in $\Gamma_{\bd^+}$, and {\em a fortiori} in $\Mod(E)[\phi_{\bd^+}]$. Thus we have a containment
\[
\Mod(E)[\bar{\phi^+}] \le \Mod(E)[\phi_{\bd^+}].
\]
By \Cref{lemma:rspincontainment}, it follows that $r(\bd^+)$ divides $\rho$, and that $\phi_{\bd^+}$ is the mod-$r(\bd^+)$ reduction of $\bar{\phi^+}$, so moreover is the mod-$r(\bd^+)$ reduction of $\phi^+$. To complete the argument, we must show $\Gamma_{\bd^+}$ contains $\Mod(E)[\phi_{\bd^+}]$. According to \Cref{lemma:rspingencrit}, for this it suffices to exhibit a simple closed curve $c \subset E^\circ$ with $\phi^+(c) = \pm r(\bd^+)$ and with $T_c \in \Gamma_{\bd^+}$. Any boundary component $c$ of $\tilde C$ will do: by \Cref{lemma:tildeDframed}, $\phi^+(c) = \pm(r(\bd)+1) = \pm r(\bd^+)$, and each such $c$ is a vanishing cycle by \Cref{lemma:boundaryVC}, so that $T_c \in \Gamma_{\bd^+}$ as required. \qed

\bibliographystyle{alpha}
\bibliography{bibliography}

\end{document}